\newtheorem{thm}{Theorem}[section]
\newtheorem{prop}[thm]{Proposition}
\newtheorem{lem}[thm]{Lemma}
\newtheorem{sublem}[thm]{Sublemma}
\newtheorem{coro}[thm]{Corollary}
\newtheorem*{thm*}{Theorem}
\newtheorem*{prop*}{Proposition}
\newtheorem*{coro*}{Corollary}
\theoremstyle{definition}
\theoremstyle{remark}
\newtheorem{rmq}[thm]{Remark}
\newtheorem*{rmq*}{Remark}
\begin{document}

\title{Existence of  periodic points near  an isolated fixed point with Lefschetz index $1$ and zero rotation for  area preserving surface homeomorphisms}
\author{ YAN Jingzhi\footnote{ Institut de Math\'ematiques de Jussieu-Paris Rive Gauche,
4 place Jussieu,
75252 PARIS CEDEX,
jingzhi.yan@imj-prg.fr
}}
\maketitle

\begin{abstract}
Let $f$ be an orientation and area preserving diffeomorphism of an oriented surface $M$ with an isolated degenerate fixed point $z_0$ with Lefschetz index one. Le Roux conjectured that $z_0$ is accumulated by periodic orbits. In this article, we will approach Le Roux's conjecture by proving that if $f$ is isotopic to the identity by an isotopy fixing $z_0$ and if the area of $M$ is finite, then $z_0$ is accumulated not only by periodic points, but also by periodic orbits in the measure sense. More precisely, the Dirac measure at $z_0$ is the limit  in weak-star topology of a sequence of invariant probability measures supported on periodic orbits. Our proof is purely topological and will works for homeomorphisms and is related to the notion of local rotation set.
\end{abstract}

\smallskip
\noindent \textbf{Keywords.} periodic point,  rotation set, transverse foliation, degenerate fixed point

\smallskip
\noindent\textbf{Mathematical Subject Classification.} 	37E30 37E45 37A05

%

\section{Introduction}\label{S: Introduction}

The goal of this article is to give a result of existence of periodic orbits for area preserving homeomorphisms of  surfaces. We will begin this introduction by explaining our result in the more general case of homeomorphisms, then will explain what does it mean in the case of diffeomorphisms and will conclude by giving its significance in the symplectic formalism.

Let $f$ be a homeomorphism of an oriented surface $M$ that is isotopic to the identity. We say that $f$ is \emph{area preserving} if it preserves a Borel measure without atom such that the measure of each open set is positive and that the measure of each compact set is finite. We call $I=(f_t)_{t\in[0,1]}$ an \emph{identity isotopy of $f$} if  it is an isotopy from the identity to $f$, and call a fixed point of $f$  a \emph{contractible fixed point}  associated to $I$ if its trajectory along $I$ is a loop homotopic to zero in $M$. We say that a fixed point  of $f$ is a \emph{fixed point of $I$} if it is fixed along the isotopy, and denote by $\mathrm{Fix}(I)$ the set of fixed points of $I$.

Suppose that $f$ is an area preserving homeomorphism of $M$, that $I$ is an identity isotopy of $f$, and that $z$ is a fixed point of $I$. We say that $f$ can be \emph{blown-up} at $z$ if we can replace $z$ with a circle  and extend $f$ continuously to this circle. In particular, when $f$ is a diffeomorphism near $z$,  the blow-up can be induced by $Df(z)$. We can define the \emph{blow-up rotation number} $\rho(I,z)$ to be a representative of the Poincar\'e rotation number of the homeomorphism on the added circle. (The precise definitions can be found in Section \ref{S: pre-local rotation set}.)

Suppose that $f$ can be blown-up at $z\in\mathrm{Fix}(I)$, and that  $M$ (resp. $M\setminus (\mathrm{Fix}(I)\setminus\{z\})$) is neither a sphere nor a plane. When the blow-up rotation number $\rho(I,z)$ is different from $0$, we lift $I$ (resp. $I|_{M\setminus(\mathrm{Fix}(I)\setminus\{z\})}$) to the universal covering space and get an identity isotopy of a lift of $f$ (resp. $f|_{M\setminus(\mathrm{Fix}(I)\setminus\{z\})}$). If we fix a pre-image of $z$, we can blow up this point and get an area preserving homeomorphism $\widetilde{f}$ of the half-open annulus. Every other pre-image of $z$  in the annulus is a fixed point of $\widetilde{f}$ with rotation number $0$, and the rotation number of $\widetilde{f}$ on the boundary is different from $0$. (A formal definition of the rotation number can be found in Section \ref{S: pre-Poincare-Birkhoof}.)  By a generalization of Poincar\'e-Birkhoff Theorem, we can deduce that $\widetilde{f}$ has infinitely many periodic points that correspond to different contractible periodic points of $f$. So, we can prove the existence of infinitely many contractible periodic points.

When the blow-up rotation number $\rho(I,z)$ is  $0$, the problem is much more difficult. We will be interested in this article in the case where $\rho(I,z)=0$ and where the Lefschetz index is equal to $1$. It must be noticed that this situation does not occur in the case of a diffeomorphism with no degenerate\footnote{Here, degenerate means that $1$ is an eigenvalue of the Jacobian matrix of $f$ at $z_0$.} fixed point. It is a critical case, but an interesting one for the following reason: there are many situations where existence of a fixed point of index one can be proven. For example, every orientation and area preserving homeomorphism of the sphere with finitely many fixed point has at least two fixed points of Lefschetz index $1$. It is a consequence of Lefschetz formula and of the fact that the Lefschetz index of an orientation and   area preserving homeomorphism at an isolated fixed point is always not bigger that $1$ (see \cite{Slaminka} and \cite{lecalvezindicesup1}). Existence of at least one fixed point of Lefschetz index $1$ can be proven for an area preserving homeomorphism $f$ of a closed surface of positive genus  in the case where $f$ is isotopic to the identity, $f$ has finitely many fixed points,  and  the mean rotation vector vanishes. (See \cite{Franksrotationvector} in the case of a diffeomorphism and \cite{Matsumoto} in the more general case.)

\bigskip

More precisely, suppose that $f:M\rightarrow M$  is an area preserving homeomorphism of an oriented surface $M$,  that  $z_0$ is an isolated  fixed point of $f$ with a Lefschetz index $i(f,z_0)=1$, and that $I$ is an identity isotopy of $f$ fixing $z_0$.  The homeomorphism $f$  can not always be blown-up at $z_0$, nevertheless, Fr\'ed\'eric Le Roux \cite{lerouxrotation} generalized the rotation number and  defined a \emph{local rotation set} $\rho_s(I,z_0)$ (see Section \ref{S: pre-local rotation set}). In particular, when $f$ can be blown-up at $z_0$, the local rotation set is just reduced to the rotation number. We will prove that  if the total area of $M$ is finite and if $\rho_s(I,z_0)$ is reduced to $0$, then $z_0$ is accumulated by contractible periodic points of $f$. More generally, the result is still valid if we relax the condition that $\rho_s(I,z_0)$ is reduced to $0$ to the condition that $\rho_s(I,z_0)$ is reduced to an integer $k$.
 We will prove a stronger result: the Dirac measure $\delta_{z_0}$ at the fixed point $z_0$ is a limit, in the weak-star topology, of a sequence of  invariant probability measures  supported on periodic orbits. We can be more precise. Let us say that a contractible periodic orbit of  period $q$ has \emph{type $(p,q)$ }  if its trajectory  along  the isotopy  is homotopic to $p\Gamma$ in $M\setminus\text{Fix}(I)$, where  $\Gamma$ is the boundary of a sufficiently small Jordan domain containing $z_0$. Then, there exists an open interval $L$ containing an integer $k$ in its boundary such that for all irreducible  $p/q\in L$ there exists a contractible periodic orbit $O_{p/q}$ of type $(p,q)$ and such that $\delta_{z_0}$ is the limit, in the weak-star topology, of any sequence  $(\mu_{O_{p_n/q_n}})_{n\ge 1 }$ such that $\lim_{n\rightarrow \infty}\frac{p_n}{q_n}=k$, where $\mu_{O_{p/q}}$ is the invariant probability measure supported on $O_{p/q}$.

 Formally, we have the following theorem, which is the main result of this article:

\begin{thm}\label{T: main-index 1 and rotation 0 implies  accumulated by periodic points}
Let $f:M\rightarrow M$  be an area preserving homeomorphism of an oriented surface $M$,   $z_0$ be an isolated  fixed point of $f$ with a Lefschetz index $i(f,z_0)=1$, and $I$ be an identity isotopy of $f$ fixing $z_0$ and satisfying  $\rho_s(I,z_0)=\{k\}$. Suppose that one of the following situations occurs,
\begin{itemize}
\item[i)] $M$ is a plane,  $f$ has only one fixed point $z_0$ and  has a periodic orbit besides $z_0$;
\item[ii)] the total area of $M$ is finite.
\end{itemize}
Then, $z_0$ is accumulated by periodic points. More precisely, the following property holds:

 \textbf{P)}: There exists $\varepsilon >0$, such that either for all irreducible  $p/q\in(k,k+\varepsilon)$, or for all irreducible  $p/q\in(k-\varepsilon, k)$, there exists a contractible periodic orbit $O_{p/q}$ of type $(p,q)$, such that  $\mu_{O_{p/q}}\rightarrow \delta_{z_0}$ as $p/q\rightarrow k$, in the weak-star topology, where $\mu_{O_{p/q}}$ is the invariant probability measure  supported on $O_{p/q}$,
\end{thm}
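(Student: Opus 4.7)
The plan is to reduce to $k=0$, use Le Calvez's equivariant foliation to locate a twist structure around $z_0$, then apply a Poincar\'e--Birkhoff type theorem on shrinking annuli and read off the measure convergence.

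First I would reduce to the case $k=0$. Replacing $I$ by its composition with $|k|$ copies of a small isotopy supported near $z_0$ and winding once around it, the local rotation set shifts by an integer, and we may assume $\rho_s(I,z_0)=\{0\}$. Both statements of the conclusion are preserved under this shift, with the effect of translating the interval $L$ from around $k$ back to an interval around $0$; so the task becomes to produce contractible periodic orbits of type $(p,q)$ with $p/q\to 0$ on one side of $0$.

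Next I would invoke Le Calvez's equivariant foliation theorem to get an oriented singular foliation $\mathcal{F}$ on $M\setminus\mathrm{Fix}(I)$ such that every orbit of $I$ is homotopic, rel endpoints, to a path positively transverse to $\mathcal{F}$. The two hypotheses $i(f,z_0)=1$ and $\rho_s(I,z_0)=\{0\}$ pin down the local structure of $\mathcal{F}$ near $z_0$: the local Le Calvez index formulas tie the indices of $\mathcal{F}$-leaves at $z_0$ to $i(f,z_0)=1$, while the rotation set being reduced to $\{0\}$ rules out non-trivial winding of leaves. I expect this to force the existence of arbitrarily small Jordan curves $\gamma$ around $z_0$, together with companion curves inside $\gamma$, bounding annular regions $A_\gamma$ on which, after lifting to the relevant cover and passing to a suitable compactification, a twist condition holds with boundary rotation numbers straddling every small rational $p/q$ on one side of $0$.

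Granted this twist picture, I would apply a Poincar\'e--Birkhoff theorem in the area-preserving setting (Franks--Le Calvez generalizations) to produce, for every irreducible $p/q$ in a one-sided interval $(0,\varepsilon)$ or $(-\varepsilon,0)$, a periodic orbit $O_{p/q}$ in $A_\gamma$ whose projection to $M$ is a contractible periodic orbit of type $(p,q)$. Since $\gamma$ can be taken arbitrarily small, for $p/q$ close enough to $0$ one obtains orbits contained in any prescribed neighborhood of $z_0$, yielding $\mu_{O_{p/q}}\to\delta_{z_0}$ in weak-star topology. Case (i) is handled separately: finite total area (used to enforce an intersection property) is replaced by a boundedness argument exploiting the extra periodic orbit to confine dynamics in the plane. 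The main obstacle is the construction of the twisting annuli $A_\gamma$ around $z_0$: because $f$ need not be blowable-up at $z_0$, the boundary rotation numbers cannot be read from any local linear model, so everything hinges on a careful analysis of how leaves of $\mathcal{F}$ accumulate on $z_0$ under the combined index-$1$ and rotation-set-$\{0\}$ hypotheses, and on showing that this accumulation is rigid enough to produce a genuine twist after lifting.
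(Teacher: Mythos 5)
Your plan has the right general ingredients (reduction to $k=0$, transverse foliations, Poincar\'e--Birkhoff type arguments, weak-star convergence), but it misses the central idea of the paper and, more importantly, the step you describe as providing the twist does not work.

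The gap is the assertion that the hypotheses $i(f,z_0)=1$ and $\rho_s(I,z_0)=\{0\}$ ``force the existence of arbitrarily small Jordan curves $\gamma$ around $z_0$ \dots bounding annular regions $A_\gamma$ on which \dots a twist condition holds with boundary rotation numbers straddling every small rational $p/q$ on one side of $0$.'' This is precisely what fails when $\rho_s(I,z_0)=\{0\}$: the whole difficulty of the theorem is that there is no twist near $z_0$. Both circles of any small annulus around $z_0$ have rotation number $0$ (or arbitrarily close to $0$), so a direct Poincar\'e--Birkhoff argument on shrinking annuli has nothing to straddle. The paper is explicit about this in the introduction: when $\rho(I,z_0)\neq 0$ the problem is solved by Poincar\'e--Birkhoff, and the case under consideration ($\rho=0$, index $1$) is the critical one where that method is unavailable.

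The paper's actual mechanism for manufacturing a twist is global, not local. First one must produce at least one periodic orbit $O\neq\{z_0\}$ somewhere on the surface (in case i) this is a hypothesis; in case ii) it requires real work, via a Birkhoff ergodic theorem argument in Lemma \ref{L: chapter3-positive area implies periodic orbits} and, in the non-planar case, Proposition \ref{P: chanpter3-mainproof2-exist periodic orbit in W}, which the paper calls its most difficult part). One then shows, via Boyland's theory of topologically monotone braids, that a suitable such orbit $O$ of rotation number $1/q$ can be chosen so that $f|_{M\setminus O}$ is isotopic to the finite-order rotation $R_{1/q}$. Passing to the universal cover of $M\setminus O$ and blowing up at infinity, the point at infinity acquires rotation number $-1/q\neq 0$, while $z_0$ retains rotation number $0$; only now is there a genuine twist, between $z_0$ and $\infty$, to which Lemma \ref{L: chapter3-mainproof1-plane1} applies. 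Your plan contains none of these steps. Finally, the weak-star convergence $\mu_{O_{p/q}}\to\delta_{z_0}$ is not because the orbits $O_{p/q}$ shrink into small neighborhoods of $z_0$ (they need not); it is Lemma \ref{L: compact to converge}, which uses the foliation to show the lift makes uniform horizontal progress off a neighborhood $V$ of $z_0$, so the fraction of time an orbit of rotation number $p/q$ spends outside $V$ is $O(p/q)$.
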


\begin{rmq}
 Le Roux \cite{lerouxrotation} gave the following  conjecture: if $f: (W, z_0)\rightarrow (W', z_0)$ is an orientation and area preserving homeomorphism between two neighborhoods of $z_0\in M$, and if $z_0$ is an isolated fixed point of $f$ such that $i(f,z_0)$ is equal to $1$ and that $\rho_s(I,z_0)$ is reduced to $0$ for a local isotopy $I$ of $f$, then $z_0$ is accumulated by periodic orbits of $f$. Although we can not give a complete answer to this conjecture in this article, we approach it by the previous theorem.
\end{rmq}

\bigskip

When $f$ is a diffeomorphism, we will give several versions of the theorem whose conditions are more easy to understand.

The rotation set of a local isotopy (see Section \ref{S: pre-local rotation set}) at a degenerate fixed point of an orientation preserving diffeomorphism is reduced to an integer. So, given an area-preserving diffeomorphism $f$ of a surface $M$ with finite area that is isotopic to the identity, if $z_0$ is a degenerate fixed point whose Lefschetz index is equal to $1$, the assumptions of  the previous theorem are satisfied, and hence $z_0$ is accumulated by contractible periodic points. Formally, we  have the following corollary:

\begin{coro}\label{C: main-degenerate and index 1 implies accumlated by periodic points}
Let $f$ be an orientation and  area preserving diffeomorphism  of an oriented  surface $M$ with finite total area, and $z_0$ be a degenerate isolated fixed point such that $i(f,z_0)=1$. If $f$ is isotopic to the identity by an isotopy $I$ that fixes $z_0$\footnote{If there exists an identity isotopy of $f$ such that the trajectory of $z_0$ along the isotopy is homotopic to zero, there always exists an identity isotopy of $f$ fixing $z_0$.},   then $z_0$ is accumulated by contractible periodic points. Moreover, the property \textbf{P)} holds.
\end{coro}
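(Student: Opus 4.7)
The strategy is to apply Theorem~\ref{T: main-index 1 and rotation 0 implies  accumulated by periodic points} in its case~(ii). All of its hypotheses except $\rho_s(I,z_0)=\{k\}$ are immediate from the corollary's assumptions: $f$ is an area-preserving homeomorphism of $M$, $z_0$ is an isolated fixed point with Lefschetz index $i(f,z_0)=1$, the isotopy $I$ fixes $z_0$, and the total area of $M$ is finite. Thus the corollary reduces to verifying that the local rotation set $\rho_s(I,z_0)$ is reduced to a single integer.

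To do this I would invoke the fact recalled in the paragraph just preceding the corollary: at a degenerate fixed point of an orientation-preserving diffeomorphism, the local rotation set of any identity isotopy is reduced to an integer. The intuition is that differentiability allows $f$ to be blown up at $z_0$ via the differential $Df(z_0)$, which collapses the local rotation set to the single blow-up rotation number $\rho(I,z_0)$ as described in the excerpt. Since $z_0$ is degenerate, $1$ is an eigenvalue of $Df(z_0)$, so the induced circle homeomorphism on the added circle has a fixed point; its Poincar\'e rotation number is therefore $0 \pmod{1}$, and the particular representative picked out by the isotopy $I$ is an integer $k$ measuring how many times $I$ twists around $z_0$.

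Once $\rho_s(I,z_0)=\{k\}$ has been obtained, Theorem~\ref{T: main-index 1 and rotation 0 implies  accumulated by periodic points} under alternative~(ii) delivers both the accumulation of $z_0$ by contractible periodic points and the refined statement~\textbf{P)}, which is exactly the conclusion of the corollary.

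The only nontrivial ingredient in this plan is the integrality of $\rho_s(I,z_0)$ at a degenerate fixed point. The paper treats this as a separate, essentially differential-topological fact pinned down in Section~\ref{S: pre-local rotation set}, rather than as a consequence of the dynamical machinery used in the proof of the main theorem; from the corollary's standpoint it is invoked rather than re-derived, so the deduction itself is short.
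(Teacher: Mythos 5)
Your proposal is correct and follows essentially the same route as the paper: the paper's own justification is the paragraph immediately preceding the corollary, which notes that degeneracy of $Df(z_0)$ forces $\rho_s(I,z_0)$ to be a single integer (via blow-up at $z_0$ and non-emptiness of the rotation set for area-preserving maps, cf.\ Proposition~\ref{P: pre-rotation set}~iv)), and then invokes Theorem~\ref{T: main-index 1 and rotation 0 implies  accumulated by periodic points} in case~ii). Your expanded explanation of why the blow-up circle map has rotation number $0 \pmod 1$ is a faithful unpacking of the same step, not a different argument.
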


Let  $f$ be a $\mathcal{C}^1$ diffeomorphism of $\mathbb{R}^2$.  A function $g:\mathbb{R}^2\rightarrow \mathbb{R}$ of class $\mathcal{C}^2$ is called  a \emph{generating function} of  $f$  if $\partial_{12}^2g<1$, and
\begin{equation*}
 f(x,y)=(X,Y)\Leftrightarrow\left\{\begin{aligned} X-x & = & \partial_2 g(X,y),\\ Y-y & = &  -\partial_1 g(X,y) . \end{aligned}\right.
\end{equation*}
We know that the previous diffeomorphism $f$ is orientation and area preserving  by a direct computation.

Generating functions are usual objects in symplectic geometry. We will give the following version of our result  whose conditions are  described by  generating functions.
\begin{coro}\label{C: main-generating function with a degenrate maximal point means accumlated by periodic points}
Let $f$ be an orientation and area preserving diffeomorphism of an oriented  surface $M$ with finite area. We suppose that $f$ is isotopic to the identity by an isotopy that fixes $z_0$. Suppose that  in a neighborhood of $z_0$, $f$ is  conjugate to a local  diffeomorphism at $0$ that is generated by a generating function $g$,   that $0$ is a local extremum of $g$, and that the Hessian matrix of $g$ at $0$ is degenerate. Then $z_0$ is accumulated by periodic points, and the property \textbf{P)} holds.
\end{coro}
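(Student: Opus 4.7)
The plan is to verify the hypotheses of Corollary \ref{C: main-degenerate and index 1 implies accumlated by periodic points}: that $z_0$ is an isolated degenerate fixed point of $f$ with Lefschetz index equal to $1$. Working in the local chart in which $f$ is generated by $g$, the generating equations $X-x=\partial_2 g(X,y)$, $Y-y=-\partial_1 g(X,y)$ identify the fixed points of $f$ with the critical points of $g$. Since $0$ is a local extremum of $g$, I will treat it as an isolated critical point of $g$ (otherwise $z_0$ is trivially accumulated by fixed points of $f$, which already yields the weaker accumulation conclusion).

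For the degeneracy, I would implicitly differentiate the generating equations at $0$, using $\nabla g(0)=0$. Writing $H=\mathrm{Hess}(g)(0)=\begin{pmatrix}a&b\\b&c\end{pmatrix}$ with $b<1$ (forced by $\partial^2_{12}g<1$), a short computation gives
\[Df(0)=\frac{1}{1-b}\begin{pmatrix}1 & c\\ -a & (1-b)^2-ac\end{pmatrix},\]
which has determinant $1$ and trace $(1+(1-b)^2-ac)/(1-b)$. Its characteristic polynomial $\lambda^2-\mathrm{tr}(Df(0))\lambda+1$ admits $1$ as a root iff $\mathrm{tr}(Df(0))=2$, iff $ac-b^2=0$, iff $\det H=0$. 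This is precisely the hypothesis, so $z_0$ is degenerate in the sense of the footnote.

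For the Lefschetz index, recall that $i(f,z_0)$ equals the Poincar\'e--Hopf index at $0$ of the planar vector field $V:=f-\mathrm{id}$. I would use the following homotopy trick: for $s\in(0,1]$ the function $g_s:=sg$ still satisfies $\partial^2_{12}g_s<1$, so it generates a diffeomorphism $f_s$, and the rescaled field $W_s:=(f_s-\mathrm{id})/s$ extends continuously to $s=0$ with limit $W_0(x,y)=(\partial_2 g(x,y),-\partial_1 g(x,y))=J\nabla g(x,y)$, where $J$ is the rotation by $-\pi/2$. The zero set of $W_s$ in a small disc around $0$ is independent of $s$ (it coincides with the critical set of $g$, which by isolatedness reduces to $\{0\}$), so the index is constant along the homotopy. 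Hence
\[\mathrm{ind}_0(V)=\mathrm{ind}_0(J\nabla g)=\mathrm{ind}_0(\nabla g)=+1,\]
the last equality being the standard fact that the gradient of a $\mathcal{C}^2$ function on a surface has index $+1$ at an isolated local extremum. This gives $i(f,z_0)=1$, and Corollary \ref{C: main-degenerate and index 1 implies accumlated by periodic points} then yields the desired conclusion, including property \textbf{P)}.

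The main delicate point is the third paragraph: one must verify that $s\mapsto W_s$ is continuous on $[0,1]$ (which follows from the implicit function theorem applied to $X\mapsto X-x-s\partial_2 g(X,y)$, using $s\partial^2_{12}g<1$) and that its zero set in a fixed small disc reduces to $\{0\}$ throughout the homotopy (exactly the isolatedness hypothesis on the critical point of $g$). Everything else is a direct verification of the hypotheses of the preceding corollary.
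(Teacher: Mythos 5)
Your proof is correct, and it takes a genuinely different route from the paper's. The paper deduces Corollary~\ref{C: main-generating function with a degenrate maximal point means accumlated by periodic points} from Theorem~\ref{T: main-index 1 and rotation 0 implies  accumulated by periodic points} together with Proposition~\ref{P: chapter3-diffeo-the index of degenerate fixed point that is an extremum of genrating function}, and the latter is proved via the paper's machinery of local isotopies and transverse foliations: one first shows that the isotopy $I_0$ induced by $g$ has blow-up rotation number $0$ (by exhibiting the null eigenvector of $\mathrm{Hess}(g)(0)$ as a common eigenvector of all $J_{f_t}(0)$), then constructs a foliation $\mathcal{F}_0$ from the gradient of $g$, verifies it is locally transverse to $I_0$ with $0$ a sink or source, and concludes that every local isotopy has index $0$, whence $i(f,0)=1$ by Proposition~\ref{P: pre-relation of indices between isotopy and homeo}. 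You instead compute the Lefschetz index directly by a Poincar\'e--Hopf argument: the scaling homotopy $g_s=sg$ gives a family $W_s=(f_s-\mathrm{id})/s$ interpolating between $f-\mathrm{id}$ and $J\nabla g$, with zero set fixed (the critical set of $g$) throughout, so $i(f,0)=\mathrm{ind}_0(J\nabla g)=\mathrm{ind}_0(\nabla g)=1$ by the classical fact that the gradient index at an isolated extremum is $1$. Both routes verify the same hypotheses (degeneracy via the eigenvalue computation, index $1$) before invoking the main theorem. Your route is more elementary and self-contained; the paper's route has the advantage that the transverse foliation $\mathcal{F}_0$ built from $\nabla g$ is reused later (in the proof of Proposition~\ref{P: chapter3-index of symplectically degenerate extremum} via Lemmas~\ref{L: chapter3-symplectic-exist common conjugate}--\ref{L: chapter3-symplectic-construct new generating function}), so the extra structure is not wasted.

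One small remark: like the paper, you tacitly treat $0$ as an isolated critical point of $g$ (equivalently $z_0$ an isolated fixed point), which is required both for your homotopy argument and to invoke Corollary~\ref{C: main-degenerate and index 1 implies accumlated by periodic points}. Your aside that the non-isolated case yields accumulation trivially is fine for the weaker conclusion, but property \textbf{P)} is not addressed in that degenerate case; the paper has exactly the same implicit assumption, since Proposition~\ref{P: chapter3-diffeo-the index of degenerate fixed point that is an extremum of genrating function} also assumes $0$ is an isolated critical point of $g$. So you are on equal footing with the paper here and this is not a real gap.
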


Let us explain now what is our result in the symplectic formalism.

A time-dependent vector field $(X_t)_{t\in\mathbb{R}}$ is called a \emph{Hamiltonian vector field} if it is defined by the equation:
$$dH_t=\omega(X_t,\cdot),$$
where $(M,\omega)$ is a symplectic manifold, and $H:\mathbb{R}\times M\rightarrow \mathbb{R}$ is a smooth function. The Hamiltonian vector field induces a flow $(\varphi_t)_{t\in\mathbb{R}}$ on $M$, which is the solution of  the following equation
$$\frac{\partial}{\partial t}\varphi_t(z)=X_t(\varphi_t(z)).$$
 We say that a diffeomorphism $f$ of $M$ is a \emph{Hamiltonian diffeomorphism} if it is the time-$1$ map of a Hamiltonian flow.

In particular, a Hamiltonian diffeomorphism $f$ of the torus $\mathbb{T}^{2}$ that  is close to the identity in $\mathcal{C}^1$ topology can be lifted to the plane $\mathbb{R}^2$, and the lifted diffeomorphism  can be defined by a generating function $g$. If $z_0$ is a local maximum of $g$, and if  the Hessian of $g$ at $z_0$ is degenerate, we are in the case of the previous corollary, and the image of $z_0$ in $\mathbb{T}^2$ is a fixed point of $f$ that is accumulated by contractible periodic points.


\bigskip

In 1984, Conley conjectured that a Hamiltonian diffeomorphism $f$ of the standard symplectic torus $(T^{2d},\omega)$ has infinitely many contractible periodic points. The result was proved later in the case where $f$ has no degenerate fixed point, by Conley and Zehnder\cite{cznondegenarate}, in the weakly non-degenerate case by Salamon and Zehnder \cite{szweaklynondegenerate}, and finally in the general case by Hinston\cite{hinstondegenerate}. Recently, Mazzucchelli\cite{mazzuchelli} gave a simpler argument based on generating functions for the second part of the proof of Hinston, and noticed that the existence of a symplectical degenerate extremum, that will be defined in the section \ref{S: Discrete symplectic actions and symplectically degenerate extremum}, implies the  existence of infinitely many other periodic points. In the same paper, he asked  whether a symplectical degenerate extremum actually corresponds to a fixed point accumulated by periodic points. In this article, we will give a positive answer in the case where $d=1$. More precisely, we have the following result.

\begin{thm}\label{T: main-symplectic degenerate maxima implies accumlated by periodic points}
Let $f: \mathbb{T}^2\rightarrow \mathbb{T}^2$ be a Hamiltonian diffeomorphism, and $z_0$ be a symplectical degenerate  extremum. Then $z_0$  is accumulated by periodic points, and the property \textbf{P)} holds.
\end{thm}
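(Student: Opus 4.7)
The plan is to reduce Theorem~\ref{T: main-symplectic degenerate maxima implies accumlated by periodic points} to the generating-function version, Corollary~\ref{C: main-generating function with a degenrate maximal point means accumlated by periodic points}, by verifying its hypotheses for $f$ and $z_0$. Since the symplectic form on $\mathbb{T}^2$ is an area form, $f$ is automatically orientation and area preserving on a surface of finite total area. The Hamiltonian isotopy gives an identity isotopy of $f$; because $z_0$ is a symplectically degenerate extremum, its orbit under the Hamiltonian flow is contractible in $\mathbb{T}^2$ (the discrete symplectic action at $z_0$ must be well-defined), so by the footnote of Corollary~\ref{C: main-degenerate and index 1 implies accumlated by periodic points} we may choose an identity isotopy fixing $z_0$.

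Next, I would lift $f$, or rather a suitable iterate $f^N$, near the chosen preimage of $z_0$ to the universal cover $\mathbb{R}^2$. Since $z_0$ is degenerate (the defining conditions of an SDE force $Df(z_0)$ to have $1$ as an eigenvalue of multiplicity two), the lift of $f^N$ near that preimage is $\mathcal{C}^1$-close to the identity and so admits a generating function $g$ in the sense of Corollary~\ref{C: main-generating function with a degenrate maximal point means accumlated by periodic points}. The key translation step, which I would carry out using the material of Section~\ref{S: Discrete symplectic actions and symplectically degenerate extremum}, is that under the standard correspondence between discrete symplectic actions and generating functions, the symplectically degenerate extremum condition at $z_0$ becomes exactly the statement that $0$ is a local extremum of $g$ and that the Hessian of $g$ at $0$ is degenerate. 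Corollary~\ref{C: main-generating function with a degenrate maximal point means accumlated by periodic points} then applies to $f^N$ and yields a local rotation set reduced to an integer, plus the property \textbf{P)} at $z_0$ for $f^N$.

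Finally, I would transfer the conclusion from $f^N$ back to $f$: every periodic orbit of $f^N$ is a periodic orbit of $f$, and the invariant probability measure on such an orbit is the same for both maps, so the weak-star convergence $\mu_{O_{p/q}}\to \delta_{z_0}$ is preserved. The type $(p,q)$ of orbits for $f^N$ translates into type $(p,Nq)$ for $f$, and the interval $L\subset \mathbb{R}$ of admissible rotations produced by Corollary~\ref{C: main-generating function with a degenrate maximal point means accumlated by periodic points} rescales by the factor $1/N$ around the relevant integer, still providing an interval with $k$ on one side of its boundary, as required by property \textbf{P)} for $f$.

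The main obstacle I foresee is the translation step: showing that the SDE condition of Mazzucchelli, stated in terms of the mean Conley--Zehnder index and local extrema of the discrete action of iterates, is equivalent (possibly after passing to $f^N$) to the statement ``$0$ is a local extremum of the generating function $g$ with degenerate Hessian''. This is essentially a careful unpacking of definitions together with the variational characterization of fixed points via generating functions, but it requires checking that the $C^0$-local extremum of the action and the extremum of $g$ coincide near $z_0$, which is what couples the symplectic hypothesis to the topological hypothesis of Corollary~\ref{C: main-generating function with a degenrate maximal point means accumlated by periodic points}.
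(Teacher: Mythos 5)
Your high-level strategy — reduce to Corollary~\ref{C: main-generating function with a degenrate maximal point means accumlated by periodic points} by producing a local generating function $g$ for $f$ near $z_0$ with $0$ a local extremum and $\mathrm{Hess}(g)(0)$ degenerate — is the same route the paper takes, but two steps of your argument are wrong or substantially underestimated.

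First, the passage to an iterate $f^N$ and the claim that "the lift of $f^N$ near that preimage is $\mathcal{C}^1$-close to the identity" is incorrect. The Jacobian $Df^N(z_0)$ is unipotent but nothing forces $Df^N$ to stay near the identity on any neighborhood of $z_0$, however large you take $N$; iteration does not help. Moreover the paper never iterates $f$: the $\mathcal{C}^1$-small pieces are the factors $F_j$ in the Hamiltonian factorization $F=F_{k-1}\circ\cdots\circ F_0$, which is already built into the definition of a symplectically degenerate extremum, and the object you need is a single generating function for the \emph{composition} $f=f_{k-1}\circ\cdots\circ f_0$, not for an iterate. (The existence of a local generating function for $f$ near $z_0$ ultimately comes from the normal form $Df(z_0)\sim\begin{pmatrix}1 & c\\ 0 & 1\end{pmatrix}$ after a suitable rotation of coordinates, which forces $\partial_1(p_1\circ f)(z_0)=1>0$, not from any $\mathcal{C}^1$-smallness.)

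Second, the "translation step" you describe as "essentially a careful unpacking of definitions" is where all of the substance of the paper's argument lives, and it is not a formality. The SDE hypothesis tells you that $z_0$ is a local maximum of each factor's generating function $g_j$; it does \emph{not} directly tell you that $z_0$ is a local extremum of a generating function of the composition $f$. To get from one to the other, the paper proves three nontrivial lemmas: (i) Lemma~\ref{L: chapter3-symplectic-exist common conjugate}, which uses $\rho(I,[z_0])=0$ to find a single rotation $\Omega$ conjugating every $J_{f_j}(0)$ simultaneously into the upper-triangular form $\begin{pmatrix}1 & c_j\\ 0 & 1\end{pmatrix}$ with $c_j\le 0$; (ii) Lemma~\ref{L: chapter3-symplectic-conjugated does not change the maximality}, showing that this conjugation preserves the local-maximum property of the $g_j$; and (iii) Lemma~\ref{L: chapter3-symplectic-construct new generating function}, a genuine composition lemma for generating functions, which produces a generating function $g$ of $f_1\circ f_0$ near $0$ with $0$ a local maximum and a controlled degenerate Hessian, by an explicit implicit-function-theorem construction followed by a Taylor-expansion estimate of the form
\[
 g(x_2,y_0)\le g_0\bigl(x_1+\tfrac{1}{\varepsilon}(y_0-y_1),y_0\bigr)+g_1\bigl(x_2, y_1+\varepsilon(x_2-x_1)\bigr)\le 0.
\]
Only after iterating (iii) along the factorization and then invoking Proposition~\ref{P: chapter3-diffeo-the index of degenerate fixed point that is an extremum of genrating function} does one get $i(F,[z_0])=1$, at which point Theorem~\ref{T: main-index 1 and rotation 0 implies  accumulated by periodic points} (equivalently Corollary~\ref{C: main-generating function with a degenrate maximal point means accumlated by periodic points}) applies. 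As written, your proposal does not supply these steps and replaces them with an incorrect shortcut; the gap is exactly the one you flagged at the end, but it is not a definitional unpacking — it is the technical heart of the reduction.
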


Now, we will give a plan of this article. In Section \ref{S: Preliminaries}, we will recall some definitions and results that we will use in the proofs of our results. In Section \ref{S: chapter3-homeomorphism case},  we will prove Theorem \ref{T: main-index 1 and rotation 0 implies  accumulated by periodic points}, which is the main result of this paper. In Section \ref{S: chapter3-diffeomorphism case},  we will study a particular  case where $f$ is a diffeomorphism, and will give several versions of our results whose conditions are described in different ways:  Corollary \ref{C: main-generating function with a degenrate maximal point means accumlated by periodic points} and Theorem \ref{T: main-symplectic degenerate maxima implies accumlated by periodic points}.

\section{Preliminaries}\label{S: Preliminaries}

\subsection{A classification of isolated fixed points}\label{S: pre-a classification of isolated fixed points}

In this section, we will give a classification of  isolated fixed points. More details can be found in  \cite{lecalvezindices}.

Let $f:(W,z_0)\rightarrow (W',z_0)$ be a local  homeomorphism with an isolated fixed point $z_0$.
We say that  $z_0$ is an \emph{accumulated point} if every neighborhood of $z_0$ contains a periodic orbit besides $z_0$. Otherwise, we say that $z_0$ is a \emph{non-accumulated point}.

We define a \emph{Jordan domain} to be a bounded domain whose boundary is a simple closed curve. We say that $z_0$ is \emph{indifferent} if there exists a neighborhood $V\subset\overline {V}\subset W$ of $z_0$ such that for every Jordan domain $U\subset V$ containing $z_0$, the connected component of $\cap_{k\in \mathbb{Z}}f^{-k}(\overline{U})$ containing $z_0$ intersects the boundary of $U$.

%

We say that  $z_0$ is \emph{dissipative} if there exists a fundamental system $\{U_{\alpha}\}_{\alpha\in J}$ of the neighborhood of $z_0$
such that each $U_{\alpha}$ is a Jordan domain and that  $f(\partial U_\alpha)\cap \partial U_\alpha=\emptyset$.

We say that  $z_0$ is  a \emph{saddle} point if it is neither  indifferent nor dissipative.

Note that if $f$ is area preserving, an isolated fixed point of $f$ is either an indifferent point  or a saddle point.


\subsection{Lefschetz  index}\label{S: pre-Lefschetz index}

Let $f:(W,0)\rightarrow (W',0)$ be an orientation  preserving local homeomorphism at an isolated fixed point $0\in\mathbb{R}^2$. Denote by $S^1$ the unit circle. If $C\subset W$ is a simple closed curve which contains no fixed point of $f$, then we can define the \emph{ index}  $i(f,C)$ of $f$ along the curve $C$ to be the Brouwer degree of the map
\begin{eqnarray*}
\varphi:S^1&\rightarrow &S^1\\
 t&\mapsto&\frac{f(\gamma(t))-\gamma(t)}{||f(\gamma(t))-\gamma(t)||},
\end{eqnarray*}
where $\gamma: S^1\rightarrow C$ is a parametrization compatible with the orientation, and $\|\cdot\|$ is the usual Euclidean norm.  Let $U$ be a Jordan domain containing $0$ and contained in a  sufficiently small neighborhood of $0$. We define the \emph{Lefschetz index} of $f$ at $0$ to be  $i(f,\partial U)$, which is independent of the choice of  $U$. We denote it by $i(f,0)$.

 More generally, if $f:(W,z_0)\rightarrow (W',z_0)$ is an orientation preserving local homeomorphism at a fixed point $z_0$ on a surface $M$, we can conjugate it topologically to an orientation preserving  local homeomorphism $g$ at $0$ and define the  \emph{Lefschetz index} of $f$ at $z_0$ to be $i(g,0)$, which is independent of the choice of the conjugation. We denote it by $i(f,z_0)$.

 \subsection{Local isotopies and the index of local isotopies}\label{S: pre-local isotopies and index of isotopies}

Let $f: (W,z_0)\rightarrow (W',z_0)$  be an orientation preserving local homeomorphism at $z_0\in M$. A \emph{local isotopy} $I$ of $f$ at $z_0$ is a family of homeomorphisms $(f_t)_{t\in[0,1]}$   such that
 \begin{itemize}
\item[-] every $f_t$ is a homeomorphism between the neighborhoods $V_t\subset W$ and $V'_t\subset W'$ of $z_0$, and $f_0=\mathrm{Id}_{V_0}$, $f_1=f|_{V_1}$;
\item[-] for all $t$, one has $f_t(z_0)=z_0$;
\item[-]the sets $\{(z,t)\in M\times[0,1] : z\in V_t\}$ and  $\{(z,t)\in M\times[0,1] : z\in V'_t\}$ are both open in $M\times [0,1]$;
\item[-] the maps $(z,t)\mapsto f_t(z)$   and $(z,t)\mapsto f^{-1}_t(z)$ are both continuous.
\end{itemize}

We say that two local isotopies of $f$ are \emph{equivalent} if they are locally homotopic.

Let us  introduce the index of a local isotopy which was defined by  Le Roux \cite{lerouxrotation} and  Le Calvez \cite{lecalveztourner}.

 Let $f:(W,0)\rightarrow (W',0)$ be an orientation preserving local homeomorphism at $0\in\mathbb{R}^2$, and $I=(f_t)_{i\in[0,1]}$ be a local isotopy of $f$. We denote by $D_r$ the disk with radius $r$  and centered at $0$. Then the isotopy $f_t$ is well defined in the disk $D_r$ if $r$ is sufficiently small. Let
\begin{eqnarray*}
\pi: \mathbb{R}\times (-\infty, 0)&\rightarrow& \mathbb{C}\setminus\{0\}\simeq \mathbb{R}^2\setminus\{0\}\\
(\theta,y) &\mapsto& -ye^{i2\pi \theta},
 \end{eqnarray*}
 be the universal covering projection, and  $\widetilde{I}=(\widetilde{f}_t)_{t\in[0,1]}$ be the lift of $I|_{D_r\setminus\{0\}}$ to $\mathbb{R}\times (-r,0)$ such that $f_0$ is  the identity. Let $\widetilde{\gamma}:[0,1]\rightarrow \mathbb{R}\times(-r, 0)$ be a path  from $\widetilde{z'}\in\mathbb{R}\times(-r, 0)$ to $\widetilde{z'}+(1,0)$. The map
$$t\mapsto\frac{\widetilde{f}_1(\widetilde{\gamma}(t))-\widetilde{\gamma}(t)}{||\widetilde{f}_1(\widetilde{\gamma}(t))-\widetilde{\gamma}(t)||}$$
takes the same value at both $0$ and $1$,  and hence descends to a continuous map $\varphi: [0,1]/_{0\sim 1}\rightarrow S^1$. We define the \emph{ index of the isotopy} $I$ at $0$   to be the Brouwer degree of $\varphi$, which does not depend on the choice of $\widetilde{\gamma}$ when $r$ is sufficiently small. We denote it by $i(I,0)$.

%
%

 More generally, we consider an orientation preserving local homeomorphism on an oriented surface. Let $f:(W, z_0)\rightarrow (W', z_0)$ be an orientation preserving local homeomorphism at a fixed point $z_0$ in a surface $M$. Let $h: (U,z_0)\rightarrow (U',0)$ be a  local homeomorphism. Then  $h\circ I\circ h^{-1}=(h\circ f_t\circ h^{-1})_{t\in[0,1]}$ is a local isotopy at $0$, and we define the \emph{index of} $I$ at $z_0$ to be $i(h\circ I\circ h^{-1},0)$, which is independent of the choice of  $h$. We denote it by $i(I,z_0)$. 

Let $I=(f_t)_{t\in [0,1]}$ and $I'=(g_t)_{t\in[0,1]}$ be two identity isotopies (resp. local isotopies). We denote by $I^{-1}$ the  isotopy (resp. local isotopy) $(f^{-1}_t)_{t\in [0,1]}$, by $I'I$ the  isotopy (resp. local isotopy) $(\varphi_t)_{t\in[0,1]}$ such that
\begin{eqnarray*}
\varphi_t=\left\{\begin{array}{ll} f_{2t} &\mbox{for $t\in[0,\frac{1}{2}]$},\\ g_{2t-1}\circ f &\mbox{for $t\in[\frac{1}{2},0]$},\end{array}\right.
\end{eqnarray*}
and by $I^n$ the isotopy (resp. local isotopy) $\underbrace{I\cdots I}_{n \textrm{ times}}$ for every $n\ge 1$.


The Lefschetz index at an isolated fixed point  and the indices of the local isotopies are related. We have the following result:

\begin{prop}(\cite{lecalveztourner}\cite{lerouxrotation})\label{P: pre-relation of indices between isotopy and homeo}
Let $f: W\rightarrow W'$ be an orientation preserving homeomorphism  with an isolated fixed point $z$. Then, we have the following results:
 \begin{itemize}
\item[-] if $i(f,z)\neq 1$, there exists a unique homotopy class of local isotopies  such that $i(I,z)=i(f,z)-1$ for every local isotopy $I$ in this class, and the indices of the other local isotopies are equal to $0$;
\item[-] if $i(f,z)= 1$, the indices of all the local isotopies are equal to $0$.
\end{itemize}
\end{prop}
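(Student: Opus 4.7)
The plan is to pass to the universal cover of a punctured neighborhood of $z$ and exploit a $\mathbb Z$-parametrization of the homotopy classes of local isotopies. After a topological conjugation I may assume $z=0\in\mathbb R^2$ and use the covering $\pi:\mathbb R\times(-r,0)\to D_r\setminus\{0\}$ from the definition of $i(I,z)$, whose deck group is generated by $T:(\theta,y)\mapsto(\theta+1,y)$.

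\textbf{Step 1 (Parametrizing homotopy classes).} Every local isotopy $I$ of $f$ lifts uniquely to an isotopy on the cover starting at the identity, and its time-$1$ map $\widetilde f_1$ commutes with $T$. Two local isotopies are locally homotopic if and only if their time-$1$ lifts coincide. The rotation isotopy $R^k$ (pointwise rotation of angle $2\pi kt$) lifts to the translation $T^k$ at time $1$, so composing $I$ with $R^k$ changes the time-$1$ lift by $T^k$. Hence the set of homotopy classes of local isotopies of $f$ at $z$ is a $\mathbb Z$-torsor. I fix a reference class $I_0$ for which the displacement $\widetilde v_0:=\widetilde f_1^{(0)}-\mathrm{Id}$ is uniformly small on the chosen lifted circle when $r$ is small; such a ``canonical'' class exists, e.g.\ by a radial retraction of an ambient isotopy through small homeomorphisms. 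Every other class is then of the form $R^kI_0$ for a unique $k\in\mathbb Z$.

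\textbf{Step 2 (Non-canonical classes have index $0$).} For $k\neq 0$ and any lift $\widetilde\gamma$ from $\widetilde z'$ to $T\widetilde z'$, the displacement reads $\widetilde v_0\circ\widetilde\gamma+(k,0)$, which is uniformly close to the nonzero constant vector $(k,0)$ once $r$ is small. The associated map $\varphi_k:[0,1]/_{0\sim 1}\to S^1$ is then homotopic to a constant, so $i(R^kI_0,z)=0$.

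\textbf{Step 3 (Canonical class sees $i(f,z)-1$).} Pick the explicit lift $\widetilde\gamma(t)=(t,y_0)$ with $y_0<0$ small, whose projection $\gamma$ is a small circle winding once counterclockwise around $0$. From $\pi(\theta,y)=-ye^{2\pi i\theta}$ one finds by direct computation
\begin{equation*}
d\pi_{(\theta,y)}=(-2\pi y)\cdot R_{2\pi\theta+\pi/2},
\end{equation*}
where $R_\alpha$ is planar rotation by $\alpha$. The identity $\pi(\widetilde z+\widetilde v_0(\widetilde z))=\pi(\widetilde z)+v(\pi(\widetilde z))$, with $v:=f-\mathrm{Id}$, Taylor-expanded at small $\widetilde v_0$, yields
\begin{equation*}
d\pi_{\widetilde\gamma(t)}\bigl(\widetilde v_0(\widetilde\gamma(t))\bigr)=v(\gamma(t))+o(\|\widetilde v_0\|).
\end{equation*}
The winding of $v/\|v\|$ along $\gamma$ equals $i(f,z)$ by definition, while the rotational factor $R_{2\pi\theta+\pi/2}$ of $d\pi$ itself performs exactly one full turn as $t$ traverses $[0,1]$. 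Consequently the winding of $\widetilde v_0/\|\widetilde v_0\|$ differs from $i(f,z)$ by $-1$, whence $i(I_0,z)=i(f,z)-1$.

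\textbf{Step 4 (Conclusion).} If $i(f,z)\neq 1$, then $i(I_0,z)=i(f,z)-1\neq 0$, so the canonical class is the unique homotopy class with index $i(f,z)-1$, while every other class has index $0$. If $i(f,z)=1$, then $i(I_0,z)=0$ as well, so all local isotopies have index $0$.

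The main obstacle is Step 3: the $-1$ correction has to be extracted unambiguously from the rotational part of $d\pi$ and from the orientation conventions for $\pi$ and $\gamma$; once that is fixed, the other steps are essentially formal consequences of the $\mathbb Z$-torsor structure established in Step 1. A secondary subtlety is the construction of a reference class whose displacement is small, which requires a shearing/normalization argument on an ambient isotopy near $z$.
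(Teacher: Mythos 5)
Your Step~1 assumes the existence of a ``canonical'' homotopy class $I_0$ whose lifted displacement $\widetilde v_0 = \widetilde f_1^{(0)} - \mathrm{Id}$ is uniformly small on a lifted circle once $r$ is small, and both Step~2 and Step~3 rest entirely on this. No such class exists in general. Take $f(z)=e^{2\pi i\alpha}z$ with $\alpha\in(0,1)$ away from the endpoints, say $\alpha=1/2$. Every lift $\widetilde f_1$ of $f|_{D_r\setminus\{0\}}$ to $\mathbb R\times(-r,0)$ has constant displacement $\widetilde v=(\alpha+k,0)$ for some $k\in\mathbb Z$, which is never close to $(0,0)$ for any $r$ and any homotopy class. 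The suggested remedy (``radial retraction of an ambient isotopy through small homeomorphisms'') does not help: smallness of $f_t-\mathrm{Id}$ in the Euclidean norm near the fixed point does not control the angular coordinate of the displacement in the cover --- for a rotation $|f(z)-z|\to 0$ as $z\to 0$, yet the angular displacement $a\equiv\alpha$ is fixed. Without a small $\widetilde v_0$, Step~2's estimate (that the shifted displacement $\widetilde v_0+(k,0)$ stays near the nonzero constant $(k,0)$, hence has winding $0$) fails, and Step~3's linearization $v\approx d\pi\cdot\widetilde v_0$ is not valid. A smaller error in Step~3: the identity $d\pi_{(\theta,y)}=(-2\pi y)\,R_{2\pi\theta+\pi/2}$ is false --- the two columns of $d\pi$ have norms $2\pi|y|$ and $1$, so $d\pi$ is not a scalar multiple of a rotation --- although the conclusion that $t\mapsto d\pi_{(t,y_0)}$ has degree $1$ in $\pi_1(GL^+(2,\mathbb R))$ is correct, so this part is recoverable.

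The genuine content of the proposition is therefore untouched. Writing the lifted displacement as $\widetilde v(t)=(a(t),b(t))$ and $\rho=-y_0>0$, one computes $v(\gamma(t))=e^{2\pi i t}w(t)$ with $w(t)=(\rho-b(t))e^{2\pi i a(t)}-\rho$, so that $i(f,z)=1+\deg(w)$, while $i(I,z)=\deg(\widetilde v)$ is the winding of $\widetilde v$ about $(0,0)$. The map $(a,b)\mapsto(\rho-b)e^{2\pi i a}-\rho$ is $1$-periodic in $a$ and sends a small loop about each puncture $(j,0)$, $j\in\mathbb Z$, to a loop of degree $1$ about the origin. Decomposing $[\widetilde v]\in H_1(\mathbb R^2\setminus(\mathbb Z\times\{0\}))$ as $\sum_j n_j\beta_j$ one gets $i(I,z)=n_0$ and $i(f,z)-1=\sum_j n_j$, and replacing $I$ by $R^kI$ just reindexes to $n_{-k}$. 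The proposition reduces to the claim that \emph{at most one} $n_j$ is nonzero, which is not a consequence of the torsor structure and requires a genuine input about $\widetilde f_1$ (periodicity together with injectivity, typically via an intermediate-value argument on the $\theta$-displacement). That claim is exactly what your argument omits; for $i(f,z)\neq 1$ it is equivalent to the uniqueness assertion you are trying to prove.
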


\subsection{Brouwer plane translation theorem}\label{S: pre-Brouwer plane translation theorem}

In this section, we will recall the Brouwer plane translation theorem. More details can be found in \cite{Brouwer}, \cite{Guillou} and \cite{FranksBrouwertranslationtheorem}.

Let $f$ be an orientation preserving homeomorphism of  $\mathbb{R}^2$. If $f$ does not have any fixed point, the Brouwer plane translation theorem asserts that every $z\in\mathbb{R}^2$ is contained in a translation domain for $f$, i.e. an open connected set of $\mathbb{R}^2$ whose boundary is $L\cup f(L)$, where $L$ is the image of a proper embedding of $\mathbb{R}$ in $\mathbb{R}^2$ such that $L$ separates $f(L)$ and $f^{-1}(L)$.

As an immediate corollary, one knows that if $f$ is an orientation and area preserving homeomorphism of a plane\footnote{Here, a plane  is an open set homeomorphic to $\mathbb{R}^2$.} with finite area, it has at least one fixed point.

\subsection{Transverse foliations and its index  at an isolated end}\label{S: pre-index of foliation}

In this section, we will introduce the index of a foliation at an isolated end. More details can be found in \cite{lecalveztourner}.

Let $M$ be an oriented surface and $\mathcal{F}$ be an oriented topological foliation on $M$. For every point $z$, there is a neighborhood $V$ of $z$ and a homeomorphism $h: V\rightarrow (0,1)^2$ preserving the orientation such that the images of the leaves of $\mathcal{F}|_{V}$ are the vertical lines oriented upward. We call $V$  a  \emph{trivialization neighborhood} of $z$, and $h$ a \emph{trivialization chart}.

 Let $z_0$ be an isolated end of $M$.  We choose a small annulus $U\subset M$ such that $z_0$ is an end of $U$.  Let   $h: U\rightarrow \mathbb{D}\setminus\{0\}$ be a homeomorphism which sends $z_0$ to $0$ and preserves the orientation. Let $\gamma: \mathbb{T}^1\rightarrow \mathbb{D}\setminus \{0\}$ be a simple closed  curve homotopic to $\partial\mathbb{D}$. We can cover the curve by finite trivialization  neighborhoods $\{V_i\}_{1\leq i\leq n}$ of the foliation $\mathcal{F}_h$, where $\mathcal{F}_{h}$ is the image of  $\mathcal{F}|_{U}$. For every $z\in V_i$, we denote by $\phi_{V_i, z}^{+}$ the positive half leaf of the the leaf in $V_i$ containing $z$. Then we can construct a continuous map $\psi$ from the curve $\gamma$ to $\mathbb{D}\setminus \{0\}$, such that $\psi(z)\in \phi_{V_i,z}^{+}$ for all $0\le i\le n$ and for all $z\in V_i$.  We define the  \emph{index}  $i(\mathcal{F},z_0)$ of $\mathcal{F}$ at  $z_0$ to be the Brouwer degree of the application
$$\theta\mapsto \frac{\psi(\gamma(\theta))-\gamma(\theta)}{\|\psi(\gamma(\theta))-\gamma(\theta)\|},$$  which  depends neither on the choice of $\psi$, nor on the choice of $V_i$, nor on the choice of $\gamma$, nor on the choice of $h$.

We say that a path $\gamma:[0,1]\rightarrow M$ is \emph{positively transverse} to $\mathcal{F}$, if for every $t_0\in[0,1]$, there exists a trivialization neighborhood $V$ of $\gamma(t_0)$ and $\varepsilon>0$ such that $\gamma([t_0-\varepsilon,t_0+\varepsilon]\cap[0,1])\subset V$ and $h\circ \gamma|_{[t_0-\varepsilon,t_0+\varepsilon]\cap[0,1]}$ intersects the vertical lines  from left to right, where $h: V\rightarrow (0,1)^2$ is the trivialization chart.

Let $f$ be a homeomorphism on $M$ isotopic to the identity, and $I=(f_t)_{t\in[0,1]}$ be an identity isotopy of $f$.  We say that an oriented foliation $\mathcal{F}$ on $M$ is a \emph{transverse foliation}  of $I$ if for every $z\in M$, there is a path  that is homotopic to the trajectory $t\rightarrow f_t(z)$ of $z$ along $I$ and  is positively transverse to   $\mathcal{F}$.

Suppose that $I=(f_t)_{t\in[0,1]}$ is a local isotopy at $z_0$. We say that $\mathcal{F}$  is \emph{locally transverse} to $I$  if for every sufficiently small neighborhood $U$ of $z_0$, there exists a neighborhood $V\subset U$ such that for all $z\in V\setminus \{z_0\}$, there exists a path  in $U\setminus\{z_0\}$ that is homotopic to the trajectory $t\mapsto f_t(z)$  of $z$ along  $I$  and  is positively transverse to $\mathcal{F}$.

\begin{prop}\cite{lecalveztourner}
Suppose that $I$ is an identity isotopy  on a surface $M$ with an isolated end $z$ and $\mathcal{F}$ is a transverse foliation of $I$. If $M$ is not a plane, $\mathcal{F}$ is also locally transverse to the local isotopy $I$ at $z$.
\end{prop}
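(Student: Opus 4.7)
The plan is to compactify the end $z$ to a point, view $I$ as a local isotopy at $z$, and then, for every $w$ sufficiently close to $z$, produce a positively transverse path contained in a prescribed neighborhood of $z$ that is homotopic there to the trajectory $t\mapsto f_t(w)$.

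First I would fix a Jordan neighborhood $U$ of $z$ in the one-point compactification $\widehat{M}=M\cup\{z\}$, so that $U\cap M$ is an open annulus. Using continuity of $I$, I would shrink $U$ to a smaller neighborhood $V\subset U$ with the property that $f_t(V\cap M)\subset U$ for every $t\in[0,1]$. Hence for each $w\in V\cap M$, the entire trajectory $\gamma_w:t\mapsto f_t(w)$ lies in $U\cap M$. By global transversality of $\mathcal{F}$ with respect to $I$ there exists a positively transverse path $\sigma_w$ in $M$ from $w$ to $f_1(w)$, homotopic to $\gamma_w$ rel endpoints in $M$. The genuine content of the proposition is to replace $\sigma_w$ by a transverse path that stays in $U\cap M$ and is homotopic to $\gamma_w$ there.

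This is where the assumption that $M$ is not a plane enters. I would pass to the universal cover $p:\widetilde{M}\to M$, lift $I$ to an isotopy $\widetilde{I}=(\widetilde{f}_t)$ of $\widetilde{M}$ starting at the identity, and lift $\mathcal{F}$ to an oriented foliation $\widetilde{\mathcal{F}}$ that is transverse to $\widetilde{I}$. Since $M$ is not a plane, a small loop around the end $z$ is non-trivial in $\pi_1(M)$, so the inclusion $U\cap M\hookrightarrow M$ is $\pi_1$-injective; this means a fixed connected component $\widetilde{A}$ of $p^{-1}(U\cap M)$ is a strip invariant under the cyclic group generated by the covering translation corresponding to the loop around $z$, and $p|_{\widetilde A}$ realizes the universal cover of $U\cap M$. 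Choosing a lift $\widetilde w\in\widetilde A$ of $w$, the lifted trajectory $\widetilde\gamma_w:t\mapsto\widetilde f_t(\widetilde w)$ stays inside $\widetilde A$ because $\gamma_w$ stays inside $U\cap M$. Lifting $\sigma_w$ gives a positively transverse path $\widetilde\sigma_w$ with the same endpoints as $\widetilde\gamma_w$ in $\widetilde{M}$.

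The core step, and the one I expect to be the main obstacle, is to show that a positively transverse path $\widetilde\sigma_w$ with endpoints in $\widetilde A$ and homotopic rel endpoints (in $\widetilde M$) to a path contained in $\widetilde A$ may itself be chosen inside $\widetilde A$. Rather than arguing abstractly about positively transverse homotopies, I would build $\widetilde\sigma_w$ directly: cover the compact image of $\widetilde\gamma_w$ by finitely many trivialization charts of $\widetilde{\mathcal{F}}$ contained in $\widetilde A$, and concatenate short positively transverse segments inside these charts that track $\widetilde\gamma_w$ step by step, using the local product structure to pass from one chart to the next along short arcs of $\widetilde\gamma_w$. The resulting transverse path lies in $\widetilde A$ and is homotopic to $\widetilde\gamma_w$ rel endpoints in $\widetilde A$. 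Projecting by $p$ yields a positively transverse path in $U\cap M$ from $w$ to $f_1(w)$ homotopic to $\gamma_w$ in $U\cap M$, which is exactly the local transversality at the end $z$.
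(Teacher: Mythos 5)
Your reduction is sound up to the core step: compactifying the end, noting that ``$M$ not a plane'' makes the loop around $z$ essential and hence $\pi_1$-injective in $M$ (if that loop bounded a disk in $M$, capping it off inside $\widehat{M}$ would produce a sphere and force $M$ to be a plane), passing to the simply connected strip $\widetilde{A}$ covering $U\cap M$ in the universal cover, and observing that simple connectivity of $\widetilde{A}$ takes care of the homotopy class once a transverse path contained in $\widetilde{A}$ has been produced. The paper does not prove this proposition itself (it is quoted from \cite{lecalveztourner}), so the burden of the proof is entirely on this last construction.

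That is where the gap is. Covering the trajectory $\widetilde{\gamma}_w : t\mapsto \widetilde{f}_t(\widetilde{w})$ by trivialization charts of $\widetilde{\mathcal{F}}$ inside $\widetilde{A}$ and concatenating ``short positively transverse segments that track $\widetilde{\gamma}_w$ step by step'' does not produce a positively transverse path in general. Positive transversality is a strictly monotone condition: in a chart, a transverse arc crosses the vertical leaves from left to right, always in the same direction. The trajectory $\widetilde{\gamma}_w$ is not a transverse path and has no reason to cross leaves monotonically; within a single chart it may move to the right and then back to the left. For two consecutive tracking points lying on leaves in the ``wrong'' order, no positively transverse segment joins them, and ``passing from one chart to the next along short arcs of $\widetilde{\gamma}_w$'' already violates transversality since those arcs need not be transverse. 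The local product structure of the foliation gives no control on where $\widetilde{\gamma}_w$ sits relative to the local leaf order, so the assertion ``the resulting transverse path lies in $\widetilde{A}$'' is not justified by the construction. The missing ingredient is the global dynamical input: because $\mathcal{F}$ is transverse to $I$, each leaf of $\widetilde{\mathcal{F}}$ is a Brouwer line for the lift $\widetilde{f}$, and it is this order structure on leaves, together with the positions of the leaves through $\widetilde{w}$, $\widetilde{f}(\widetilde{w})$ and $\partial\widetilde{A}$, that one must exploit to show a transverse path from $\widetilde{w}$ to $\widetilde{f}(\widetilde{w})$ can be confined to $\widetilde{A}$. Your chart-by-chart argument discards precisely that structure, so it does not close.
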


\begin{prop}\cite{lecalveztourner}\label{P: pre-relations of indices between  foliation and the others}
Let  $f: (W,z_0)\rightarrow (W',z_0)$ be an orientation preserving local homeomorphism at an isolated fixed point $z_0$, $I$ be a local isotopy of $f$ at $z_0$, and $\mathcal{F}$ be a  foliation that is locally transverse to $I$, then
\begin{itemize}
\item[-]   $i(\mathcal{F}, z_0)=i (I,z_0)+1$;
\item[-] $i(f,z_0)=i(\mathcal{F}, z_0)$  if $i(\mathcal{F}, z_0)\ne 1$.
\end{itemize}
\end{prop}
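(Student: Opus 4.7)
The plan is to derive the second identity from the first. Indeed, if $i(\mathcal{F},z_0)\ne 1$, then by the first identity $i(I,z_0)=i(\mathcal{F},z_0)-1\ne 0$, so by Proposition~\ref{P: pre-relation of indices between isotopy and homeo} we are in the unique homotopy class of local isotopies where $i(I,z_0)=i(f,z_0)-1$, giving $i(f,z_0)=i(I,z_0)+1=i(\mathcal{F},z_0)$. So the heart of the matter is proving $i(\mathcal{F},z_0)=i(I,z_0)+1$.

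For this identity, I would first conjugate locally to reduce to $(W,z_0)=(\mathbb{R}^2,0)$, fix a small round loop $\gamma\subset D_r\setminus\{0\}$ bounding a Jordan disk that contains $0$ and no other fixed point of $f$, and lift via the universal covering $\pi:(\theta,y)\mapsto -ye^{2i\pi\theta}$ from $\mathbb{R}\times(-r,0)$ onto the punctured disk. Write $\widetilde{I}=(\widetilde{f}_t)$ for the lift of $I|_{D_r\setminus\{0\}}$ with $\widetilde{f}_0=\mathrm{Id}$, $\widetilde{\mathcal{F}}$ for the lifted foliation, and $\widetilde{\gamma}$ for a lift of $\gamma$, which is a path from some $\widetilde{z}_*$ to $\widetilde{z}_*+(1,0)$.

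The key step is to choose, for every $\widetilde{z}$ on $\widetilde{\gamma}$, a point $\widetilde{\psi}(\widetilde{z})$ on the positive $\widetilde{\mathcal{F}}$-half-leaf through $\widetilde{z}$ so that the map $t\mapsto \widetilde{\psi}(\widetilde{\gamma}(t))-\widetilde{\gamma}(t)$ is homotopic in $\mathbb{R}^{2}\setminus\{0\}$ to $t\mapsto \widetilde{f}_1(\widetilde{\gamma}(t))-\widetilde{\gamma}(t)$. This is where the local transversality is used essentially: a positively transverse path from $z$ to $f_1(z)$ in the base lifts to a path $\widetilde{\alpha}_t$ from $\widetilde{\gamma}(t)$ to $\widetilde{f}_1(\widetilde{\gamma}(t))$ that crosses only leaves of $\widetilde{\mathcal{F}}$ in the positive direction; after shrinking by a small factor $\varepsilon>0$ we may place $\widetilde{\psi}(\widetilde{\gamma}(t))$ so close to $\widetilde{f}_1(\widetilde{\gamma}(t))$ that the two straight segments from $\widetilde{\gamma}(t)$ stay in a common half-plane that avoids $\widetilde{\gamma}(t)$, yielding the homotopy of $S^1$-valued maps. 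The normalized map descends to $[0,1]/(0\sim 1)$ and, by definition, has Brouwer degree equal to $i(I,z_0)$.

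Finally I would relate this lifted degree to $i(\mathcal{F},z_0)$, which is computed on the base. One has $\psi(\gamma(\theta))-\gamma(\theta)=\pi(\widetilde{\psi}(\widetilde{\gamma}(t)))-\pi(\widetilde{\gamma}(t))$, and after the above shrinking this is arbitrarily close to the first-order quantity $D\pi(\widetilde{\gamma}(t))\cdot(\widetilde{\psi}(\widetilde{\gamma}(t))-\widetilde{\gamma}(t))$. A direct computation shows that $D\pi(\theta,y)$ has positive determinant and rotates by $2\pi$ as $\theta$ increases by $1$. Hence as $t$ goes from $0$ to $1$, the argument of the base vector gains exactly one full turn more than the argument of the lifted vector, which gives $i(\mathcal{F},z_0)=i(I,z_0)+1$. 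The main obstacle I expect is the uniform-in-$t$ control in the key step: one must perform the shrinking and lift the transverse paths consistently on all of $\widetilde{\gamma}$, which requires compactness together with a careful use of the finitely many trivialization charts for $\widetilde{\mathcal{F}}$ along $\widetilde{\gamma}$.
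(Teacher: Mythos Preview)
The paper does not prove this proposition: it is quoted from \cite{lecalveztourner} without argument, so there is no ``paper's proof'' to compare against. Your derivation of the second item from the first via Proposition~\ref{P: pre-relation of indices between isotopy and homeo} is correct and is exactly the intended reduction.

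For the first identity, your overall architecture (conjugate to $(\mathbb{R}^2,0)$, lift through $\pi(\theta,y)=-ye^{2i\pi\theta}$, and recover the ``$+1$'' from the fact that $D\pi$ rotates a full turn as $\theta$ increases by $1$) is the right one and matches Le Calvez's original approach. However, the justification you give for the ``key step'' is not correct. You write that one may place $\widetilde{\psi}(\widetilde{\gamma}(t))$, a point on the positive half-leaf through $\widetilde{\gamma}(t)$, close to $\widetilde{f}_1(\widetilde{\gamma}(t))$. In general this is impossible: the leaf through $\widetilde{\gamma}(t)$ need not pass anywhere near $\widetilde{f}_1(\widetilde{\gamma}(t))$, and no amount of shrinking helps. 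What transversality actually gives you is that the lifted transverse arc crosses the leaf through $\widetilde{\gamma}(t)$ only at $\widetilde{\gamma}(t)$ and from left to right, so $\widetilde{f}_1(\widetilde{\gamma}(t))$ lies strictly on the right of that leaf. In a trivialization chart where the leaves are vertical upward lines, the vector $\widetilde{f}_1(\widetilde{\gamma}(t))-\widetilde{\gamma}(t)$ therefore has strictly positive first coordinate, while the positive leaf direction is vertical; hence the two $S^1$-valued fields are never antipodal and the linear homotopy between them stays in $\mathbb{R}^2\setminus\{0\}$. This is the argument that makes the key step go through; your proposed justification via proximity would fail, and the place you flag as ``the main obstacle'' (uniform shrinking) is not actually where the difficulty lies.
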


\subsection{Existence of a transverse foliation and Jaulent's  preorder}\label{S: pre-Jaulent's preorder}
Let $f$ be a homeomorphism of $M$ isotopic to the identity, and $I=(f_t)_{t\in[0,1]}$ be an identity isotopy of $f$. A \emph{contractible fixed point} $z$ of $f$ associated to $I$ is a  fixed point of $f$ such that  the trajectory of $z$ along $I$, that is the path  $t\mapsto f_t(z)$, is a  loop homotopic to zero in $M$. One has the following generalization of Brouwer's translation theorem.

\begin{thm}\cite{lecalvezfeuilletage}\label{T: pre-feuilletage}
Let $M$ be an oriented surface. If $I=(f_t)_{t\in[0,1]}$ is an identity isotopy of a homeomorphism $f$ of $M$ such that there exists no contractible fixed point of $f$ associated to $I$, then there exists a transverse  foliation $\mathcal{F}$ of $I$.
\end{thm}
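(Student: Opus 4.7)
The approach is to pass to the universal cover and construct the foliation equivariantly there. First I would lift the isotopy $I$ to the universal covering space $\widetilde{M}$ of $M$, obtaining an identity isotopy $\widetilde{I}=(\widetilde{f}_t)_{t\in[0,1]}$ from the identity to a lift $\widetilde{f}$ of $f$. The hypothesis that there is no contractible fixed point of $f$ associated to $I$ translates precisely to the statement that $\widetilde{f}$ has no fixed point on $\widetilde{M}$. The case $M=S^{2}$ is vacuous (every loop being contractible, the hypothesis forces $f$ to have no fixed point at all, which is impossible for a homeomorphism isotopic to the identity on $S^{2}$), and the case $M=\mathbb{R}^{2}$ is an immediate consequence of Brouwer's plane translation theorem since every translation domain inherits a canonical oriented foliation by lines parallel to its boundary. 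So one may assume $\widetilde{M}=\mathbb{R}^{2}$ carries a free action of $\pi_{1}(M)$ by deck transformations commuting with $\widetilde{I}$.

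The core of the proof is the construction of an oriented topological foliation $\widetilde{\mathcal{F}}$ of $\widetilde{M}$, invariant under $\pi_{1}(M)$, whose leaves are positively transverse to the trajectories of $\widetilde{I}$. My plan is to build $\widetilde{\mathcal{F}}$ from a \emph{maximal equivariant brick decomposition}: a locally finite partition of $\widetilde{M}$ into compact, simply connected topological disks (``bricks''), permuted equivariantly by $\pi_{1}(M)$, such that each brick is disjoint from its image under $\widetilde{f}$. Existence of an initial such decomposition is obtained by pulling back a fine enough equivariant triangulation of $\widetilde{M}$ and then subdividing any brick that meets its $\widetilde{f}$-image, using Brouwer's theorem to guarantee that iterated subdivision terminates locally. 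One then orders equivariant decompositions by refinement and applies Zorn's lemma; the critical point is that the ``free'' condition (each brick disjoint from its image) passes to any nested intersection, because an accumulation of fixed-point-free bricks cannot produce a fixed point of $\widetilde{f}$.

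Given such a maximal equivariant brick decomposition, the frontier between adjacent bricks organizes itself into a disjoint, locally finite, equivariant union of proper embeddings of $\mathbb{R}$ in $\widetilde{M}$; each such line is a Brouwer line of $\widetilde{f}$, and it carries a canonical orientation induced by the dynamics (the $\widetilde{f}^{-1}$-side on the left, the $\widetilde{f}$-side on the right). Inside each complementary strip bounded by two adjacent lines, maximality forces every $\widetilde{f}$-orbit to cross in a consistent direction, which allows one to interpolate the two boundary lines by a one-parameter family of parallel proper arcs and thereby obtain a genuine oriented foliation $\widetilde{\mathcal{F}}$ of $\widetilde{M}$. Local positive transversality of $\widetilde{\mathcal{F}}$ to $\widetilde{I}$ is verified leaf by leaf using that each leaf is a Brouwer line and that the trajectory of any point in $\widetilde{M}$, being concatenated with a short arc inside a single brick, can be homotoped into the positively transverse class. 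Equivariance is automatic by construction, so $\widetilde{\mathcal{F}}$ descends to the desired transverse foliation $\mathcal{F}$ of $I$ on $M$.

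The main obstacle is the maximality step together with the extraction of an honest foliation from the purely combinatorial brick picture. One must rule out local ``dead-end'' configurations where a finite loop of bricks encircles a point which admits no further equivariant refinement; the absence of contractible fixed points is precisely what excludes such configurations, but the local argument—trading any potential singular leaf for a strictly finer free brick by means of a small free-disk surgery—is the technical heart of the construction and the place where all the hypotheses are genuinely used.
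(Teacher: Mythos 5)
The paper does not prove this statement; it cites it from Le Calvez \cite{lecalvezfeuilletage}, so there is no ``paper's own proof'' to compare against. Your outline does follow Le Calvez's actual strategy --- lift to the universal cover, reduce to a fixed-point-free equivariant Brouwer homeomorphism, build a maximal free brick decomposition, extract Brouwer lines, and fatten them into an oriented foliation --- so the approach is the right one. But two concrete steps as you state them are wrong.

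First, the assertion that the simply connected case ``is an immediate consequence of Brouwer's plane translation theorem'' is false. Brouwer gives a translation domain through each point, but translation domains through distinct points need not be coherent and their families of parallel lines need not patch together; producing a single oriented foliation of the whole plane by Brouwer lines, with every isotopy trajectory homotopic to a positively transverse path, is precisely the content of Le Calvez's theorem and is strictly stronger than Brouwer's. This non-equivariant plane case is the main body of the work, not a triviality to dispose of first. Second, your Zorn's lemma step is aimed at the wrong difficulty: when passing to the limit along a chain of coarsenings the danger is not that ``an accumulation of fixed-point-free bricks produces a fixed point of $\widetilde{f}$,'' but that the union of a nested chain of bricks may fail to be compact, connected, or locally finite --- i.e.\ may not be a brick decomposition at all. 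Controlling this, and then promoting the resulting maximal free decomposition to an honest foliation rather than a disjoint family of Brouwer lines plus hand-waved ``interpolation,'' is the technical core of \cite{lecalvezfeuilletage}, and as written your sketch does not carry through either of these points.
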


One can extend this result to the case where there exist contractible fixed points by defining the following preorder of Jaulent \cite{Jaulent}.

Let us denote  by $\mathrm{Fix}(f)$  the set of fixed points of $f$,  and for every identity isotopy $I=(f_t)_{t\in[0,1]}$  of  $f$,  by $\mathrm{Fix}(I)=\cap_{t\in[0,1]}\mathrm{Fix}(f_t)$ the set of fixed points of $I$. Let  $X$ be a closed subset of $\mathrm{Fix}(f)$. We denote by $(X, I_X)$ the couple that consists of a closed subset $X\subset\mathrm{Fix}(f)$ such that $f|_{M\setminus X}$ is isotopic to the identity and an identity  isotopy $I_X$  of $f|_{M\setminus X}$.

Let $\pi_X:\widetilde{M}_X\rightarrow M\setminus X$ be the universal cover, and $\widetilde{I}_X=(\widetilde{f}_t)_{t\in[0,1]}$ be the identity isotopy  that  lifts  $I_X$.  We say that $\widetilde{f}_X=\widetilde{f}_1$ is \emph{the lift of $f$ associated to $I_X$}. We say that a path $\gamma:[0,1]\rightarrow  M\setminus X$ from  $z$ to $f(z)$ is \emph{associated} to $I_X$ if there exists a path $\widetilde{\gamma}:[0,1]\rightarrow \widetilde{M}_X$  that is the lift of $\gamma$ and satisfies $\widetilde{f}_X(\widetilde{\gamma}(0))=\widetilde{\gamma}(1)$.
We write $(X, I_X)\precsim (Y, I_Y)$, if
\begin{itemize}
\item[-] $X\subset Y\subset (X\cup\pi_X(\mathrm{Fix}(\widetilde{f}_X)))$;
\item[-]all the paths in $M\setminus Y$ associated to $I_Y$ are also associated to $I_X$.
\end{itemize}
The preorder $\precsim$ is well defined. Moreover, if one has $(X, I_X)\precsim (Y, I_Y)$ and $(Y, I_Y)\precsim (X, I_X)$, then one knows that $X=Y$ and that   $I_X$ is homotopic to  $I_Y$. In this case, we will write $(X,I_X)\sim(Y, I_Y)$. Jaulent proved the following result:

\begin{thm}\cite{Jaulent}\label{T: pre-exist max isotopy}
Let $M$ be an oriented surface and $I$ be an identity isotopy of a homeomorphism $f$ on $M$.  Then, there exists a maximal $(X,I_X)\in\mathcal{I}$ such that $(\mathrm{Fix}(I),I)\precsim(X,I_X)$. Moreover, $f|_{M\setminus X}$  has no contractible fixed point associated to $I_X$, and there exists a  transverse foliation $\mathcal{F}$ of $I_X$ on $M\setminus X$.
\end{thm}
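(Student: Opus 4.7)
The plan is to apply Zorn's lemma to the set $\mathcal{I}$ of pairs $(X,I_X)$ with $(\mathrm{Fix}(I),I)\precsim (X,I_X)$, and then to invoke Theorem \ref{T: pre-feuilletage} on the complement of the maximal element to produce the transverse foliation. First I would verify that every totally ordered chain $\{(X_\alpha,I_{X_\alpha})\}_{\alpha\in A}$ in $\mathcal{I}$ admits an upper bound. The natural candidate is $X=\overline{\bigcup_\alpha X_\alpha}$; by definition of $\precsim$, whenever $\alpha\le\beta$ every path in $M\setminus X_\beta$ associated to $I_{X_\beta}$ is also associated to $I_{X_\alpha}$, so the chosen lifts $\widetilde f_{X_\alpha}$ are coherent and descend to a well-defined lift $\widetilde f_X$ on the universal cover of $M\setminus X$, hence to a homotopy class $I_X$ of identity isotopies on $M\setminus X$. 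The coherence of paths then yields $(X_\alpha,I_{X_\alpha})\precsim (X,I_X)$ for every $\alpha$, giving the desired upper bound.

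Once Zorn's lemma produces a maximal element $(X,I_X)$, I would argue by contradiction that $f|_{M\setminus X}$ has no contractible fixed point $z$ associated to $I_X$. If such a $z$ existed, its trajectory $t\mapsto f_t(z)$ along $I_X$ would be null-homotopic in $M\setminus X$, so some lift $\widetilde z$ of $z$ would be a fixed point of $\widetilde f_X$, placing $z$ in $\pi_X(\mathrm{Fix}(\widetilde f_X))$. Using the null-homotopy, I would deform $I_X$ inside a small disk around $z$ to an equivalent isotopy that pointwise fixes $z$, and then enlarge $X$ by adjoining $z$ together with a maximal set of points of $\pi_X(\mathrm{Fix}(\widetilde f_X))$ whose associated paths remain preserved. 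This produces $(Y,I_Y)\succsim (X,I_X)$ with $Y\supsetneq X$, contradicting maximality. Having ruled out contractible fixed points of $f|_{M\setminus X}$ associated to $I_X$, Theorem \ref{T: pre-feuilletage} applied to the oriented surface $M\setminus X$ and the isotopy $I_X$ delivers the desired transverse foliation $\mathcal{F}$.

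The step I expect to be hardest is the chain argument in Zorn's lemma: one needs the gluing of the isotopies $I_{X_\alpha}$ to yield a bona fide identity isotopy $I_X$ on $M\setminus X$ rather than only a germ, which requires controlling the inverse system of universal covers $\widetilde M_{X_\alpha}\to M\setminus X_\alpha$ and checking that passing to the closure $X=\overline{\bigcup_\alpha X_\alpha}$ does not destroy continuity (or swallow connected components of the complement). A secondary technical point is the enlargement step in the maximality argument: one must add exactly those contractible fixed points whose associated trajectories are compatible with $I_X$, so that both conditions defining $\precsim$ (the inclusion $X\subset Y\subset X\cup\pi_X(\mathrm{Fix}(\widetilde f_X))$ and the preservation of associated paths in $M\setminus Y$) hold for the resulting pair $(Y,I_Y)$ and the strict inequality $Y\supsetneq X$ is genuine.
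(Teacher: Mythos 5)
The paper does not prove this statement; it is cited from Jaulent \cite{Jaulent} as a known result, so there is no in-paper proof to compare against. That said, your outline --- Zorn's lemma to produce a maximal element, a contradiction argument to rule out contractible fixed points, and an application of Theorem~\ref{T: pre-feuilletage} on $M\setminus X$ --- is the natural route and, to the best of my knowledge, is the architecture of Jaulent's original argument.

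Two substantive remarks on the details. First, in the chain step, the set-inclusion condition is actually in good shape: $\mathrm{Fix}(\widetilde f_{X_\alpha})$ is a closed, deck-transformation--saturated subset of the universal cover, so $\pi_{X_\alpha}(\mathrm{Fix}(\widetilde f_{X_\alpha}))$ is closed in $M\setminus X_\alpha$ and hence $X_\alpha\cup\pi_{X_\alpha}(\mathrm{Fix}(\widetilde f_{X_\alpha}))$ is closed in $M$, which is what lets you pass to the closure $X=\overline{\bigcup_\alpha X_\alpha}$ and still land inside each $X_\alpha\cup\pi_{X_\alpha}(\mathrm{Fix}(\widetilde f_{X_\alpha}))$. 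The genuinely delicate issue, which you correctly flag, is constructing the limit isotopy $I_X$ on $M\setminus X$: the coherent data on the $\widetilde M_{X_\alpha}$ only determines the lift $\widetilde f_X$ and hence the homotopy class, and one must verify this homotopy class is realized by an actual identity isotopy of $f|_{M\setminus X}$. Second, in the contradiction step, the phrase about adjoining ``a maximal set of points of $\pi_X(\mathrm{Fix}(\widetilde f_X))$ whose associated paths remain preserved'' is superfluous and muddies the argument: taking $Y=X\cup\{z\}$, which is automatically closed, already gives $Y\supsetneq X$ and hence a contradiction with maximality, provided the deformation of $I_X$ to an equivalent isotopy fixing $z$ is available. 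That deformation is the real content here (null-homotopic trajectory $\Rightarrow$ deformable to a pointwise-fixed isotopy), and it deserves an explicit citation or argument rather than the informal ``deform inside a small disk'' (the null-homotopy need not live in a small disk).

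So: the approach is correct and matches the expected one, but the proposal is a sketch with two real gaps (existence of the limit isotopy in the chain argument; the deformation-to-fix-$z$ lemma), both of which you identify as the hard points.
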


 \begin{rmq}
 Here, we can also consider the previous foliation $\mathcal{F}$ to be a singular foliation on $M$ whose  singularities are the points in $X$.  In particular, if $I_X$ is the restriction to $M\setminus X$  of an identity isotopy $I'$ on $M$, we will say that  $\mathcal{F}$ a transverse (singular) foliation of $I'$.
\end{rmq}

 We call $(Y,I_Y)\in\mathcal{I}$  a \emph{maximal extension} of $(X,I_X)$ if $(X,I_X)\precsim (Y, I_Y)$ and if $(Y,I_Y)$ is maximal in Jaulent's preorder; we call  $(Y,I_Y)$  a \emph{ maximal extension} of $I$ if $(Y,I_Y)$ is a maximal extension of $(\mathrm{Fix}(I),I)$.

\subsection{Dynamics of an oriented foliation in a neighborhood of an isolated singularity}\label{S: pre-dynamics of foliation}
In this section, we  consider  singular foliations. A \emph{sink}  (resp. a\emph{ source}) of $\mathcal{F}$ is an isolated singular point of $\mathcal{F}$ such that  there is a homeomorphism $h: U\rightarrow \mathbb{D}$ which sends  $z_0$ to $0$ and sends the restricted foliation $\mathcal{F}|_{U\setminus\{z_0\}}$ to the radial foliation of $\mathbb{D}\setminus\{0\}$ with the leaves toward (resp. backward) $0$, where $U$ is a neighborhood of $z_0$ and $\mathbb{D}$ is the unit disk. A \emph{petal} of $\mathcal{F}$ is a closed topological disk whose boundary is the union of a leaf and a singularity. Let $\mathcal{F}_0$ be the foliation on $\mathbb{R}^2\setminus\{0\}$ whose leaves are the horizontal lines except the $x-$axis which is cut  into two leaves. Let $S_0=\{y\ge 0:x^2+y^2\le 1\}$ be the half-disk.  We call a closed topological disk $S$ a \emph{hyperbolic sector} if there exist
\begin{itemize}
\item[-] a closed set $K\subset S$ such that $K\cap\partial S$ is reduced to a singularity $z_0$ and $K\setminus\{z_0\}$ is the union of the leaves of $\mathcal{F}$ that are contained in $S$,
\item[-] a continuous map $\phi: S\rightarrow S_0$ that maps $K$ to $0$ and the leaves of $\mathcal{F}|_{S\setminus K}$ to the leaves of $\mathcal{F}_0|_{S_0}$.
\end{itemize}

\begin{figure}[h]
  \subfigure[the hyperbolic sector model $S_0$]{
    \begin{minipage}[t]{0.25\linewidth}
      \centering
   \includegraphics[width=3cm]{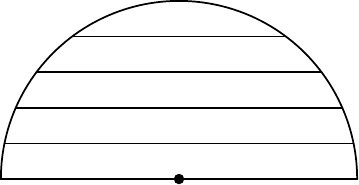}
    \end{minipage}}
  \hspace{0.05\linewidth}
  \subfigure[a pure hyperbolic sector]{
    \begin{minipage}[t]{0.25\linewidth}
      \centering
     \includegraphics[width=2cm]{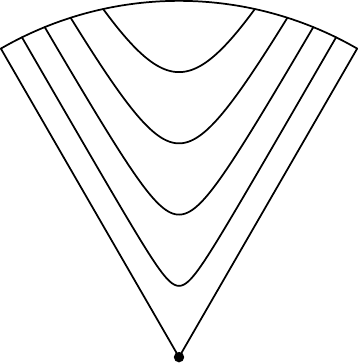}
    \end{minipage}}
    \hspace{0.05\linewidth}
    \subfigure[a strange hyperbolic sector]{
    \begin{minipage}[t]{0.25\linewidth}
      \centering
   \includegraphics[width=2cm]{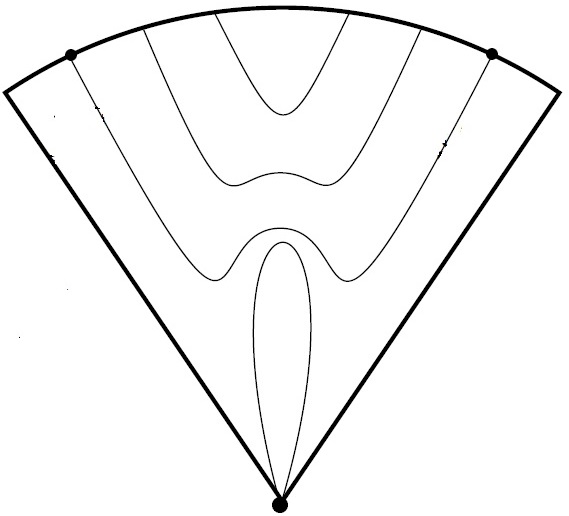}
    \end{minipage}}
  \caption{The hyperbolic sectors}
\end{figure}

 Le Roux gives a description of  the  dynamics of an oriented foliation $\mathcal{F}$ at an isolated singularity $z_0$.

 \begin{prop}\cite{lerouxrotation}\label{P: pre-dynamics of a foliation}
 We have one of the following cases:
 \begin{itemize}
 \item[i)](sink or source) there exists a neighborhood of $z_0$ that contains neither a closed leaf, nor a petal, nor a hyperbolic sector;
 \item[ii)](cycle) every neighborhood of $z_0$ contains a closed leaf;
 \item[iii)](petal) every neighborhood of $z_0$ contains a petal, and does not contain any hyperbolic sector;
 \item[iv)](saddle) every neighborhood of $z_0$ contains a hyperbolic sector,  and does not contain any petal;
 \item[v)](mixed) every neighborhood of $z_0$ contains both a petal and a hyperbolic sector.
 \end{itemize}
Moreover, $i(\mathcal{F},z_0)$ is equal to $1$ in the first two cases,  is strictly bigger than $1$ in the petal case, and is strictly smaller than $1$ in the saddle case.
 \end{prop}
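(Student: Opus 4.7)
The plan is to split the proposition into two parts: first the five-case classification, then the index computation. For the classification, I would argue by exhaustion on three mutually independent dichotomies. Let $U$ range over a shrinking fundamental system of neighborhoods of $z_0$. Either every such $U$ contains a closed leaf (case ii), or we can fix $U_0$ without closed leaves; inside $U_0$ we then separately test for petals and for hyperbolic sectors, yielding cases iii, iv, v, and the residual case i. The content that needs justification is that case i is really a sink or source. For that, I would choose $U_0$ so small that no leaf meeting $U_0$ is entirely contained in $U_0$ (possible since no closed leaves, no petals, and no hyperbolic sectors occur), and then argue that every leaf crossing $\partial U_0$ either accumulates on $z_0$ in forward time or in backward time but not both; by a Poincaré–Bendixson-type argument adapted to oriented topological foliations (the foliation being locally trivial away from $z_0$), and by the absence of petals and hyperbolic sectors which would otherwise capture limit sets, this gives a half-neighborhood basis foliated by leaves radially accumulating on $z_0$, from which a conjugation with the radial model can be constructed leaf by leaf.

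For the index computation I would work directly from the definition via the map $\theta\mapsto \tfrac{\psi(\gamma(\theta))-\gamma(\theta)}{\|\psi(\gamma(\theta))-\gamma(\theta)\|}$. In the sink/source case, after conjugation to the radial model, $\psi(z)-z$ is homotopic to a radial vector field, so the Brouwer degree is $1$. In the cycle case, take $\gamma$ to be a closed leaf itself; then $\psi(z)-z$ can be chosen tangent to $\gamma$ (pointing in the positive direction along $\mathcal{F}$), so its degree equals the index of the tangent bundle of a simple closed curve, namely $1$.

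The petal and saddle cases are the substantive ones, and I would treat them via a Bendixson-type sector decomposition of a small circle $\gamma$ around $z_0$. I would partition $\gamma$ into arcs according to whether the trajectories enter a petal, exit a petal, or pass through a hyperbolic sector. On each arc, the direction of $\psi(z)-z$ can be tracked: across a petal the leaves bend around and return to $z_0$, producing a net extra positive rotation (roughly a half-turn for each petal boundary), whereas across a hyperbolic sector the two bounding separatrices force the vector to reverse through a negative half-turn. Summing these contributions along $\gamma$ gives the Bendixson-style formula $i(\mathcal{F},z_0) = 1 + \tfrac{1}{2}(e-h)$, where $e$ and $h$ are the numbers of petals and hyperbolic sectors encountered along $\gamma$. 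In the petal case ($e>0$, $h=0$) this yields $>1$, in the saddle case ($e=0$, $h>0$) this yields $<1$, confirming the stated inequalities.

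The main obstacle will be the rigorous treatment of case i. With only a topological (not $C^1$) foliation and without a flow, the Poincaré–Bendixson argument must be replaced by a combinatorial argument using trivialization charts and the exclusion of closed leaves, petals, and hyperbolic sectors; the delicate point is ensuring that no leaf can have an $\omega$-limit set in $U_0\setminus\{z_0\}$, since such behaviour without a closed leaf would force either a petal or a hyperbolic sector to appear, and making that forcing precise in the topological category is where most of the work lies.
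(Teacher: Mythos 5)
The paper does not give a proof of this proposition; it is quoted from Le Roux's paper \cite{lerouxrotation} as a preliminary result. So there is no internal proof to compare against, and your sketch must be judged on its own. Your exhaustion into the five cases is logically sound (once one notes that the properties ``contains a closed leaf / petal / hyperbolic sector'' are monotone under shrinking the neighborhood, so each is either eventually absent or always present), and your treatment of the cycle case by taking $\gamma$ to be a closed leaf is a clean way to get index $1$. The Poincar\'e--Bendixson-style reduction of case i) to a sink or source is the right idea, and you are right to flag that transporting it to a merely topological oriented foliation is the delicate step; the exclusions you invoke (no closed leaf, no petal, no hyperbolic sector) are indeed exactly what is needed to force each leaf in a small enough neighborhood to be asymptotic to $z_0$ at exactly one end, all on the same side.

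There are, however, two genuine gaps beyond the one you flag. First, you build a conjugation to the radial model ``leaf by leaf'', but it is not automatic that a half-neighborhood basis of leaves radially accumulating at $z_0$ assembles into a global homeomorphism onto the radial model: one needs a transversal circle on which the holonomy/first-return structure is trivial, and some care is needed when the leaves spiral. Second, and more seriously, your argument for cases iii) and iv) rests on the Bendixson sector formula $i(\mathcal{F},z_0) = 1+\tfrac{1}{2}(e-h)$. This formula is proved classically for $C^1$ vector fields with a finite sector decomposition; in the present topological setting one has neither a vector field nor any a priori bound on the number of petals or hyperbolic sectors (the hypotheses of cases iii) and iv) explicitly allow infinitely many, arbitrarily small sectors). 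You would need to either (a) restrict attention to the finitely many sectors actually crossed by a fixed small circle $\gamma$, justify that the uncounted ones contribute nothing to the degree, and prove the formula in that weaker, $\gamma$-relative form; or (b) replace the exact formula with the weaker monotonicity statement that each petal encountered along $\gamma$ forces a net positive half-turn of $\psi(\gamma(\theta))-\gamma(\theta)$ and each hyperbolic sector a net negative half-turn, with parabolic arcs contributing nothing, so that in the pure petal case the degree is $\geq 2$ and in the pure saddle case $\leq 0$. As written, the step ``summing these contributions along $\gamma$ gives $i=1+\tfrac{1}{2}(e-h)$'' asserts precisely what requires proof and quietly assumes $e$ and $h$ are finite and well-defined.
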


\begin{rmq}
In particular, let $f:(W,z_0)\rightarrow (W',z_0)$ be an orientation preserving local homeomorphism at $z_0$, $I$ be a local isotopy of $f$, $\mathcal{F}$ be an oriented foliation that is locally transverse to $I$, and $z_0$ be an isolated singularity of $\mathcal{F}$. If $P$ is a petal in a small neighborhood of $z_0$ and $\Phi$ is the leaf in $\partial P$, then $\Phi\cup\{z_0\}$ divides $M$ into two parts. We denote by $L(\Phi)$ the one to the left and $R(\Phi)$ the one to the right. By  definition, $P$ contains the positive orbit of $R(\Phi)\cap L(f(\Phi))$  or the negative orbit  of $ L(\Phi)\cap R(f^{-1}(\Phi))$. Then, a petal in a small neighborhood of $z_0$ contains the positive or the negative orbit of a wandering open set. So does the topological disk whose boundary is a closed leaf in a small neighborhood of $z_0$. Therefore, if $f$ is area preserving, then $z_0$ is either a sink,  a source, or a saddle of $\mathcal{F}$.
\end{rmq}
%

\subsection{The local rotation type of a local isotopy}\label{S: pre-local rotation type}

In this section, suppose that  $f:(W,z_0)\rightarrow(W',z_0)$ is an orientation and area preserving local homeomorphism at an isolated fixed point $z_0$, and that $I$ is a local isotopy of $f$. We say that $I$ has a \emph{positive rotation type} (resp. \emph{negative rotation type}) if there exists a locally transverse foliation $\mathcal{F}$ of $I$ such that $z_0$ is a sink (resp. source) of $\mathcal{F}$. Shigenori Matsumoto \cite{Matsumoto} proved the following result:

\begin{prop}\cite{Matsumoto}\label{P: pre-rotation type is unique}
If $i(f,z_0)$ is equal to $1$, $I$ has unique one of the two kinds of  rotation types.
\end{prop}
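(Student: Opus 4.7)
The approach splits naturally into existence and uniqueness, with the index relations of Section~\ref{S: pre-index of foliation} doing almost all the work for existence.

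For existence, I would start from the identity $i(I,z_0)=i(f,z_0)-1=0$ provided by Proposition~\ref{P: pre-relation of indices between isotopy and homeo} in the case $i(f,z_0)=1$. Take any foliation $\mathcal{F}$ locally transverse to $I$ at $z_0$, obtained, say, by restricting to a small neighborhood of $z_0$ the foliation given by Jaulent's Theorem~\ref{T: pre-exist max isotopy}. Proposition~\ref{P: pre-relations of indices between  foliation and the others} then gives $i(\mathcal{F},z_0)=1$. By the remark following Proposition~\ref{P: pre-dynamics of a foliation}, area preservation of $f$ excludes the cycle, petal, and mixed cases near $z_0$, while the saddle case is excluded because it has $i(\mathcal{F},z_0)<1$. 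Hence $z_0$ is a sink or a source of $\mathcal{F}$, producing a positive or a negative rotation type for $I$.

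For uniqueness, assume toward a contradiction that there exist foliations $\mathcal{F}_+$ and $\mathcal{F}_-$ both locally transverse to $I$, with $z_0$ a sink of $\mathcal{F}_+$ and a source of $\mathcal{F}_-$. Choose a small disk $V$ around $z_0$ in which both foliations are conjugated to their radial models (leaves pointing toward $z_0$ for $\mathcal{F}_+$, away from $z_0$ for $\mathcal{F}_-$). Lift $I|_{V\setminus\{z_0\}}$ to the universal cover $\pi:\mathbb{R}\times(-\infty,0)\to V\setminus\{z_0\}$ and write $\widetilde{I}=(\widetilde{f}_t)_{t\in[0,1]}$ for the lifted isotopy. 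The idea is that positive transversality to the lift of a sink model forces the $\theta$-coordinate of $\widetilde{f}_1(\widetilde{z})-\widetilde{z}$ to be $\geq 0$ (the radial leaves lift to vertical lines oriented toward the boundary, and crossings from left to right correspond to non-negative $\theta$-increments), while positive transversality to the lift of a source model forces that coordinate to be $\leq 0$. The two conditions combined would pin the $\theta$-coordinate of $\widetilde{f}_1(\widetilde{z})-\widetilde{z}$ to $0$ for all $\widetilde{z}$ in a one-sided neighborhood of the boundary, and iterating this with $I^n$ one would conclude that $\widetilde{f}_1$ acts trivially on the $\theta$-variable up to arbitrarily high multiples, a behavior incompatible with the area preservation of $f$ near $z_0$.

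The main obstacle will be to turn this sign statement on $\theta$-displacement into a rigorous argument: positively transverse paths are only homotopic to the trajectory, so one must invoke homotopy invariance of the $\theta$-variation in the punctured disk $V\setminus\{z_0\}$ and guarantee that trajectories truly produce non-trivial $\theta$-winding (possibly after replacing $I$ by a suitable iterate $I^n$ to amplify the motion). A cleaner way to conclude is to encode each rotation type as a sign condition on a generalized rotation number for $\widetilde{f}_1$ read on orbits approaching the boundary of the covering strip, and then observe that the sink/source dichotomy gives opposite strict inequalities. The role of the area-preservation hypothesis will be precisely to rule out the degenerate case where both transversality conditions are simultaneously saturated along every orbit.
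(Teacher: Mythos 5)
The paper does not prove this proposition: it is quoted from \cite{Matsumoto}, so there is no internal proof to compare your argument against. I therefore assess your proposal on its own terms.

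Your existence half is essentially correct and matches what the remark after the proposition implicitly relies on: $i(I,z_0)=0$ from Proposition~\ref{P: pre-relation of indices between isotopy and homeo}, hence $i(\mathcal{F},z_0)=1$ from Proposition~\ref{P: pre-relations of indices between  foliation and the others}; the remark following Proposition~\ref{P: pre-dynamics of a foliation} excludes the cycle, petal and mixed cases by area preservation, and the saddle case is excluded because its foliation index is strictly below $1$. One small loose end: Theorem~\ref{T: pre-exist max isotopy} is about a \emph{global} identity isotopy, so to obtain a locally transverse foliation for the local isotopy $I$ you must first extend $f$ and $I$ to, say, a homeomorphism of the sphere and then restrict near $z_0$; the extension step deserves at least a sentence.

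The uniqueness half has a real gap. The two sign inequalities you write on $p_1(\widetilde f_1(\widetilde z)-\widetilde z)$ do not live in the same chart: the first is computed in the chart that carries $\mathcal{F}_+$ to the radial sink model, the second in the chart that carries $\mathcal{F}_-$ to the radial source model. These two charts differ by an orientation-preserving homeomorphism of the disk fixing $0$; its lift to the strip commutes with $(\theta,y)\mapsto(\theta+1,y)$ but can shift $\theta$-displacements by nonzero (though bounded) amounts, so the two inequalities cannot simply be intersected to force the displacement to vanish. Your ``cleaner'' reformulation in terms of the sign of a generalized rotation number fails for the same reason a priori, and moreover it is structurally doomed: the entire point of this paper is the case $\rho_s(I,z_0)=\{0\}$, where a sink-type and a source-type locally transverse foliation would both have vanishing rotation number, so no sign condition on a rotation number can separate them. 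What actually saves the argument (and is where area preservation genuinely enters) is Poincar\'e recurrence: it produces recurrent orbits that stay near $z_0$ forever; along the return times of such an orbit, the accumulated $\theta$-displacement in the $\mathcal{F}_+$-chart is a sum of terms each bounded below by a fixed $c>0$ and hence grows without bound, while the $\mathcal{F}_-$-chart together with the bounded distortion between the two charts forces the same quantity to stay bounded above. This is the contradiction, and it is not the one you invoke (``$\widetilde f_1$ acts trivially on the $\theta$-variable''). As written, the uniqueness step does not go through.
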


\begin{rmq}
By considering the index of foliation, one  deduces the following corollary:
\textit{if $\mathcal{F}$ and $\mathcal{F}'$ are two locally transverse foliations of $I$, and if $0$ is a sink (resp. source) of $\mathcal{F}$, then $0$ is a sink (resp. a source) of $\mathcal{F}'$.}
\end{rmq}

\subsection{Prime-ends compactification and  rotation number}\label{S: pre-prime-ends Compactification}
In this section, we first recall some facts and definitions from Carath\'eodory's prime-ends theory, and then give the definition of the prime-ends rotation number. More details can be found in \cite{milnor} and \cite{Lecalvezprimeendsrotationnumberandperiodicpoints}.

Let $U\varsubsetneq\mathbb{R}^2$ be a simply connected domain, then there exists a natural compactification of $U$  by adding a circle, that can be defined in different ways.  One explanation is the following: we can identify $\mathbb{R}^2$ with $\mathbb{C}$ and consider a conformal diffeomorphism
$$h: U\rightarrow\mathbb{D},$$
where $\mathbb{D}$ is the unit disk. We endow $U\sqcup S^1$ with the topology of the pre-image of the natural topology of $\overline{\mathbb{D}}$ by the application
$$\overline{h}:U\sqcup S^1\rightarrow \overline{\mathbb{D}},$$
whose restriction is $h$ on $U$ and the identity on $S^1$ .

Any arc in $U$ which lands at a point $z$ of $\partial U$ corresponds, under $h$, to an arc in $\mathbb{D}$ which lands at a point of $S^1$, and arcs which land at distinct points of $\partial U$ necessarily correspond to arcs which land at distinct points of $S^1$. We define an \emph{end-cut} to be the image of a simple arc $\gamma:[0,1)\rightarrow U$ with a limit point in $\partial U$. Its image by $h$ has a limit point in $S^1$. We say that two end-cuts are \emph{equivalent} if their images have the same limit point in $S^1$. We say that a point $z\in\partial U$ is \emph{accessible} if there is an end-cut that lands at $z$.  Then the set of points of $S^1$ that are limit points of an end-cut is dense in $S^1$, and accessible points of $\partial U$ are dense in $\partial U$.
We define a \emph{cross-cut}  by the image of a simple arc $\gamma:(0,1)\rightarrow U$ which extends to an arc $\overline{\gamma}:[0,1]\rightarrow \overline{U}$ joining two points of $\partial U$ and such that each of the two components of $U\setminus \gamma$ has a boundary point in $\partial U\setminus\overline{\gamma}$.

Let $f$ be an orientation preserving homeomorphism of $U$. We can extend $f$ to  a homeomorphism of the prime-ends compactification $U\sqcup S^1$, and denote it by $\overline{f}$. In fact, for a point $z\in S^1$ which is a limit point of an end-cut $\gamma$, we can naturally define $\overline{f}(z)$ to be the limit point of $f\circ\gamma$. Then we  can define the \emph{prime-ends rotation number} $\rho(f, U)\in \mathbb{T}^1=\mathbb{R}/\mathbb{Z}$ to be  the Poincar\'e's rotation number of  $\overline{f}|_{S^1}$. In particular, if $f$ fixes every point in $\partial U$, $\rho(f,U)=0$.

\bigskip

Let $K\subset \mathbb{R}^2$ be a continuum, and $U_K$ be the unbounded component of $\mathbb{R}^2\setminus K$. Then, $U_K$ is an annulus and becomes a simply connected domain of the Riemann sphere if we identify $\mathbb{R}^2$ with $\mathbb{C}$ and add a point at infinity. The prime-ends compactification also gives us a compactification of the end of $U_K$ corresponding to $K$ by  adding the circle of prime-ends. We can define  \emph{end-cuts} and \emph{cross-cuts} similarly.

Let $f:(W,0)\rightarrow (W',0)$ be an orientation preserving local homeomorphism at $0\in\mathbb{R}^2$,  and  $K\subset W$ be an invariant continuum containing $0$.  Similarly, we can naturally extend $f|_{U_K\cap W}$ to  a homeomorphism $f_K: U_K\cap W\cup S^1\rightarrow U_K\cap W'\cup S^1$ , and define the \emph{rotation number} $\rho(f,K)\in \mathbb{R}/\mathbb{Z}$ to be  the Poincar\'e's rotation number of  $f_K|_{S^1}$.

Furthermore, if $I=(f_t)_{t\in[0,1]}$ is a local isotopy of $f$ at $0$,  we consider the universal covering projections
\begin{eqnarray*}
\pi: \mathbb{R}\times (-\infty,0)&\rightarrow&\mathbb{C}\setminus\{0\}\simeq\mathbb{R}^2\setminus \{0\}\\
(\theta,y)&\mapsto&-ye^{i 2\pi \theta}
\end{eqnarray*}
and
\begin{eqnarray*}
\pi': \mathbb{R}&\rightarrow& S^1\\
\theta &\mapsto&e^{i 2\pi \theta}.
\end{eqnarray*}
Let $\widetilde{U}_K=\pi^{-1}(U_K)$, $\widetilde{W}=\pi^{-1}(W)$, and $\widetilde{W}'=\pi^{-1}(W')$.  Let
$$\pi_K: \widetilde{U}_K\sqcup \mathbb{R}\rightarrow U_K\sqcup \mathbb{T}^1$$
be the map such that $\pi_K=\pi$ in $\widetilde{U}_K$ and $\pi_K=\pi'$ on $\mathbb{R}$.  We endow the topology on $\widetilde{U}_K\sqcup R$ such that  $\pi_K$ is a universal cover.  Let $\widetilde{I}=(\widetilde{f}_t)_{t\in[0,1]}$ be  the lift of $(f_t|_{V\setminus\{0\}})_{t\in[0,1]}$ such that $\widetilde{f}_0$ is the identity, where $V$ is a small neighborhood of $z$. Let  $\widetilde{f}: \widetilde{W}\rightarrow\widetilde{W}'$ be the lift of $f|_{W\setminus\{0\}}$  such that $\widetilde{f}=\widetilde{f}_1$ in $\pi^{-1}(V)$, we call it the \emph{lift of $f$ associated to $I$}. Let $\widetilde{f}_K:(\widetilde{W}\cap\widetilde{U}_K)\sqcup\mathbb{R}\rightarrow(\widetilde{W}'\cap\widetilde{U}_K)\sqcup \mathbb{R}$ be the lift of $f_K$ such that $\widetilde{f}_K=\widetilde{f}$ in $\widetilde{W}\cap\widetilde{U}_K$,  we call it \emph{the lift of $f_K$ associated to $I$}.  We define the \emph{rotation number} $\displaystyle\rho (I,K)=\lim_{n\rightarrow \infty}\frac{\widetilde{f}^n_K(\theta)-\theta}{n}$ which is a  real number that does not depend on the choice of $\theta$.  We know that $\rho(I, K)$ is a representative of $\rho(f,K)$ in $\mathbb{R}$.

We have the following property:

%

\begin{prop}\cite{lecalvezindices}\label{P: pre-existence of rotation number of indifferent point}
Let $f:(W,0)\rightarrow (W',0)$ be an orientation preserving local homeomorphism at a non-accumulated indifferent point $0$. Let $U\subset \overline{U}\subset W$ be a Jordan domain such that $\overline{U}$ does not contain any periodic orbit except $0$, and that for all $V\subset U$, the connected component of $\cap_{n\in \mathbb{Z}} f^{-n}(\overline{V})$ containing $0$ intersects the boundary of  $V$. Let $K_0$ be the connected component of $\cap_{n\in \mathbb{Z}}f^{-n}(\overline{U})$ containing $0$. Then  for every local isotopy $I$ of $f$, and  for every invariant continuum $K\subset \overline{U}$ containing $0$, one has $\rho(I,K)=\rho(I,K_0)$.
\end{prop}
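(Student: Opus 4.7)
\emph{Plan.} I would reduce the problem to the nested case $K\subset K_0$ and then apply a standard semiconjugacy argument between prime-ends circles. First, any invariant continuum $K\subset\overline{U}$ containing $0$ is automatically contained in $K_0$: invariance yields $K\subset\bigcap_{n\in\mathbb{Z}} f^{-n}(\overline{U})$, and connectedness together with $0\in K$ places $K$ in the connected component $K_0$ of this intersection. Hence it suffices to prove $\rho(I,K)=\rho(I,K_0)$ whenever $K\subset K_0$.

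For the nested case, the unbounded complementary components satisfy $U_{K_0}\subset U_K$. Classical prime-ends theory then furnishes a continuous, weakly order-preserving, degree-one surjection $p:\mathbb{T}^1_{K_0}\to\mathbb{T}^1_K$: a prime-end of $K_0$ represented by a shrinking sequence of cross-cuts $(\gamma_n)$ in $U_{K_0}$ is sent to the prime-end of $K$ obtained by extending each $\gamma_n$ into $U_K$ to a cross-cut of $K$. Since $f$ sends cross-cuts of $K_0$ (respectively of $K$) to cross-cuts of $K_0$ (respectively of $K$), the map $p$ intertwines the prime-ends extensions: $p\circ\overline{f}_{K_0}=\overline{f}_K\circ p$. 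Both prime-ends lifts $\widetilde{f}_{K_0}$ and $\widetilde{f}_K$ are determined by the same lift $\widetilde{f}$ of $f$ associated to $I$, so $p$ lifts to a nondecreasing map $\widetilde{p}:\mathbb{R}\to\mathbb{R}$ commuting with integer translation and satisfying $\widetilde{p}\circ\widetilde{f}_{K_0}=\widetilde{f}_K\circ\widetilde{p}$. By the standard fact that a degree-one monotone semiconjugacy preserves the real-valued rotation number, $\rho(I,K_0)=\rho(I,K)$.

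The main technical obstacle is the careful construction of $p$ in the present generality: one must verify that equivalent cross-cut sequences in $U_{K_0}$ give rise to equivalent cross-cut sequences in $U_K$, that $p$ is well-defined, continuous, and monotone, and that the chosen lift $\widetilde{p}$ is consistent with $\widetilde{f}$. The hypotheses that $0$ is non-accumulated and indifferent and that $\overline{U}\setminus\{0\}$ contains no periodic orbit beyond $0$ play a role here, excluding pathological prime-end behavior arising from $K_0\setminus K$ that could obstruct either the definition of $p$ or the comparison of rotation numbers.
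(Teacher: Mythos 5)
The paper does not prove this proposition; it is cited verbatim from \cite{lecalvezindices}, so there is no in-paper proof to compare against. Evaluating your attempt on its own merits, there is a genuine gap at the central step.

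Your reduction to the nested case $K\subset K_0$ is correct: invariance of $K$ and $K\subset\overline{U}$ give $K\subset\bigcap_{n}f^{-n}(\overline{U})$, and connectedness with $0\in K$ places $K$ in the component $K_0$. The problem is the alleged \textquotedblleft standard semiconjugacy\textquotedblright\ $p:\mathbb{T}^1_{K_0}\to\mathbb{T}^1_K$. No such continuous degree-one monotone map between prime-end circles of nested continua exists in general: if $z_n\in U_{K_0}\subset U_K$ converges to a prime-end of $K_0$, there is no reason for the sequence to converge in the prime-end compactification of $U_K$; it can oscillate among several impressions. Concretely, when $K_0\setminus K$ has complicated boundary behavior from the side of $U_K$ (think of combs, spirals, or hairy arcs attached to $K$), distinct prime-ends of $K_0$ need not have coherent limits as prime-ends of $K$, and the \textquotedblleft extend each cross-cut $\gamma_n$ of $K_0$ to a cross-cut of $K$\textquotedblright\ recipe is not well-defined: such extensions may wind, need not be nested, and need not converge to a single prime-end of $K$. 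You flag the construction of $p$ as the \textquotedblleft main technical obstacle,\textquotedblright\ but it is not a detail to be filled in; it is the statement itself that is being assumed. The dynamical hypotheses you invoke (non-accumulated, indifferent, no periodic orbits in $\overline{U}$) are not topological regularity conditions on $K_0$ or on $\partial U_{K_0}$, so they do not rule out the pathologies that obstruct $p$.

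The expected route, in the spirit of \cite{lecalvezindices} and of the way the present paper uses the proposition, is dynamical rather than a prime-end semiconjugacy: one shows that if $\rho(I,K)\neq\rho(I,K_0)$ for $K\subset K_0$, then a Poincar\'e--Birkhoff-type argument applied to the invariant annular region separating $K$ from the \textquotedblleft outer boundary\textquotedblright\ of $K_0$ inside $U_K$ (after suitable compactification by prime-ends on both ends) produces periodic orbits in $\overline{U}\setminus\{0\}$. This contradicts the hypothesis that $\overline{U}$ contains no periodic orbit other than $0$, and it is precisely here that the non-accumulation and indifference hypotheses (together with the property that the component of $\bigcap_n f^{-n}(\overline{V})$ through $0$ meets $\partial V$ for all $V\subset U$) do their work. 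Your sketch replaces this dynamical argument with a topological semiconjugacy that one cannot build, so the proof as written does not go through.
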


This proposition  implies that if $f:(W,0)\rightarrow (W',0)$ is an orientation preserving local homeomorphism at a non-accumulated indifferent point $0$, we can define the \emph{rotation number}  $\rho(I,0)$ for every local isotopy $I$ of $f$ at $0$, by writing $\rho(I,0)=\rho(I,K)$ where $K$ is a non-trivial invariant continuum  sufficiently close to $0$.


More generally, if $f:(W,z_0)\rightarrow (W',z_0)$ is an orientation preserving local homeomorphism at a non-accumulated indifferent point $z_0\in M$, we can conjugate it to a local homeomorphism at $0$, and get the previous definitions and results similarly.

\subsection{The local rotation set}\label{S: pre-local rotation set}

In this section, we will give a definition of the local rotation set and  will describe the relations between the rotation set and the rotation number. More details can be found in \cite{lerouxrotation}.

Let $f: (W,0)\rightarrow (W',0)$ be an orientation preserving local homeomorphism at $0\in\mathbb{R}^2$, and $I=(f_t)_{t\in[0,1]}$ be a local isotopy of $f$. Given two neighborhoods $V\subset U$ of $0$ and an integer $n\ge 1$, we define
$$E(U,V,n)=\{z\in U: z\notin V,f^{n}(z)\notin V, f^i(z)\in U \textrm{ for all } 1\le i\le n\}.$$
We define the \emph{rotation set} of $I$ relative to $U$ and $V$ by
$$\rho_{U,V}(I)=\cap_{m\ge 1}\overline{\cup_{n\ge m}\{\rho_n(z),z\in E(U,V,n)\}}\subset[-\infty,\infty],$$
where $\rho_n(z)$ is the average change of  angular coordinate along the trajectory of $z$. More precisely, let
\begin{eqnarray*}
\pi: \mathbb{R}\times (-\infty,0)&\rightarrow&\mathbb{C}\setminus\{0\}\simeq\mathbb{R}^2\setminus \{0\}\\
(\theta,y)&\mapsto&-ye^{i 2\pi \theta}
\end{eqnarray*}
be the universal covering projection,  $\widetilde{f}:\pi^{-1}(W)\rightarrow \pi^{-1}(W')$ be the lift of $f$ associated to $I$, and $p_1:\mathbb{R}\times (-\infty,0)\rightarrow \mathbb{R}$ be the projection onto the first factor. We define $$\rho_n(z)=\frac{p_1(\widetilde{f}^n(\widetilde{z})-\widetilde{z})}{n},$$
where $\widetilde{z}$ is any point in $\pi^{-1}\{z\}$.

We define the \emph{local rotation set} of $I$ to be
$$\rho_s(I,0)=\cap_U\overline{\cup_V\rho_{U,V}(I)}\subset[-\infty,\infty],$$
where $V\subset U\subset W$ are neighborhoods of $0$.


We say that $f$ can be \emph{blown-up} at $0$ if there exists an orientation preserving homeomorphism $\Phi: \mathbb{R}^2\setminus\{0\}\rightarrow \mathbb{T}^1\times(-\infty, 0)$, such that $\Phi f \Phi^{-1}$ can be extended continuously to $\mathbb{T}^1\times\{0\}$. We denote this extension by $h$.  Suppose that  $f$ is not conjugate a contraction  or an expansion. We define the \emph{blow-up rotation number} $\rho(f, 0)$ of $f$ at $0$ to be the Poincar\'e rotation number of $h|_{\mathbb{T}^1}$.  Let $I=(f_t)_{t\in[0,1]}$ be a local isotopy of $f$,  $(\widetilde{h}_t)$ be the natural lift of $\Phi|_{\mathbb{T}^1\times(0,r)}\circ f_t|_{D_r\setminus\{0\}}\circ \Phi^{-1}|_{\mathbb{T}^1\times(0,r)}$, where $D_r$ is a sufficiently small disk with radius $r$ and centered at $0$, and $\widetilde{h}$ be the lift of $h$ such that $\widetilde{h}=\widetilde{h}_1$ in a neighborhood of $\mathbb{R}\times\{0\}$. We define the \emph{blow-up rotation number $\rho(I,0)$ of $I$ at $0$} to be  the rotation number of $h|_{\mathbb{T}^1}$ associated to the lift $\widetilde{h}|_{\mathbb{R}\times\{0\}}$, which is a representative of $\rho(f, 0)$ on $\mathbb{R}$.  Jean-Marc Gambaudo, Le Calvez, and Elisabeth P\'ecou \cite{LecalvezthmNaishul} proved that  neither $\rho(f,0)$ nor $\rho(I,0)$   depend on the choice of $\Phi$, which generalizes a previous result of Na\u{\i}shul$'$ \cite{Naishul}.
In particular, if $f$ is a diffeomorphism, $f$ can be blown-up at $0$ and the extension of $f$ on $\mathbb{T}^1$  is induced by the map
 $$v\mapsto \frac{Df(0) v}{\|Df(0)v\|}$$
 on the space of unit tangent vectors.

More generally, if $f:(W,z_0)\rightarrow (W',z_0)$ is an orientation preserving local homeomorphism at $z_0$ that is not conjugate to the contraction or the expansion, we can give  the previous definitions for $f$ by conjugate it to an orientation preserving local homeomorphism at $0\in\mathbb{R}^2$.

The local rotation set can be empty. However, due to  Le Roux  \cite{lerouxparabolic}, we know that the rotation set is not empty if $f$ is area preserving. 
%
%

We say that $z$ is a \emph{contractible} fixed point of $f$ associated to a local isotopy $I=(f_t)_{t\in[0,1]}$ if the trajectory $t\mapsto f_t(z)$  of $z$ along $I$ is a loop homotopic to zero in $W\setminus\{z_0\}$.


The local rotation set satisfies the following properties:

\begin{prop}\cite{lerouxrotation} \label{P: pre-rotation set}
Let $f:(W,z_0)\rightarrow (W',z_0)$ be an orientation preserving local homeomorphism at $z_0$, and $I$ be a local isotopy of $f$ at $z_0$. One has the following results:
\begin{itemize}
\item[i)] For all integer $p,q$, $\rho_s(J^pI^q,z_0)=q\rho_s(I,z_0)+p$, where $J$ is a local isotopy of the identity such that $\rho(J,z_0)=1$.
\item[ii)] If $z_0$ is accumulated by  contractible  fixed points of $f$ associated to $I$, then $0\in \rho_s(I,z_0)$.
\item[iii)] If $\rho_s(I,z_0)$ is a non-empty set that is contained in $(0,+\infty]$ (resp. $[-\infty, 0)$), then $I$ has a positive (resp.negative) rotation type.
\item[iv)] If $f$ can be blown-up at $z_0$, and if $\rho_s(I,z_0)$  is not empty, then  $\rho_s(I,z_0)$ is  reduced to the single real number $\rho(I,z_0)$.
\item[v)] If $z_0$ is a non-accumulated indifferent point, $\rho_s(I,z_0)$ is  reduced to $\rho(I,z_0)$ (the rotation number defined in Section \ref{S: pre-prime-ends Compactification}).
\end{itemize}
\end{prop}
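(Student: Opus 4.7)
Since the statement bundles five different properties of $\rho_s$, my plan is to treat each with the appropriate tool while keeping all computations in the universal cover $\pi\colon\mathbb{R}\times(-\infty,0)\to\mathbb{R}^2\setminus\{0\}$, tracked via the angular projection $p_1$. For (i), let $\widetilde{I}$ and $\widetilde{J}$ be the natural lifts with $\widetilde{I}_0=\widetilde{J}_0=\mathrm{Id}$; since $\rho(J,z_0)=1$, the lift of $J$ translates the first coordinate by roughly $1$ in the asymptotic regime. A direct bookkeeping in the cover gives
$$p_1\bigl(\widetilde{(J^pI^q)}{}^n(\widetilde{z})-\widetilde{z}\bigr)=p_1\bigl(\widetilde{I}^{nq}(\widetilde{z})-\widetilde{z}\bigr)+np+o(n),$$
and dividing by $n$ yields $\rho_s(J^pI^q,z_0)=q\,\rho_s(I,z_0)+p$ after matching the nested neighborhood systems defining $\rho_s$. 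For (ii), if $(z_k)$ are contractible fixed points of $f$ accumulating at $z_0$, each $z_k$ lifts to a fixed point of $\widetilde{f}$ (this is exactly the contractible condition), so $\rho_1(z_k)=0$; for any fixed $U$ I choose $V\subset U$ small enough to exclude $z_k$, placing $z_k\in E(U,V,1)$, whence $0\in\rho_s(I,z_0)$.

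The substantive point is (iii). I would apply Theorem \ref{T: pre-exist max isotopy} to $I$ restricted to a punctured neighborhood of $z_0$, obtaining a transverse singular foliation $\mathcal{F}$ with $z_0$ as a singularity. Le Roux's classification (Proposition \ref{P: pre-dynamics of a foliation}) gives six possible local models; the remark immediately following that proposition, applied with $f$ area preserving, eliminates cycles, petals, and mixed sectors, leaving only sink, source, and saddle. It then remains to rule out source and saddle under $\rho_s(I,z_0)\subset(0,+\infty]$. The presence of a hyperbolic sector produces wandering transverse paths that loop back through the sector with essentially no net angular displacement, yielding points $z\in E(U,V,n)$ with $\rho_n(z)$ accumulating at $0$ (or at a negative value depending on the orientation of the sector), contradicting $\rho_s(I,z_0)\subset(0,+\infty]$. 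A source is excluded similarly, since radial escape can be realized along transverse paths of either orientation, again producing rotation numbers accumulating in $(-\infty,0]$. Hence $z_0$ is a sink of $\mathcal{F}$, which is precisely to say $I$ has positive rotation type; the negative case is symmetric.

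For (iv), when $f$ can be blown up the lift $\widetilde{f}$ extends continuously to the boundary $\mathbb{R}\times\{0\}$, and the extension is a lift of a circle homeomorphism whose rotation number is $\rho(I,z_0)$. Any orbit trapped in $U\setminus V$ with $V$ small is forced to stay near $\mathbb{R}\times\{0\}$ after radial normalization, so $\rho_n(z)$ is arbitrarily close to $\rho(I,z_0)$, and the non-empty set $\rho_s(I,z_0)$ must reduce to this single value. Part (v) proceeds analogously, with the blown-up boundary circle replaced by the prime-ends circle of the invariant continuum $K_0$ furnished by Proposition \ref{P: pre-existence of rotation number of indifferent point}: the prime-ends rotation number is $\rho(I,z_0)$ independent of the choice of continuum, and orbits trapped near $z_0$ shadow that dynamics in the prime-ends compactification, forcing $\rho_n(z)\to\rho(I,z_0)$. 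The main obstacle is unquestionably (iii), where one must argue carefully that the mere existence of a hyperbolic sector in every neighborhood forces rotation values accumulating at exactly $0$ rather than merely at small positive numbers; this is where the orientation of the transverse foliation, together with the constraints area preservation imposes on Le Roux's classification, does all the work.
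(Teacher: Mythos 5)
This proposition is cited from \cite{lerouxrotation}; the paper gives no proof of its own, so there is nothing to compare your sketch against. Judged on its own terms, your treatment of (i) and (ii) is sound, and the geometric intuition for (iv) and (v) is correct, but in (iv) the operative regime is $U$ small rather than $V$ small: shrinking $V$ only enlarges $E(U,V,n)$, whereas it is the shrinking of the outer neighborhood $U$ that forces a trapped orbit to lie near $z_0$ and hence, in blown-up coordinates, near the added circle, which is what pins $\rho_n(z)$ near $\rho(I,z_0)$ by continuity on the compact annulus $\overline{U}$.

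The genuine gap is (iii), which you flag yourself. You eliminate the cycle, petal and mixed cases by invoking area preservation of $f$ via the remark following Proposition \ref{P: pre-dynamics of a foliation}, but no area-preservation hypothesis appears in Proposition \ref{P: pre-rotation set}; either that assumption must be silently imported from Section \ref{S: pre-local rotation type}, or those cases must be excluded by another route (for petals, saddles and mixed singularities, Remark \ref{R: pre-blow-up} yields a blow-up with $\rho(I,z_0)=0$, and then (iv) contradicts $\rho_s(I,z_0)\subset(0,+\infty]$, with no area hypothesis). More seriously, even once only sink, source and saddle remain, your assertion that a hyperbolic sector forces rotation values accumulating at exactly $0$ is precisely the nontrivial content of the statement and is left unargued; it is not clear from what you write why orbits traversing the sector cannot accumulate rotation strictly between $0$ and the values already present in $\rho_s(I,z_0)$. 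The source case is likewise dismissed with the word ``similarly'' but needs its own observation, namely that transverse loops near a source are negatively oriented, forcing $\rho_n(z)\le 0$ up to a vanishing error. As written, (iii) is a plausible plan rather than a proof.
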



\begin{rmq}\label{R: pre-blow-up}
Le Roux also gives several criteria implying that $f$ can be blown-up at $z_0$.  The one we need in this article is  due to  B\'eguin, Crovisier and  Le Roux \cite{lerouxrotation}：

\textit{If there exists an arc $\gamma$ at $z_0$ whose germ is disjoint with the germs of $f^{n}(\gamma)$ for all $n\ne 0$, then $f$ can be blown-up at $z_0$.}

In particular, if there exists a leaf $\gamma^+$ from $z_0$ and a leaf $\gamma^-$ toward $z_0$ (we are in this case if $z_0$ is a petal, a saddle, or a mixed singularity of $\mathcal{F}$), we can choose a sector $U$ as in the picture.
\begin{figure}[h]
\centering
 \includegraphics[width=3cm]{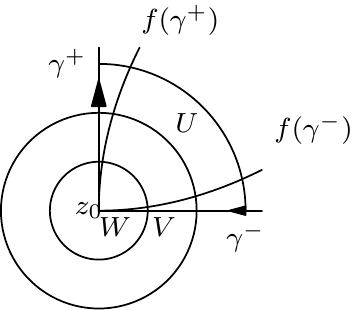}
\end{figure}
Let $V$ be a small neighborhood of $z_0$. There exists a neighborhood $W\subset V$ of $z_0$ such that
$$f(\overline{U}\cap W)\subset(\mathrm{Int}(U)\cap V)\cup\{z_0\}.$$
 So, the germs at $z_0$ of $f^{n}(\gamma^+)$  are pairwise disjoint, and hence $f$ can be blown-up at $z_0$. Moreover, $\rho(I,z_0)$ is equal to $0$ in this case.
\end{rmq}

 Le Roux also studied the dynamics near  a non-accumulated saddle point, and  proved the following result:

\begin{prop}\cite{lerouxrotation}\label{P: pre-rotation number of a saddle point}
If $z_0$ is a non-accumulated saddle point, then $f$ can be blown-up at $z_0$ and $\rho_s(I,z_0)$ is reduced to a rational number. Moreover, if $i(f,z_0)$ is equal to $1$, this rational number is not an integer.
\end{prop}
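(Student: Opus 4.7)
The plan is to combine the classification of oriented singular foliations near an isolated singularity (Proposition~\ref{P: pre-dynamics of a foliation}) with the blow-up criterion of Remark~\ref{R: pre-blow-up}, using the non-indifference, non-dissipativity, and non-accumulation hypotheses to rule out the unwanted cases. First I would apply Theorem~\ref{T: pre-exist max isotopy} in a small punctured neighborhood of $z_0$ to obtain a maximal extension of $I$ together with an oriented singular foliation $\mathcal{F}$ locally transverse to it, for which $z_0$ is an isolated singularity (no periodic orbit near $z_0$ is absorbed because of non-accumulation). The local picture near $z_0$ falls into one of the five cases of Proposition~\ref{P: pre-dynamics of a foliation}: sink/source, cycle, petal, saddle, or mixed.

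I would then produce an arc $\gamma$ at $z_0$ whose germ is disjoint from the germs of $f^n(\gamma)$ for $n\neq 0$, by a case analysis ruling out sink/source, cycle, and pure petal. In the sink/source case $z_0$ would itself be dissipative: a transverse Jordan curve $C$ encircling $z_0$ satisfies $f(C)\cap C=\emptyset$, and such curves form a fundamental system --- contradicting the saddle assumption. In the cycle case, a closed leaf near $z_0$ is an $f$-invariant Jordan curve; Brouwer translation theory (Section~\ref{S: pre-Brouwer plane translation theorem}) applied to the bounded component then produces either periodic points (contradicting non-accumulation) or forces $z_0$ to be indifferent. In the pure petal case a similar argument using the petal boundary and its iterates forces either periodic orbits in the petal or an $f$-invariant Jordan domain around $z_0$ (indifference). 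The remaining saddle and mixed cases provide a leaf from $z_0$, and Remark~\ref{R: pre-blow-up} directly yields that $f$ can be blown-up at $z_0$, so $\rho_s(I,z_0)=\{\rho(I,z_0)\}\subset\mathbb{R}$ by Proposition~\ref{P: pre-rotation set}(iv). To show $\rho(I,z_0)\in\mathbb{Q}$, I would observe that $\gamma$ determines an accessible prime end $\theta\in C$ on the added circle, and that its $\overline{f}$-orbit stays inside a proper sub-arc of $C$ because the iterates $f^n(\gamma)$ have germs confined to one side of a fixed sector; together with the absence of periodic points of $f$ nearby, this forces the orbit of $\theta$ to be periodic on $C$, hence rational rotation number.

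Finally, if $i(f,z_0)=1$, I would assume for contradiction that $\rho(I,z_0)=k\in\mathbb{Z}$ and replace $I$ by $J^{-k}I$, where $J$ is a local isotopy of the identity with blow-up rotation number $1$. By Proposition~\ref{P: pre-rotation set}(i) the rotation becomes $0$, while $i(f,z_0)=1$ is unchanged; by Proposition~\ref{P: pre-relation of indices between isotopy and homeo} every local isotopy has index $0$ at $z_0$, so by Proposition~\ref{P: pre-relations of indices between foliation and the others} any locally transverse foliation satisfies $i(\mathcal{F},z_0)=1$, placing us in the sink/source/cycle cases of Proposition~\ref{P: pre-dynamics of a foliation}. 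Combining a sink/source/cycle foliation with rotation number zero on the blow-up circle (every point on $C$ is a rotation-zero point of the extension) then yields an $f$-invariant fundamental system of Jordan neighborhoods, making $z_0$ indifferent --- a contradiction with the saddle hypothesis. Hence $\rho(I,z_0)\notin\mathbb{Z}$.

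\textbf{Main obstacle.} The subtle point is the case-by-case elimination in the second paragraph, particularly the petal and cycle cases: one must convert the presence of a petal or a closed leaf into either a genuine periodic orbit arbitrarily close to $z_0$ or into an $f$-invariant Jordan neighborhood system (indifference), and this requires controlling the $f$-dynamics of a leaf in a way that is not obvious from the foliation picture alone. The non-integrality step is conceptually shorter but depends crucially on the same classification and on the sharp identity $i(\mathcal{F},z_0)=i(I,z_0)+1$, so any gap in Step 2 would propagate here.
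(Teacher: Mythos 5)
This proposition is stated in the paper as a cited result from Le Roux \cite{lerouxrotation}; the paper contains no proof of it, so there is no internal argument to compare your proposal against. Your write-up must therefore be judged on its own terms, and it has a concrete error at its center: closed leaves of a foliation locally transverse to $I$ are not $f$-invariant. Transversality means exactly the opposite --- the trajectory $t\mapsto f_t(z)$ of any non-singular point is homotopic to a path that \emph{crosses} the leaves --- so if $C$ is a closed leaf bounding a disk $U_C$ around $z_0$, then $f$ pushes $C$ strictly to one side, i.e.\ $f(C)\cap C=\emptyset$. The way to eliminate the cycle case is precisely via this pushing: the closed leaves give a fundamental system of Jordan domains with $f(\partial U_C)\cap\partial U_C=\emptyset$, which makes $z_0$ dissipative (contradicting the saddle hypothesis), or, in the area-preserving setting of the remark after Proposition~\ref{P: pre-dynamics of a foliation}, produces a wandering open set. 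Your stated argument (``an $f$-invariant Jordan curve; Brouwer translation theory $\ldots$ produces either periodic points $\ldots$ or forces $z_0$ to be indifferent'') does not get off the ground, and the petal case has the same issue: the boundary leaf of a petal is not $f$-invariant either.

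There is a second gap in the non-integrality step. From $i(\mathcal{F},z_0)=1$ you conclude that the singularity is of sink/source/cycle type, but Proposition~\ref{P: pre-dynamics of a foliation} only pins the index in the sink/source, cycle, petal, and saddle cases; the mixed case is left unassigned, so $i(\mathcal{F},z_0)=1$ does not exclude it. You would have to rule out the mixed case separately (it contains a petal, which you would again need to kill via the dissipativity or wandering-set argument above). Finally, the rationality step via prime ends is asserted rather than argued: ``the iterates $f^n(\gamma)$ have germs confined to one side of a fixed sector'' does not by itself force the prime-end orbit to be periodic; one needs either a finite cyclic arrangement of accessible ends coming from the finitely many hyperbolic sectors, or a direct argument that irrational rotation on the added circle would force accumulation, and neither is supplied. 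None of these gaps is obviously unfixable, but as written the proposal does not constitute a proof.
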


\subsection{Some generalizations of Poincar\'e-Birkhoff theorem}\label{S: pre-Poincare-Birkhoof}

In this section, we will introduce several generalizations of Poincar\'e-Birkhoff theorem.  An \emph{essential} loop in the annulus is a loop that is not homotopic to zero.

We first consider the homeomorphisms of closed annuli. Let $f$ be a homeomorphism of $\mathbb{T}^1\times[0,1]$ isotopic to  the identity, $I=(f_t)_{t\in[0,1]}$ be an identity isotopy of $f$. Let $\pi: \mathbb{R}\times[0,1]\rightarrow \mathbb{T}^1\times\mathbb{R}$ be the universal cover, $\widetilde{I}=(\widetilde{f}_t)_{t\in[0,1]}$ be the identity isotopy that lifts $I$,  $\widetilde{f}=\widetilde{f}_1$ be the lift of $f$ associated to $I$, and $p_1:\mathbb{R}^2\rightarrow \mathbb{R}$ be the projection on the first factor. The limits
$$\lim_{n\rightarrow \infty}\frac{p_1\circ\widetilde{f}^n(x,0)-x}{n}\quad \text{and } \lim_{n\rightarrow \infty}\frac{p_1\circ\widetilde{f}^n(x,1)-x}{n}$$
exists for all $x\in\mathbb{R}$, and do not depend on the choice of $x$. We  define the \emph{rotation number} of $f$ on  each boundary to be the corresponding limits. We define the \emph{rotation number} of  $z\in \mathbb{T}^1\times \mathbb{R}$ associated to  $I$ to be the limit
$$\lim_{n\rightarrow +\infty}\frac{p_1 (\widetilde{f}^n(\widetilde{z})-\widetilde{z})}{n}\in [-\infty,\infty],$$
 if this limit exists. We say that $f$ satisfies the \emph{intersection property} if $f\circ\gamma$ intersects $\gamma$, for every  simple essential loop $\gamma\subset\mathbb{T}^1\times(0,1)$. We have the following   generalizations of Poincar\'e-Birkhoff theorem:

\begin{prop} \cite{birkhoff}\label{P: pre-poincare-birkhoof-closed annulus}
Let $f$ be a homeomorphism of $\mathbb{T}^1\times[0,1]$ isotopic to  the identity and satisfying the intersection property. If the rotation number of $f$ on the two boundaries are different, then there exists a $q$-periodic orbit of rotation number $p/q$ for all irreducible  rational $p/q\in(\rho_1, \rho_2)$, where $\rho_1$ and $\rho_2$ are the rotation numbers of $f$ on the boundaries.
\end{prop}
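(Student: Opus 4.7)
The plan is to reduce the statement to a lifted fixed-point problem on the strip $\mathbb{R}\times[0,1]$ and then combine the intersection property with Brouwer's plane translation theorem (Section \ref{S: pre-Brouwer plane translation theorem}). First, I fix an irreducible $p/q\in(\rho_1,\rho_2)$ and lift $f$ via the identity isotopy to $\widetilde{f}:\mathbb{R}\times[0,1]\to\mathbb{R}\times[0,1]$, so that the boundary rotation numbers of $\widetilde{f}$ are exactly $\rho_1$ and $\rho_2$. Letting $T(x,y)=(x+1,y)$ generate the deck group, I set $\widetilde{g}:=T^{-p}\circ\widetilde{f}^{\,q}$. Then $\widetilde{g}$ is a lift of $f^{\,q}$ whose boundary rotation numbers are $q\rho_1-p<0<q\rho_2-p$. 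A fixed point of $\widetilde{g}$ in $\mathbb{R}\times[0,1]$ projects to a point $z\in\mathbb{T}^1\times[0,1]$ satisfying $f^{\,q}(z)=z$ with rotation number $p/q$; irreducibility of $p/q$ forces the $f$-period of $z$ to be exactly $q$.

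Hence it suffices to prove the following core claim: if $h$ is a homeomorphism of $\mathbb{T}^1\times[0,1]$ isotopic to the identity and satisfying the intersection property, and if $\widetilde{h}$ is a lift whose boundary rotation numbers have opposite signs, then $\widetilde{h}$ has a fixed point in $\mathbb{R}\times[0,1]$. To apply this to $h=f^{\,q}$, I must verify that the intersection property passes from $f$ to $f^{\,q}$: if a simple essential loop $\gamma\subset\mathbb{T}^1\times(0,1)$ satisfied $f^{\,q}(\gamma)\cap\gamma=\emptyset$, then $\gamma$ and $f^{\,q}(\gamma)$ would be disjoint essential loops, one strictly on each side. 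Tracking the positions of the intermediate iterates $f(\gamma),\dots,f^{\,q-1}(\gamma)$ together with the forced crossings imposed by the intersection property of $f$ leads to a combinatorial contradiction.

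For the core claim I argue by contradiction. Assuming $\widetilde{h}$ has no fixed point in $\mathbb{R}\times[0,1]$, I extend $\widetilde{h}$ to a fixed-point-free orientation-preserving homeomorphism $H$ of $\mathbb{R}^2$ by gluing horizontal translations on $\{y\ge 1\}$ and $\{y\le 0\}$, with translation directions dictated by the opposite signs of the boundary rotation numbers. Brouwer's plane translation theorem then produces, for any prescribed point in $\mathbb{R}\times(0,1)$, a translation domain whose boundary contains a proper embedded line $L$ separating $H(L)$ from $H^{-1}(L)$. A careful analysis shows that a suitable sub-arc of $L\cap(\mathbb{R}\times[0,1])$ projects to the annulus as an essential simple closed curve disjoint from its $h$-image, contradicting the intersection property.

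The main obstacles I anticipate are, first, the transfer of the intersection property from $f$ to $f^{\,q}$ (iterating a homeomorphism does not preserve such a global condition in general, so one must exploit the fact that $\rho_1$ and $\rho_2$ sit on opposite sides of $p/q$ to rule out an $f$-invariant chain of disjoint essential loops), and, second, verifying that the curve produced from the translation domain is genuinely essential in the annulus rather than null-homotopic — this requires showing the translation domain stretches across the strip because the $x$-displacement changes sign between $y=0$ and $y=1$.
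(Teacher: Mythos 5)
The paper does not prove this proposition; it cites \cite{birkhoff} and uses the statement as a black box, so there is no in-paper argument to compare against. I will therefore assess the proposal on its own merits.

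Your reduction to a fixed point of $\widetilde{g}=T^{-p}\widetilde{f}^{\,q}$ is the standard and correct first step, and the Brouwer-plane-translation strategy for the core fixed-point claim is a recognized route to Poincar\'e--Birkhoff type statements. The genuine gap is exactly where you flag it: you need the intersection property for $f^{q}$, and under the hypothesis of the proposition this does not follow from the intersection property for $f$. The ``combinatorial contradiction'' you sketch does not exist. Knowing that $f^{j}(\gamma)\cap f^{j+1}(\gamma)\neq\emptyset$ for $j=0,\dots,q-1$ places no constraint forcing $\gamma\cap f^{q}(\gamma)\neq\emptyset$; the intermediate curves $f(\gamma),\dots,f^{q-1}(\gamma)$ may freely oscillate across both $\gamma$ and $f^{q}(\gamma)$, and already for $q=2$ one can draw a configuration of three essential curves $\gamma,\ f(\gamma),\ f^{2}(\gamma)$ with $\gamma\cap f(\gamma)\neq\emptyset$, $f(\gamma)\cap f^{2}(\gamma)\neq\emptyset$, yet $\gamma\cap f^{2}(\gamma)=\emptyset$. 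The fact that $\rho_{1}<p/q<\rho_{2}$ does not close the gap either: $\gamma$ is not a periodic orbit and carries no rotation number that this hypothesis would constrain, so it gives no obstruction to $f^{q}(\gamma)$ lying strictly on one side of $\gamma$.

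This is precisely why the known arguments do not proceed by iterating. If $f$ is area-preserving then the intersection property for $f^{q}$ (indeed for $T^{-p}f^{q}$) is automatic, and your reduction goes through; but under the weaker intersection-property hypothesis one must produce the periodic orbit for $f$ directly, for instance via the Brouwer-theoretic and chain-recurrence arguments of \cite{Franks} or the transverse-foliation methods of \cite{lecalvezfeuilletage} that the paper invokes in Proposition~\ref{P: pre-Le Calvez's generalization of poincare-birkhoof}. As written, your proposal establishes the fixed-point case $(p,q)=(0,1)$ but leaves the periodic-orbit case unproved. A secondary (lesser) concern is the last step: a translation domain boundary $L$ is a proper line in $\mathbb{R}^{2}$, and extracting from it a simple essential loop in the annulus disjoint from its image requires a delicate cut-and-paste argument that your sketch does not supply; this is the hard core of the known Brouwer-based proofs and should not be waved through.
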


\bigskip

We also consider  homeomorphisms of open annuli. Let $f:\mathbb{T}^1\times \mathbb{R}\rightarrow\mathbb{T}^1\times\mathbb{R}$ be a homeomorphism  isotopic to the identity, and $I=(f_t)_{t\in[0,1]}$ be an identity isotopy of $f$. Let $\pi: \mathbb{R}\times\mathbb{R}\rightarrow \mathbb{T}^2\times\mathbb{R}$ be the universal cover, $\widetilde{I}=(\widetilde{f}_t)_{t\in[0,1]}$ be the identity isotopy that lifts $I$,  $\widetilde{f}=\widetilde{f}_1$ be the lift of $f$ associated to $I$, and $p_1:\mathbb{R}^2\rightarrow \mathbb{R}$ be the projection on the first factor. Similarly, we define the \emph{rotation number} of a recurrent point $z\in \mathbb{T}^1\times \mathbb{R}$ associated to  $I$ to be the limit
$$\lim_{n\rightarrow +\infty}\frac{p_1 (\widetilde{f}^n(\widetilde{z})-\widetilde{z})}{n}\in [-\infty,\infty],$$
 if this limit exists. We say that $f$ satisfies the \emph{intersection property} if $f\circ\gamma$ intersects $\gamma$, for every  simple essential loop $\gamma\subset\mathbb{T}^1\times\mathbb{R}$.  Then, we have the following  generalization of Poincar\'e-Birkhoff theorem:
\begin{prop}[\cite{Franks}, \cite{lecalvezfeuilletage}]\label{P: pre-Le Calvez's generalization of poincare-birkhoof}
Let $f:\mathbb{T}^1\times\mathbb{R}\rightarrow\mathbb{T}^1\times\mathbb{R}$ be a homeomorphism isotopic to  the identity and satisfying the intersection property. If there exist two recurrent points of rotation numbers $\rho_1,\rho_2\in [-\infty,+\infty]$ respectively such that $\rho_1<\rho_2$, then there exists a $q$-periodic orbit of rotation number $p/q$ for all irreducible  rational $p/q\in(\rho_1, \rho_2)$.
\end{prop}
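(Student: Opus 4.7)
The plan is to argue by contradiction using Le Calvez's foliated version of Brouwer's theorem (Theorem~\ref{T: pre-feuilletage}). Fix an irreducible rational $p/q\in(\rho_1,\rho_2)$, let $T\colon(x,y)\mapsto(x+1,y)$ denote the generator of the deck transformations, set $g=f^q$, and let $J$ be the identity isotopy of $g$ whose associated lift is $\widetilde{g}=\widetilde{f}^{\,q}\circ T^{-p}$. Fixed points of $\widetilde{g}$ project exactly to $q$-periodic points of $f$ of rotation number $p/q$, and equivalently they are the contractible fixed points of $g$ associated to $J$. Assume for contradiction that no such point exists.

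Applying Theorem~\ref{T: pre-feuilletage} to $(g,J)$ yields an oriented topological foliation $\mathcal{F}$ of $\mathbb{T}^1\times\mathbb{R}$ that is transverse to $J$; lifting produces a non-singular foliation $\widetilde{\mathcal{F}}$ of $\mathbb{R}^2$ each of whose leaves is a Brouwer line for the fixed-point-free homeomorphism $\widetilde{g}$. The two recurrent points $z_1,z_2$ of rotation numbers $\rho_1<p/q<\rho_2$ give rise, by concatenating for each $n\in\mathbb{Z}$ a positively transverse path from $\widetilde{g}^{\,n}(\widetilde{z}_i)$ to $\widetilde{g}^{\,n+1}(\widetilde{z}_i)$ homotopic to the trajectory along $J$, to two positively transverse bi-infinite trajectories $\widetilde{\alpha}_1,\widetilde{\alpha}_2\colon\mathbb{R}\to\mathbb{R}^2$. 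Their horizontal drifts are $q\rho_1-p<0$ and $q\rho_2-p>0$ respectively, while the projected paths $\alpha_1,\alpha_2$ in $\mathbb{T}^1\times\mathbb{R}$ are recurrent (since $z_1,z_2$ are $f$-recurrent).

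The core of the argument is to use these two transverse recurrent trajectories, whose drifts have opposite signs, to build a simple essential loop $\gamma\subset\mathbb{T}^1\times\mathbb{R}$ that is itself positively transverse to $\mathcal{F}$ (except possibly at finitely many corners). Using the recurrence of $\alpha_1$ and $\alpha_2$ together with the local product structure of $\mathcal{F}$, one selects long sub-arcs of $\alpha_1$ and $\alpha_2$ that return close to their respective starting points and joins them through short transverse arcs inside foliation charts, balancing the horizontal winding so that $\gamma$ wraps exactly once around the annulus. Because $\gamma$ is positively transverse to $\mathcal{F}$ and every $g$-trajectory is homotopic to a positively transverse path, the image $g(\gamma)$ lies strictly on the positive side of $\gamma$ relative to $\mathcal{F}$, hence $g(\gamma)\cap\gamma=\emptyset$. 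Finally, among the loops $\{f^i(\gamma)\}_{0\le i<q}$ two consecutive ones in the height order induced by $\gamma$ must be disjoint, which yields an essential simple loop $\gamma'$ with $f(\gamma')\cap\gamma'=\emptyset$, contradicting the intersection property of $f$.

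The main obstacle is the combinatorial step producing $\gamma$: one must simultaneously exploit recurrence to close up the trajectories, interpolate inside trivialization charts of $\mathcal{F}$ without breaking positive transversality, and ensure that the resulting loop is essential rather than contractible in the annulus. The passage from an obstruction for $g=f^q$ to one for $f$ itself is a secondary, but genuinely needed, subtlety that relies on reordering the iterates $f^i(\gamma)$ by their relative positions.
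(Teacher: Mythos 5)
The paper does not actually prove this proposition; it is cited directly from \cite{Franks} and \cite{lecalvezfeuilletage}, so there is no in-paper proof to compare against. Your overall framework (argue by contradiction, apply the foliation theorem of \cite{lecalvezfeuilletage} to $g=f^q$ with the lift $T^{-p}\widetilde f^{\,q}$, then use the two recurrent trajectories with drifts of opposite signs) is the correct Le Calvez--style approach. However, your pivotal step contains a genuine error.

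The claim that a simple essential loop $\gamma$ positively transverse to $\mathcal{F}$ satisfies $g(\gamma)\cap\gamma=\emptyset$ is false. What is true is that a \emph{leaf} of $\widetilde{\mathcal{F}}$ is a Brouwer line for $\widetilde g$, and in particular a closed leaf $L$ of $\mathcal{F}$ (necessarily essential, since $\mathcal F$ is singularity-free) satisfies $g(L)\cap L=\emptyset$. A positively transverse loop does not share this property: positive transversality of the $g$-trajectories controls how they cross \emph{leaves}, not how they cross another transverse curve. Concretely, take $\mathcal F$ the vertical foliation of $\mathbb{T}^1\times\mathbb{R}$ oriented upward, $\widetilde g(x,y)=(x+1,y)$ (so $g=\mathrm{Id}$ and $\mathcal F$ is a legitimate transverse foliation for the rotation isotopy), and $\gamma=\mathbb{T}^1\times\{0\}$, which is positively transverse; then $g(\gamma)=\gamma$. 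Two positively transverse paths can perfectly well cross one another in either direction, since at a crossing point the sign of $\widetilde\gamma'\times\widetilde\beta'$ is unconstrained even when both $\widetilde\gamma'$ and the trajectory tangent $\widetilde\beta'$ cross the leaf positively. So the core of your contradiction evaporates: the curve $\gamma$ you assemble from sub-arcs of $\alpha_1,\alpha_2$ has no reason to be disjoint from its $g$-image.

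The correct argument must exploit the leaves rather than transverse curves: one shows that the existence of recurrent trajectories with drifts of opposite sign forces $\mathcal F$ to have either a closed leaf or a leaf joining the two ends of the annulus, and in either case the lifted leaves $T^k(\widetilde\phi)$ are Brouwer lines that both trajectories would have to cross in opposite senses, which is impossible for positively transverse paths. This case analysis over the possible leaf structures (closed leaf, essential line leaf, Reeb-type configurations) is the real technical content and is absent from your sketch. A secondary but genuine issue is the passage from $g(\gamma)\cap\gamma=\emptyset$ to a contradiction of the intersection property of $f$ rather than of $f^q$; the ``reorder the $f^i(\gamma)$ by height'' remark is circular as stated, since $f^i(\gamma)\cap f^{i+1}(\gamma)=\emptyset$ is equivalent to $\gamma\cap f(\gamma)=\emptyset$, and a correct reduction requires a saturation argument along the lines of taking $C=\bigcup_{i=0}^{q-1}f^i(D)$ with $D$ the region below $\gamma$.
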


\begin{rmq}
 The result is also true for  area preserving homeomorphisms of the closed or half closed annulus by considering a symmetry.
\end{rmq}

\subsection{Topologically monotone periodic orbits for annulus homeomorphisms}\label{S: pre-topologically monotone periodic orbits}

In this section, we will recall the braid type of a periodic orbit and  the existence of the topologically monotone periodic orbits for  annulus homeomorphisms under some conditions. More details can be found in \cite{Boyland}.

 Denote by $\mathbb{A}$ the closed annulus $\mathbb{T}^1\times[0,1]$.  Let $f$ be a homeomorphism of ${\mathbb{A}}$ that preserves the orientation and each boundary circle, and $\widetilde{f}$ be a lift of $f$ to the universal cover $\widetilde{\mathbb{A}}=\mathbb{R}\times[0,1]$. Given $\widetilde{z}\in \widetilde{\mathbb{A}}$,  we define its rotation number under $\widetilde{f}$ as
 $$\rho(\widetilde{z},\widetilde{f})=\lim_{n\rightarrow\infty}\frac{p_1(\widetilde{f}^n(\widetilde{z}))-p_1(\widetilde{z})}{n},$$
if this limit exists, where $p_1$ is the projection onto the first factor. We define the rotation set of $\widetilde{f}$ to be
$$\rho(\widetilde{f})=\{\rho(\widetilde{z},\widetilde{f}), \widetilde{z}\in\widetilde{A}\}.$$
In particular, if $I$ is an identity isotopy of $f$ and $\widetilde{f}$ is the lift of $f$ associated to $I$, this definition of the rotation number coincides with the definition of the rotation number in Section \ref{S: pre-Poincare-Birkhoof}.

 Fix a copy of the closed annulus minus $n$ interior points, and denote it by ${\mathbb{A}}_n$. Let $G_n$ be the group of isotopy classes of orientation preserving homeomorphism of $\mathbb{A}_n$.  If $O$ is an $n$-periodic orbit of $f$ in the interior of ${\mathbb{A}}$, then there is an orientation preserving homeomorphism $h:{\mathbb{A}}\setminus O\rightarrow {\mathbb{A}}_n$. Philip Boyland defined the \emph{braid type} $bt(O,f)$ to be the isotopy class of $h\circ f|_{{\mathbb{A}}\setminus O}\circ h^{-1}$ in $G_n$, this isotopy class is independent of the choice of $h$. If $O$ is an $n$-periodic orbit of $f$ contained in a boundary circle of ${\mathbb{A}}$, he extends $f$ near this boundary and gets a homeomorphism $\overline{f}$ also on a closed annulus. Then $O$ is in the interior of this new annulus. The braid type $bt(O,\overline{f})$ is independent of the choice of the extension, and Boyland defined $bt(O,f)=bt(O,\overline{f})$.

Let $p/q$ be an irreducible  positive rational, and  $\widetilde{T}_{p/q}$ be the homeomorphism of $\widetilde{\mathbb{A}}$ defined by $(x,y)\mapsto(x+p/q,y)$. It descends to a homeomorphism $T_{p/q}$ of $\mathbb{A}$. We denote by $\alpha_{p/q}$ the braid type $bt(O,T_{p/q})$, where $O$ is any periodic orbit of $T_{p/q}$. We say that a $q$-periodic orbit $O$ of $f$ is a $(p,q)$-periodic orbit if $\rho(\widetilde{z},\widetilde{f})=p/q$ for  any  $\widetilde{z}$  in the  lift of $O$. We say that a $(p,q)$-periodic orbit $O$ is \emph{topologically monotone} if $bt(O,f)=\alpha_{p/q}$.
We define the \emph{Farey interval} $I(p/q)$ of $p/q$ to be the closed interval
$$[\max\{r/s: r/s<p/q, 0<s<q, \text{ and } (r,s)=1\}, \min\{r/s: r/s>p/q, 0<s<q, \text{ and } (r,s)=1\}].$$
 In particular, the Farey interval of $1/q$ is equal to $[0,1/(q-1)]$.

Boyland proved the following result:

\begin{prop}[\cite{Boyland}]\label{P: pre-boyland closed annulus}
If $f$ is an orientation and boundary preserving homeomorphism of the closed annulus, and $p/q\in\rho(\widetilde{f})$ is an irreducible positive rational, then $f$ has a $(p,q)$-topologically monotone periodic orbit. If $f$ has a $(p,q)$-orbit that is not topologically monotone, then $I(p/q)\subset \rho(\widetilde{f})$.
\end{prop}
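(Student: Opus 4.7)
The plan is to combine Poincar\'e--Birkhoff (Proposition~\ref{P: pre-poincare-birkhoof-closed annulus}) with the Thurston--Nielsen classification of surface homeomorphisms, applied to $f$ restricted to the complement of a candidate periodic orbit. The key translation is that a $(p,q)$-periodic orbit of $\widetilde f$ corresponds, after composing with $\widetilde T_{p/q}^{-1}$ and passing to the $q$-th iterate, to a fixed point of a lift of the homeomorphism $g:=T_{p/q}^{-1}\circ f^q$.

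First I would produce \emph{some} $(p,q)$-periodic orbit. Since $p/q\in\rho(\widetilde f)$ is realised by some point and $p/q$ is positive, the boundary rotation numbers of $\widetilde f$ (or, if needed, those of a symmetrised extension to $\mathbb{T}^1\times[-1,1]$ obtained by reflection, using the boundary-preservation hypothesis) strictly bracket $p/q$ or can be made to do so after composing with $\widetilde T_{p/q}^{-1}$. Proposition~\ref{P: pre-poincare-birkhoof-closed annulus} then yields a $q$-periodic point of $f$ with rotation number $p/q$, i.e.\ a $(p,q)$-orbit $O$.

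Next, among all $(p,q)$-periodic orbits I would select one of minimal braid type in the natural partial order on braid types (finer braids lying below coarser ones via Thurston--Nielsen reduction). For such a minimal orbit $O$, I would argue that the Thurston--Nielsen canonical representative of $f$ rel $O$ must be finite order: otherwise, a pseudo-Anosov piece or a non-trivial reducing family would produce additional $(p,q)$-periodic orbits whose braid types lie strictly below $bt(O,f)$, contradicting minimality. The only finite-order canonical form with rotation number $p/q$ is $T_{p/q}$, which forces $bt(O,f)=\alpha_{p/q}$, i.e.\ $O$ is topologically monotone. For the second statement, assume $O$ is a $(p,q)$-orbit with $bt(O,f)\neq\alpha_{p/q}$ and let $\Phi$ be the Thurston--Nielsen canonical form of $f$ rel $O$. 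A pseudo-Anosov component with average rotation $p/q$ admits, via its Markov partition and transition matrix, periodic orbits of every irreducible rotation number in the Farey interval $I(p/q)$, while a non-trivial reducing family produces invariant sub-annuli whose boundary rotation numbers straddle $p/q$; in the reducible case one reapplies Proposition~\ref{P: pre-poincare-birkhoof-closed annulus} on each sub-annulus. Since rotation numbers are invariant under isotopy rel the marked orbit, these periodic points persist for $f$, giving $I(p/q)\subset\rho(\widetilde f)$.

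The main obstacle is the transfer of rotation-number information between $f$ and its Thurston--Nielsen representative on $\mathbb{A}\setminus O$: rotation numbers are a priori not an isotopy invariant, so one must use the combinatorial structure (Markov partitions, train tracks, transition matrices) to exhibit explicit periodic orbits of the representative realising the prescribed rotation numbers, and then appeal to Asimov--Franks--Handel type arguments to ensure these orbits survive the isotopy back to $f$ with the same rotation number. This, together with making the minimal-braid-type selection in Step~2 precise, constitutes the technical core of the argument.
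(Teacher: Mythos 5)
This proposition is cited from Boyland's paper (\cite{Boyland}) and is not proved in the present paper, so there is no in-text proof to compare against. What follows is an assessment of your sketch on its own terms.

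Your general strategy --- Thurston--Nielsen classification rel a candidate orbit, plus a persistence/shadowing argument of Asimov--Franks--Handel type to transport periodic orbits from the canonical representative back to $f$ --- is indeed the spirit of Boyland's original argument, and you correctly identify the persistence step as the technical core. However, the first step of your plan has a real gap. You want to produce \emph{some} $(p,q)$-periodic orbit by Poincar\'e--Birkhoff (Proposition~\ref{P: pre-poincare-birkhoof-closed annulus}), but that proposition requires the intersection property, which is not among the hypotheses of the proposition you are trying to prove: $f$ is only assumed to be an orientation and boundary component preserving homeomorphism of the closed annulus, not area preserving. Moreover, even when the intersection property holds, the boundary rotation numbers need not strictly bracket $p/q$ (both boundary circles may have rotation number exactly $p/q$, or $p/q$ may coincide with one of them), and composing with $\widetilde T_{p/q}^{-1}$ shifts \emph{all} rotation numbers by the same amount, so it cannot create a bracketing that was not already there. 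The correct tool at this stage is a realization theorem in the style of Franks (rational points in the rotation set of an annulus or torus homeomorphism are realized by periodic orbits), or a direct appeal to the Nielsen--Thurston structure to produce the first $(p,q)$-orbit; Poincar\'e--Birkhoff alone does not do the job under the stated hypotheses.

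Your Step~2, choosing a $(p,q)$-orbit of ``minimal braid type'', is also under-specified. You would need to define the partial order carefully, establish that it is well-founded so that a minimal element exists, and then justify why a pseudo-Anosov or reducible canonical form forces the existence of a $(p,q)$-orbit of strictly smaller braid type. Boyland's published argument does not proceed by such a minimization; it analyzes the canonical form of $f$ rel any given $(p,q)$-orbit $O$ directly and shows that either $O$ is already monotone or the pseudo-Anosov/reducible structure realizes the entire Farey interval $I(p/q)$ (which simultaneously gives the second assertion of the proposition). If you want to keep the minimization scheme, you should spell out the ordering and its well-foundedness explicitly; otherwise, the more economical route is the direct analysis.
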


\subsection{Annulus covering projection}\label{S: pre-annulus covering projection}

Let $M$ be an oriented surface, $X_0\subset M$ be a closed set, and $z_0\in M\setminus X_0$.  Denote by $M_0$ the connected component of $M\setminus X_0$ containing $z_0$.  Let $V\subset U\subset M_0$ be two small Jordan domains containing $z_0$. Write $\dot{U}=U\setminus \{z_0\}$ and $\dot{V}=V\setminus\{z_0\}$. Fix $z_1\in \dot{V}$. Let $\gamma\subset \dot{V}$ be a simple loop at $z_1$ such that the homotopic class $[\gamma]$ of $\gamma$ in $\dot{V}$ generates $\pi_1(\dot{V}, z_1)$. Let $i:\dot{U}\rightarrow M_0\setminus\{z_0\}$ be the inclusion, then $i_{*}\pi_1(\dot{U},z_1)$ is a subgroup of $\pi_1(M_0\setminus\{z_0\},z_1)$. Then, there exists a covering projection $\pi:(\widetilde{M},\widetilde{z}_1)\rightarrow (M_0\setminus\{z_0\},z_1)$ such that $\pi_{*}\pi_1(\widetilde{M},\widetilde{z}_1)=i_{*}\pi_1(\dot{U},z_1)$ by Theorem 2.13 in \cite{spanier}. Moreover, the fundamental group of $\widetilde{M}$ is isomorphic to $\mathbb{Z}$, so $\widetilde{M}$ is an annulus.

Let $\widetilde{\dot{U}}$ be the component of $\pi^{-1}(\dot{U})$ containing $\widetilde{z}_1$. Then  $\pi_{*}\pi_1(\widetilde{\dot{U}},\widetilde{z}_1)=\pi_1(\dot{U},z_1)$ and the restriction of $\pi$ to $\widetilde{\dot{U}}$ is a homeomorphism between $\widetilde{\dot{U}}$ and $\dot{U}$ by Theorem 2.9 in \cite{spanier}. Consider the ideal-ends compactification of $\widetilde{M}$, and denote by $\star$ the end in $\widetilde{\dot{U}}$. Then $\pi|_{\widetilde{\dot{U}}}$ can be extended continuously to a homeomorphism between $\widetilde{\dot{U}}\cup\{\star\}$ and $U$. We denote it by $h$.

\bigskip

If $f$ is an orientation preserving homeomorphism of  $M_0$, and  $z_0$ is a fixed point of $f$. By choosing sufficiently small $V$, we can suppose that $f(V)\subset U$. We know that $(f\circ\pi)_{*}\pi_1(\widetilde{M},\widetilde{z}_1)=i_{*}\pi_1(\dot{U},f(z_1))=\pi_{*}\pi_1(\widetilde{M},h^{-1}(f(z_1)))$, then  we deduce by Theorem 2.5 of \cite{spanier} that there is a lift $\widetilde{f}$ of $f$ to $\widetilde{M}$ that sends $\widetilde{z}_1$ to $h^{-1}(f(z_1))$. This map $\widetilde{f}$ is an homeomorphism because $\widetilde{f}_{*}\pi_1(\widetilde{M},\widetilde{z}_1)=\pi_1(\widetilde{M},h^{-1}(f(z_1))$ (see Corollary 2.7 in \cite{spanier}). Moreover, $\widetilde{f}$ can be extend continuously to a homeomorphism of $\widetilde{M}\cup\{\star\}$ that fixes $\star$.

In particular, if $f$ is isotopic to the identity, and if $I=(f_t)_{t\in[0,1]}$ is an identity isotopy of $f$ fixing $z_0$, then  there exists a lift $\widetilde{f}_{\cdot}( \cdot ): I\times\widetilde{M}\rightarrow \widetilde{M}$ of the continuous map $(t,\widetilde{z})\mapsto f_t(\pi(\widetilde{z}))$ such that $\widetilde{f}_0$ is equal to the identity, because $\pi$ is a covering projection. Moreover, by choosing $V$ small enough, we know that $\widetilde{f}_t|_{\widetilde{\dot{V}}}$ is conjugate to $f_t|_{\dot{V}}$ for $t\in[0,1]$, where $\widetilde{\dot{V}}$ is the component of $\pi^{-1}(\dot{V})$ containing $\widetilde{z}_1$. Then $(\widetilde{f}_{t})_{*}\pi_1(\widetilde{M},\widetilde{z}_1)=\pi_1(\widetilde{M},h^{-1}(f_t(z_1))$, therefore $\widetilde{f}_t$ is a homeomorphism by Corollary 2.7 in \cite{spanier}. We have indeed lifted  $I$ to an identity isotopy $\widetilde{I}=(\widetilde{f}_t)_{t\in[0,1]}$. Moreover, $\widetilde{f}_t$ can be extended continuously to a homeomorphism of $\widetilde{M}\cup\{\star\}$ that fixes $\star$, and we get an isotopy on $\widetilde{M}\cup\{\star\}$ that fixes $\star$. We still denote by $\widetilde{f}_t$ the homeomorphism of $\widetilde{M}\cup\{\star\}$ and  by $\widetilde{I}$ the identity isotopy on $\widetilde{M}\cup\{\star\}$ when there is no ambiguity. We call $\widetilde{I}$ the natural lift of $I$ to $\widetilde{M}\cup\{\star\}$, and $\widetilde{f}=\widetilde{f}_1$ the lift of $f$ to $\widetilde{M}\cup\{\star\}$ associated to $I$.

 Moreover, if $I$ is a maximal isotopy, $\widetilde{f}$ has no contractible fixed point associated to $\widetilde{I}$ on $\widetilde{M}$ and $\widetilde{I}$ is also a maximal isotopy. Recall that $\pi_{*}\pi_1(\widetilde{M},\widetilde{z}_1)=i_{*}\pi_1(\dot{U},z_1)$. So, $\pi(O)$ is a periodic orbit of type $(p,q)$ associated to $I$ at $z_0$ for  all periodic orbit $O$ of type $(p,q)$ associated to $\widetilde{I}$ at $\star$, where $\frac{p}{q}\in\mathbb{Q}$ is irreducible.

\bigskip

 Let $\mathcal{F}$ be an oriented foliation on $M_0$ such that $z_0$ is a sink (resp. source). Then there exists a lift $\widetilde{\mathcal{F}}$ of $\mathcal{F}|_{M_0\setminus\{z_0\}}$ to $\widetilde{M}$, and $\star$ is a sink (resp. source) of $\widetilde{\mathcal{F}}$.  Denote by  $W$   the attracting (resp. repelling) basin of $z_0$ for  $\mathcal{F}$, and by $\widetilde{W}$ the attracting (resp. repelling) basin of $\star$ for $\widetilde{\mathcal{F}}$. Write $\dot{W}=W\setminus\{z_0\}$,  $\dot{\widetilde{W}}=\widetilde{W}\setminus\{\star\}$. Let $\widetilde{z}_1\in\dot{\widetilde{W}}$ be a point sufficient close to $\star$.  Then $(\pi|_{\dot{\widetilde{W}}})_{*}\pi_1(\dot{\widetilde{W}},\widetilde{z}_1)=\pi_1(\dot{W},\pi(z_1))$, and hence $\pi|_{\dot{\widetilde{W}}}$ is a homeomorphism between $\dot{\widetilde{W}}$ and $\dot{W}$ by Corollary 2.7 in \cite{spanier}, and can be extended continuously to a homeomorphism between $\widetilde{W}$ and $W$.

\subsection{Extend  lifts of a homeomorphism to the boundary}\label{S: pre-extend lifts to the boundary}
In this section, let $M$ be a plane,  $f$ be an orientation preserving homeomorphism of $M$, and $X$ be a invariant, discrete, closed subset of $M$ with at least $2$ points.

We consider the Poincar\'e's disk model for the hyperbolic plane $H$, in which model, $H$ is identified with the interior of the unit disk and the geodesics are segments of Euclidean circles and straight lines that meet the boundary perpendicularly. A choice of hyperbolic structure on $M\setminus X$ provides an identification of the universal cover of $M\setminus X$ with $H$. A detailed description of the hyperbolic structures can be found in  \cite{Casson}. The compactification of the interior of the unit disk by the unit circle induces a compactification of $H$ by the circle $S_{\infty}$. Let $\pi: H\rightarrow M\setminus X$ be the universal cover. Then, $f|_{M\setminus X}$ can be lifted to homeomorphisms of $H$. Moreover, we have the following result:

\begin{prop}\cite{Handel}
Each lift $\widehat{f}$ of $f|_{M\setminus X}$ extends uniquely to a homeomorphism of $H\cup S_{\infty}$.
\end{prop}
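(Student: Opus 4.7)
The plan is to use the equivariance of $\widehat{f}$ under the deck transformation group $\Gamma$ of $\pi\colon H\to M\setminus X$, which acts on $H$ by hyperbolic isometries and therefore already extends canonically to $S_{\infty}$. Since $\widehat{f}$ is a lift of $f|_{M\setminus X}$, there is an automorphism $\phi$ of $\Gamma$ satisfying $\widehat{f}\circ\gamma=\phi(\gamma)\circ\widehat{f}$ for every $\gamma\in\Gamma$. Because $X$ is closed and discrete in the plane $M$ with $|X|\ge 2$, the surface $M\setminus X$ is hyperbolic, so $\Gamma$ is a non-elementary Fuchsian group; each of its elements extends uniquely to a homeomorphism of $H\cup S_{\infty}$, and the set $\Lambda\subset S_{\infty}$ of hyperbolic and parabolic fixed points of elements of $\Gamma$ is a countable dense subset of $S_{\infty}$.

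I would first define the candidate extension on $\Lambda$. For each hyperbolic $\gamma\in\Gamma$ with attracting fixed point $\xi_{\gamma}^{+}$ and repelling fixed point $\xi_{\gamma}^{-}$, set $\widehat{f}(\xi_{\gamma}^{\pm})=\xi_{\phi(\gamma)}^{\pm}$, and define the analogous prescription on the (unique) fixed point of each parabolic element. Since $\phi$ is a group automorphism sending hyperbolic elements to hyperbolic and parabolic to parabolic, this prescription is well-defined and tautologically equivariant: $\widehat{f}(\gamma\cdot\xi)=\phi(\gamma)\cdot\widehat{f}(\xi)$ on $\Lambda$.

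The core step is to show that this prescription extends continuously to all of $S_{\infty}$. I would argue by a nested-intervals argument: the circular order on $\Lambda$ is preserved by $\widehat{f}$, because $\widehat{f}$ is an orientation preserving homeomorphism of $H$ that carries the axis of a hyperbolic $\gamma$ to a properly embedded topological line in $H$, whose two ends necessarily accumulate on the fixed points of $\phi(\gamma)$. Hence the combinatorial structure of the cyclic order of $\Lambda$ is preserved. Pairs of points of $\Lambda$ bracket arcs of $S_{\infty}$ whose diameters shrink to zero, and their images bracket arcs that also shrink to a single point, which defines $\widehat{f}$ at an arbitrary $\xi\in S_{\infty}$ and simultaneously yields its continuity there. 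The resulting map $H\cup S_{\infty}\to H\cup S_{\infty}$ is then a continuous bijection between compact Hausdorff spaces, hence a homeomorphism. Uniqueness is immediate, since $H$ is dense in $H\cup S_{\infty}$ and continuous extensions are determined by their values on $H$.

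The hardest part will be justifying the nested-intervals / monotonicity step, namely that the extension constructed on $\Lambda$ is uniformly continuous with respect to the circular topology on $S_{\infty}$. A generic homeomorphism of $H$ need not extend to $S_{\infty}$ at all, and the decisive input is precisely that $\widehat{f}$ descends to a homeomorphism of the quotient $M\setminus X$: the $\Gamma$-equivariance supplies a ``bounded distortion at infinity'' property (via the fact that $\widehat{f}$ sends a locally finite, $\Gamma$-equivariant fundamental polygon to another such set whose translates cover $H$) which forces the existence of the continuous extension.
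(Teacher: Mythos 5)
The paper does not give its own proof of this proposition: it cites Handel and remarks only that Handel proves it in Section 3 of \cite{Handel} when $X$ is infinite, and that the same argument covers the finite case. So the benchmark here is whether your outline would actually constitute a proof, and I do not think it does as written.

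Your skeleton is the standard one (equivariance under $\Gamma$, define the boundary map on fixed points of group elements via $\xi_{\gamma}^{\pm}\mapsto\xi_{\phi(\gamma)}^{\pm}$, show it preserves the cyclic order, then extend by a nested-arcs argument), and the order-preservation step via linking of axes is fine. But there are two genuine gaps. First, the density of $\Lambda$ in $S_{\infty}$ requires the Fuchsian group $\Gamma$ to be of the first kind, i.e. to have limit set all of $S_{\infty}$; this forces a specific choice of hyperbolic structure (cusps at all punctures and at the plane's end, no funnels), and for infinite $X$ it is not automatic that such a complete structure of the first kind exists and is the one being used --- you never address this, yet without it your candidate map is only defined on the closure of the limit set, a Cantor set. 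Second, and more importantly, the step you yourself flag as the hardest --- that arcs of $S_{\infty}$ bracketed by points of $\Lambda$ with small diameter are sent to arcs with small diameter --- is exactly the content of the proposition and is not proven, only attributed to a ``bounded distortion at infinity'' property coming from a locally finite equivariant fundamental polygon. For an infinitely generated $\Gamma$ there is no finite-sided fundamental polygon, and even in the finitely generated cusped case the fundamental domain is noncompact, so the usual quasi-isometry argument does not run verbatim; this is precisely where Handel has to work. As it stands, the proposal identifies the right ingredients and the right difficulty but stops short of resolving it, so the argument is incomplete.
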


 \begin{rmq} When $X$ has infinitely many points, Michael Handel gave a proof  in Section 3 of \cite{Handel}; when $X$ has finitely many points, the situation is easier and Handel's proof still works.
\end{rmq}

\bigskip

In particular, suppose that $z_0$ is an isolated point in $X$ and is a fixed point of $f$. Let  $\gamma$ be a sufficiently small circle near $z_0$ whose lifts to $H$ are horocycles. Fix one lift $\widehat{\gamma}$ of $\gamma$. Denote by $P$ the end point of $\widehat{\gamma}$ in $S_{\infty}$.    Fix $z_1\in\gamma$ and a lift $\widehat{z}_1$ of $z_1$ in $\widehat{\gamma}$. Let $\Gamma$ be the group of parabolic covering translations  that fix $\widehat{\gamma}$, and $T$ be the parabolic covering translations  that generates $\Gamma$. Then, $\pi$ descends to a annulus cover $\pi': (H/\Gamma, \widetilde{z}_1)\rightarrow (M\setminus X, z_1)$, where $\widetilde{z}_1=\{T^n(\widehat{z}_1):n\in\mathbb{Z}\}$. Also, $\widehat{z}\mapsto\{T^n(\widehat{z}):n\in\mathbb{Z}\}$ defines a universal cover $\pi'': H\rightarrow H/\Gamma$.

\begin{figure}[h]
    \begin{minipage}[t]{0.45\linewidth}
      \centering
   \includegraphics[width=4cm]{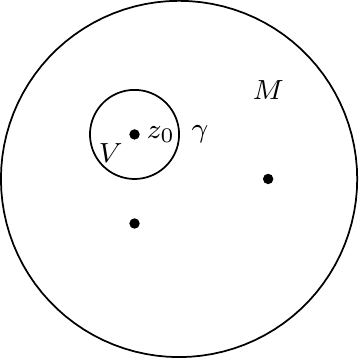}
    \end{minipage}
  \hspace{0.1\linewidth}
 \begin{minipage}[t]{0.45\linewidth}
      \centering
     \includegraphics[width=3.8cm]{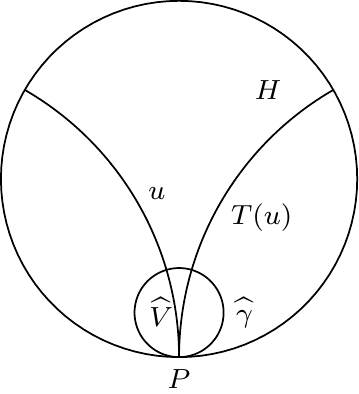}
    \end{minipage}
\end{figure}

Let $V$ be the disk containing $z_0$ and bounded by $\gamma$, $\widehat{V}$ be the disk bounded by $\widehat{\gamma}$ which is a component of $\pi^{-1}(V\setminus \{z_0\})$. We know that $\pi''(\widehat{V})$ is an annulus with $\pi''(\widehat{\gamma})$ as one of its boundary. We add a point $\star$ at the other end, and get a disk $ \displaystyle{\widetilde{V}=\pi''(\widehat{V})\cup\{\star\}}$.  As in the previous section, $\pi'|_{\pi''(\widehat{V})}$ extends continuously to a homeomorphism between $\displaystyle{\widetilde{V}}$ and $V$, and $f$ can be lifted to  a homeomorphism $\widetilde{f}$ of $H/\Gamma\cup\{\star\}$ fixing $\star$. Let $\widehat{f}$ be a lift of $\widetilde{f}|_{H/\Gamma}$ to $H$, it is also a lift of $f|_{M\setminus X}$ and satisfies $\widehat{f}\circ T=T\circ\widehat{f}$. Moreover, both $\widehat{f}$  and $T$ extend continuously to  homeomorphisms of $H\cup S_{\infty}$ fixing $P$. We denote still by $\widehat{f}$  and $T$ the two extensions respectively. The formula $\widehat{f}\circ T=T\circ\widehat{f}$ is still satisfied. So, $\widehat{f}|_{H\cup S_{\infty}\setminus\{P\}}$ descends to a homeomorphism of $(H\cup S_{\infty}\setminus\{P\})/\Gamma$. Because $(H\cup S_{\infty}\setminus\{P\})/\Gamma$ is homeomorphic to a compactification of $H/\Gamma\cup \{\star\}$ by adding a circle at infinity $S_{\infty}$, one knows that $\widetilde{f}$ extends continuously to a homeomorphism of $H/\Gamma\cup\{\star\}\cup S_{\infty}$.

\section{Proof of the main theorem}\label{S: chapter3-homeomorphism case}

Let $M$ be an oriented surface,  $f:M\rightarrow M$ be an area preserving homeomorphism of $M$ isotopic to the identity, and $z_0$ be an isolated fixed point of $f$ such that $i(f,z_0)=1$.  Let $I$ be an identity isotopy of $f$ fixing $z_0$ such that its rotation set, which was defined in section \ref{S: pre-local rotation set}, is reduced to an integer $k$.

We say that the property \textbf{P)} holds for $(f,I,z_0)$ if there exists $\varepsilon >0$, such that either for all irreducible  $p/q\in(k,k+\varepsilon)$, or for all irreducible  $p/q\in(k-\varepsilon, k)$, there exists a contractible periodic orbit $O_{p/q}$ of type $(p,q)$ associated to $I$ at $z_0$, such that  $\mu_{O_{p/q}}\rightarrow \delta_{z_0}$ as $p/q\rightarrow k$, in the weak-star topology, where $\mu_{O_{p/q}}$ is the invariant probability measure  supported on $O_{p/q}$,

Our aim of this section is to prove the following result:
\begin{thm}[Theorem \ref{T: main-index 1 and rotation 0 implies  accumulated by periodic points}]
Under the previous assumptions, if one of the following situations occurs,
\begin{itemize}
\item[a)]
$M$ is a plane, $f$ has only one fixed point $z_0$, and  has another periodic orbit besides $z_0$;
 \item[b)] the total area of $M$ is finite,
 \end{itemize}
then the property \textbf{P)} holds for $(f,I,z_0)$.
\end{thm}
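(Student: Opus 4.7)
The overall strategy is to turn $z_0$ into a boundary circle of an annulus on which the induced dynamics has rotation number $0$, to locate on the other side of that annulus a recurrent point of different rotation number, and then to apply a Poincar\'e--Birkhoff type theorem; area preservation will then force the resulting $(p,q)$-orbits to concentrate on $z_0$ as $p/q\to k$. First, Proposition \ref{P: pre-rotation set}(i) lets me replace $I$ by $J^{-k}I$, where $J$ is a local isotopy around $z_0$ of rotation number $1$, thereby reducing to $k=0$ (a $(p,q)$-orbit for the new isotopy is of type $(p+kq,q)$ for the original). Applying Jaulent's Theorem \ref{T: pre-exist max isotopy} I obtain a maximal extension $(X,I_X)$ with $z_0\in X$ together with a transverse singular foliation $\mathcal{F}$; Proposition \ref{P: pre-relations of indices between  foliation and the others} combined with Proposition \ref{P: pre-relation of indices between isotopy and homeo} gives $i(\mathcal{F},z_0)=i(I_X,z_0)+1=1$. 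Area preservation (remark after Proposition \ref{P: pre-dynamics of a foliation}) excludes petals, cycles and mixed singularities, so $z_0$ is a sink or a source of $\mathcal{F}$, and by Proposition \ref{P: pre-rotation type is unique} this rotation type is intrinsic; after possibly replacing $f$ by $f^{-1}$ I may assume $z_0$ is a sink.

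I then pass to the annular cover of Section \ref{S: pre-annulus covering projection} associated to a small Jordan neighbourhood $U$ of $z_0$ inside the attracting basin of $z_0$ for $\mathcal{F}$. The lift $\widetilde{f}$ extends continuously to the added end $\star$ (which replaces $z_0$), giving a homeomorphism of $\widetilde{M}\cup\{\star\}$ that preserves a lift $\widetilde{\mathcal{F}}$ for which $\star$ is again a sink, and is conjugate to $f|_U$ on the basin of $\star$. Compactifying the far end of $\widetilde{M}$ by the prime-ends circle of Section \ref{S: pre-prime-ends Compactification} (in case (b)) or by the hyperbolic ideal boundary of Section \ref{S: pre-extend lifts to the boundary} (in case (a), where $X$ can be taken finite), I obtain an area-preserving homeomorphism $\overline{f}$ of a closed annulus $\mathbb{A}$. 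The condition $\rho_s(I,z_0)=\{0\}$, via the blow-up description and Proposition \ref{P: pre-rotation set}(iv)--(v), forces the rotation number $\rho_-$ of $\overline{f}$ on the boundary circle corresponding to $\star$ to equal $0$.

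The core difficulty, and what I expect to be the main obstacle, is to prove that the rotation number $\rho_+$ of $\overline{f}$ on the opposite boundary is nonzero. In case (b), finiteness of the total area yields Poincar\'e recurrence in $M\setminus\{z_0\}$: orbits that enter a small $\mathcal{F}$-basin of $\star$ cannot remain there forever without violating area preservation, so in the annular cover I expect to locate a recurrent point whose $\widetilde{f}$-iterates drift along the annulus at a definite nonzero rate, forcing $\rho_+\neq 0$. In case (a), Brouwer's plane translation theorem (Section \ref{S: pre-Brouwer plane translation theorem}) applied to the assumed second periodic orbit of $f$ produces the needed recurrent point with nontrivial rotation in the lift. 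In both cases, $\overline{f}$ inherits the intersection property from area preservation.

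Given $\rho_-=0\ne\rho_+$, Proposition \ref{P: pre-poincare-birkhoof-closed annulus} (together with Proposition \ref{P: pre-boyland closed annulus} when topological monotonicity is needed to ensure the projection lands on a genuine type-$(p,q)$ orbit downstairs) produces, for each irreducible $p/q$ strictly between $0$ and $\rho_+$, a $q$-periodic orbit of $\overline{f}$ of rotation number $p/q$; projecting through the annular cover yields a contractible $(p,q)$-orbit $O_{p/q}$ of $f$ at $z_0$. To upgrade this existence statement to the weak-$*$ convergence $\mu_{O_{p/q}}\to\delta_{z_0}$, I would argue by compactness and contradiction: any weak-$*$ accumulation point $\mu$ of $(\mu_{O_{p_n/q_n}})$ is $f$-invariant, and the fact that $p_n/q_n\to 0$ means its restriction to $M\setminus\{z_0\}$ carries an invariant measure whose average rotation in the lift is zero. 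Combined with the hypothesis $\rho_s(I,z_0)=\{0\}$ and the area control coming from the annular compactification, this forces $\mu=\delta_{z_0}$, which completes the proof of property \textbf{P)}.
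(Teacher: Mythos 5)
Your broad strategy (reduce to $k=0$, exploit the sink/source dichotomy, pass to an annulus and apply Poincar\'e--Birkhoff, then force measure concentration at $z_0$) is genuinely parallel to what the paper does, but you gloss over exactly the points the paper identifies as hardest, and some of these are not mere technicalities but essential obstructions your plan does not surmount.

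First, the compactification of the far end of $\widetilde{M}$ to a closed annulus is not automatic. To apply Proposition \ref{P: pre-poincare-birkhoof-closed annulus} you need the dynamics to extend to a boundary circle with a well-defined rotation number, and there is no reason for this to hold a priori. In the plane case the paper achieves it only after a substantial detour: it first produces a periodic orbit $O$ with rotation number $1/q$ such that $f|_{M\setminus O}$ is isotopic to a finite-order homeomorphism $R_{1/q}$ (using Boyland's Proposition \ref{P: pre-boyland closed annulus} not merely to certify types, as you suggest, but to produce the topologically monotone orbit $O$ in the first place), passes to the universal cover of $M\setminus O$, and only then obtains a blow-up at $\infty$ with rotation number $-1/q$. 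Your plan omits this entire mechanism, so the existence of the closed annulus $\mathbb{A}$ you want to feed into Poincar\'e--Birkhoff is left unjustified.

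Second, your argument that $\rho_+\neq 0$ in case (b) by ``Poincar\'e recurrence in $M\setminus\{z_0\}$'' is too weak. Recurrence gives recurrent points but by itself no control on their rotation; the paper's Lemma \ref{L: chapter3-positive area implies periodic orbits} needs a Birkhoff-ergodic argument \emph{inside the attracting basin}, which is the whole plane in that sub-case. When $M_0$ is not a plane, the paper flags Proposition \ref{P: chanpter3-mainproof2-exist periodic orbit in W} (the existence of a periodic orbit of $\widetilde f$ besides $\star$) as ``the most difficult part of this article'': its proof is a delicate contradiction argument built on the continua $K_n=$ components of $\cap_k\widetilde f^{-k}(D_n)$, the structure of leaves in the attracting basin of $\star$, and a countability argument showing each leaf meets exactly one component of the complement of $K$. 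Nothing in your sketch engages with this; recurrence and area preservation alone will not produce a nonzero rotation number on the far boundary.

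Third, the weak-$*$ convergence step is too soft. You claim that a weak-$*$ limit $\mu$ with ``zero average rotation'' plus $\rho_s(I,z_0)=\{0\}$ forces $\mu=\delta_{z_0}$, but there is no reason a priori to exclude non-atomic invariant measures away from $z_0$ with vanishing rotation vector. The paper's Lemma \ref{L: compact to converge} is a concrete quantitative argument: inside the lift of the attracting basin the dynamics strictly increases the transverse coordinate, and that increase has a positive lower bound $\eta_1$ outside any fixed neighborhood of $z_0$; hence a periodic orbit of rotation number $p/q$ spends at most roughly $p/\eta_1$ of its $q$ iterates outside that neighborhood, forcing $\mu_{O_{p/q}}\to\delta_{z_0}$ as $p/q\to 0$. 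Your compactness-and-contradiction argument needs to be replaced by something of this precision, and in addition you must justify that all the $O_{p/q}$ lie in a common compact set (the paper does so via the structure of the transverse foliation near the boundary of the attracting basin).

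In short, the proposal identifies the right ambient structures but omits the existence of a special finite-order periodic orbit needed to close up the annulus, omits the genuinely difficult existence-of-a-second-periodic-orbit argument in the non-planar finite-area case, and replaces the quantitative concentration lemma with an argument that does not close. These are genuine gaps, not polishing issues.
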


\begin{rmq}\label{R: asume positive}
Let $I'$ be a local isotopy of $f$ at $z_0$ such that $\rho_s(I',z_0)$ is reduced to $0$. Since $f$ is area preserving and $i(f,z_0)=1$, by Proposition \ref{P: pre-rotation type is unique}, $I'$ has either a positive or a negative rotation type. Let $\mathcal{F}'$ be a locally transverse foliation of $I'$.  If $I'$ has a positive rotation type,  then $z_0$ is a sink of $\mathcal{F}'$ and  the interval in Property \textbf{P)} is  $(k,k+\varepsilon)$;   if $I'$ has a negative rotation type, then $z_0$ is a source and  the interval in Property \textbf{P)} is $(k-\varepsilon, k)$.

We suppose that $I'$ has a positive rotation type in  this section, the other case can be treated similarly.
\end{rmq}

\begin{rmq}\label{R: nonaccumulated and rotation 0 means indefferent}
 If $z_0$ is not accumulated by periodic orbits, since the rotation set is reduced to  an integer and $i(f,z_0)=1$, $z_0$ is an indifferent fixed  point  by Proposition \ref{P: pre-rotation number of a saddle point}. Then, by the assertion v) of Proposition \ref{P: pre-rotation set}, one deduces that $\rho(I,z_0)$ is equal to this integer.
\end{rmq}

 We will prove the theorem in several cases.

\subsection{The case where $M$ is a plane}\label{S: M is a plane}

In this section, we suppose that $M$ is a plane, and that  $I$ is a maximal identity isotopy of $f$ such  that $\mathrm{Fix}(I)$ is reduced to $z_0$.  We will prove the following result in this section and get the proof of the first part of Theorem \ref{T: main-index 1 and rotation 0 implies  accumulated by periodic points} as a corollary.

\begin{thm}\label{T: chapter3-plane case}
 Under the previous assumption, if $\rho_s(I,z_0)$ is reduced to $0$, and if $f$ has another periodic orbit besides $z_0$, then the property \textbf{P)} holds for $(f,I,z_0)$.
\end{thm}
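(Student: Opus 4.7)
\noindent The plan has four main ingredients: construct a transverse foliation $\mathcal{F}$ with $z_0$ as a sink; compactify $M\setminus\{z_0\}$ into an annulus using the hyperbolic setup with $X=\{z_0\}\cup\sigma$, where $\sigma$ is the hypothesized second periodic orbit; use a Poincar\'e--Birkhoff type theorem to produce the orbits $O_{p/q}$; and establish the weak-star convergence $\mu_{O_{p/q}}\to\delta_{z_0}$ from $\rho_s(I,z_0)=\{0\}$. To begin, since $I$ is a maximal identity isotopy with $\mathrm{Fix}(I)=\{z_0\}$, Theorem \ref{T: pre-feuilletage} yields a transverse foliation $\mathcal{F}$ of $I|_{M\setminus\{z_0\}}$, viewed as a singular foliation on $M$ whose unique singularity is $z_0$. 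By Remark \ref{R: asume positive} I may assume $I$ has positive rotation type, and the remark following Proposition \ref{P: pre-dynamics of a foliation}, together with area preservation, excludes cycles, petals and mixed singularities, forcing $z_0$ to be a sink of $\mathcal{F}$.

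For the compactification, the set $X=\{z_0\}\cup\sigma$ is closed, discrete and $f$-invariant with at least two points. Equip $M\setminus X$ with the hyperbolic structure of Section \ref{S: pre-extend lifts to the boundary}, let $\Gamma=\langle T\rangle$ be the parabolic subgroup stabilizing a horocycle around $z_0$, and let $\pi'\colon H/\Gamma\to M\setminus X$ be the resulting annulus cover. The natural lift $\widetilde{f}$ of $f$ on $H/\Gamma$ extends to a homeomorphism of $\widehat{\mathbb{A}}:=H/\Gamma\cup\{\star\}\cup S_\infty$, fixing $\star$ (the lift of $z_0$) and acting on the boundary circle $S_\infty$. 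The parabolic fixed points of $H\cup S_\infty$ coming from the lifts of points of $\sigma$ descend, modulo $\Gamma$, to a $\widetilde{f}$-invariant subset of $S_\infty$ whose combinatorial shift under $\widetilde{f}|_{S_\infty}$ computes the rotation number $\rho_\infty$ of $\widetilde{f}|_{S_\infty}$; using the positive-rotation-type convention of Remark \ref{R: asume positive} I may assume $\rho_\infty>0$.

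The lifted transverse foliation has $\star$ as a sink and the local rotation set of $\widetilde{f}$ at $\star$ is $\{0\}$, while area preservation supplies the intersection property. Applying Le Calvez's open-annulus version of Poincar\'e--Birkhoff (Proposition \ref{P: pre-Le Calvez's generalization of poincare-birkhoof}) to $\widetilde{f}$ on $H/\Gamma$---the Poincar\'e recurrence near $\star$ produces recurrent points of rotation number $0$, while the dynamics near $S_\infty$ produces recurrent points of rotation number $\rho_\infty$---yields, for every irreducible $p/q\in(0,\rho_\infty)$, a $(p,q)$-periodic orbit $\widetilde{O}_{p/q}$ of $\widetilde{f}$ in $H/\Gamma$. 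Its projection $O_{p/q}=\pi'(\widetilde{O}_{p/q})$ is then a contractible $(p,q)$-periodic orbit of $f$ of type $(p,q)$ associated to $I$ at $z_0$, in view of the relation $\pi'_{*}\pi_1(H/\Gamma)=i_{*}\pi_1(\dot U)$ recorded at the end of Section \ref{S: pre-annulus covering projection}.

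For the measure convergence, any weak-star limit $\mu$ of $(\mu_{O_{p_n/q_n}})$ with $p_n/q_n\to 0^+$ is an $f$-invariant probability measure. If $\mu\neq\delta_{z_0}$, then $\mu$ assigns positive mass to the complement of some neighborhood $V$ of $z_0$, so for $n$ large a positive density of each $O_{p_n/q_n}$ consists of orbit segments lying in $E(U,V,n)$ for a suitable $U\supset V$; computing the corresponding $\rho_n$ via $p_n=p_1(\widetilde{f}^{q_n}(\widetilde{z}_n)-\widetilde{z}_n)$ on the lifts should then yield accumulation points of $\bigcup_V\rho_{U,V}(I)$ strictly away from $0$, contradicting $\rho_s(I,z_0)=\{0\}$. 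Consequently $\mu=\delta_{z_0}$, and property \textbf{P)} holds with $\varepsilon=\rho_\infty$. I expect this measure-convergence step to be the main obstacle: turning the hypothetical non-concentration at $z_0$ into an actual nonzero value in $\rho_s(I,z_0)$ demands a delicate counting of how orbit segments of $O_{p_n/q_n}$ enter and leave shrinking neighborhoods of $z_0$, and this is precisely where the sink structure of $\mathcal{F}$ together with area preservation should be the decisive tools.
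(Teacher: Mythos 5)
Your outline captures the broad shape of the argument—produce a transverse foliation with $z_0$ a sink, pass to a covering annulus, invoke a Poincar\'e--Birkhoff type theorem, and finish with the weak-star convergence—but it skips the two places where the real work of the paper happens, and both gaps are genuine.

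First, you assert ``I may assume $\rho_\infty>0$,'' but nothing in your setup justifies this; the rotation number of $\widetilde{f}$ on $S_\infty$ could well be zero for an arbitrary second periodic orbit $\sigma$, in which case the Poincar\'e--Birkhoff argument collapses. The paper's route is precisely to avoid using an arbitrary $\sigma$: it proves a dedicated lemma, built on Boyland's theorem on topologically monotone periodic orbits, that produces a special $q'$-periodic orbit $O$ of rotation number $1/q'$ such that $f|_{M\setminus O}$ is isotopic to a finite-order map $R_{1/q'}$ with $R^{q'}_{1/q'}=\mathrm{Id}$. It is this property (together with a rotation-set computation on the universal cover $\widehat{M}$ of $M\setminus O$, not the annulus cover $H/\Gamma$ you use) that forces $\rho(\widehat I_0,\infty)=-1/q'\neq 0$ and hence enables Lemma \ref{L: chapter3-mainproof1-plane1}. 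Your appeal to a ``combinatorial shift'' of the parabolic fixed points descending to $S_\infty$ gives no quantitative hold on this rotation number, and without the Boyland step there is no mechanism forcing it away from $0$.

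Second, the measure-convergence step you flag as ``the main obstacle'' is not merely delicate—it fails without the compactness established by Lemma \ref{L: compact to converge} and supplied by the two-case analysis inside the proof of Lemma \ref{L: chapter3-mainproof1-plane1}. That lemma requires the orbits $O_{p/q}$ to lie in a fixed compact subset of the attracting basin $W'$ of the sink, which the paper obtains by doubling over $S_\infty$ to a sphere and analyzing whether $\mathcal{F}'$ has a closed leaf or not; your argument provides no such containment, and Poincar\'e recurrence alone (which itself needs finite area, not guaranteed for $H/\Gamma$ with the pulled-back measure) cannot substitute for it. In short, what you propose is closer to Sublemma \ref{L: chapter3-mainproof1-plane1-sublemma} stated with an unproven hypothesis about the boundary rotation number; the Boyland-type lemma that supplies a controlled finite-order orbit and the compactness argument for the weak-star limit are both missing, and together they constitute most of the paper's proof of this theorem.
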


This result is an important one in the proof of Theorem \ref{T: main-index 1 and rotation 0 implies  accumulated by periodic points}. In the latter cases, we will always reduce the problem to this case and get the result as a corollary.
Before proving this result, we first prove the first case of  Theorem \ref{T: main-index 1 and rotation 0 implies  accumulated by periodic points} as a corollary.

\begin{proof}[Proof of the first case of  Theorem \ref{T: main-index 1 and rotation 0 implies  accumulated by periodic points}]
 We only need to deal with the  case where $\rho_s(I,z_0)$ is reduced to a non-zero integer $k$. Let $J$ be an identity isotopy of the identity fixing $z_0$ such that the blow-up rotation number $\rho(J,z_0)$ is equal to $1$. Write $I'=J^{-k}I$. By the first assertion of Proposition \ref{P: pre-rotation set}, $\rho_s(I',z_0)$ is reduced to $0$.   Since $f$ has exactly one fixed point, $I'$  is maximal and the property \textbf{P)} holds for $(f, I', z_0)$. A periodic orbit in the annulus $M\setminus \{z_0\}$ with rotation number $p/q$ associated to $I'$ is  a periodic orbit with rotation number $k+p/q$ associated to $I$. Therefore,  the property \textbf{P)} holds for $(f, I, z_0)$.
\end{proof}

Now we begin the proof of Theorem \ref{T: chapter3-plane case} by some lemmas.

\begin{lem}\label{L: compact to converge}
Let $g$ be a homeomorphism of $\mathbb{R}^2$,  $I'$ be a maximal identity isotopy of $g$ that fixes $z_0$, and $\mathcal{F}'$ be a transverse foliation of $I'$.  Suppose that $z_0$ is an isolated fixed point of $g$ and a sink of $\mathcal{F}'$. Let  $W'$ be the attacting basin of $z_0$ for  $\mathcal{F}'$. Suppose that either $W'$ is equal to $\mathbb{R}^2$ or  $W'$ is a proper subset of $\mathbb{R}^2$ whose boundary is the union of some proper leaves of $\mathcal{F}'$. Let  $U$ be  a Jordan domain  containing $z_0$  that   satisfies  $U\subset W'$ and  $g(U)\subset W'$.

 If there exist a compact subset $K\subset U$ and  $\varepsilon>0$ such that $K$ contains a $q$-periodic orbit $O_{p/q}$ with  rotation number $p/q$ in the annulus $\mathbb{R}^2\setminus\{z_0\}$ for all irreducible $p/q\in(0,\varepsilon)$, then $\mu_{O_{p/q}}$ converges, in the weak-star topology, to the Dirac measure $\delta_{z_0}$ as $p/q\rightarrow 0$, where $\mu_{O_{p/q}}$ is  the invariant probability measure supported on $O_{p/q}$.
\end{lem}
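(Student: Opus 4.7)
The approach I would take is by contradiction, exploiting weak-star compactness of probability measures on the compact set $K$. Suppose that $\mu_{O_{p_n/q_n}}$ does not converge to $\delta_{z_0}$ for some sequence of irreducible rationals $p_n/q_n\in(0,\varepsilon)$ with $p_n/q_n\to 0$. After extracting a subsequence we may assume $\mu_{O_{p_n/q_n}}\to\mu$ weakly for some $g$-invariant Borel probability measure $\mu\neq\delta_{z_0}$ on $K$. Writing $\mu = t\delta_{z_0} + (1-t)\mu_0$ with $t\in[0,1)$ yields a $g$-invariant probability measure $\mu_0$ supported in $K\setminus\{z_0\}\subset W'\setminus\{z_0\}$, and the task becomes to derive a contradiction from the existence of $\mu_0$.

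Next I would compute the mean rotation number of $\mu_0$ with respect to the lift $\widetilde{g}$ of $g|_{\mathbb{R}^2\setminus\{z_0\}}$ associated to $I'$ to the universal cover of the annulus $\mathbb{R}^2\setminus\{z_0\}$. The angular-coordinate displacement $d(\widetilde{z})=p_1(\widetilde{g}(\widetilde{z})-\widetilde{z})$ is bounded and continuous over a fundamental domain projecting onto $K$; since $\int d\,d\mu_{O_{p_n/q_n}}=p_n/q_n\to 0$, weak-star continuity gives $\int d\,d\mu=0$, and subtracting off the contribution of $\delta_{z_0}$ (which vanishes, as $z_0$ is fixed by $I'$) shows $\int d\,d\mu_0=0$. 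Moreover, since $\mathcal{F}'$ is a transverse (singular) foliation of $I'$ whose singularities are exactly $\mathrm{Fix}(I')$, and since the leaves foliating $W'\setminus\{z_0\}$ all end at $z_0$, one has $W'\cap\mathrm{Fix}(I')=\{z_0\}$; in particular $W'\setminus\{z_0\}$ contains no singularity of $\mathcal{F}'$.

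To obtain the contradiction I would invoke the ergodic decomposition of $\mu_0$. If some ergodic component has nonzero rotation, then since the rotation numbers average to $0$ there are components of both signs, yielding recurrent points with rotations $\rho_1<0$; combining with any orbit $O_{p_n/q_n}$ (recurrent of rotation $\rho_2=p_n/q_n>0$) gives two recurrent points in $W'\setminus\{z_0\}$ with $\rho_1<0<\rho_2$. Applying the Franks--Le Calvez generalization of Poincar\'e--Birkhoff (Proposition~\ref{P: pre-Le Calvez's generalization of poincare-birkhoof}) produces a periodic orbit of rotation $0$, i.e., a fixed point of $g$ in $W'\setminus\{z_0\}$ whose $I'$-trajectory is contractible in $\mathbb{R}^2\setminus\mathrm{Fix}(I')$ (since its trajectory lies in $W'$ and therefore does not wind around any singularity of $\mathcal{F}'$ outside $W'$). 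This fixed point lies outside $\mathrm{Fix}(I')$, contradicting the maximality of $I'$. In the remaining case where every ergodic component of $\mu_0$ has rotation exactly $0$, one applies Atkinson's theorem to the rotation cocycle to produce $\mu_0$-generic points with arbitrarily close returns of $\widetilde{g}^{n_k}$ in the universal cover, and then uses area preservation together with the radial sink structure of $\mathcal{F}'$ near $z_0$ to upgrade these near-returns into a genuine contractible fixed point of $g$ in $W'\setminus\{z_0\}$, again contradicting maximality.

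The principal obstacle is this second case, where every ergodic component of $\mu_0$ has rotation $0$: Proposition~\ref{P: pre-Le Calvez's generalization of poincare-birkhoof} requires strict inequality between the two rotation numbers and therefore does not, from the data $\rho_1=0<\rho_2=p_n/q_n$, directly furnish a rotation-zero periodic orbit. Producing the fixed point here requires a careful closing-type argument combining Atkinson's near-returns with area preservation and the specific geometry of the radial sink at $z_0$, converting approximate recurrence of $\widetilde{g}^{n_k}$ into an actual fixed point of $g$ in the support of $\mu_0$; this is the delicate technical heart of the lemma.
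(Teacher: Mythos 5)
The paper's own proof is a short, direct, quantitative estimate: it first shows that the angular displacement $d(\widetilde{z})=p_1(\widetilde{g}(\widetilde{z}))-p_1(\widetilde{z})$ is \emph{strictly positive on all of} $\widetilde{K}=\pi^{-1}(K\setminus\{z_0\})$ (this uses that $z_0$ is a sink, that $\mathcal{F}'$ is transverse, and that a positively transverse arc cannot re-enter $W'$ once it leaves), then extracts a uniform lower bound $d>\eta_1>0$ on the compact set $\pi^{-1}(K\setminus V)$, and finally bounds the fraction of orbit points lying outside $V$ by $\frac{\#(O\setminus V)}{q}\le \frac{p/q}{\eta_1}$. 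This gives the weak-star convergence directly, with no compactness or ergodic-theoretic machinery.

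Your proposal is not a correct proof and takes a different, considerably heavier route, with gaps at exactly the places the paper's argument is designed to avoid. First, the passage ``$\int d\,d\mu_{O_{p_n/q_n}}=p_n/q_n\to 0$ implies $\int d\,d\mu=0$'' requires $d$ to extend to a bounded continuous function on $K$ (including at $z_0$); but $d$ is defined only on $K\setminus\{z_0\}$, and nothing in the hypotheses of the lemma guarantees that it is bounded near the singularity for a general homeomorphism $g$. The paper avoids this by only using the lower bound $\eta_1$ on the compact set $K\setminus V$, never a global bound on $d$. Second, and more substantively, you overlook the crucial sign information: because $z_0$ is a sink of the transverse foliation and the boundary of $W'$ consists of proper leaves, one has $d>0$ on all of $\widetilde{K}$. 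Hence every ergodic component of any invariant measure supported on $K\setminus\{z_0\}$ has strictly positive rotation number; your ``components of both signs'' and ``every ergodic component has rotation exactly $0$'' branches simply cannot occur. Noticing this positivity would collapse your case analysis entirely and render the invocations of Franks--Le Calvez and of Atkinson's theorem superfluous (indeed, if boundedness of $d$ were known, the contradiction $\int d\,d\mu_0>0$ versus $\int d\,d\mu_0=0$ would be immediate). As it stands, you explicitly concede a gap in the ``rotation zero'' case, which is the case the positivity removes; and the appeal to maximality of $I'$ is not what the paper uses. In short, the missing idea is precisely the one the paper hinges on --- the strict positivity of $d$ on $\widetilde{K}$ and its uniform positivity away from a small neighborhood of $z_0$ --- and without it the weak-star/ergodic-decomposition strategy neither closes nor is needed.
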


\begin{proof}
We only need to prove that for every continuous  function $\varphi:W'\rightarrow \mathbb{R}$, for every $\eta>0$, there exists $\delta >0$, such that for every $q-$periodic orbit $O\subset K$ with irreducible rotation number $p/q<\delta$, we have
$$|\int\varphi d\mu_O-\varphi(z_0)|<\eta,$$
where $\mu_O$ is the invariant probability measure supported on $O$.

Let $V$ be a neighborhood of $z_0$ such that $|\varphi(z)-\varphi(z_0)|<\eta/2$ for all $z\in V$.  Let $\pi:\mathbb{R}\times (-\infty,0)\rightarrow W'\setminus\{z_0\}$ be the universal cover which sends the vertical lines upward to the leaves of $\mathcal{F}'$, and $p_1:\mathbb{R}\times(-\infty,0)\rightarrow \mathbb{R}$ be the projection onto the first coordinate. Let $\widetilde{U}=\pi^{-1}(U\setminus\{z_0\})$, $\widetilde{K}=\pi^{-1}(K\setminus\{z_0\})$     and $\widetilde{g}$ be the lift of $g$ to $\widetilde{U}$ associated to $I'$.  By the assumptions about $W'$, we know that   any arc that is positively transverse to $\mathcal{F}'$ cannot come back into $W'$ once it leaves $W'$. So
$$p_1(\widetilde{g}(z))-p_1(z)>0, \textrm{ for all } z\in\widetilde{K}.$$
Therefore there exists   $\eta_1>0$  such that for all $z\in\pi^{-1}(K\setminus V)$, one has
$$p_1(\widetilde{g}(z))-p_1(z)>\eta_1.$$
One deduces that for all $q-$periodic orbit $O\subset K$ with irreducible rotation number   $p/q<\delta = \frac{\eta\eta_1}{4|\sup_K\varphi|}$,
$$\frac{\#(O\setminus V)\eta_1}{q}<\frac{p}{q},$$
hence,
$$|\int\varphi d\mu_O-\varphi(z_0)|< \eta/2+2\sup_K|\varphi|\frac{\#(O\setminus V)}{q}<\eta.$$
\end{proof}

\begin{rmq}
 In  this lemma, the homeomorphism $g$ do not need to be area preserving. The assumptions about $W'$ prohibit the following bad situation:
\end{rmq}
\begin{figure}[h]
      \centering
   \includegraphics{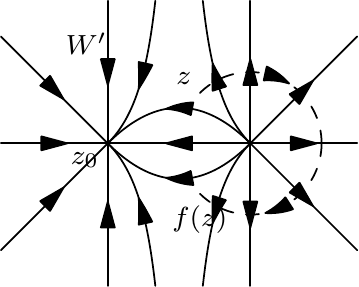}
 \end{figure}

\begin{lem}\label{L: chapter3-mainproof1-plane1}
If  $\rho_s(I, z_0)$ is reduced to $0$, and if $f$ can be blown-up at $\infty$ such that the blow-up rotation number at $\infty$, that is defined in Section \ref{S: pre-local rotation set}, is different from $0$,   then the property \textbf{P)} holds for $(f, I, z_0)$.
\end{lem}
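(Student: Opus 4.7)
The plan is to use the blow-up at $\infty$ to reduce to a closed annulus situation, produce the required periodic orbits via a Poincar\'e--Birkhoff-type theorem, and upgrade existence to weak-star convergence of invariant measures through Lemma~\ref{L: compact to converge}. For concreteness I treat the case $\alpha:=\rho(I,\infty)>0$; the case $\alpha<0$ is symmetric and yields orbits of rotation number in $(\alpha,0)$.

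First I set the stage. Since $I$ is a maximal identity isotopy with $\mathrm{Fix}(I)=\{z_0\}$, Theorem~\ref{T: pre-exist max isotopy} provides a transverse foliation $\mathcal{F}$ of $I$ on $M\setminus\{z_0\}$; combining Proposition~\ref{P: pre-rotation type is unique} with Remark~\ref{R: asume positive} I may assume $I$ has positive rotation type, so that $z_0$ is a sink of $\mathcal{F}$ and its attracting basin $W'$ satisfies the structural hypothesis required by Lemma~\ref{L: compact to converge}. The blow-up hypothesis extends $f$ to a homeomorphism $\widehat f$ of $\widehat M=M\cup S^1_\infty$, a closed topological disk with $z_0$ in its interior, on whose boundary circle $\widehat f$ is a circle homeomorphism of rotation number $\alpha$. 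Using the sink structure, I pick arbitrarily small Jordan domains $D\ni z_0$ whose boundary is positively transverse to $\mathcal{F}$, which forces $f(\overline D)\subset\mathrm{Int}(D)$; the set $K=\bigcap_{n\ge 0}f^n(\overline D)$ is then a compact, connected, $f$-invariant continuum containing $z_0$.

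Compactifying $\widehat M\setminus K$ at the $K$-end by prime-ends (Section~\ref{S: pre-prime-ends Compactification}) produces a closed annulus $\widehat A$ with boundary circles $S^1_\infty$ and $S^1_K$, and $\widehat f$ extends continuously to $\widehat A$ preserving each boundary. The rotation number of $\widehat f$ on $S^1_\infty$ remains $\alpha$; the key claim is that on $S^1_K$ it equals $0$, which I extract from $\rho_s(I,z_0)=\{0\}$ via the principle that small $f$-invariant continua around $z_0$ inherit their prime-ends rotation number from the local rotation set, generalizing assertion~v of Proposition~\ref{P: pre-rotation set}. The intersection property for $\widehat f$ on $\widehat A$ follows from area-preservation: an essential simple loop $\gamma\subset\widehat A$ bounds in $M$ a Jordan domain $\Delta\supset K$ of finite area (compact sets have finite area), and if $f(\gamma)\cap\gamma=\emptyset$ then $\Delta$ and $f(\Delta)$ would be nested Jordan domains of equal area, a contradiction. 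Proposition~\ref{P: pre-poincare-birkhoof-closed annulus} therefore yields, for every irreducible $p/q\in(0,\alpha)$, a $(p,q)$-periodic orbit $O_{p/q}$ of $\widehat f$ in the interior of $\widehat A$, giving a contractible periodic orbit of $f$ of type $(p,q)$ associated to $I$; invoking Proposition~\ref{P: pre-boyland closed annulus} I may choose $O_{p/q}$ to be $(p,q)$-topologically monotone.

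For the convergence step I fix a Jordan domain $U$ with $z_0\in U\subset\overline U\subset W'$ and $f(U)\subset W'$, arranging $D\subset U$ in advance. It remains to show that, for every sufficiently small $p/q>0$, the orbit $O_{p/q}$ lies in some fixed compact set $K^{*}\subset U$; once this confinement is available, Lemma~\ref{L: compact to converge} yields $\mu_{O_{p/q}}\to\delta_{z_0}$ in the weak-star topology, establishing property~\textbf{P)}. The hard part of the argument is precisely this twofold confinement: on the one hand, justifying that the prime-ends rotation number of $\widehat f$ at the small invariant continuum $K$ is genuinely $0$, which demands transferring the rotational information in $\rho_s(I,z_0)=\{0\}$ to the circle of prime ends of $K$; and on the other hand, controlling the location of the topologically monotone orbits as $p/q\to 0^{+}$, which should follow from the trapping property $f(\overline D)\subset\mathrm{Int}(D)$ together with the fact that a $(p,q)$-topologically monotone orbit with very small rotation number must accumulate on the $0$-rotation boundary $S^1_K$ of $\widehat A$, hence into $\overline D\subset U$. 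Both points are where the combined use of the sink structure of $\mathcal{F}$, the local rotation set $\rho_s(I,z_0)$, and the Poincar\'e--Birkhoff--Boyland machinery is most delicate.
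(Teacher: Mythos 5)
Your overall plan (blow up at infinity, reduce to an annulus, produce periodic orbits by Poincar\'e--Birkhoff, then confine them in a compact set to invoke Lemma~\ref{L: compact to converge}) is in the right direction, but the proposal contains one outright false step, one unhandled case, and one step you explicitly leave open — and that open step is precisely where the paper's proof does its real work.

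The false step is the claim that choosing $\partial D$ positively transverse to $\mathcal{F}$ ``forces $f(\overline D)\subset\mathrm{Int}(D)$.'' Transversality of $\mathcal{F}$ to the isotopy is a statement about the \emph{homotopy class} of the trajectory $t\mapsto f_t(z)$, not about the location of $f(\partial D)$; and since $f$ is area-preserving, $f(\overline D)\subsetneq\mathrm{Int}(D)$ is impossible for any Jordan domain $D$. Consequently your continuum $K=\bigcap_{n\ge 0}f^n(\overline D)$ is not what you think it is, and the prime-ends compactification at $K$ with boundary rotation number $0$ is not justified this way. The paper constructs the relevant invariant continuum only in the case where $z_0$ is a \emph{non-accumulated indifferent} point, using Proposition~\ref{P: pre-existence of rotation number of indifferent point} (and assertion~v of Proposition~\ref{P: pre-rotation set}), which requires non-accumulation. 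This leads to the second gap: you do not treat the case where $z_0$ is already accumulated by periodic orbits. In that case one cannot get a closed annulus with rotation number $0$ on the inner boundary; instead the paper picks an auxiliary periodic orbit near $z_0$ (made possible because $\rho_s(I,z_0)=\{0\}$ forces rotation numbers of nearby orbits to tend to $0$) and applies the \emph{open}-annulus generalization, Proposition~\ref{P: pre-Le Calvez's generalization of poincare-birkhoof}; a remark in the paper explains why the more naive half-open annulus argument does not directly apply.

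Finally, the confinement needed to invoke Lemma~\ref{L: compact to converge} — that all $O_{p/q}$ for small $p/q$ lie in a fixed compact subset of the attracting basin $W'$ — is the heart of the lemma, and you candidly leave it unfinished. The paper settles it by doubling $M\cup S_\infty$ along $S_\infty$ to obtain a sphere with a new transverse foliation $\mathcal{F}'$ having exactly two singularities ($z_0$ a sink, $\sigma(z_0)$ a source), then arguing by cases on whether $\mathcal{F}'$ has a closed leaf: if not, the whole compact $M\sqcup S_\infty$ works; if yes, the closed leaf must cross $S_\infty$, and a careful analysis of the components $L(\Gamma)\cap(M\cup S_\infty)$ (using invariance of $M$ and $S_\infty$ and the boundary rotation number) shows all relevant periodic orbits live in $\overline{R(\Gamma)\cap M}$. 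Your sketch (``a topologically monotone orbit of small rotation number must accumulate on $S^1_K$'') is a plausible heuristic but is not proved and does not obviously substitute for this argument. As written, the proposal does not close.
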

In order to to prove this lemma, we need the following sublemma:
\begin{sublem}\label{L: chapter3-mainproof1-plane1-sublemma}
Under the conditions of the previous  Lemma,   $\rho(I,\infty)$ is negative, and there exists $\varepsilon>0$ such that for all irreducible $p/q\in(0,\varepsilon)$, there exists a $q$-periodic orbit with rotation number $p/q$ in the annulus $M\setminus\{z_0\}$.
\end{sublem}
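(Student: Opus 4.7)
The plan is to apply a Poincaré-Birkhoff-type theorem to the annulus $A = M \setminus \{z_0\}$, using the blow-up at $\infty$ to furnish one rotation value and the sink structure of the transverse foliation $\mathcal{F}'$ at $z_0$ to force a vanishing rotation at the other end. Using the hypothesis, extend $f|_A$ continuously to a homeomorphism $\hat f$ of the half-closed annulus $\overline A = A \sqcup S^1_\infty$; the rotation number of $\hat f$ on $S^1_\infty$ is $\rho(I,\infty) \ne 0$, and area preservation of $f$ yields the intersection property for $\hat f$.

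The sign of $\rho(I,\infty)$ is determined by a global index argument on $\mathcal{F}'$. The positive rotation type of $I$ at $z_0$ (Remark~\ref{R: asume positive}) means $z_0$ is a sink of $\mathcal{F}'$ with foliation index $+1$. Extending $\mathcal{F}'$ to a singular oriented foliation on $S^2 = M \cup \{\infty\}$, the Poincaré-Hopf formula gives total index $\chi(S^2) = 2$, so the singularity at $\infty$ has foliation index $+1$. By the remark following Proposition~\ref{P: pre-dynamics of a foliation}, area preservation excludes petals and cycles, and an index of $+1$ excludes saddles; hence $\infty$ is a sink or source of $\mathcal{F}'$. The possibility that $\infty$ is also a sink is incompatible with the orientability of $\mathcal{F}'$, since every leaf would then need a backward end at some source. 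Thus $\infty$ is a source, so $I$ has negative rotation type at $\infty$. Combining Proposition~\ref{P: pre-rotation set}(iv) (which, by the hypothesis of blow-up at $\infty$, gives $\rho_s(I,\infty) = \{\rho(I,\infty)\}$) with the uniqueness of rotation type at index-$1$ fixed points (Proposition~\ref{P: pre-rotation type is unique}) and the contrapositive of Proposition~\ref{P: pre-rotation set}(iii), the negative rotation type forces $\rho(I,\infty) \notin (0,+\infty]$, hence $\rho(I,\infty) < 0$.

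To construct the periodic orbits, note that the orientation used to define $\rho(I,\infty)$ (via a positively oriented loop around $\infty$ in $S^2$) is opposite to the $z_0$-orientation on essential loops of $A$, so the effective rotation at the $\infty$-end in the $z_0$-convention is $-\rho(I,\infty) > 0$; meanwhile $\rho_s(I,z_0) = \{0\}$ controls the rotation numbers of orbits remaining close to $z_0$. Applying a Poincaré-Birkhoff-type theorem to $\hat f$ on $\overline A$ — either Proposition~\ref{P: pre-poincare-birkhoof-closed annulus} on a closed sub-annulus obtained by closing off the $z_0$-end via the prime-ends compactification of an invariant continuum around $z_0$ (using Proposition~\ref{P: pre-existence of rotation number of indifferent point}, which applies because Proposition~\ref{P: pre-rotation number of a saddle point} together with $\rho_s(I,z_0)=\{0\}$ and $i(f,z_0)=1$ makes $z_0$ indifferent whenever it is not already accumulated by periodic orbits), or directly Proposition~\ref{P: pre-Le Calvez's generalization of poincare-birkhoof} with recurrent points near each end — yields, for every irreducible $p/q \in (0, -\rho(I,\infty))$, a $q$-periodic orbit in $A$ of rotation number $p/q$. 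Setting $\varepsilon = -\rho(I,\infty)$ finishes the proof.

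The main obstacle lies in the third step: verifying that the two rotation values are realized in the form required by the chosen Poincaré-Birkhoff theorem. The rotation at $S^1_\infty$ is provided directly by the blow-up, but at the $z_0$-end one must either produce an invariant essential loop carrying rotation number $0$ (through a prime-ends circle of an invariant continuum at $z_0$) or find recurrent interior points of rotation number $0$ accumulating on $z_0$ via Poincaré recurrence on finite-area neighborhoods and the control from $\rho_s(I,z_0) = \{0\}$. Handling both the accumulated and non-accumulated cases for $z_0$ in a unified framework is the delicate part.
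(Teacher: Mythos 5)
Your overall strategy matches the paper's: determine that $\infty$ is a source of the transverse foliation (hence $\rho(I,\infty)<0$), then run a Poincar\'e--Birkhoff argument in the annulus $M\setminus\{z_0\}$, splitting into the accumulated and non-accumulated cases for $z_0$. Your non-accumulated branch (prime-ends compactification of an invariant continuum at $z_0$, then Proposition~\ref{P: pre-poincare-birkhoof-closed annulus} on the resulting closed annulus together with the $\infty$-blow-up) is exactly the paper's. Your Poincar\'e--Hopf route to the index of $\mathcal{F}'$ at $\infty$ is a valid alternative to the paper's, which instead infers that $\infty$ is a sink or source directly from the nonzero blow-up rotation number via Proposition~\ref{P: pre-rotation set}(iii)--(iv); both lead to the same conclusion, with the paper's route being a bit shorter.

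The genuine gap is in your treatment of the accumulated case. You propose to find ``recurrent interior points of rotation number $0$ accumulating on $z_0$ via Poincar\'e recurrence on finite-area neighborhoods,'' but the planar case treated in Theorem~\ref{T: chapter3-plane case} carries no finite-area hypothesis --- that hypothesis belongs to a separate branch of Theorem~\ref{T: main-index 1 and rotation 0 implies  accumulated by periodic points}. Poincar\'e recurrence is therefore unavailable here. You do not need it: when $z_0$ is accumulated by periodic orbits, those periodic orbits themselves are the recurrent points, and $\rho_s(I,z_0)=\{0\}$ forces their rotation numbers to tend to $0$ as they approach $z_0$. The paper's accumulated case uses exactly this, and moreover sidesteps the half-open annulus issue altogether: rather than taking one rotation value from $S^1_\infty$, it fixes a periodic orbit $z_1$ of rotation number $\varepsilon>0$ and, for each irreducible $p/q\in(0,\varepsilon)$, pairs it with a nearby periodic orbit of rotation number $r/s\in(0,p/q)$, then applies Proposition~\ref{P: pre-Le Calvez's generalization of poincare-birkhoof} on the open annulus. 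Your plan to apply Proposition~\ref{P: pre-Le Calvez's generalization of poincare-birkhoof} to the half-closed annulus $M\setminus\{z_0\}\sqcup S^1_\infty$ is defensible given the remark following that proposition, but the paper explicitly flags uncertainty about this in the remark after the sublemma and chooses the two-interior-orbits workaround; if you keep your route you should at least address that point and, in any case, replace the Poincar\'e-recurrence step with the accumulated-periodic-orbits argument.
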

\begin{proof}
We consider a transverse foliation $\mathcal{F}$ of $I$. It has exactly two singularities $z_0$ and $\infty$. Since $f$ is area preserving,  $f|_{M\setminus\{z_0\}}$ satisfies the intersection property, and using the remark that follows Proposition \ref{P: pre-dynamics of a foliation}, one can deduce that $\mathcal{F}$ does not have any closed leaf. Because $f$ can be blown-up at $\infty$ and the blow-up rotation number $\rho(I,\infty)$ is different from $0$,  we deduce that $\infty$ is either a sink or a source. By the assumption in Remark \ref{R: asume positive},  $z_0$ is a sink of $\mathcal{F}$,  so $\infty$ is a source of $\mathcal{F}$, and hence $\rho(I,\infty)$ is smaller than $0$. Write $\rho=-\rho(I,\infty)$. We denote by $S_{\infty}$ the circle  added at $\infty$ when blowing-up $f$ at $\infty$, and still by $f$ the extension of $f$ to $M\sqcup S_{\infty}$. One has to consider the following two cases:
\begin{itemize}
\item[-]Suppose that $z_0$ is accumulated by periodic orbits. Let $z_1$ be a periodic point of $f$ in the annulus $M\setminus\{z_0\}$. Its rotation number is strictly positive. We denote by $\varepsilon$ this number. Because the rotation set $\rho_s(I,z_0)$ is equal to $0$, the rotation number of a periodic orbit tends to $0$ as the periodic orbit tend to $z_0$.   Hence for all irreducible $p/q\in(0,\varepsilon)$, there exists a periodic orbit near $z_0$ with rotation number $r/s \in(0,p/q)$.  The restriction of the homeomorphism $f$  to the annulus $M\setminus \{z_0\}$ satisfies the intersection property,  then by  Proposition \ref{P: pre-Le Calvez's generalization of poincare-birkhoof}, there exists a $q$-periodic orbit with rotation number $p/q$ in the annulus for all irreducible $p/q\in(0,\varepsilon)$.

\item[-]Suppose that  $z_0$ is not accumulated by periodic orbits. Then,  $z_0$ is an indifferent fixed point by Proposition \ref{P: pre-rotation number of a saddle point}, and $\rho(I,z_0)$, which was defined in Section \ref{S: pre-prime-ends Compactification}, is equal to $0$. Let $K_0$ be a small enough invariant continuum at $z_0$ such that $\rho(I,K_0)=0$ ( see Section \ref{S: pre-prime-ends Compactification}). We denote by $(M\setminus K_0)\sqcup \mathbb{T}^1\sqcup S_{\infty}$ the prime-ends compactification at the ends $K_0$ and the compactification at $\infty$, which is an annulus. We can extend $f$ to both boundaries and get a homeomorphism of the closed annulus satisfying the intersection condition. Moreover, the rotation number of $f$ on the  upper boundary $\mathbb{T}^1$ is equal to $0$, and on the lower boundary $S_{\infty}$ is equal to  $\rho$. So, by  Proposition \ref{P: pre-poincare-birkhoof-closed annulus}, for all irreducible $p/q$ between $0$ and $\rho$, there exists a periodic orbit in the annulus with rotation number $p/q$.
    \end{itemize}
\end{proof}

\begin{rmq}
In the first case of the proof, it is natural to think that we can prove by a generalization of Poincar\'e-Birkhoff theorem that there exists a periodic orbit in the annulus with rotation number $p/q$ for all irreducible $p/q$ between $0$ and $\rho$. But in fact, the annulus in this case is half-open, and   we do not know whether there exists such a generalization of Poincar\'e-Birkhoff theorem. So, we choose another periodic orbit to avoid treating the half-open annulus.
\end{rmq}

\begin{proof}[Proof of Lemma \ref{L: chapter3-mainproof1-plane1}]
Paste two copies of the closed disk by $S_{\infty}$. We get a sphere $S$ and a  homeomorphism $f'$ that equals to $f$ on each copy and has two fixed points $z_0$ and $\sigma(z_0)$, where $\sigma$ is the natural involution. Let $I'$ be an identity isotopy that fixes $z_0$ and $\sigma(z_0)$ and satisfies  $\rho_s(I', z_0)=\{0\}$. Because $I$ is a maximal isotopy,  $f|_{M\setminus\{z_0\}}$ has no contractible fixed point associated to $I$. Because the blow-up rotation number $\rho(I,\infty)$ is different from $0$, the extension of $f$ to $S_{\infty}$ does not have any  fixed point with rotation number $0$ (associated to $I$). So, $f'|_{S\setminus\{z_0,\sigma(z_0)\}}$ has no contractible fixed point associated to $I'|_{S\setminus\{z_0,\sigma(z_0)\}}$. Therefore, $I'$ is a maximal isotopy, and  one knows $\mathrm{Fix}(I')=\{z_0,\sigma(z_0)\}$. Let $\mathcal{F}'$ be a transverse foliation of $I'$. Then $\mathcal{F}'$ has exactly two singularities $z_0$ and $\sigma(z_0)$. By the assumption, $z_0$ is a sink of $\mathcal{F}'$. Since the involution $\sigma$ is orientation reversing, $\rho(I',\sigma(z_0))=0$ and $I'$ has a negative rotation type at $\sigma(z_0)$. So $\sigma(z_0)$ is a source of $\mathcal{F}'$, and hence  $\mathcal{F}'$ does not have any petal. One has to consider the following two cases:

\begin{itemize}
\item[-]Suppose that all the leaves of $\mathcal{F}'$ are curves from $\sigma(z_0)$ to $z_0$. The compact set $M\sqcup S_{\infty}$ satisfies the conditions of Lemma \ref{L: compact to converge}, and we can deduce the result.
\begin{figure}[h]
      \centering
   \includegraphics{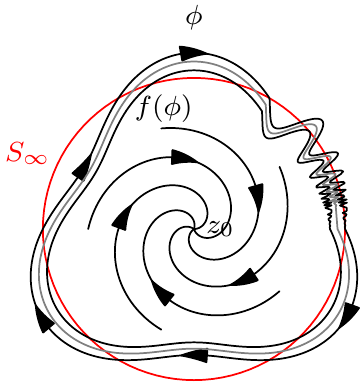}
   \end{figure}
\item[-]Suppose that  there exists a closed leaf in $\mathcal{F}'$. Since $f$ is area preserving,  similarly to the remark that follows Proposition \ref{P: pre-dynamics of a foliation}, one can deduce that there does not exist any closed  leaf in $M$ or in $\sigma(M)$.    So,  each closed leaf intersects $S_{\infty}$. Let $W'$ be the basin of $z_0$ for  $\mathcal{F}'$. Then $\partial W'$ is  a closed leaf, and hence  intersects $S_{\infty}$. Denote this leaf by $\phi$. We suppose  that $z_0$ is to the right of $\phi$, the other case can be treated similarly.
Denote by $R(\phi)$ (resp. $L(\phi)$) the component of $S\setminus \phi$ to the right (resp. left) of $\phi$. Since $f'(\phi)$ is included in $R(\phi)$, we know that both $R(\phi)\cap M$ and $R(\phi)\setminus (M\cup S_{\infty})$ are not empty.
 Choose a suitable essential curve $\displaystyle{\Gamma\subset \left(R(\phi)\cap L(f(\phi))\right)\subset W'}$ that transversely intersects $S_{\infty}$ at only finitely many points (see the gray curve between $\phi$ and $f(\phi)$ in the picture). Then, $(L(\Gamma)\cap M)$ has finitely many connected components, and so does $(L(\Gamma)\cap (M\cup S_{\infty}))$. Moreover, each component of  $(L(\Gamma)\cap (M\cup S_{\infty}))$ contains a segment of $S_{\infty}$.

 Since both $M$ and $S_{\infty}$ are invariant by $f'$, one knows that $f'^{-1}(L(\Gamma)\cap (M\cup S_{\infty}))$ is included in $L(\Gamma)\cap (M\cup S_{\infty})$. So, if $V$ is a component of  $(L(\Gamma)\cap (M\cup S_{\infty}))$,  there exists $n>0$ such that $f'^{-n}(V)\subset V$. Moreover, one knows that $f'^{-n}(V\cap S_{\infty})\subset V\cap S_{\infty}$ and that the rotation number of each point in $S_{\infty}$ is equal to $\rho$, so there exists $m>0$ such that $\rho=m/n$ and the rotation number of every periodic point of $f'$ in $V$ is equal to $\rho$.
  Therefore, the rotation number of every periodic point $z\in(L(\Gamma)\cap M)$  of $f'$ is equal to the rotation number of $S_{\infty}$.
So, all the periodic orbits in the annulus $M\setminus\{z_0\}$ with rotation number in $(0,\rho)$ is contained in $R(\Gamma)\cap M$. We find  a compact set $\overline{R(\Gamma)\cap M}$  that satisfies the conditions of Lemma \ref{L: compact to converge}, and can deduce the result.
\end{itemize}
\end{proof}

\begin{lem}
If $\rho_s(I,z_0)$ is reduced to $0$, and if $O\subset M\setminus \{z_0\}$ is a periodic orbit of $f$, then the rotation number of $O$ (associated to I) is positive.
\end{lem}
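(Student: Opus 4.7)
The plan is to work with a transverse foliation $\mathcal{F}$ of $I|_{M_0}$ (where $M_0:=M\setminus\{z_0\}$), which exists by Theorem~\ref{T: pre-exist max isotopy} since $I$ is maximal, and for which $z_0$ is a sink by Remark~\ref{R: asume positive} (positive rotation type).  Let $\pi:\widetilde{M_0}\to M_0$ denote the universal cover of the annulus $M_0$, let $T$ generate the deck group, and let $\widetilde{f}$ and $\widetilde{\mathcal{F}}$ be the natural lifts of $f$ (associated to $I$) and of $\mathcal{F}$.  For $z\in O$ and a lift $\widetilde{z}$, write $\widetilde{f}^q(\widetilde{z})=T^p(\widetilde{z})$; the rotation number of $O$ associated to $I$ equals $p/q$, and the goal is to show $p>0$.

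The case $p=0$ is ruled out by Brouwer's plane translation theorem. Maximality of $I$ with $\mathrm{Fix}(I)=\{z_0\}$ and Theorem~\ref{T: pre-exist max isotopy} imply $\widetilde{f}$ has no fixed point on $\widetilde{M_0}\cong\mathbb{R}^2$, and the standard consequence of Section~\ref{S: pre-Brouwer plane translation theorem} forbids any periodic orbit of $\widetilde{f}$, contradicting $\widetilde{f}^q(\widetilde{z})=\widetilde{z}$. For the case $p<0$, the sink structure of $\mathcal{F}$ at $z_0$ yields arbitrarily small simple counterclockwise loops $\alpha$ around $z_0$ that are positively transverse to $\mathcal{F}$ with winding number $+1$, while the concatenated $I$-trajectories of $O$ give a loop homotopic in $M_0$ to a positively transverse loop $\gamma$ of winding number $p$ around $z_0$.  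Concatenating $\gamma$ with $|p|$ copies of $\alpha$, joined through short positively transverse connecting arcs and smoothed at the joining points, I obtain a positively transverse loop in $M_0$ of winding number $0$.  Its lift to $\widetilde{M_0}$ is a closed loop, from which an innermost simple sub-loop bounds a disk $\widetilde{D}\subset\widetilde{M_0}$ on whose interior $\widetilde{\mathcal{F}}$ is a regular foliation meeting $\partial\widetilde{D}$ consistently (either all into or all out of $\widetilde{D}$).  The Poincar\'e--Hopf index formula then forces the sum of $\widetilde{\mathcal{F}}$-singularity indices inside $\widetilde{D}$ to equal $\chi(\widetilde{D})=1$, contradicting the absence of singularities.

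The main technical obstacle is the $p<0$ step: one must verify that the concatenation of $\gamma$ with the loops $\alpha$ can be realized, after smoothing near the joining points, as a loop positively transverse to $\mathcal{F}$, and then that an innermost simple closed sub-loop can be extracted so that the Poincar\'e--Hopf argument applies cleanly on the bounded disk.  The $p=0$ step via Brouwer is direct, and the setup (existence of $\mathcal{F}$ and the sink property) is handed to us by the general results already collected in the preliminaries.
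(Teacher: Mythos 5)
Your $p=0$ step is correct: maximality of $I$ with $\mathrm{Fix}(I)=\{z_0\}$ forces the lift $\widetilde{f}$ associated to $I$ to be fixed-point free on $\widetilde{M_0}\cong\mathbb{R}^2$ (a fixed point of $\widetilde{f}$ would project to a contractible fixed point of $f|_{M_0}$), and Brouwer's plane translation theorem then rules out periodic points of $\widetilde{f}$. The Poincar\'e--Hopf machinery you set up for the $p<0$ case is also a legitimate tool.

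The gap is in the concatenation. The trajectory loop $\gamma$ of the periodic orbit $O$ need not pass anywhere near $z_0$, so $\gamma$ and the small loops $\alpha$ do not share a basepoint and you must insert connecting arcs. If the same arc is traversed forward and back, the return traversal is negatively transverse and the construction fails. If instead you use two distinct positively transverse arcs $c$ (from $\gamma$ to $\alpha$) and $c'$ (back), then the class of $\gamma\ast c\ast\alpha^{|p|}\ast c'$ in $H_1(M_0;\mathbb{Z})$ is $p+|p|+[c\ast c']=[c\ast c']$, the winding of the positively transverse loop $c\ast c'$ --- exactly the quantity whose sign you are trying to establish. Either way the claimed positively transverse loop of winding number zero is not produced, and no contradiction is reached. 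The obstruction is genuine: the sign of the winding of a positively transverse loop is not a consequence of the local sink structure at $z_0$ alone; global information on $\mathcal{F}$ is needed. The paper supplies it by noting that area preservation rules out closed leaves, so the attracting basin $W$ of $z_0$ is either all of $M$ or bounded by proper leaves; the components of $M\setminus\overline{W}$ are disks foliated by proper leaves and thus contain no positively transverse loop; and a positively transverse loop cannot cross a boundary leaf of $W$. Hence every positively transverse loop lies in $W$, and in the universal cover of $W\setminus\{z_0\}$, where $\widetilde{\mathcal{F}}$ consists of vertical lines, the first coordinate increases strictly along it, forcing the winding to be positive. This confinement-to-the-basin step is what your concatenation cannot replace.
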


\begin{proof}
Let $\mathcal{F}$ be a transverse foliation of $I$. Then $\mathcal{F}$ has only one singularity $z_0$, and $z_0$ is a sink of $\mathcal{F}$ by the assumption in Remark \ref{R: asume positive}. Since $f$ is area preserving, by the remark that follows \ref{P: pre-dynamics of a foliation} one knows that $\mathcal{F}$ does not have any closed leaf. Let $W$ be the attracting basin of $z_0$ for $\mathcal{F}$. It is either  $M$ or a  proper subset of $M$ whose boundary is the union of some proper leaves. In the first case, any periodic orbit of $f|_{M\setminus\{z_0\}}$ has a positive rotation number associated to $I$, and the proof is finished. In the second case,  note that each  connected component of $M\setminus \overline{W}$ is a disk foliated by proper leaves, and hence does not contain any loop that is transverse to $\mathcal{F}$. Moreover, any loop transverse to $\mathcal{F}$ can not meet a boundary leaf of $W$, and hence is contained in $W$. One deduces that every periodic orbit of $f$ distinct from $\{z_0\}$ is contained in  $W$, and its trajectory along the isotopy is homotopic to a transverse loop in $W$. So, its rotation number is positive.
\end{proof}

\begin{lem}
 If $\rho_s(I,z_0)$  is reduced to $0$, and if $f$ has
another periodic orbit besides $z_0$, then there exist an interger $q\ge 1$ and a $q$-periodic orbit $O$ with rotation number $1/q$ (associated to $I$) such that  $f|_{M\setminus O}$ is isotopic to a homeomorphism $R_{1/q}$ satisfying $R^q_{1/q}=\mathrm{Id}$.
\end{lem}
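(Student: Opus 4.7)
The plan is to produce a $(1,q)$-periodic orbit of $f$ in $M\setminus\{z_{0}\}$ for some integer $q\ge 1$, and then upgrade it to a topologically monotone one using Boyland's theorem (Proposition~\ref{P: pre-boyland closed annulus}). Once a topologically monotone $(1,q)$-orbit $O$ is produced, the required conclusion $R_{1/q}^{q}=\mathrm{Id}$ comes for free, since the braid type of a topologically monotone $(1,q)$-orbit is by definition $\alpha_{1/q}$, the braid type of the rigid rotation $T_{1/q}$, which satisfies $T_{1/q}^{q}=\mathrm{Id}$.

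By the previous lemma, the periodic orbit $O_{*}$ existing by hypothesis has some rotation number $\rho_{*}>0$ associated to $I$. To produce $(1,q)$-orbits I would split the argument according to Remark~\ref{R: nonaccumulated and rotation 0 means indefferent}. If $z_{0}$ is not accumulated by periodic orbits, Proposition~\ref{P: pre-rotation number of a saddle point} rules out the saddle case, so $z_{0}$ is indifferent; Proposition~\ref{P: pre-existence of rotation number of indifferent point} provides an invariant continuum $K_{0}\ni z_{0}$, and assertion~(v) of Proposition~\ref{P: pre-rotation set} together with $\rho_{s}(I,z_{0})=\{0\}$ gives $\rho(I,K_{0})=0$. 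Compactifying the end of $M\setminus K_{0}$ corresponding to $K_{0}$ by the circle of prime-ends, and cutting off the other end just outside $O_{*}$, yields a closed sub-annulus of $M\setminus K_{0}$ whose two boundary circles carry rotation numbers $0$ and $\rho_{*}$ respectively. The closed-annulus Poincar\'e--Birkhoff theorem (Proposition~\ref{P: pre-poincare-birkhoof-closed annulus}) then produces a $(p,q)$-periodic orbit for every irreducible $p/q\in(0,\rho_{*})$. If on the contrary $z_{0}$ is accumulated by periodic orbits, the condition $\rho_{s}(I,z_{0})=\{0\}$ forces their rotation numbers to converge to $0$; picking one of rotation smaller than $\rho_{*}$ and using it together with $O_{*}$ as two recurrent points of distinct rotations, Le Calvez's open-annulus Poincar\'e--Birkhoff theorem (Proposition~\ref{P: pre-Le Calvez's generalization of poincare-birkhoof}) applied to $f|_{M\setminus\{z_{0}\}}$, which satisfies the intersection property by area preservation, yields the same conclusion.

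In either alternative, choosing $q$ large enough that $1/q\in(0,\rho_{*})$ gives a $(1,q)$-periodic orbit of $f$. Applying Boyland's Proposition~\ref{P: pre-boyland closed annulus} on the compactified closed annulus then upgrades it to a topologically monotone $(1,q)$-orbit $O$, with $f|_{M\setminus O}$ isotopic to a homeomorphism $R_{1/q}$ conjugate to the rigid rotation $T_{1/q}$; in particular $R_{1/q}^{q}=\mathrm{Id}$.

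The main obstacle I anticipate is the compactification step in the indifferent case, where one must simultaneously compactify the $K_{0}$-end by prime-ends and confirm that the extended map is boundary preserving with rotation number $0$ on the added circle, that the intersection property survives the extension, and that the resulting closed sub-annulus is large enough to contain the $(1,q)$-orbit produced by Poincar\'e--Birkhoff. The first two points follow from Proposition~\ref{P: pre-existence of rotation number of indifferent point} and from area preservation, while the third is ensured by taking $K_{0}$ small and the outer cut close to $O_{*}$; a secondary subtlety is that an invariant essential loop near $O_{*}$ (rather than merely the finite set $O_{*}$) may be needed to serve as the outer boundary, and such a loop can be produced from the transverse foliation of $I$.
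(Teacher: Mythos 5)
The overall skeleton — produce a $(1,q)$-orbit via Poincaré–Birkhoff, then upgrade it to a topologically monotone one via Boyland's theorem — is the right high-level outline and matches the paper, but the proposal glosses over exactly the steps where the real work happens, and several of those steps as stated would not go through. First, the compactification needed before you can invoke Boyland's theorem (Proposition~\ref{P: pre-boyland closed annulus}) is not available in the form you describe. In your non-accumulated case, ``cutting off the other end just outside $O_{*}$'' does not produce an invariant closed sub-annulus, and your proposed fix of extracting an invariant essential loop from the transverse foliation fails: since $f$ is area preserving, the transverse foliation $\mathcal{F}$ has no closed leaves and no petals near $z_0$ (remark after Proposition~\ref{P: pre-dynamics of a foliation}), so there is no invariant loop to use. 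The paper instead splits on whether the attracting basin $W$ of $z_0$ for $\mathcal{F}$ equals $M$ or not. When $W\subsetneq M$, Remark~\ref{R: pre-blow-up} gives a blow-up at $\infty$ with rotation number~$0$, and that circle (not a loop near $O_*$) serves as the outer boundary; when $W=M$, no such blow-up exists and the paper must \emph{modify} $f$ near the ends to a map $f''$ that is a contraction there, then blow up $f''$. Your dichotomy (accumulated/non-accumulated) does not track this, and the $W=M$ subcase is simply not covered by your argument.

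Second, in your accumulated case you apply the open-annulus Poincaré–Birkhoff theorem to produce the $(1,q)$-orbit, but then the Boyland step still needs a closed annulus, and no prime-end compactification at $z_0$ is available there (Proposition~\ref{P: pre-existence of rotation number of indifferent point} needs $z_0$ non-accumulated). The paper handles this case (its case~(iii)) through a genuinely different mechanism: perturb $f$ away from a neighborhood of $z_0$ so that low-period orbits become finite, pass to an annulus cover of the complement of those orbits, blow up both ends of the cover, and only then apply Boyland, followed by a careful argument to show the monotone orbit descends to the right place. Third, even once a topologically monotone orbit is found for the extended or modified homeomorphism on a closed annulus, the conclusion does \emph{not} ``come for free'' for $f|_{M\setminus O}$: Boyland's statement is about the isotopy class of $\overline{f}|_{\mathbb{A}\setminus O}$ (or of a modified $f''$), and you must transfer this to an isotopy statement about the original $f$ on the open surface $M\setminus O$. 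In the paper this is done either by noting $f''$ and $f'$ agree on a large compact set containing $O$ (case~(i)), or by an explicit homeomorphism between $M\setminus K$ and $M\setminus\{z_0\}$ together with Alexander's trick (case~(ii)); your proposal omits this transfer entirely.
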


\begin{proof}
Let $O_0$ be a periodic orbit of $f$ distinct from $\{z_0\}$. By the previous lemma, the rotation number $\rho$ of $O_0$ in the annulus $M\setminus\{z_0\}$ associated to $I$ is positive. Similarly to the proof of Sublemma \ref{L: chapter3-mainproof1-plane1-sublemma},  there exists a $q$-periodic orbit with rotation number $p/q$ in the annulus $M\setminus\{z_0\}$ for all irreducible $p/q\in(0,\rho)$.  Let $\mathcal{F}$ be a transverse foliation of $I$. One knows by the assumption in Remark \ref{R: asume positive} that $z_0$ is a sink of $\mathcal{F}$. Let $W$ be the attracting basin of $z_0$ for $\mathcal{F}$. One has to consider the following three cases:
\begin{itemize}
\item[i)] Suppose that $W$ is equal to $M$.

 Let $T:(x,y)\mapsto (x+1,y)$ be the translation of $\mathbb{R}^2$. It induces a universal covering map
$ \pi: \mathbb{R}^2\rightarrow \mathbb{R}^2/T\simeq \mathbb{T}^1\times \mathbb{R}$. Let $h:M\setminus \{z_0\}\rightarrow \mathbb{T}^1\times\mathbb{R}$ be an orientation preserving map  that maps  the leaves of $\mathcal{F}$ to the vertical lines $\{\pi(\{x\}\times\mathbb{R}): x\in\mathbb{R}\}$ upward. Write $I'=(h\circ f_t\circ h^{-1})_{t\in[0,1]}$, and $f'=h\circ f\circ h^{-1}$. We will prove that there exists a positive integer $q$, and a $q$-periodic orbit $O$ of $f'$ with rotation number $1/q$ (associated to $I'$) such that  $f'|_{(\mathbb{T}^1\times\mathbb{R})\setminus O}$ is isotopic to a homeomorphism $R_{1/q}$ satisfying that $R^q_{1/q}=\mathrm{Id}$, and hence $h^{-1}(O)$ is a $q$-periodic orbit of $f$ with rotation number $1/q$ (associated to $I$) such that $f|_{M\setminus h^{-1}(O)}$ is isotopic to $h^{-1}\circ R_{1/q}\circ h$.

Fix a  $q$-periodic orbit $O$ of $f'$ with rotation number $1/q$ in the annulus $M\setminus\{z_0\}$ for  $1/q\in(0,\rho)$.  Choose  $0<M_1<M_2$ such that
$$O\subset \mathbb{T}^1\times(-M_1, M_1), \quad \text{and } (\bigcup_{t\in[0,1]}f'_t(\mathbb{T}^1\times[-M_1,M_1]))\subset \mathbb{T}^1\times(-M_2,M_2).$$
 Let $\widetilde{f}$ be the lift of $f'$ associated to $I'$. One knows that
    $$p_1(\widetilde{f}(\widetilde{z}))-p_1(\widetilde{z})>0 \quad \text{for all } \widetilde{z}\in\mathbb{R}^2, $$
where $p_1$ is the projection to the first factor. Let $\varphi_1$ be the  homeomorphism of $\mathbb{T}^1\times \mathbb{R}$ whose lift to $\mathbb{R}^2$ is defined by
$$\widetilde{\varphi}_1(x,y)=\left\{\begin{array}{ll}(x,y), \quad &\text{for } |y|\le M_2,\\
(x+|y|-M_2,y),\quad &\text{for } |y|>M_2.\end{array}\right.$$
We know that $\eta(y)=\sup_{x\in \mathbb{R}}|p_2(\widetilde{f'}(x,y))-y|$ is a continuous function, where $p_2$ is the projection onto the second factor. So,  there exist $M_3>M_2$ and a   homeomorphism $\varphi_2$ of $\mathbb{T}^1\times \mathbb{R}$ whose lift $\widetilde{\varphi}_2$  to $\mathbb{R}^2$  satisfies $p_1\circ\widetilde{\varphi}_2=\mathrm{Id}$ and
$$\widetilde{\varphi}_2(x,y)=\left\{\begin{array}{ll}(x,y), \quad &\text{for } |y|\le M_2,\\
(x,y+\text{sign}(y)(\eta(y)+1)),\quad &\text{for } |y|\ge M_3.\end{array}\right.$$
Let $f''=\varphi_2\circ\varphi_1\circ f'$. It is a contraction near each end and hence can be blown-up at each end by adding a circle. Moreover, by choosing suitable blow-up, the rotation numbers  at the boundary can be any real number, and we get a homeomorphism $\overline{f''}$ of closed annulus and a lift $\widetilde{f''}$ of $\overline{f''}$  such that $O$ is a $(1,q)$-periodic orbit and $\rho(\widetilde{f''})$ (see Section \ref{S: pre-topologically monotone periodic orbits} for the definition) is a closed interval in $(0,\infty)$.  One deduces by Proposition \ref{P: pre-boyland closed annulus} that $O$ is topologically monotone (Otherwise $I(1/q)=[0,1/(q-1)]\subset \rho(\widetilde{f''})$). Therefore,  $f''|_{(\mathbb{T}^1\times\mathbb{R})\setminus O}$  is isotopic to a homeomorphism $R_{1/q}$ satisfying $R^q_{1/q}=\mathrm{Id}$, and so is $f'|_{(\mathbb{T}^1\times\mathbb{R})\setminus O}$. The lemma is proved.

\item[ii)] Suppose that $W$ is a proper subset of M whose boundary is the union of some proper leaves, and that $z_0$ is not accumulated by periodic orbits.

    In this case, one knows by Remark \ref{R: pre-blow-up} that $f$ can be blown-up at $\infty$, and that the blow-up rotation number $\rho(I,\infty)$ is equal to $0$. One knows  by Remark \ref{R: nonaccumulated and rotation 0 means indefferent} that $z_0$ is an non-accumulated indifferent point, and that $\rho(I,z_0)$ is equal to $0$.

    Recall that there exists a $q$-periodic orbit with rotation number $p/q$ in the annulus $M\setminus\{z_0\}$ for all irreducible $p/q\in(0,\rho)$. We  fix a  $q$-periodic orbit $O$ of $f$ with rotation number $1/q$ in the annulus $M\setminus\{z_0\}$ for  $1/q\in(0,\rho)$. Let $\gamma_1$ be a simple closed curve that separates $O$ and $z_0$. Denote by $U^-$ the component of $M\setminus \gamma_1$ containing $O$.  We deduce by the assertions i) and ii) of Proposition \ref{P: pre-rotation set} that there exists a neighborhood  of $z_0$ that does not contain any $q$-periodic point of $f$ with rotation number $1/q$. So, by choosing $\gamma_1$ sufficiently close to $z_0$, we can suppose that all the $q$-periodic points of $f$ with rotation number $1/q$ are contained in $U^-$. Let $\gamma_2$ be a simple closed curve that separate $\gamma_1$ and $z_0$ such that $\cup_{t\in[0,1]}f_t(\overline{U^-})$ is in the component of $M\setminus \gamma_2$ containing $\gamma_1$. Denote by $U$ the component of $M\setminus \gamma_2$ containing $z_0$. Let  $V\subset U$ be a small Jordan domain containing $z_0$ such that   $\cup_{t\in[0,1]}f_t(\overline{V})\subset U$, and $K\subset V$ be a  sufficiently small invariant continuum at $z_0$ such that $\rho(I,K)=0$.  Let $M\setminus K\cup S_\infty\cup S^1$ be a compactification of  $M\setminus K$, where $S_{\infty}$ is the circle added when blowing $f$ at $\infty$ and $S^1$ is the circle added when blowing $f|_{M\setminus K}$ at the end $K$. It is a closed annulus, and $f|_{M\setminus K}$ extends continuously to a homeomorphism $\overline{f}$ of $M\setminus K\cup S_\infty\cup S^1$.   The homeomorphism $\overline{f}$ has a $(1,q)$ periodic orbit $O$ and hence by Proposition \ref{P: pre-boyland closed annulus} has a $(1,q)$ topologically monotone periodic orbit $O'$ (It could be equal to $O$ or different from $O$). Since the rotation number of $\overline{f}$ at both boundary is equal to $0$, $O'$ is included in $M\setminus K$ and hence in $U^-$. So,   $f|_{M\setminus K}$ is isotopic to a homeomorphism  $R_{1/q}$ satisfying that $R^q_{1/q}=\mathrm{Id}$.

     Let $h: M\setminus K\rightarrow M\setminus\{z_0\}$ be a homeomorphism whose restriction to $M\setminus U$ is equal to the identity. Then, $f'=h\circ f|_{M\setminus K}\circ h^{-1}$ is a homeomorphism of $M\setminus\{z_0\}$ which coincides $f$ in $M\setminus U$. The restriction of $f'\circ f^{-1}$ to $M\setminus U$ is equal to the identity, using Alexander's trick one deduces that $f'\circ f^{-1}|_{M\setminus O'}$ is isotopic to the identity. So, $f'|_{M\setminus O'}$ and  $f|_{M\setminus O'}$ are isotopic. Therefore, $f|_{M\setminus O'}$ is isotopic to   $R'_{1/q}=h|_{M\setminus O'} \circ R_{1/q}\circ h^{-1}|_{M\setminus O'}$ which satisfies of course $R'^q_{1/q}=\mathrm{Id}$.

\item[iii)] Suppose that $W$ is a proper subset of M whose boundary is the union of some proper leaves, and that $z_0$ is  accumulated by periodic orbits.

    As in case ii), $f$ can be blown-up at $\infty$ and the  blow-up rotation number $\rho(I,\infty)$ is equal to $0$.  Recall that there exists a $q$-periodic orbit with rotation number $p/q$ in the annulus $M\setminus\{z_0\}$ for all irreducible $p/q\in(0,\rho)$.  Fix two prime integers $q_1$ and $ q_2$ such that $1/q_2<1/q_1<\rho$. Choose  a $q_1$-periodic orbit $O_1$ and a $q_2$-periodic orbit $O_2$ in $M\setminus \{z_0\}$ with rotation number (associated to $I$)  $1/q_1$ and $1/q_2$ respectively. Recall that the rotation number of every periodic orbit in $M\setminus\{z_0\}$ is positive and that $\rho_s(I,z_0)$ is reduced to $0$. One deduces by the assertion i) and ii) that for any given integer $q> 1$, there is a neighborhood of $z_0$  that does not contain any $q$-periodic point of $f$. So, there exists  a Jordan domain $U$ containing $z_0$ that does not contain any periodic point of $f$  with period not bigger that $q_2$ except $z_0$.    Let $\gamma_1\subset U$ be a simple closed curve that  separates $z_0$ and $O_1\cup O_2$. Denote by $U^-_1$ the component of $M\setminus \gamma_1$ containing $O_1\cup O_2$. Let $\gamma_2$ be a simple closed curve that separates $\gamma_1$ and $z_0$ such that the trajectory of each $z\in \overline {U^-_1}$ along $I^{q_2}$ is in the component $U^-_2$ of $M\setminus \gamma_2$ containing $\gamma_1$. Let $\gamma_3$ be a simple closed curve that separates $\gamma_2$ and $z_0$ such that the trajectory of each $z\in \overline {U^-_2}$ along $I^{q_2}$ is in the component $U^-_3$ of $M\setminus \gamma_3$ containing $\gamma_2$. Since $\gamma_3\subset U$, there does not exist any periodic points of $f$ with periodic not bigger that $q_2$ in $\gamma_3$.  We can perturb $f$ in $M\setminus (\overline{U^-_3}\cup\{z_0\})$ and get a homeomorphism $f'$ such that $f'$  has  finitely many periodic points with periods not bigger than $q_2$ in $M\setminus U^-_{3}$.

    Let $X$ be the union of periodic orbits of $f'$ with periodic not bigger than $q_2$ that intersects $M\setminus \overline{U^-_3}$. It is a finite set containing $z_0$. We consider the annulus covering $\pi:\widetilde{M}\rightarrow M\setminus X$  such that the restriction of $\pi$ to a sufficiently small annulus near one end is a homeomorphism between this annulus and a small annulus near $\infty$ in $M\setminus X$.  As in Section \ref{S: pre-annulus covering projection}, we  add a point $\star$ at this end of $\widetilde{M}$. Let $\widetilde{U^-_i}$ be the component of $\pi^{-1}(U^-_i)$ that has an end $\star$ and $\widetilde{O}_i'$ be the lift of $O_i'$ in $\widetilde{U^-_2}$ for $i=1,2$.  Let $\widetilde{f}'$ be the lift of $f'|_{M\setminus X}$. It extends continuously to a homeomorphism of $\widetilde{M}\cup\{\star\}$, and the dynamics of $\widetilde{f}'$ near $\star$ is conjugate to the dynamics of $f'$ near $\infty$. So, $\widetilde{f}'$ can be blown-up at $\star$, and by choosing a suitable isotopy $\widetilde{I}'$ of $\widetilde{f}'$, the blow-up rotation number $\rho(\widetilde{I}',\star)$ is equal to $0$. Moreover, $\widetilde{O}_i'$ is a $q_i$-periodic orbit of $\widetilde{f}'$ with rotation number $1/q_i$ (associated to $\widetilde{I}'$), for $i=1,2$.  Referring to Section \ref{S: pre-extend lifts to the boundary}, one knows that $\widetilde{f}'$ can be blown-up at the other end.

     We blow-up $\widetilde{f}$ at both ends and get a homeomorphism $\overline{\widetilde{f}'}$ of a closed annulus. For $i=1,2$, the homeomorphism $\overline{\widetilde{f}'}$ has a $(1,q_i)$ periodic orbit,  so one can deduce by Proposition \ref{P: pre-boyland closed annulus} that $\overline{\widetilde{f}'}$ has a $(1,q_i)$ topologically monotone periodic orbit $\widetilde{O}_i''$. The circle we added at $\star$ does not contain any periodic points with rotation number different from  $0$, so it does not contain $\widetilde{O}_1''$ or $\widetilde{O}_2''$.  The rotation number of $\overline{\widetilde{f}'}$ at the circle we added at the other end is different from $1/q_1$ or $1/q_2$. Suppose that it is different from $1/q_1$, the other case can be treated similarly. Then,  $\widetilde{O}''_1$ is included in $ \widetilde{M}$, and $\pi(\widetilde{O}''_1)$ is a periodic orbit of $f'$ of period not bigger than $q_1$. So, $\pi(\widetilde{O}''_1)$ is included in $\overline{U^-_3}$, and hence is a periodic orbit of $f$ in $U^-_1$.
     \begin{figure}[h]
     \centering
      \includegraphics[width=4cm]{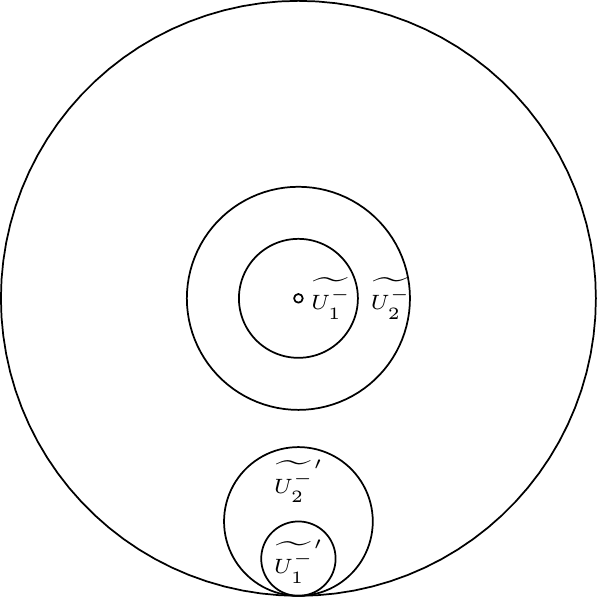}
      \end{figure}

We will prove by contradiction that $\widetilde{O}''_1$ is included in $\widetilde{U^-_1}$. Otherwise, suppose that there exists $\widetilde{z}\in\widetilde{O}''_1$ in another component $\widetilde{U^-_1}'$ of $\pi^{-1}(U^-_1)$. Then, $\widetilde{z}$ is a fixed point of $\widetilde{f}^{q_1}$. Let $\widetilde{U^-_2}'$ be the component of $\pi^{-1}(U^-_2)$ containing $\widetilde{U^-_1}'$.  Since $f'^{q_1}(U^-_1)\subset U^-_2$,  one deduces that $\widetilde{f}'^{q_1}(\widetilde{U^-_1}')\subset \widetilde{U^-_2}'$. Recall that the rotation number of $\widetilde{O}_1''$ is $1/q_1$.  So, the rotation number of $\overline{\widetilde{f}'}$ at the outer boundary is $1/q_1$, which contradicts our assumption.

Let $h:\widetilde{M}\rightarrow M\setminus \{z_0\}$ be a homeomorphism whose restriction to $\widetilde{U^-_2}$ is equal to $\pi$. As in the end of case ii), we deduce that $f|_{M\setminus \pi(\widetilde{O}''_1)}$ is isotopic to a homeomorphism  $R_{1/q_1}$ satisfying that $R^{q_1}_{1/q_1}=\mathrm{Id}$. The lemma is proved.
\end{itemize}
\end{proof}

\begin{proof}[Proof of Theorem \ref{T: chapter3-plane case}]
By the previous lemma, there exists  a  $q'$-periodic orbit $O$ with rotation number $1/q'> 0$ (associated to $I$) such that $f|_{M\setminus O}$ is isotopic to a homeomorphism  $R_{1/q'}$ satisfying  $R^{q'}_{1/q'}=\mathrm{Id}$. Let $I'=(\varphi_t)_{t\in[0,1]}$ be  an identity isotopy  of $f^{q'}$ that fixes every point in $O\cup \{z_0\}$.
Since the rotation number of $O$ associated to $I$ is $1/q'$, each point in $O$ is a fixed point of $f^{q'}$ and its rotation number associated to $I^{q'}$ is $1$. Because $I'$ fixes $O\cup\{z_0\}$, $I'|_{M\setminus\{z_0\}}$ is homotopic to $J^{-1}_{z_0}I^{q'}|_{M\setminus\{z_0\}}$, where $J_{z_0}$ is an identity isotopy of the identity fixing $z_0$ such that $\rho(J_{z_0},z_0)=1$. By the first assertion of Proposition \ref{P: pre-rotation set}, one knows that  $\rho_s(I',z_0)$ is reduced to $-1$.

Let $\pi':\widehat{M}\rightarrow M\setminus O$ be the universal cover.  Since $M\setminus O$ is a surface of finite type, we can endow it a hyperbolic structure, and $\widehat{M}$ can be viewed to be the hyperbolic plane.
Fix  $\widehat{z}_0\in\pi'^{-1}(z_0)$. Let $\widehat{f}$ be the lift of $f|_{M\setminus O}$ that fixes $\widehat{z}_0$.
 Then, $\widehat{f}$ can be blown-up at $\infty$.

 Let $\widehat{I}'=(\widehat{\varphi}_t)_{t\in[0,1]}$ be the identity isotopy of $\widehat{f}^{q'}$ that lifts $I'$. Then, $\rho_s(\widehat{I}',\widehat{z}_0)$ is reduced to $-1$.
On the other hand, $\infty$ is accumulated by the points of $\pi'^{-1}\{z_0\}$ which are fixed points of $\widehat{I}'$, so by the assertion ii) of Proposition \ref{P: pre-rotation set}, one knows that $0$ is belong to $\rho_s(\widehat{I}',\infty)$. But $\widehat{f}$ can be blown-up at $\infty$, by the assertion iv) of Proposition \ref{P: pre-rotation set}, we know that $\rho_s(\widehat{I}',\infty)$ is reduced to $0$.

Let $\widehat{I}_0$ be an identity isotopy of $\widehat{f}$ that fixes $\widehat{z}_0$ and satisfies $\rho_s(\widehat{I}_0,\widehat{z}_0)=\{0\}$. Then $\rho_s(\widehat{I}_0^{q'},\widehat{z}_0)$ is reduced to $0$, and hence $\widehat{I}_0^{q'}|_{\widehat{M}\setminus\{\widehat{z}_0\}}$ is homotopic to  $ J_{\widehat{z}_0}\widehat{I}'|_{\widehat{M}\setminus\{\widehat{z}_0\}}$. So, $\rho_s(\widehat{I}^{q'}_0,\infty)$ is reduced to $-1$, and by the assertion i) of Proposition \ref{P: pre-rotation set}, we deduce that $\rho_s(\widehat{I}_0,\infty)$ is reduced to $-1/q'$. Since $\widehat{f}$ can be blown-up at $\infty$, by the assertion iv) of Proposition \ref{P: pre-rotation set},  one knows that the blow-up rotation number $\rho(\widehat{I}_0,\infty)$ is equal to $-1/q'$.

Every $\widehat{z}'_0\in\pi'^{-1}\{z_0\}\setminus\{\widehat{z}_0\}$ is a contractible fixed point of $\widehat{f}^{q'}|_{\widehat{M}\setminus\{\widehat{z}_0\}}$ associated to $\widehat{I}'|_{\widehat{M}\setminus\{\widehat{z}_0\}}$, so it is not a contractible fixed point of $\widehat{f}|_{\widehat{M}\setminus\{\widehat{z}_0\}}$ associated to $\widehat{I}_0|_{\widehat{M}\setminus\{\widehat{z}_0\}}$.

Let $\widehat{O}'$ be a periodic orbit of $\widehat{f}$ in the annulus $\widehat{M}\setminus\{\widehat{z}_0\}$ such that $z_0\notin \pi'(\widehat{O}')$ and the rotation number of $\widehat{O}'$ associated to $\widehat{I}_0|_{\widehat{M}\setminus\{\widehat{z}_0\}}$ is $p/q$. Then $\widehat{O}'$ is a periodic orbit of $\widehat{f}^{q'}$  in the annulus $\widehat{M}\setminus\{\widehat{z}_0\}$ and the rotation number associated to $\widehat{I}'|_{\widehat{M}\setminus\{\widehat{z}_0\}}$ is $\frac{pq'}{q}-1$. So, $\pi'(\widehat{O}')$ is a periodic orbit of $f$ in the annulus $M\setminus\{z_0\}$, the rotation number associated to $I'$ is $\frac{pq'}{q}-1$, the rotation number associated to $I^{q'}$ is $\frac{pq'}{q}$, and the rotation number associated to $I$ is $p/q$. In particular, if $\widehat{z}'$ is a contractible  fixed point of $\widehat{f}|_{\widehat{M}\setminus\{\widehat{z}_0\}}$  associated to $\widehat{I}_0|_{\widehat{M}\setminus\{\widehat{z}_0\}}$, $\pi'(\widehat{z}')$ is a contractible fixed point of $f|_{M\setminus\{z_0\}}$ associated to $I|_{M\setminus\{z_0\}}$. So, $\widehat{f}|_{\widehat{M}\setminus\{\widehat{z}_0\}}$ does not have any contractible fixed point associated to $\widehat{I}_0$.

 Moreover, if $p/q$ is irreducible, and if $\widehat{O}'$ is a periodic orbit of $\widehat{f}$ of type $(p,q)$ associated to $\widehat{I}_0$ at $\widehat{z}_0$ such that $z_0\notin \pi'(\widehat{O}')$, then $\pi'(\widehat{O}')$ is a periodic orbit of $f$ of type $(p,q)$ associated to $I$ at $z_0$.

 By  Lemma \ref{L: chapter3-mainproof1-plane1}, the property \textbf{P)} holds for $(\widehat{f},\widehat{I}_0,\widehat{z}_0)$, and then holds for $(f,I,z_0)$.
\end{proof}

\subsection{The case where the total area of $M$ is finite}\label{S: M plane}

In this section, we assume that the area of $M$ is finite. Recall that $f$ is an area preserving homeomorphism of $M$, that $z_0$ is an isolated fixed point of $f$ satisfying $i(f,z_0)=1$, that $I$ is an identity isotopy of $f$ that fixes $z_0$ and satisfies $\rho_s(I,z_0)=\{k\}$.  Let $(X, I_X)$ be a maximal extension of $I$ that satisfies $\rho_s(I_X,z_0)=\rho_s(I,z_0)$. Write $X_0=X\setminus\{z_0\}$. Then, $X_0$ is a closed subset of $\mathrm{Fix}(f)$, and $I_X$ can be extended to a maximal identity isotopy on $M\setminus X_0$ that fixes $z_0$. To simplify the notation, we still denote by $I_X$ this extension.  Moreover, by definition of Jaulent's preorder, we know  that a periodic orbit of type $(p,q)$ associated to $I_X$ at $z_0$ is a periodic orbit of type $(p,q)$ associated to $I$ at $z_0$.  Let $M_0$ be the connected component of $M\setminus X_0$ that contains $z_0$.  Of course  the total area of $M_0$ is also finite. When $M$ is a sphere,  $f|_{M\setminus\{z_0\}}$ has at least one fixed point (see Section \ref{S: pre-Brouwer plane translation theorem}), and hence $X_0$ is not empty. So, $M_0$ is not a sphere. To simplify the notations, we denote by $f_0$ the restriction of $f$ to $M_0$, and  by $I_0$ the restriction of  $I_X$ to $M_0$. If the property \textbf{P)} holds for $(f_0,I_0, z_0)$, it holds  for $(f,I,z_0)$. So, we will prove the following proposition, and the second part of Theorem \ref{T: main-index 1 and rotation 0 implies  accumulated by periodic points} is also proved.

\begin{prop}\label{P: chapter3-mainproof2-property p hold if the area is finite}
Under the previous assumptions, the property \textbf{P)} holds for $(f_0,I_0, z_0)$.
\end{prop}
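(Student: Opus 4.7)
The plan is to reduce Proposition \ref{P: chapter3-mainproof2-property p hold if the area is finite} to the plane case (Theorem \ref{T: chapter3-plane case}) by means of the annulus covering construction of Section \ref{S: pre-annulus covering projection}. First, as in Section \ref{S: M is a plane}, I normalize to $k = 0$: letting $J_{z_0}$ be an identity isotopy of the identity fixing $z_0$ with blow-up rotation number $1$ and setting $I_0' = J_{z_0}^{-k} I_0$, Proposition \ref{P: pre-rotation set}(i) gives $\rho_s(I_0', z_0) = \{0\}$, and a contractible $(p,q)$-type periodic orbit for $I_0'$ at $z_0$ is a contractible $(p+kq, q)$-type orbit for $I_0$ at $z_0$, so property \textbf{P)} transfers back. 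By Remark \ref{R: asume positive} I may further assume $I_0'$ has positive rotation type, so $z_0$ is a sink of any locally transverse foliation.

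Next, I fix a small Jordan domain $U$ containing $z_0$ in $M_0$ and form the annulus cover $\pi\colon \widetilde M \to M_0 \setminus \{z_0\}$ with $\pi_*\pi_1(\widetilde M) = i_*\pi_1(\dot U)$. Adjoining a point $\star$ at the end of $\widetilde M$ where $\pi$ restricts to a homeomorphism onto $\dot U$ yields a surface $\widetilde M \cup \{\star\}$ homeomorphic to the plane $\mathbb R^2$. Section \ref{S: pre-annulus covering projection} furnishes a natural lift $\widetilde I$ of $I_0'$ to a maximal identity isotopy of a lift $\widetilde f$ on $\widetilde M \cup \{\star\}$ fixing $\star$, with $\mathrm{Fix}(\widetilde I) = \{\star\}$; moreover $\widetilde f$ preserves the pulled-back area measure, and $\rho_s(\widetilde I, \star) = \{0\}$ because $\pi$ is a homeomorphism near $\star$. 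Any periodic orbit of $\widetilde f$ of type $(p,q)$ at $\star$ projects under $\pi$ to a contractible periodic orbit of $f_0$ of type $(p,q)$ at $z_0$.

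The crucial remaining hypothesis of Theorem \ref{T: chapter3-plane case} is that $\widetilde f$ admit a periodic orbit besides $\star$, and this is where the finite area of $M_0$ is invoked. I would argue by dichotomy. If $z_0$ is accumulated by contractible periodic orbits of $f_0$ associated to $I_0'$, each such orbit with nonzero winding around $z_0$ lifts to a periodic orbit of $\widetilde f$ in $\widetilde M$, providing the required orbit. Otherwise, Remark \ref{R: nonaccumulated and rotation 0 means indefferent} forces $z_0$ to be a non-accumulated indifferent fixed point with $\rho(I_0', z_0) = 0$; one then uses Poincar\'e recurrence on the finite-area surface $M_0$ together with the intersection property for the area-preserving $f_0$, plus the prime-ends compactification of Section \ref{S: pre-prime-ends Compactification} at a small invariant continuum around $z_0$, to apply Proposition \ref{P: pre-poincare-birkhoof-closed annulus} on a closed annulus in $M_0$ and extract a periodic orbit whose trajectory winds around $z_0$ and therefore lifts to $\widetilde M$. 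This dichotomy, and in particular making the non-accumulated case work when $M_0$ has higher topological complexity, is the main technical obstacle I foresee.

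Once this hypothesis is established, Theorem \ref{T: chapter3-plane case} applied to $(\widetilde f, \widetilde I, \star)$ yields $\varepsilon > 0$ such that for every irreducible $p/q \in (0, \varepsilon)$ there exists a contractible $(p,q)$-type periodic orbit $\widetilde O_{p/q}$ of $\widetilde f$ at $\star$ with $\mu_{\widetilde O_{p/q}} \to \delta_\star$ in the weak-star topology. Since $\pi$ is a homeomorphism from a small neighborhood of $\star$ onto a neighborhood of $z_0$, setting $O_{p/q} = \pi(\widetilde O_{p/q})$ gives contractible $(p,q)$-type periodic orbits of $f_0$ at $z_0$ with $\mu_{O_{p/q}} \to \delta_{z_0}$; this establishes property \textbf{P)} for $(f_0, I_0', z_0)$ and hence for $(f_0, I_0, z_0)$, proving Proposition \ref{P: chapter3-mainproof2-property p hold if the area is finite}.
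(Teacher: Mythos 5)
Your high-level scaffolding is the same as the paper's: normalize to $k=0$, pass to the annulus cover $\widetilde{M}\cup\{\star\}$ of Section~\ref{S: pre-annulus covering projection}, observe that type-$(p,q)$ orbits project correctly, and then invoke Theorem~\ref{T: chapter3-plane case}. But there are two genuine gaps.

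The first and larger one is the existence of a periodic orbit of $\widetilde f$ besides $\star$ when $M_0$ is neither a plane nor a sphere. You acknowledge this is ``the main technical obstacle,'' but your sketch of the non-accumulated subcase does not close it. Running Poincar\'e recurrence and prime-ends Poincar\'e--Birkhoff on $M_0$ does not make sense in general, since $M_0\setminus K$ is not an annulus. And running it directly on $\widetilde{M}$ hits the real difficulty: $\widetilde{M}$ has infinite area, the attracting basin $\widetilde W$ of $\star$ (which does have finite area) need not equal $\widetilde M$, and $\widetilde f$ can be blown up at $\infty$ with blow-up rotation number $0$. So the rotation numbers on the two ends of the annulus are the same, and a naive Poincar\'e--Birkhoff argument gives nothing. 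The paper's Proposition~\ref{P: chanpter3-mainproof2-exist periodic orbit in W} --- explicitly flagged there as the most difficult part of the article --- deals with this by building the maximal invariant continuum $K=\overline{\cup_n K_n}$, ruling out $\rho(\widetilde I,K_n)\neq 0$, positive area, and the saddle-like configuration via Lemmas~\ref{L: chapter3-mainproof2-positive rotation implies the existence of periodic orbits}, \ref{L: chapter3-mainproof2-positive area continuum implies the existence of periodic orbis}, \ref{L: chapter3-mainprof2-like saddle point}, and then deriving a contradiction from the topology of $\widehat{M}\setminus\widehat K$ (Lemmas~\ref{L: chapter3-mainproof2-each componnet of M-K is unbounded} through \ref{L: chapter3-mainproof2-each leaf intersects only one component of M-K} and the countability argument with the transversal $\widehat\delta$). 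None of this is replaceable by the recurrence argument you describe; that argument is exactly what the paper does in the genuinely planar case (Lemma~\ref{L: chapter3-positive area implies periodic orbits}), where the transverse foliation has a single singularity and $\infty$ as its source.

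The second gap concerns $k\neq 0$. Normalizing $I_0'=J_{z_0}^{-k}I_0$ correctly yields $\rho_s(I_0',z_0)=\{0\}$ and translates property \textbf{P)} by $k$, but it does \emph{not} reduce the non-planar $k\neq 0$ case to the non-planar $k=0$ case. After lifting $I_0'$ to the annulus cover, the blow-up rotation number at the $\infty$ end becomes $k\neq 0$ (instead of $0$), so the foliation structure near $\infty$ is of sink/source type rather than petal/saddle type, and the finiteness-of-area hypothesis on $\widetilde W$ that drives Proposition~\ref{P: chanpter3-mainproof2-exist periodic orbit in W} can fail (when $k\le -1$, the basin $\widetilde W^*$ may have infinite area). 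This is why the paper has the separate Proposition~\ref{P: chapter3-mainproof3} with its own maximal extension $(Y,\widetilde I_Y)$, a further annulus cover, Sublemma~\ref{L: chapter3-mainproof2-each leaf in breveW is from infinite to brevestar}, and the argument in Lemma~\ref{L: chapter3-mainproof2-not plane+rotation non 0} bounding the area of $K$ via cross-cuts and the covering translation $T$. Your proposal collapses this into the $k=0$ reduction, which is not justified.

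In short, the reduction framework you set up matches the paper, but the core content of the proof --- establishing the existence of the required periodic orbit in the non-planar cases, for both $k=0$ and $k\neq 0$ --- is missing.
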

We will prove this proposition  in the following four cases:
\begin{itemize}
\item[-] the component $M_0$ is a plane and $\rho_s(I,z_0)$ is reduced to $0$;
\item[-] the component $M_0$ is neither a sphere nor a plane and $\rho_s(I,z_0)$ is reduced to $0$;
\item[-] the component $M_0$ is a plane and $\rho_s(I,z_0)$ is reduced to an non-zero integer $k$;
\item[-] the component $M_0$ is neither a sphere nor a plane and $\rho_s(I,z_0)$ is reduced to an non-zero integer $k$.
\end{itemize}
We will use some results that  will be deduced  in the first two cases to obtain the last two cases.

\subsubsection{The case where $M_0$ is a plane and $\rho_s(I,z_0)$ is reduced to $0$}\label{S: chapter3-plane finite area rotation 0}

 In this case, $I_0$ is a maximal identity isotopy on the plane $M_0$ that fixes only one point $z_0$ and  satisfies  $\rho_s(I_0,z_0)=\{0\}$.  The result of Proposition \ref{P: chapter3-mainproof2-property p hold if the area is finite} is just a corollary of Theorem \ref{T: chapter3-plane case} and the following lemma:

 \begin{lem}\label{L: chapter3-positive area implies periodic orbits}
  Under the  previous assumptions, there exists a periodic orbit of $f$ in the annulus $M_0\setminus \{z_0\}$.
 \end{lem}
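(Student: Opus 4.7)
The plan is to argue by contradiction: suppose $f_0$ has no periodic orbit in $A := M_0 \setminus \{z_0\}$. The argument proceeds by analyzing a transverse foliation of $I_0$ at the end $\infty$ of $M_0$, prime-ends compactifying $A$, and invoking Brouwer's translation theorem in the universal cover to contradict the local rotation hypothesis.

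First, I would fix a transverse foliation $\mathcal{F}$ of $I_0$ on $M_0$. Since $I_0$ is maximal with $\mathrm{Fix}(I_0) = \{z_0\}$ and has positive rotation type at $z_0$ (Remark~\ref{R: asume positive}), the only singularity of $\mathcal{F}$ in $M_0$ is $z_0$, and it is a sink. Finite total area together with the remark following Proposition~\ref{P: pre-dynamics of a foliation} rules out closed leaves of $\mathcal{F}$. Viewing $M_0$ as the sphere $S^2$ with $\infty$ removed, the index formula gives $i(\mathcal{F}, \infty) = 2 - i(\mathcal{F}, z_0) = 1$, so by Proposition~\ref{P: pre-dynamics of a foliation} combined with the absence of closed leaves, $\infty$ is a sink or a source of $\mathcal{F}$. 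The sink case is impossible, since then the sphere-foliation would have two sinks and no source, so every leaf would lack a starting singularity. Hence $\infty$ is a source.

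Second, under the no-periodic-orbit assumption neither $z_0$ nor the end $\infty$ is accumulated by periodic orbits, and area preservation excludes dissipative behavior at either end (a wandering Jordan domain would have positive area, bounded above by the total). Thus both are indifferent, so by Proposition~\ref{P: pre-rotation set} v) and Remark~\ref{R: nonaccumulated and rotation 0 means indefferent}, $\rho(I_0, z_0) = 0$, and $\rho_\infty := \rho(I_0, \infty)$ is a well-defined prime-ends rotation number. Adjoining a prime-ends circle at each end of $A$ yields a closed annulus $\overline{A}$ on which $f_0$ extends continuously as a homeomorphism $\overline{f_0}$ satisfying the intersection property (since $f_0$ preserves an atomless full-support Borel measure in $A$). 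If $\rho_\infty \neq 0$, Proposition~\ref{P: pre-poincare-birkhoof-closed annulus} yields interior $q$-periodic orbits of rotation number $p/q$ for every irreducible $p/q$ strictly between $0$ and $\rho_\infty$, contradicting our assumption.

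The remaining case $\rho_\infty = 0$ is where the main difficulty lies, since the two boundary rotations coincide and Proposition~\ref{P: pre-poincare-birkhoof-closed annulus} no longer applies directly. My plan is to contradict it as follows. Lift $f_0|_A$ to the universal cover of $A$, obtaining a homeomorphism $\widetilde{f}$ that commutes with the generator of deck transformations and has no fixed point, since $f_0|_A$ is fixed-point free by maximality of $I_0$. Brouwer's plane translation theorem (Section~\ref{S: pre-Brouwer plane translation theorem}) then prevents $\widetilde{f}$ from having any recurrent point in the universal cover. Via an Atkinson-type argument applied to the ergodic decomposition of the area measure on $A$, this rules out all ergodic components simultaneously having vanishing rotation integral, and so produces a recurrent point $z_1 \in A$ with rotation number $\rho_1 \neq 0$ under the lift associated to $I_0$. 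Combined with recurrent points near $z_0$ whose rotation numbers are forced to cluster at $0$ by $\rho_s(I_0, z_0) = \{0\}$ and Poincaré recurrence within shrinking neighborhoods of $z_0$, Proposition~\ref{P: pre-Le Calvez's generalization of poincare-birkhoof} produces an interior periodic orbit of irreducible rational rotation number strictly between $0$ and $\rho_1$, contradicting our assumption. The main obstacle is rigorously extracting recurrent points near $z_0$ whose rotation numbers approach $0$; this will require carefully matching ergodic measures of small support near $z_0$ with the definition of the local rotation set from Section~\ref{S: pre-local rotation set}.
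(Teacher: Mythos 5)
Your proposal takes a genuinely different route from the paper, and it has real gaps that are not bridged.

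First, the claim that area preservation ``excludes dissipative behavior at either end... Thus both are indifferent'' is an invalid inference. In the classification of Section~\ref{S: pre-a classification of isolated fixed points}, ``not dissipative'' does not imply ``indifferent'': a fixed point can be a saddle. For an area-preserving map the alternatives are indifferent or saddle, and to exclude the saddle case at the end $\infty$ one would need an additional argument (e.g.\ via Proposition~\ref{P: pre-rotation number of a saddle point} and the index at $\infty$). As written, the prime-ends rotation number $\rho_\infty$ is not known to exist, so the case split that drives your proof is not yet on solid ground.

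Second, your case $\rho_\infty=0$ is where the real work is, and it remains a sketch. The Brouwer--Atkinson mechanism you invoke is plausible in spirit, but you give no construction of the cocycle, no verification that recurrence of the displacement cocycle together with recurrence in $A$ yields recurrence in the universal cover, and no control on the sign of the rotation number obtained. More seriously, you acknowledge yourself that ``rigorously extracting recurrent points near $z_0$ whose rotation numbers approach $0$'' is unresolved; $\rho_s(I_0,z_0)=\{0\}$ controls escape orbits in the definition of the local rotation set, and transferring this into information about recurrent points near $z_0$ requires a genuine argument. Without it you cannot feed two rotation numbers into Proposition~\ref{P: pre-Le Calvez's generalization of poincare-birkhoof}.

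The paper's proof avoids both problems by exploiting the transverse foliation $\mathcal{F}$ more strongly than you do. Since $z_0$ is a sink and $\mathcal{F}$ has no other singularity, no closed leaf, and $M_0$ has finite area, the attracting basin of $z_0$ is all of $M_0$ and \emph{every} leaf is a line from $\infty$ to $z_0$; in the universal cover $\mathbb{R}\times(0,1)\to M_0\setminus\{z_0\}$ adapted to the vertical leaves, the lift $\widetilde{f}$ satisfies $p_1(\widetilde{f}(\widetilde{z}))-p_1(\widetilde{z})>0$ everywhere. A Poincar\'e recurrence plus Birkhoff ergodic argument on a small Jordan domain $V$ with $f(V)\cap V=\emptyset$ then produces a recurrent point with \emph{positive} rotation number $\rho$. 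On the $z_0$ side, the paper never prime-ends compactifies at $\infty$; it compactifies only at a small invariant continuum $K_0$ at $z_0$, where Proposition~\ref{P: pre-existence of rotation number of indifferent point} gives boundary rotation number $0$ (and hence a fixed point on the boundary circle with rotation number $0$). The remark after Proposition~\ref{P: pre-Le Calvez's generalization of poincare-birkhoof} (half-closed annulus by symmetry) then yields periodic orbits with rotation number in $(0,\rho)$. This sidesteps the $\rho_\infty$ case split, the indifference issue at $\infty$, and the recurrent-points-near-$z_0$ difficulty all at once.
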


 \begin{proof}
 Of course, we can assume that  $z_0$ is not accumulated by periodic orbits. As in  Remark \ref{R: nonaccumulated and rotation 0 means indefferent}, one knows that $z_0$ is an indifferent fixed point with rotation number $\rho(I,z_0)=0$.

Let $\mathcal{F}$ be  a transverse foliation of $I_0$. One knows that  $\mathcal{F}$ has a unique singularity $z_0$ and an end $\infty$. By the assumption in Remark \ref{R: asume positive}, $z_0$ is a sink of $\mathcal{F}$. Since $f_0$ is area preserving and the total area of $M_0$ is finite,  $\infty$ is a source of $\mathcal{F}$ and all the leaves of $\mathcal{F}$ are lines from $\infty$ to $z_0$.
Let $\pi:\mathbb{R}\times(0,1)\rightarrow M_0\setminus\{z_0\}$ be the universal cover such that the leaves of the lift $\widetilde{\mathcal{F}}$ of $\mathcal{F}$ are the vertical lines oriented upward. Let $\widetilde{f}$ be the lift of $f_0$ associated to $I_0$, and $p_1:\mathbb{R}\times(0,1)\rightarrow \mathbb{R}$ be the projection onto the first factor.  Then we know that
  $$p_1(\widetilde{f}(\widetilde{z})-\widetilde{z})>0, \text{ for all } \widetilde{z}\in \mathbb{R}\times(0,1).$$
  Let V be a small Jordan domain in the annulus $M_0\setminus\{z_0\}$ such that $f(V)\cap V=\emptyset$. Let $\widetilde{V}$ be one of the connected components of $\pi^{-1}(V)$. By choosing $V$ small enough, one can suppose that
$$|p_1(\widetilde{z})-p_1(\widetilde{z}')|<\frac{1}{2} \text{ for all } \widetilde{z},\widetilde{z}'\in \widetilde{V}. $$
  Then, for every $z\in V$ and $\widetilde{z}\in \pi^{-1}\{z\}$, we know that
$$\frac{p_1(\widetilde{f}^n(\widetilde{z})-\widetilde{z})}{n}\ge\frac{
 \left(\sum_{k=1}^{n}\chi_{V}(f^k(z))\right)-\frac{1}{2}}{n}.$$
We define $U=\cup_{k\in\mathbb{Z}}f^{k}(V)$. By Poincar\'e Recurrence Theorem, almost all points in $U$ are recurrent.
By  Birkhoff-Khinchen Theorem, for almost all $z\in U$, and every $\widetilde{z}\in \pi^{-1}\{z\}$, both of the two limits
$$\lim_{n\rightarrow \infty}\frac{p_1(\widetilde{f}^n(\widetilde{z})-\widetilde{z})}{n} \text{ and } \lim_{n\rightarrow\infty}\sum_{k=1}^{n}\frac{\chi_{V}(f^k(z))}{n}$$
exist, and there exists a non negative measurable  function $\varphi$ on  $U$ that satisfies $\varphi\circ f=\varphi$ and
 $$\lim_{n\rightarrow\infty}\sum_{k=0}^{n-1}\frac{\chi_{V}(f^k(z))}{n}=\varphi(z)\text{ for almost all } z\in U.$$
Moreover, by Lebesgue's dominated convergence theorem,
$$\int_U \varphi=\int \chi_V =\mathrm{Area}(V)>0.$$
Therefore, there exist a recurrent point $z\in V$ and $\widetilde{z}\in \pi^{-1}\{z\}$ such that the limit
$$\lim_{n\rightarrow \infty}\frac{p_1(\widetilde{f}^n(\widetilde{z})-\widetilde{z})}{n}$$
exists and is positive. So,  the rotation number of $z$ is positive. We denote it by $\rho$.

On the other hand, let $K_0$ be a small enough continuum at $z_0$ whose rotation number is $0$. We denote  by $(W\setminus K_0)\sqcup \mathbb{T}^1$ the prime-ends compactification at the end $K_0$, which is an annulus. We can extend $f$ to $\mathbb{T}^1$ and  know that the  rotation number on  $\mathbb{T}^1$ is $0$. Then, there exists a fixed point on $\mathbb{T}^1$ whose rotation number is $0$.

By the remark that follows Proposition \ref{P: pre-Le Calvez's generalization of poincare-birkhoof}, there exists a $q$-periodic orbit of rotation number $p/q$ in the annulus $(W\setminus K_0)$, for all irreducible $p/q\in(0,\rho)$.
 \end{proof}

\subsubsection{The case where $M_0$ is neither a sphere nor a plane and $\rho_s(I,z_0)$ is reduced to $0$}\label{S: rotation 0 and not plane}
Recall that $f_0$ is an area preserving homeomorphism of $M_0$,  that $z_0$ is an isolated fixed point of $f_0$ satisfying $i(f_0,z_0)=1$,  that $I_0$ is a maximal identity isotopy that fixes only one point $z_0$ and satisfies $\rho_s(I_0,z_0)=\{0\}$.

As in Section \ref{S: pre-annulus covering projection}, let  $\pi:\widetilde{M}\rightarrow M_0\setminus \{z_0\}$ be the annulus covering projection, $\widetilde{I}$ be the natural lift of $I_0$ to $\widetilde{M}\cup\{\star\}$, $\widetilde{f}$ be the lift of $f_0$ to $\widetilde{M}\cup\{\star\}$ associated to $I_0$. Then $\widetilde{I}$ is a maximal identity isotopy and $\mathrm{Fix}(\widetilde{I})$ is reduced to $\star$. For all irreducible $p/q\in \mathbb{Q}$, if $O$ is a periodic orbit of type $(p,q)$ associated to $\widetilde{I}$ at $\star$, then $\pi(O)$ is a periodic orbit of type $(p,q)$ associated to $I_0$ at $z_0$. So, if the property \textbf{P)} holds for $(\widetilde{f},\widetilde{I},\star)$, then it holds for $(f_0,I_0,z_0)$. The result of Proposition \ref{P: chapter3-mainproof2-property p hold if the area is finite} is a corollary of Theorem \ref{T: chapter3-plane case} and the following  Proposition \ref{P: chanpter3-mainproof2-exist periodic orbit in W}, which is the most difficult part of this article.

\begin{prop}\label{P: chanpter3-mainproof2-exist periodic orbit in W}
There exists a periodic orbit of $\widetilde{f}$ besides $\star$.
\end{prop}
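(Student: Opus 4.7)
The plan is to produce, via a Birkhoff ergodic argument on $(M_0,f,\mu)$ transferred through the covering $\pi\colon\widetilde{M}\to M_0\setminus\{z_0\}$, a recurrent point of $\widetilde{f}$ on the annulus $\widetilde{M}$ with strictly positive rotation number; together with $\star$, which has rotation number $0$, Proposition \ref{P: pre-Le Calvez's generalization of poincare-birkhoof} then supplies a periodic orbit of $\widetilde{f}$ of every irreducible rational rotation number in $(0,\rho)$, hence in particular a periodic orbit distinct from $\star$. The intersection property required by that proposition is inherited from the area preservation of $f$ on $M_0$.

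First I would fix, via Theorem \ref{T: pre-exist max isotopy}, a transverse singular foliation $\mathcal{F}$ of $I_X$ on $M$, restrict to $M_0$, and invoke Remark \ref{R: asume positive} to make $z_0$ a sink of $\mathcal{F}|_{M_0}$; the finite area of $M_0$ together with the remark following Proposition \ref{P: pre-dynamics of a foliation} rules out closed leaves and petals. By Section \ref{S: pre-annulus covering projection}, $\mathcal{F}|_{M_0\setminus\{z_0\}}$ lifts to a foliation $\widetilde{\mathcal{F}}$ on $\widetilde{M}$ extending over $\star$ so that $\star$ is a sink. Let $W\subset M_0$ and $\widetilde{W}\subset\widetilde{M}\cup\{\star\}$ be the respective attracting basins; the same reference provides a homeomorphism $\pi|_{\widetilde{W}}\colon\widetilde{W}\to W$ which is also a measure isomorphism. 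Choose a small Jordan domain $V\subset W\setminus\{z_0\}$ inside a trivialising chart of $\mathcal{F}$ with $f(V)\cap V=\emptyset$, and write $\widetilde{V}\subset\widetilde{W}$ for its lift.

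Next, let $\pi'\colon\mathbb{R}\times(-\infty,0)\to\widetilde{M}$ be a universal cover of the annulus $\widetilde{M}$ chosen so that the leaves of $\widetilde{\mathcal{F}}$ near $\star$ lift to vertical lines going upward, let $\widehat{f}$ be the lift of $\widetilde{f}$ associated to the lifted identity isotopy $\widehat{I}$, and let $p_1$ be the projection on the first coordinate. Positive transversality of $\widehat{I}$ to the vertical foliation near $\star$ yields a uniform lower bound $p_1(\widehat{f}(\widehat{z}))-p_1(\widehat{z})\ge\eta_1>0$ on the lifts of points of $\widetilde{V}$. Combining Poincar\'e recurrence on $(M_0,f,\mu)$ with the Birkhoff ergodic theorem applied to $\chi_V$, one obtains for $\mu$-almost every $z\in V$ a recurrent $f$-orbit with positive return frequency $\varphi(z)$ to $V$. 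Arguing exactly as in the proof of Lemma \ref{L: chapter3-positive area implies periodic orbits}, any lift $\widehat{z}\in (\pi')^{-1}(\widetilde{V})$ of such a $z$ therefore satisfies
\[
\liminf_{n\to\infty}\frac{p_1(\widehat{f}^n(\widehat{z}))-p_1(\widehat{z})}{n}>0,
\]
so that $\widetilde{z}=\pi'(\widehat{z})\in\widetilde{V}\subset\widetilde{M}$ has strictly positive rotation number under $\widetilde{f}$, with the $M_0$-recurrence of $z$ providing the annulus-recurrence of $\widetilde{z}$ needed in Proposition \ref{P: pre-Le Calvez's generalization of poincare-birkhoof}.

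The principal obstacle is verifying that the Birkhoff estimate on $(M_0,f,\mu)$ really does produce a point whose lift in $\widetilde{M}$ both recurs on the annulus and has positive rotation. Since the cover $\widetilde{M}\to M_0\setminus\{z_0\}$ unfolds only the winding around $z_0$ (the cyclic subgroup $\langle\Gamma\rangle\subset\pi_1(M_0\setminus\{z_0\})$) and not the other topological features of $M_0$, one must show that the return-winding elements of the $f$-orbit of $z$ lie in $\langle\Gamma\rangle$ up to bounded correction. This is secured by choosing $V$ inside the basin $W$: points of $W$ have forward leaves ending at $z_0$, so the winding cocycle of a recurrent orbit that spends a positive fraction of its time in $W$ is, topologically, controlled by winding around $z_0$ alone. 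Granted this control, the uniform positive $p_1$-displacement on the lifts of $\widetilde{V}$ converts the positive return frequency from Birkhoff into the desired positive rotation number on $\widetilde{M}$, and Proposition \ref{P: pre-Le Calvez's generalization of poincare-birkhoof} then concludes the proof.
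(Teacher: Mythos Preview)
Your plan has a genuine gap, and it is precisely the obstacle you flag but do not resolve: recurrence of $z$ in $M_0$ does \emph{not} imply recurrence of its lift $\widetilde{z}$ in the annulus $\widetilde{M}$. The covering $\pi:\widetilde{M}\to M_0\setminus\{z_0\}$ corresponds to the cyclic subgroup $\langle\Gamma\rangle\subset\pi_1(M_0\setminus\{z_0\})$, and since $M_0$ is neither a sphere nor a plane this subgroup has infinite index. When the $f$-orbit of $z$ returns to $V$ along a loop whose class is not in $\langle\Gamma\rangle$, the lifted orbit lands in a different sheet of $\pi^{-1}(V)$, and there is nothing preventing it from escaping to the end $\infty$ of $\widetilde{M}$. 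Your heuristic ``points of $W$ have forward leaves ending at $z_0$'' does not help, because $W$ is not $f$-invariant: the orbit of $z$ will typically leave $W$, pick up arbitrary homotopy in $M_0\setminus W$, and re-enter. The same issue undermines your rotation-number estimate: the inequality $p_1(\widehat{f}(\widehat{z}))-p_1(\widehat{z})>0$ is only available while the orbit remains in $\widetilde{W}$ (where the foliation is controlled), and you have no bound once it leaves.

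The paper's proof avoids this difficulty by arguing by contradiction. Assuming $\widetilde{f}$ has no periodic orbit besides $\star$, one builds an increasing sequence of invariant continua $K_n\subset\widetilde{W}$ containing $\star$ (each the component of $\bigcap_k\widetilde{f}^{-k}(D_n)$ through $\star$), shows via separate lemmas that each $K_n$ must meet $\partial D_n$ and have prime-ends rotation number $0$, and then studies the limit $K=\overline{\bigcup_n K_n}\subset\widetilde{W}$. Lifting to the universal cover $\widehat{M}$ of $\widetilde{M}$, one analyses how leaves of the lifted foliation meet the components of $\widehat{M}\setminus\widehat{K}$ and arrives at a contradiction: $\mathbb{R}$ would be a disjoint union of countably many nonempty open sets. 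The point is that the Birkhoff-type argument you attempt works only after one has an $\widetilde{f}$-invariant continuum inside $\widetilde{W}$ of positive area (this is exactly Lemma \ref{L: chapter3-mainproof2-positive area continuum implies the existence of periodic orbis}); producing such an object, or ruling it out to reach a contradiction, is the real content of the proposition.
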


The idea of the proof of the proposition is the following: we will first consider several simple situations such that there exists a periodic orbit  of $\widetilde{f}$ besides $\star$, then we suppose that we are not in these situations and follow the idea of Le Calvez  (see Section 11 of \cite{lecalvezfeuilletage}) to get a contradiction.

Let us begin with some necessary assumptions and lemmas. Of course, we can suppose that $\star$ is not accumulated by periodic orbits of $\widetilde{f}$. As in Remark \ref{R: nonaccumulated and rotation 0 means indefferent}, $\star$ is an  indifferent fixed point of $\widetilde{f}$ and the rotation number $\rho(\widetilde{I}, \star)$ is equal to $0$.
Let $\mathcal{F}$ be a transverse foliation of $I_0$, and $\widetilde{\mathcal{F}}$ be the lift of $\mathcal{F}$. By the assumption in Remark \ref{R: asume positive}, $z_0$ is a sink of $\mathcal{F}$, and $\star$ is a sink of $\widetilde{\mathcal{F}}$. Denote by  $W$   the attracting basin of $z_0$ for  $\mathcal{F}$, and by $\widetilde{W}$ the attracting basin of $\star$ for $\widetilde{\mathcal{F}}$. Write $\dot{W}=W\setminus\{z_0\}$ and $\dot{\widetilde{W}}=\widetilde{W}\setminus\{\star\}$. Recall that  $\pi|_{\dot{\widetilde{W}}}$ is a homeomorphism between $\dot{\widetilde{W}}$ and $\dot{W}$ and can be extended continuously to a homeomorphism between $\widetilde{W}$ and $W$. The area  on $M_0$ induces an  area on $\widetilde{M}$.  So $\widetilde{f}$ is area preserving, and the area of $\widetilde{W}$ is finite.

\begin{lem}\label{L: chapter3-mainproof2-positive area continuum implies the existence of periodic orbis}
Under the previous assumptions, if there exists an invariant continuum $K\subset\widetilde{W}$ with positive area, then there exists a periodic orbit  besides $\star$.
\end{lem}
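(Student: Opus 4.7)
The plan is to mimic the argument of Lemma~\ref{L: chapter3-positive area implies periodic orbits}, producing a recurrent point of $\widetilde{f}$ with strictly positive rotation number associated to $\widetilde{I}$ and then combining it with the vanishing rotation at $\star$ to apply a Poincar\'e--Birkhoff type theorem.

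First I may assume that $\star$ is not accumulated by periodic orbits of $\widetilde{f}$, otherwise the conclusion is immediate. Remark~\ref{R: nonaccumulated and rotation 0 means indefferent} then asserts that $\star$ is an indifferent fixed point with $\rho(\widetilde{I},\star)=0$, so a sufficiently small invariant continuum $K_\star\ni\star$ has prime-ends rotation number zero. Since $\star$ is a sink of $\widetilde{\mathcal{F}}$ (no closed leaf, petal or hyperbolic sector nearby, by Proposition~\ref{P: pre-dynamics of a foliation}), its attracting basin $\widetilde{W}$ is an open topological disk and $\widetilde{W}\setminus\{\star\}$ is an open annulus. I fix the universal cover $\widehat{\pi}\colon\widehat{\widetilde{M}}\to\widetilde{M}$ of the annulus $\widetilde{M}$ together with a generator $T$ of its deck group, choose a continuous $p_1\colon\widehat{\widetilde{M}}\to\mathbb{R}$ satisfying $p_1\circ T=p_1+1$, and let $\widehat{\widetilde{f}}$ be the lift of $\widetilde{f}$ associated to $\widetilde{I}$. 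The local transversality of $\widetilde{I}$ with respect to $\widetilde{\mathcal{F}}$ and the fact that every leaf of $\widetilde{\mathcal{F}}$ in $\widetilde{W}$ lands at $\star$ together imply $p_1(\widehat{\widetilde{f}}(\widehat{z}))>p_1(\widehat{z})$ for every lift $\widehat{z}$ of a point $z\in\widetilde{W}\cap\widetilde{f}^{-1}(\widetilde{W})$; this applies to every point of the invariant continuum $K$.

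The core step is an ergodic-theoretic extraction. I choose a small Jordan domain $V\subset\widetilde{W}$ meeting $K$ in a set of positive area, with $\widetilde{f}(V)\cap V=\emptyset$, and small enough that some connected component $\widehat{V}$ of $\widehat{\pi}^{-1}(V)$ has $p_1$-oscillation strictly less than $1/2$. Since $\widetilde{f}|_K$ is area preserving and $K$ has positive area, Poincar\'e recurrence and the Birkhoff--Khinchin ergodic theorem applied to $\chi_V$ imply that for almost every $z\in K$ the frequency $\tfrac{1}{n}\sum_{k=1}^{n}\chi_V(\widetilde{f}^k(z))$ converges, with integral equal to $\mathrm{Area}(V\cap K)>0$; hence there exists a recurrent $z\in V\cap K$ with strictly positive asymptotic visit frequency to $V$. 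The width bound on $\widehat{V}$ then yields the same estimate as in the proof of Lemma~\ref{L: chapter3-positive area implies periodic orbits},
\[
\frac{p_1(\widehat{\widetilde{f}}^n(\widehat{z}))-p_1(\widehat{z})}{n}\ \ge\ \frac{\sum_{k=1}^{n}\chi_V(\widetilde{f}^k(z))-\tfrac{1}{2}}{n},
\]
so that the rotation number $\rho(z)$ of $z$ associated to $\widetilde{I}$ is strictly positive.

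I then pass to the prime-ends compactification of $\widetilde{M}\setminus K_\star$ at the end $K_\star$, obtaining an annulus on one boundary circle of which $\widetilde{f}$ induces rotation number $0$, hence a recurrent fixed point there, while $z$ is an interior recurrent point of rotation $\rho(z)>0$. Area preservation supplies the intersection property, so the generalization of the Poincar\'e--Birkhoff theorem (Proposition~\ref{P: pre-Le Calvez's generalization of poincare-birkhoof}, together with the remark extending it to half-closed annuli, or the closed-annulus version Proposition~\ref{P: pre-poincare-birkhoof-closed annulus} after compactifying the outer end as in the proof of Lemma~\ref{L: chapter3-mainproof1-plane1}) produces a periodic orbit of $\widetilde{f}$ of rotation number $p/q$ for every irreducible $p/q\in(0,\rho(z))$, each of which is distinct from $\star$. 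The main obstacle is the translation between the horizontal displacement rate in the universal cover of $\widetilde{M}$ and the prime-ends rotation number at $\star$, together with the fact that $\widetilde{f}$ need not preserve the basin $\widetilde{W}$: this is dealt with by working throughout on the invariant continuum $K\subset\widetilde{W}$ and by using that the inclusion $\widetilde{W}\setminus\{\star\}\hookrightarrow\widetilde{M}$ induces an isomorphism on fundamental groups, because $\widetilde{W}$ is a disk around $\star$.
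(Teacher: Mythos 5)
Your proposal follows the same route as the paper: restrict attention to the invariant continuum $K$, lift the dynamics to a cover of the punctured basin $\dot{\widetilde W}$ where the transverse foliation controls the horizontal displacement, run the Birkhoff--Khinchin argument on $K$ to find a recurrent point with strictly positive rotation number, and close with a Poincar\'e--Birkhoff argument against the indifferent continuum at $\star$. That is exactly what the paper does.

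One step is written too loosely, though. You "choose a continuous $p_1\colon\widehat{\widetilde{M}}\to\mathbb{R}$ satisfying $p_1\circ T=p_1+1$" and then claim that local transversality of $\widetilde I$ to $\widetilde{\mathcal F}$ gives $p_1(\widehat{\widetilde f}(\widehat z))>p_1(\widehat z)$ on the lift of $K$. That implication fails for an arbitrary $p_1$: a positively transverse path crosses leaves of $\widehat{\widetilde{\mathcal F}}$ from left to right, and only if the level sets of $p_1$ on $\widehat\pi^{-1}(\dot{\widetilde W})$ coincide with those lifted leaves (suitably ordered) does crossing leaves translate into an increase of $p_1$. The paper sidesteps the issue by taking a universal cover $\pi'\colon\mathbb{R}^2\to\dot{\widetilde W}$ in which the leaves of $\widetilde{\mathcal F}|_{\dot{\widetilde W}}$ are the vertical lines, and then uses the $\pi_1$-isomorphism $\dot{\widetilde W}\hookrightarrow\widetilde M$, which you invoke at the end anyway, to identify this with $\widehat\pi^{-1}(\dot{\widetilde W})$. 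So the fix is merely to say explicitly that $p_1$ is constructed so that, on $\widehat\pi^{-1}(\dot{\widetilde W})$, each fibre of $p_1$ is a leaf of the lifted foliation, and that this is possible because the basin is foliated by leaves landing at $\star$ and is simply connected; it can then be extended continuously to the rest of the cover, which does not affect the estimate since the whole trajectory of any $z\in K\setminus\{\star\}$ stays in $K\subset\widetilde W$. With that precision, your argument coincides with the one in the paper.
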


\begin{proof}
The proof is similar to the proof of Lemma \ref{L: chapter3-positive area implies periodic orbits} except some small modifications when we try to find a recurrent point with positive rotation number. We will give a more precise description.

Since $\widetilde{W}$ is different from $\widetilde{M}\cup\{\star\}$, we can not get a  lift of $\widetilde{f}$ as in the proof of Lemma \ref{L: chapter3-positive area implies periodic orbits}. Instead, we will get a similar one by the following procedure. Let $\pi':\mathbb{R}^2\rightarrow \dot{\widetilde{W}}$ be a universal cover which sends the vertical lines upwards to the leaves of $\widetilde{F}|_{\dot{\widetilde{W}}}$. Since $K$ is an invariant subset of $\widetilde{W}$, we can lift $\widetilde{f}|_{K\setminus\{\star\}}$ to  a homeomorphism  $\widehat{f}$  of $\pi'^{-1}(K\setminus\{\star\})$ such that
$$p_1(\widehat{f}(\widehat{z})-\widehat{z})>0, \text{ for all } \widehat{z}\in \pi'^{-1}(K\setminus\{\star\}), $$
where $p_1$ is the projection onto the first factor.

Also, we should replace the small Jordan domain $V$ in the proof of Lemma \ref{L: chapter3-positive area implies periodic orbits} with $V\cap K$ by choosing suitable $V$ such that the area of  $V\cap K$ is positive, that $f(V)\cap V=\emptyset$, and that for every component $\widehat{V}$ of $\pi'^{-1}(V)$, one has
$$|p_1(\widehat{z})-p_1(\widetilde{z}')|<1/2\quad \text{ for all }\widehat{z},\widehat{z}'\in \widehat{V}.$$
We can always find such a set because the area of $K$ is positive.
\end{proof}

\begin{lem}\label{L: chapter3-mainproof2-positive rotation implies the existence of periodic orbits}
Under the previous assumptions, if there exists an invariant continuum $K\subset \widetilde{W}$ such that $\rho(\widetilde{I},K)\ne 0$, then there exists a periodic orbit in $\widetilde{M}$.
\end{lem}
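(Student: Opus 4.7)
The plan is to adapt the strategy of Lemma~\ref{L: chapter3-mainproof2-positive area continuum implies the existence of periodic orbis}, replacing the positive-area hypothesis by the non-zero rotation number $\rho(\widetilde{I},K)$. The goal is to produce two recurrent points of $\widetilde{f}$ in the open annulus $\widetilde{M}$ with different rotation numbers and then invoke the Franks--Le Calvez version of Poincar\'e--Birkhoff (Proposition~\ref{P: pre-Le Calvez's generalization of poincare-birkhoof}).

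Two structural inputs are available from the outset. Since $\widetilde{f}$ preserves a finite Borel measure on $\widetilde{W}$, Poincar\'e recurrence holds in $\widetilde{W}$ and $\widetilde{f}$ satisfies the intersection property on $\widetilde{M}$. Moreover $\star$ is a non-accumulated indifferent fixed point with $\rho(\widetilde{I},\star)=0$, so by Proposition~\ref{P: pre-existence of rotation number of indifferent point} there is a fundamental system of invariant continua $K_0^n\subset\widetilde{W}$ at $\star$ with $\rho(\widetilde{I},K_0^n)=0$. Applying Poincar\'e recurrence to a small $\widetilde{f}$-invariant neighbourhood of $\star$ yields a recurrent point $z_2\in\widetilde{M}$ whose orbit stays inside $K_0^n$, and hence whose rotation number in the annulus $\widetilde{M}$ is $0$.

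The heart of the argument is to turn $\rho(\widetilde{I},K)\ne 0$ into a recurrent point $z_1\in\widetilde{M}$ of non-zero rotation number. Let $\pi':\mathbb{R}^2\to\dot{\widetilde{W}}$ be the universal cover whose vertical lines lift the oriented leaves of $\widetilde{\mathcal F}|_{\dot{\widetilde{W}}}$, and let $\widehat{f}$ be the natural lift of $\widetilde{f}|_{K\setminus\{\star\}}$ to $\pi'^{-1}(K\setminus\{\star\})$. Because $K\subset\widetilde{W}$, we already have $p_1(\widehat{f}(\widehat z))-p_1(\widehat z)>0$ on this set, where $p_1$ denotes the first projection. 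The new input is to use the non-vanishing prime-ends rotation of $K$ to exhibit, via a cross-cut construction in $U_K$ close to $K$ and the matching of the prime-ends angular coordinate with $p_1$ up to a bounded error, a wandering open set $V\subset\widetilde{W}$ meeting $K$ whose successive $\widehat f$-iterates accumulate $p_1$-displacement at a rate comparable to $|\rho(\widetilde{I},K)|$. Poincar\'e recurrence in $V$ combined with Birkhoff--Khinchin, in the spirit of Lemma~\ref{L: chapter3-mainproof2-positive area continuum implies the existence of periodic orbis}, then produces the desired $z_1$.

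With $z_1,z_2\in\widetilde{M}$ recurrent of rotation numbers $\rho(\widetilde{I},K)\ne 0$ and $0$ respectively, Proposition~\ref{P: pre-Le Calvez's generalization of poincare-birkhoof} applied to the open annulus $\widetilde{M}$ yields $q$-periodic orbits of rotation number $p/q$ for every irreducible rational $p/q$ strictly between these values; in particular $\widetilde{f}$ has a periodic orbit in $\widetilde{M}$. The main obstacle is exactly the cross-cut step: converting the outer invariant $\rho(\widetilde{I},K)$, defined via the prime-ends of $U_K$, into an inner $p_1$-translation estimate for orbits visiting a wandering open set, i.e.\ reconciling the prime-ends angular coordinate with the foliated coordinate $p_1$ along an approximate fundamental domain near $K$.
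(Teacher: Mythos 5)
Your proposal takes a genuinely different route from the paper's proof, and it leaves a genuine gap exactly where you flag it.

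The paper's argument is direct and avoids recurrent points altogether. It observes that, since $K\subset\widetilde{W}$ and $\pi|_{\dot{\widetilde W}}$ is a homeomorphism onto $\dot{W}$, the basin $\widetilde{W}$ is a proper subset of $\widetilde{M}\cup\{\star\}$ whose boundary is a union of proper leaves. By Remark~\ref{R: pre-blow-up}, this gives an arc at the end $\infty$ whose germ is disjoint from its iterates, so $\widetilde f$ can be blown up at $\infty$ and the blow-up rotation number $\rho(\widetilde I,\infty)$ is $0$. Then one performs the prime-ends compactification of $\widetilde M\setminus K$ at the end $K$, obtaining a homeomorphism of the closed annulus $S_\infty\sqcup(\widetilde M\setminus K)\sqcup S^1$ whose boundary rotation numbers are $0$ and $\rho(\widetilde I,K)\ne 0$. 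The intersection property holds because $\widetilde f$ is area preserving, and Proposition~\ref{P: pre-poincare-birkhoof-closed annulus} (the closed-annulus Poincar\'e--Birkhoff) produces a periodic orbit in $\widetilde M\setminus K\subset\widetilde M$. The whole point of using prime-ends here is that the non-vanishing rotation number of $K$ becomes directly a boundary rotation number of a compactification, so there is no need to exhibit an actual recurrent orbit realizing it.

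The gap in your plan is precisely the step you call \emph{the main obstacle}: converting the prime-ends rotation number of $K$ into a recurrent point in $\widetilde M$ with comparable rotation number. This is not established, and it is not a routine cross-cut argument. The prime-ends rotation number is an invariant of the extended homeomorphism on the added circle; it does not in general correspond to a rotation number realized by a recurrent orbit of $\widetilde f$ near $K$ (for instance $\rho(\widetilde I,K)$ can be irrational and there may be no recurrent point nearby with that or a nearby rotation number). In the same direction, the phrase ``wandering open set \dots Poincar\'e recurrence in $V$'' is in tension: the Birkhoff--Khinchin argument from Lemma~\ref{L: chapter3-mainproof2-positive area continuum implies the existence of periodic orbis} needs a positive-measure set whose orbit accumulates $p_1$-displacement, not a wandering set, and the positive $p_1$-displacement coming from $K\subset\widetilde W$ by itself only yields nonnegative rotation numbers, not a recurrent point whose rotation number is bounded away from zero. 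Finally, your construction of $z_2$ with rotation number exactly $0$ is also not justified: a recurrent point inside an invariant continuum $K_0^n$ with prime-ends rotation number $0$ need not itself have rotation number $0$, because the prime-ends rotation number is about boundary behaviour of $U_{K_0^n}$, not about orbits deep inside the continuum. The paper sidesteps all of these issues by blowing up at $\infty$ (giving a genuine boundary circle of rotation number $0$) and compactifying at $K$ (giving a genuine boundary circle of rotation number $\rho(\widetilde I,K)$), after which the closed-annulus theorem applies cleanly.
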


\begin{proof}
  Recall that $\pi|_{\dot{\widetilde{W}}}$ is a homeomorphism between $\dot{\widetilde{W}}$ and $\dot{W}$. So, $\widetilde{W}$ is a proper subset of $\widetilde{M}\cup\{\star\}$, and  the boundary of $\widetilde{W}$ is the union of some  proper leaves. By Remark \ref{R: pre-blow-up}, one knows that $\widetilde{f}$ can be blown-up at $\infty$ and the blow-up rotation number $\rho(\widetilde{I},\infty)$ is equal to $0$.

We consider the prime-ends compactification of $\widetilde{M}\setminus K$ at the end $K$, and extend  $\widetilde{f}$  continuously to a homeomorphism of $(\widetilde{M}\setminus K)\sqcup S^1$. We get a homeomorphism $g$ of the closed annulus $S_{\infty}\sqcup(\widetilde{M}\setminus K)\sqcup S^1$ that coincides with $\widetilde{f}$ on $\widetilde{M}\setminus K$, where $S_{\infty}$ is the circle we added when blowing-up $\widetilde{f}$ at $\infty$.

Moreover, $g$ satisfies the intersection property and has different rotation numbers at each boundary, then  by Proposition \ref{P: pre-poincare-birkhoof-closed annulus}, there exists a periodic orbit in $\widetilde{M}\setminus K$, which is also a periodic orbit of $\widetilde{f}$.
\end{proof}

\begin{lem}\label{L: chapter3-mainprof2-like saddle point}
Suppose that there exists  a closed disk $D\subset \widetilde{W}$  containing $\star$ as an interior point such that the connected component  of $\bigcap_{k\in\mathbb{Z}} \widetilde{f}^{-k}(D)$ containing $\star$ is contained in the interior of $D$. Then $\widetilde{f}$ has  another  periodic orbit besides $\star$.
\end{lem}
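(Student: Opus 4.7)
I proceed by contradiction: assume $\widetilde f$ has no periodic orbit besides $\star$. Then $\star$ is not accumulated by periodic orbits; combined with $\rho_s(\widetilde I,\star)=\{0\}$, $i(\widetilde f,\star)=1$, Proposition \ref{P: pre-rotation number of a saddle point} and Remark \ref{R: nonaccumulated and rotation 0 means indefferent}, this forces $\star$ to be a non-accumulated indifferent fixed point with $\rho(\widetilde I,\star)=0$.

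Let $K$ denote the component of $\bigcap_{k\in\mathbb Z}\widetilde f^{-k}(D)$ containing $\star$. The hypothesis makes $K$ a compact connected $\widetilde f$-invariant continuum with $K\subset\mathrm{Int}(D)\subset\widetilde W$. By Lemma \ref{L: chapter3-mainproof2-positive rotation implies the existence of periodic orbits} applied to $K$, I may assume $\rho(\widetilde I,K)=0$ (otherwise a periodic orbit besides $\star$ already exists), and by Lemma \ref{L: chapter3-mainproof2-positive area continuum implies the existence of periodic orbis} I may assume $\mathrm{Area}(K)=0$; both are consistent with Proposition \ref{P: pre-existence of rotation number of indifferent point}.

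The core step is to exploit the strict inclusion $K\subset\mathrm{Int}(D)$ to enlarge $K$ into an invariant continuum $K'\supset K$ with non-zero rotation. By Proposition \ref{P: pre-existence of rotation number of indifferent point} there is a Jordan neighborhood $V$ of $\star$ such that every invariant continuum in $\overline V$ containing $\star$ has rotation number $0$, and indifference forces $K\cap\partial U\neq\emptyset$ for every sufficiently small Jordan $U\subset V$ containing $\star$; combined with $K\subset\mathrm{Int}(D)$ this forces $V\subsetneq D$, so $D$ is not ``small'' in the sense of indifference. Pick a Jordan curve $\gamma\subset\mathrm{Int}(D)\setminus\overline V$ enclosing $\star$, let $U_\gamma$ be the bounded component of $\widetilde M\setminus\gamma$ containing $\star$, and let $K'$ be the component of $\bigcap_{k\in\mathbb Z}\widetilde f^{-k}(\overline{U_\gamma})$ containing $\star$. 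Since $U_\gamma\not\subset V$, Proposition \ref{P: pre-existence of rotation number of indifferent point} does not force $\rho(\widetilde I,K')=0$: if $\rho(\widetilde I,K')\neq 0$, Lemma \ref{L: chapter3-mainproof2-positive rotation implies the existence of periodic orbits} produces a periodic orbit besides $\star$, contradiction. If instead $\rho(\widetilde I,K')=0$, I finish by a direct Poincar\'e--Birkhoff argument: finite area of $\widetilde W$ together with Poincar\'e recurrence provides recurrent points in $U_\gamma\setminus K'$, and the sink structure of $\widetilde{\mathcal F}$ at $\star$ combined with the isolating-block property of $D$ forces these recurrent orbits to twist around $K'$ in the prime-ends compactification of $\widetilde M\setminus K'$; Proposition \ref{P: pre-Le Calvez's generalization of poincare-birkhoof} then yields a periodic orbit besides $\star$, once more a contradiction.

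\textbf{Main obstacle.} The delicate point is the last sub-case: producing a recurrent orbit in $U_\gamma\setminus K'$ whose rotation around $K'$ is actually non-zero, given that the prime-ends rotation at $K'$ itself is $0$. Here one must combine the geometric input (a genuine annular gap around $K$ forced by $K\subset\mathrm{Int}(D)$, and a transverse foliation $\widetilde{\mathcal F}$ whose leaves saturate $\widetilde W$ with a sink at $\star$) with the measure-theoretic input (finite area of $\widetilde W$ and Poincar\'e recurrence) to force a twist large enough for Poincar\'e--Birkhoff to apply. This should be the technical heart of the argument.
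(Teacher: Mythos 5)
Your proposal identifies the right starting observation (indifference is incompatible with $K$ avoiding $\partial D$) but then departs from the argument that actually closes the proof, and the departure leads to a genuine gap. The paper's key idea, which you do not use, is to \emph{collapse} $K$ to a point: quotient $\widetilde M\cup\{\star\}$ by $K$ and let $\{K\}$ be the resulting fixed point of the reduced homeomorphism. Because $K\subset\mathrm{Int}(D)$, the reduced map fails the indifference test for the disk $D/K$, and since it still preserves area (after ruling out $\mathrm{Area}(K)>0$ via Lemma~\ref{L: chapter3-mainproof2-positive area continuum implies the existence of periodic orbis}) it cannot be dissipative either, so $\{K\}$ is a non-accumulated \emph{saddle}. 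Moreover $i(\widetilde f,\{K\})=i(\widetilde f,K)=i(\widetilde f,\star)=1$ since no other periodic orbits lie between $\star$ and $\partial D$. Proposition~\ref{P: pre-rotation number of a saddle point} then forces the blow-up rotation number at $\{K\}$ to be a non-integer rational, i.e.\ $\rho(\widetilde I,K)\neq 0$, and Lemma~\ref{L: chapter3-mainproof2-positive rotation implies the existence of periodic orbits} concludes. In particular the paper does \emph{not} concede $\rho(\widetilde I,K)=0$ as you do; the whole point is to prove $\rho(\widetilde I,K)\neq 0$ from the hypothesis $K\subset\mathrm{Int}(D)$.

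Your alternative route has two concrete problems. First, the auxiliary continuum $K'$: even granting that $\mathrm{Int}(D)\not\subset V$, there is no reason a Jordan curve $\gamma$ enclosing $\star$ exists inside $\mathrm{Int}(D)\setminus\overline V$ (the two Jordan domains $D$ and $V$ can overlap so that neither $D\setminus\overline V$ nor $V\setminus D$ contains a loop around $\star$); and Proposition~\ref{P: pre-existence of rotation number of indifferent point} only asserts that sufficiently small invariant continua all share the rotation number $\rho(\widetilde I,\star)=0$, it gives no mechanism forcing a \emph{larger} continuum to have a \emph{different} rotation number, so $\rho(\widetilde I,K')=0$ is entirely possible. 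Second, the fallback ``direct Poincar\'e--Birkhoff'' step is not an argument but a hope: with $\rho(\widetilde I,K')=0$ at the inner boundary and $\rho(\widetilde I,\infty)=0$ at the outer one, there is no twist to exploit, and producing a recurrent orbit of non-zero rotation in $U_\gamma\setminus K'$ is essentially as hard as proving Proposition~\ref{P: chanpter3-mainproof2-exist periodic orbit in W} itself, which is exactly what this lemma is a small step toward. The collapsing trick is what lets the paper sidestep all of this.
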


\begin{proof}
We will proof this lemma by contradiction. Suppose that $\widetilde{f}$ does not have any other periodic orbit.  Let $K$ be the connected component of  $\bigcap_{k\in\mathbb{Z}}\widetilde{f}^{-k}(D)$ containing $\star$.  We  identify $K$ as a point $\{K\}$, and still denote by $\widetilde{f}$ the reduced homeomorphism. The fixed point $\{K\}$ is a non-accumulated saddle-point of $\widetilde{f}$ with index $i(\widetilde{f},\{K\})=i(\widetilde{f},K)=i(\widetilde{f},0)=1$. By Proposition \ref{P: pre-rotation number of a saddle point}, $\widetilde{f}$ can be blown-up at $\{K\}$ and  $\rho(\widetilde{f}, \{K\})$ is different from $0\in\mathbb{R}/\mathbb{Z}$. So, $\rho(\widetilde{I},K)$ is different from $0$. By the previous lemma, $\widetilde{f}$ has  another  periodic orbit besides $\star$, which is a contradiction.
\end{proof}

Now we begin the proof of Proposition \ref{P: chanpter3-mainproof2-exist periodic orbit in W}.

\begin{proof}[Proof of Proposition \ref{P: chanpter3-mainproof2-exist periodic orbit in W}]
We will prove this proposition by contradiction. Suppose that there does not exist any other periodic orbit except $\star$.
 Let $(D_n)_{n\in\mathbb{}N}$ be an increasing sequence of closed disks containing $\star$ as an interior point such that $D_{n}$ is contained in the interior of $D_{n+1}$  for all $n\in\mathbb{N} $ and $\cup_{n\in \mathbb{N}} D_n=\widetilde{W}$. Let $K_n$ be the connected component of $\cap_{k\in \mathbb{Z}} \widetilde{f}^{-k}(D_n)$ containing $\star$.  By Lemma \ref{L: chapter3-mainproof2-positive rotation implies the existence of periodic orbits}, we know that $\rho(\widetilde{I}, K_n)$ is equal to $0$ for every $n\in\mathbb{N}$. By Lemma \ref{L: chapter3-mainprof2-like saddle point}, each $K_n$ intersects the boundary of $D_n$.  Let $K=\overline{\cup_{n\in\mathbb{N}} K_n}\subset \widetilde{M}\cup\{\star\}$. It is an invariant set of $\widetilde{f}$. The boundary of $\widetilde{W}$ is the union of proper leaves, so for every point in $\partial \widetilde{W}$, either its image or its pre-image by $\widetilde{f}$ will leave $\widetilde{W}$. Therefore, $K$ can not touch the boundary of $\widetilde{W}$, and is included in  $\widetilde{W}$. But each $K_n$ intersects the boundary of $D_n$, so $K$ intersects every neighborhood of $\infty$.

\begin{lem}\label{L: chapter3-mainproof2-each componnet of M-K is unbounded}
 There does not exist any connected component of $\widetilde{M}\setminus K$ that is included in  $\widetilde{W}$.
\end{lem}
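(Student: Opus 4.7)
The plan is to argue by contradiction: suppose $U$ is a connected component of $\widetilde{M}\setminus K$ with $U\subset\widetilde{W}$. First I would identify $U$ topologically. By construction $K=\overline{\bigcup_n K_n}$ is connected (each $K_n$ contains $\star$ and $K_n\subset K_{n+1}$), and it accumulates at the end $\infty$ of the annulus $\widetilde{M}$, since each $K_n$ meets $\partial D_n$ and the $D_n$ exhaust $\widetilde{W}$. Hence $K\cup\{\infty\}$ is a connected closed subset of the sphere $S^{2}=\widetilde{M}\cup\{\star,\infty\}$; every connected component of its complement is therefore simply connected. In particular $U$, being one such component, is homeomorphic to $\mathbb{R}^{2}$. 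Moreover, because $K\subset\widetilde{W}$ (as established just before the statement), we have $\partial U\subset K\subset\widetilde{W}$, so $\overline{U}\subset\widetilde{W}$ and $U$ has positive finite area.

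Since $K$ is $\widetilde{f}$-invariant, $\widetilde{f}$ permutes the components of $\widetilde{M}\setminus K$. I would then split the analysis according to the orbit of $U$. If $\widetilde{f}^{\,n}(U)=U$ for some $n\geq 1$, then $\widetilde{f}^{\,n}|_{U}$ is an orientation and area preserving homeomorphism of the plane $U$ with finite total area; by the consequence of Brouwer's plane translation theorem recalled in Section~\ref{S: pre-Brouwer plane translation theorem}, $\widetilde{f}^{\,n}$ admits a fixed point in $U$. This would give a periodic orbit of $\widetilde{f}$ distinct from $\{\star\}$, contradicting our standing assumption.

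Otherwise the iterates $\widetilde{f}^{\,k}(U)$ are pairwise distinct, hence pairwise disjoint. In this second case I would aim for a contradiction by showing that every iterate remains in $\widetilde{W}$: then $\{\widetilde{f}^{\,k}(U)\}_{k\in\mathbb{Z}}$ would be a pairwise disjoint family of subsets of $\widetilde{W}$ of common positive area, forcing $\mathrm{area}(\widetilde{W})=+\infty$, a contradiction. To establish $\widetilde{f}^{\,k}(U)\subset\widetilde{W}$ I would use that each $\widetilde{f}^{\,k}(U)$ is a component of $\widetilde{M}\setminus K$ whose boundary $\widetilde{f}^{\,k}(\partial U)\subset K$ lies inside $\widetilde{W}$, combined with the fact from Lemma~\ref{L: chapter3-mainproof2-positive rotation implies the existence of periodic orbits} that $\partial\widetilde{W}$ is a union of proper leaves of $\widetilde{\mathcal{F}}$, and with the transversality of $\widetilde{I}$ to $\widetilde{\mathcal{F}}$, which prevents a positively transverse trajectory from re-entering $\widetilde{W}$ after leaving through a boundary leaf.

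The main obstacle is precisely this last step: carefully verifying, in the infinite-orbit case, that no iterate $\widetilde{f}^{\,k}(U)$ straddles or escapes across $\partial\widetilde{W}$. The topological identification of $U$ as a plane and the Brouwer fixed-point argument in the periodic-orbit case are standard given the preparation in Section~\ref{S: Preliminaries}.
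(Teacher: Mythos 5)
Your proof takes essentially the same route as the paper. Both arguments use the invariance of $K$ to transport $U$ by $\widetilde{f}$, show that every iterate stays inside $\widetilde{W}$, and then combine the finiteness of $\mathrm{area}(\widetilde{W})$ with Brouwer's plane translation theorem. Your split into a periodic case and a pairwise disjoint case is only a minor reorganization: the paper passes directly from $\bigcup_{n\in\mathbb{Z}}\widetilde{f}^{n}(\widetilde{U})\subset\widetilde{W}$ and $\mathrm{area}(\widetilde{W})<\infty$ to the existence of some $q\ge 1$ with $\widetilde{f}^{q}(\widetilde{U})=\widetilde{U}$, and then applies Brouwer once.

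You have correctly isolated the one nontrivial point, namely that $\widetilde{f}^{k}(U)\subset\widetilde{W}$ for every $k$; the paper also states this with a bare ``one deduces that''. Your sketch for closing it (transversality preventing re-entry into $\widetilde{W}$) points in the right direction but does not by itself give the containment of an entire open component. A clean way to conclude is by area. Suppose some $V=\widetilde{f}^{k}(U)$ were not contained in $\widetilde{W}$. Since $\partial V\subset K\subset\widetilde{W}$ and $\widetilde{W}$ is open, $V$ contains points of $\widetilde{W}$ near $\partial V$ as well as points outside $\widetilde{W}$, hence $V$ meets $\partial\widetilde{W}$. Pick a boundary leaf $\Phi$ of $\widetilde{W}$ through such a point. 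As established just before the lemma, $K\cap\partial\widetilde{W}=\emptyset$, so $\Phi\subset\widetilde{M}\setminus K$; being connected and meeting the component $V$, $\Phi$ lies entirely in $V$. Say $\widetilde{W}\subset R(\Phi)$. Then $L(\Phi)$ is connected, disjoint from $\widetilde{W}$ and therefore from $K$, and meets the open set $V$ (a neighborhood of $\Phi$), so $L(\Phi)\subset V$. But $L(\Phi)$ has infinite area: it contains the full backward orbit of the wandering open set $L(\Phi)\cap R(\widetilde{f}^{-1}(\Phi))$, exactly as in the remark following Proposition~\ref{P: pre-dynamics of a foliation}. This contradicts $\mathrm{area}(V)=\mathrm{area}(U)\le\mathrm{area}(\widetilde{W})<\infty$. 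With this step filled in, your argument closes as the paper's does.
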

   \begin{proof}
   We will give a proof by contradiction.   Suppose that there exists   a component $\widetilde{U}$ of $\widetilde{M}\setminus K$ such that $\widetilde{U}\subset \widetilde{W}$.  Then  $\partial U$ is a subset of $K$, which is invariant by $\widetilde{f}$. So, $\partial (\widetilde{f}^n(U))$ is a subset of $K$ for every $n\in\mathbb{Z}$, and  one deduces that $\cup_{n\in\mathbb{Z}}\widetilde{f}^{n}(\widetilde{U})\subset \widetilde{W}$. Moreover, $\star$ is not an interior point of  $\widetilde{U}$,  and $\widetilde{U}$ is homeomorphic to a disk. We know that the area of $\widetilde{W}$ is finite, so there exists $q\in\mathbb{N}$ such that $\widetilde{f}^q(\widetilde{U})=\widetilde{U}$. Then, one knows that $\widetilde{f}^q$ has a fixed point (see Section \ref{S: pre-Brouwer plane translation theorem}), and  hence $\widetilde{f}$ has a periodic point different from $\star$. We get a contradiction.
\end{proof}

 Let $\pi':\widehat{M}\rightarrow \widetilde{M}$ be the universal cover, and $T$ be  a generator of the group of covering automorphisms. Let $\widehat{I}=(\widehat{f}_t)_{t\in[0,1]}$ be the natural lift of $\widetilde{I}$, and  $\widehat{\mathcal{F}}$ be the lift of $\widetilde{\mathcal{F}}$. Write  $\widehat{f}=\widehat{f}_1$. It is  the lift of $\widetilde{f}$ associated to $\widetilde{I}$. Write $\widehat{K}=\pi'^{-1}(K\setminus\{\star\})$, and $\widehat{W}=\pi'^{-1}(\dot{\widetilde{W}})$.

%
%

Because $K$ is connected, each connected component of $\widetilde{M}\setminus K$ is simply connected. So, if $\widetilde{U}$ is one of the connected components of $\widetilde{M}\setminus K$, and if $\widehat{U}$ is one of the components of $\pi'^{-1}(\widetilde{U})$,  then $\widehat{U}$ does not intersect $T(\widehat{U})$. Therefore, $\widehat{M}\setminus\widehat{K}$ is not connected and has infinitely many components. By Lemma \ref{L: chapter3-mainproof2-each componnet of M-K is unbounded}, each component of $\widehat{M}\setminus \widehat{K}$ contains a proper leaf in $\partial\widehat{W}$, and hence a disk bounded by this leaf. As in the following picture,
\begin{figure}[h]
    \begin{minipage}[t]{0.45\linewidth}
      \centering
   \includegraphics[width=3cm]{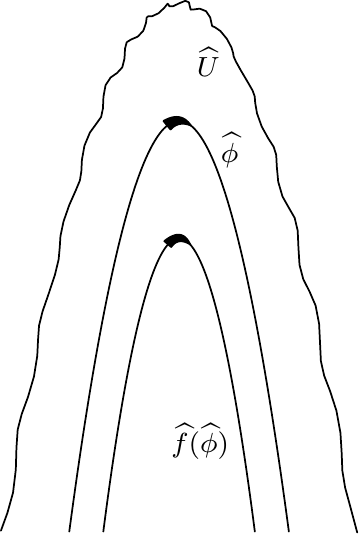}
    \end{minipage}
  \hspace{0.1\linewidth}
    \begin{minipage}[t]{0.45\linewidth}
      \centering
     \includegraphics[width=3cm]{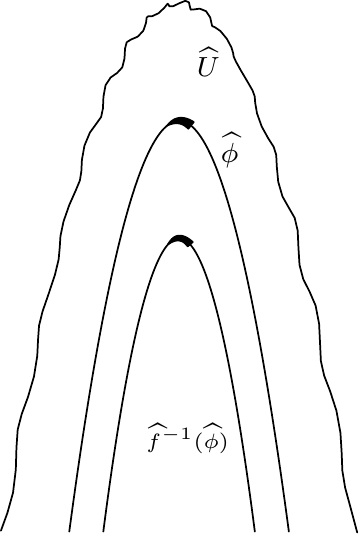}
    \end{minipage}
  \caption{Each component  $\widehat{U}$ of $\widehat{M}\setminus\widehat{K}$ is invariant by $\widehat{f}$}
\end{figure}
this disk contains the image or the pre-image of this proper leaf by $\widehat{f}$.   So,  every component of $\widehat{M}\setminus \widehat{K}$ is invariant by $\widehat{f}$.

\begin{lem}\label{L: chapter3-mainsproof2-arc in tildeW dived tildeM}
Each leaf in $\widetilde{W}$ is an arc from $\infty$ to $\star$.
\end{lem}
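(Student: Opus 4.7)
The plan is to argue by contradiction via a Poincaré-Bendixson-type analysis. Let $\phi$ be a leaf of $\widetilde{\mathcal{F}}$ contained in $\widetilde{W}$. By the local sink model of $\widetilde{\mathcal{F}}$ at $\star$, every point of the attracting basin $\widetilde{W}$ has forward leaf-orbit converging to $\star$, so the forward end of $\phi$ is automatically $\star$. The content of the lemma is to identify the backward end as $\infty$, i.e.\ to show that the $\alpha$-limit set $\alpha(\phi)\subset\widetilde{M}\cup\{\star\}$ meets $\widetilde{M}$ in nothing.

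Two preliminary observations organise the argument. First, the only singularity of $\widetilde{\mathcal{F}}$ on $\widetilde{M}\cup\{\star\}$ is $\star$: the foliation $\mathcal{F}$ on $M_0$ has $z_0$ as its only singularity, the annulus cover $\pi$ lifts $\mathcal{F}|_{M_0\setminus\{z_0\}}$ to a nonsingular foliation on $\widetilde{M}$, and $\star$ is added as a sink at the corresponding end. Second, $\widetilde{\mathcal{F}}$ admits no closed leaf in $\widetilde{W}$: any point on a closed leaf has a periodic forward leaf-orbit, hence cannot lie in the attracting basin of $\star$.

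Now suppose for contradiction that $\alpha(\phi)\cap\widetilde{M}\neq\emptyset$ and pick $z$ in this intersection. Necessarily $z\neq\star$, because the sink model near $\star$ forces every backward leaf-trajectory to leave a small disk around $\star$, so $\star$ is not an $\alpha$-limit point of any leaf. Thus $z$ is a regular point of $\widetilde{\mathcal{F}}$. Choose a short arc $\tau$ transverse to $\widetilde{\mathcal{F}}$ at $z$; since $z\in\alpha(\phi)$, $\phi$ meets $\tau$ at a sequence of points $z_n\to z$ corresponding to backward times tending to $-\infty$, and the orientation of $\widetilde{\mathcal{F}}$ forces the $z_n$ to be monotone along $\tau$. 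The standard Jordan-curve construction---taking the sub-arc of $\phi$ joining $z_n$ to $z_{n+1}$ together with the sub-arc of $\tau$ between them---produces nested disks $D_n\subset\widetilde{M}\cup\{\star\}$; passing to the limit inside $\bigcap_n D_n$, the Poincaré-Bendixson dichotomy forces $\alpha(\phi)$ to contain either a singularity of $\widetilde{\mathcal{F}}$ or a closed leaf. Both alternatives are excluded by the preliminary observations, contradicting the assumption. Hence $\alpha(\phi)\cap\widetilde{M}=\emptyset$ and $\phi$ is a proper arc from $\infty$ to $\star$.

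The main obstacle is running this Poincaré-Bendixson argument rigorously in the oriented topological-foliation setting, and in particular handling the case where $z$ lies on $\partial\widetilde{W}$. In that case the transversal $\tau$ straddles $\widetilde{W}$ and its complement and the crossings $z_n$ all lie on the $\widetilde{W}$-side of $\tau$. One must use that $\partial\widetilde{W}$ is a union of proper leaves, so $\phi$ cannot escape into $\partial\widetilde{W}$, to conclude that the nested Jordan disks $D_n$ stay in $\widetilde{W}$ and the limiting leaf is a closed leaf of $\widetilde{\mathcal{F}}$ inside $\widetilde{W}$, which is again forbidden by the definition of the attracting basin.
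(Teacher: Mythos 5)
Your proposal argues by Poincar\'e--Bendixson applied to the foliation $\widetilde{\mathcal{F}}$ alone, which is a genuinely different strategy from the paper's: the paper's proof is a one-line area argument, using the finiteness of the area of $\widetilde{W}$ together with the fact that $\widetilde{f}$ is area preserving (via a wandering-set argument applied to $\widetilde{W}\setminus\widetilde{f}^{\pm 1}(\widetilde{W})$) to produce boundary leaves of $\widetilde{W}$ of both left and right type, from which the structure of the basin follows. Your argument, by contrast, never invokes finite area nor the $\widetilde{f}$-dynamics at all --- and this is a tell: the paper itself, in Sublemma~\ref{L: chapter3-mainproof2-each leaf in breveW is from infinite to brevestar}, explicitly splits into the cases of finite and infinite area of the basin and uses a completely different argument in the infinite-area case, which signals that the present lemma's argument genuinely needs finite area.

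The gap in the Poincar\'e--Bendixson approach is the case you flag yourself but do not resolve: $z\in\partial\widetilde{W}$. Suppose $\phi$ spirals backward around $\star$ and accumulates on a boundary leaf $\Phi$, which is a \emph{proper, non-compact} leaf going from $\infty$ to $\infty$. Then $\alpha(\phi)\supset\Phi$ is non-compact, so the Poincar\'e--Bendixson dichotomy (singularity or closed leaf) simply does not apply to it. Concretely, in this scenario each Jordan curve $C_n$ encircles $\star$; the bounded disk $D_n$ it bounds contains $\star$, while the backward tail of $\phi$ after $z_{n+1}$ and all further crossings $z_m$, as well as $\Phi$ itself, lie in the \emph{unbounded} complementary component of $C_n$. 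So the disks $D_n$ are increasing rather than ``nested'' in the useful (decreasing) sense, $\alpha(\phi)\not\subset\bigcap_n D_n$, and no contradiction is obtained. Your closing sentence, that the limiting leaf would have to be ``a closed leaf of $\widetilde{\mathcal{F}}$ inside $\widetilde{W}$,'' is exactly where the argument breaks: the leaf through $z$ is $\Phi\subset\partial\widetilde{W}$, which is neither closed nor in $\widetilde{W}$. To exclude this scenario one really does need the finite area of $\widetilde{W}$ and the area-preserving $\widetilde{f}$-dynamics, which is what the paper's proof uses.
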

\begin{proof}
Recall that the area of $\widetilde{W}$ is finite. So, there exist a leaf included in $\partial\widetilde{W}$ such that $\widetilde{W}$ is to its right and a leaf included in $\partial\widetilde{W}$ such that $\widetilde{W}$ is to its left. (Otherwise, $\widetilde{W}$ contains the positive or negative orbit of a wandering open set $\widetilde{W}\setminus\widetilde{f}(\widetilde{W})$ or $\widetilde{W}\setminus\widetilde{f}^{-1}(\widetilde{W})$ respectively.)
 Therefore, a leaf in $\widetilde{W}$ is an arc from $\infty$ to $\star$.
\end{proof}

Every leaf $\widehat{\Phi}\subset \widehat{W}$ divides $\widehat{M}$ into two part.  We denote by $R(\widehat{\Phi})$ the component of $\widehat{M}\setminus\widehat{\Phi}$ to the right of $\widehat{\Phi}$ and by $L(\widehat{\Phi})$ the component to the left.

\begin{lem}\label{L: chapter3-mainproof2-K doesnot conatin a leaf in W}
 There does not exist any leaf $\widehat{\Phi}\subset\widehat{W}$ such that $\widehat{\Phi}\subset \widehat{K}$.
\end{lem}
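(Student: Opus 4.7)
The plan is a proof by contradiction. Supposing some leaf $\widehat{\Phi}\subset\widehat{W}$ of $\widehat{\mathcal{F}}$ is contained in $\widehat{K}$, I would build a non-empty open wandering set $U$ between $\widehat{\Phi}$ and $\widehat{f}(\widehat{\Phi})$ and then derive an absurdity from both possible locations of $U$ relative to $\widehat{K}$: either $U\subset\widehat{K}$, which forces $K$ to have positive area and produces a periodic orbit via Lemma~\ref{L: chapter3-mainproof2-positive area continuum implies the existence of periodic orbis}, or $U$ meets a component $\widehat{V}$ of $\widehat{M}\setminus\widehat{K}$, which forces the invariant set $\widehat{V}$ to lie inside the wandering region $U$, an obvious impossibility.

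For the construction, observe first that $\widehat{K}$ is $\widehat{f}$-invariant because $K$ is $\widetilde{f}$-invariant, so $\widehat{f}^n(\widehat{\Phi})\subset\widehat{K}$ for every $n\in\mathbb{Z}$. By Lemma~\ref{L: chapter3-mainsproof2-arc in tildeW dived tildeM}, the projection $\widetilde{\Phi}=\pi'(\widehat{\Phi})$ is an arc from $\infty$ to $\star$, so $\widehat{\Phi}$ is a proper arc separating $\widehat{M}$ into $L(\widehat{\Phi})$ and $R(\widehat{\Phi})$. Using the transversality of $\widehat{I}$ to $\widehat{\mathcal{F}}$ one gets $\widehat{f}(\widehat{\Phi})\subset R(\widehat{\Phi})$: for any $\widehat{z}\in\widehat{\Phi}$ the trajectory from $\widehat{z}$ to $\widehat{f}(\widehat{z})$ is homotopic (rel endpoints) to a positively transverse path in $\widehat{M}$, and that path immediately enters $R(\widehat{\Phi})$ and can never cross back over $\widehat{\Phi}$, since any such crossing would be from $L(\widehat{\Phi})$ to $R(\widehat{\Phi})$ while the path sits in $R(\widehat{\Phi})$ just before. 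Iterating gives a strict ordering of the $\widehat{f}^n(\widehat{\Phi})$ by ``to the right of'', so $U:=R(\widehat{\Phi})\cap L(\widehat{f}(\widehat{\Phi}))$ is a non-empty open set whose iterates $\widehat{f}^n(U)=R(\widehat{f}^n(\widehat{\Phi}))\cap L(\widehat{f}^{n+1}(\widehat{\Phi}))$ are pairwise disjoint.

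The two cases then conclude the proof. If $U\subset\widehat{K}$, then $\pi'(U)$ is a non-empty open subset of $K$, so the invariant continuum $K\subset\widetilde{W}$ has positive area, and Lemma~\ref{L: chapter3-mainproof2-positive area continuum implies the existence of periodic orbis} produces a periodic orbit of $\widetilde{f}$ besides $\star$, contradicting the standing reductio hypothesis. If instead $U\not\subset\widehat{K}$, choose a component $\widehat{V}$ of $\widehat{M}\setminus\widehat{K}$ meeting $U$; since $\widehat{V}$ is connected and disjoint from $\widehat{\Phi}\cup\widehat{f}(\widehat{\Phi})\subset\widehat{K}$, it lies entirely in the component $U$ of $\widehat{M}\setminus(\widehat{\Phi}\cup\widehat{f}(\widehat{\Phi}))$ it meets. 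Since every component of $\widehat{M}\setminus\widehat{K}$ has been shown to be $\widehat{f}$-invariant, $\widehat{f}(\widehat{V})=\widehat{V}$, which is incompatible with $\widehat{f}(\widehat{V})\subset\widehat{f}(U)$ being disjoint from $U\supset\widehat{V}$.

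The step I expect to demand the most care is the proof that $\widehat{f}(\widehat{\Phi})$ is strictly to the right of $\widehat{\Phi}$, and that $U$ is thus honestly open and non-empty: positive transversality applies to a curve merely homotopic to the trajectory, so one must control the chosen transverse representative near the two ends of $\widehat{\Phi}$ (at infinity and at the preimages of $\star$ lying on $\partial\widehat{M}$) to ensure that $\widehat{\Phi}$ and $\widehat{f}(\widehat{\Phi})$ are genuinely disjoint in $\widehat{M}$ and that the region between them has non-empty interior.
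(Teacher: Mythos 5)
Your proof is correct and takes essentially the same route as the paper: both argue by contradiction via $U=R(\widehat{\Phi})\cap L(\widehat{f}(\widehat{\Phi}))$, using that each component of $\widehat{M}\setminus\widehat{K}$ is $\widehat{f}$-invariant and then invoking Lemma~\ref{L: chapter3-mainproof2-positive area continuum implies the existence of periodic orbis}. The paper directly shows $U\subset\widehat{K}$ (a component to the right of $\widehat{\Phi}$ must lie to the right of $\widehat{f}(\widehat{\Phi})$ by invariance, and symmetrically on the left), whereas you split into the two cases $U\subset\widehat{K}$ and $U\not\subset\widehat{K}$ and derive a contradiction in each; this is a repackaging of the same argument, not a different one.
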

 \begin{proof}
 We can prove this lemma by contradiction. Suppose that $\widehat{\Phi}\subset\widehat{K}$. Then a component  of $\widehat{M}\setminus\widehat{K}$ is either to the left or to the right of $\widehat{\Phi}$. Moreover, if it is to the right (resp. left) of $\widehat{\Phi}$,  it is to the right (resp. left) of $\widehat{f}(\widehat{\Phi})$. Therefore, $R(\widehat{\Phi})\cap L(\widehat{f}(\widehat{\Phi}))$ is included in $\widehat{K}$, and so the interior of $\widehat{K}$ is not empty. We deduce that  $K$ is an invariant set of $\widetilde{f}$ with non-empty interior and  finite area. By Lemma \ref{L: chapter3-mainproof2-positive area continuum implies the existence of periodic orbis}, there exists a periodic orbit of $\widetilde{f}$ in $\widetilde{M}$, which is a contradiction.
\end{proof}

 \begin{lem}\label{L: chapter3-mainproof2-each leaf intersects only one component of M-K}
Let $\widehat{\Phi}$ be a leaf in $\widehat{W}$, $t\mapsto\widehat{\Phi}(t)$ be an oriented parametrization of $\widehat{\Phi}$, and $\widehat{U}$ be a component of $\widehat{M}\setminus\widehat{K}$.  If $\widehat{\Phi}$ intersects $ \widehat{U}$, then  both the area of $L(\widehat{\Phi})\cap\widehat{U}$ and the area of $R(\widehat{\Phi})\cap\widehat{U}$ are infinite, and  there exists $t_0$ such that $\widehat{\Phi}(t)\in\widehat{U}$ for all $t\le t_0$.
 \end{lem}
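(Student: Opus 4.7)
I would prove the two conclusions separately, using three ingredients: (i) the positive transversality of $\widehat{\mathcal{F}}$ to $\widehat{I}$; (ii) the $\widehat{f}$-invariance of $\widehat{U}$ (already established in this proof); and (iii) the Brouwer plane translation theorem, which applies because by the standing contradiction hypothesis $\widetilde{f}$ has no periodic orbit besides $\{\star\}$ and $\star\notin\widetilde{M}$, so the lift $\widehat{f}$ has no periodic points on $\widehat{M}\cong\mathbb{R}^2$.

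\textbf{Preliminary positioning.} First I would show that $\widehat{f}^n(\widehat{\Phi})\subset R(\widehat{\Phi})$ and $\widehat{f}^{-n}(\widehat{\Phi})\subset L(\widehat{\Phi})$ for every $n\geq 1$. For each $\widehat{z}\in\widehat{\Phi}$, the trajectory of $\widehat{z}$ under $\widehat{I}$ is homotopic rel endpoints to a positively transverse path $\gamma$ from $\widehat{z}$ to $\widehat{f}(\widehat{z})$. In a trivialization chart at $\widehat{z}$, $\gamma$ moves strictly rightward across the vertical lines representing $\widehat{\mathcal{F}}$, so it immediately enters $R(\widehat{\Phi})$. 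Any subsequent crossing of $\widehat{\Phi}$ would also have to go from $L$ to $R$ (by positive transversality), which is impossible once $\gamma$ is on the right side; and $\gamma$ cannot terminate on $\widehat{\Phi}$ either, for the same reason. Hence $\widehat{f}(\widehat{z})\in R(\widehat{\Phi})$, and iteration yields the claim.

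\textbf{Infinite area.} Fix $\widehat{z}_0\in\widehat{\Phi}\cap\widehat{U}$. The Brouwer plane translation theorem provides a small Jordan disk $B\subset\widehat{U}$ around $\widehat{z}_0$ whose iterates $\{\widehat{f}^n(B)\}_{n\in\mathbb{Z}}$ are pairwise disjoint (take $B$ inside a translation domain of $\widehat{z}_0$). Shrinking $B$ further, I may assume $\widehat{f}(B)\subset R(\widehat{\Phi})$ and $\widehat{f}^{-1}(B)\subset L(\widehat{\Phi})$, both by continuity using the preliminary step. Applying the preliminary step repeatedly gives $\widehat{f}^n(B)\subset R(\widehat{\Phi})$ for all $n\geq 1$ and $\widehat{f}^{-n}(B)\subset L(\widehat{\Phi})$ for all $n\geq 1$, and by $\widehat{f}$-invariance of $\widehat{U}$ all these sets lie in $\widehat{U}$. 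Since $\widehat{f}$ preserves area, $\mathrm{Area}(\widehat{U}\cap R(\widehat{\Phi}))\geq\sum_{n\geq 1}\mathrm{Area}(\widehat{f}^n(B))=+\infty$, and similarly for $L(\widehat{\Phi})\cap\widehat{U}$.

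\textbf{Existence of $t_0$, and the main obstacle.} Write $\widehat{z}_0=\widehat{\Phi}(s_0)$, and let $(a,b)$ be the connected component of $\{s\in\mathbb{R}:\widehat{\Phi}(s)\in\widehat{U}\}$ that contains $s_0$. I would argue $a=-\infty$ by contradiction; otherwise $\widehat{\Phi}(a)\in\partial\widehat{U}\subset\widehat{K}$. The plan is to use the earlier observation that $\widehat{U}$ contains a proper leaf $\widehat{\Psi}_U\subset\partial\widehat{W}$ together with the half-plane $H$ it bounds in $\widehat{M}$. Since leaves of $\widehat{\mathcal{F}}$ are pairwise disjoint, $\widehat{\Phi}$ lies entirely on one side of $\widehat{\Psi}_U$; both curves are proper lines in $\widehat{M}$ whose ends escape toward the single end at infinity of $\widetilde{M}$, so for $s\to-\infty$ the arc $\widehat{\Phi}(s)$ is pushed into $H\subset\widehat{U}$. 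The \emph{main obstacle} is precisely this last step: ruling out that $\widehat{\Phi}$ oscillates in and out of $\widehat{U}$ infinitely often as $s\to-\infty$, repeatedly crossing $\widehat{K}$. Making this rigorous requires a careful analysis of how $\widehat{K}$ meets infinity, relying on the fact that $\widehat{K}$ is the lift of $\overline{\bigcup_n K_n}$ with each $K_n$ a bounded continuum meeting $\partial D_n$, and on the $\widehat{f}$-invariance of both $\widehat{U}$ and $\widehat{K}$ to propagate local control near $\widehat{\Phi}(a)$ out toward infinity via the escaping backward orbit $\widehat{f}^{-n}(\widehat{\Phi}(a))\subset\widehat{K}$.
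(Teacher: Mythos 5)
Your proof of the first conclusion (infinite area on both sides) is correct, but it takes a longer route than the paper. You invoke the Brouwer plane translation theorem and build a disjoint-iterate disk $B$; the paper instead observes directly that $L(\widehat{\Phi})\cap R(\widehat{f}^{-1}(\widehat{\Phi}))\cap\widehat{U}$ is a nonempty open set whose negative iterates are pairwise disjoint (by transversality) and remain in $L(\widehat{\Phi})\cap\widehat{U}$ (by invariance of $\widehat{U}$ and of $L(\widehat{\Phi})$ under $\widehat{f}^{-1}$), so finiteness of that area would contradict area preservation. Both arguments are sound; the paper's avoids Brouwer entirely.

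For the second conclusion there is a genuine gap, and you say so yourself. The sketch via a single boundary leaf $\widehat{\Psi}_U$ does not close: you claim $\widehat{\Phi}(s)$ is ``pushed into $H$'' for $s\to-\infty$, but $H$ is the half-plane on the non-$\widehat{W}$ side of $\widehat{\Psi}_U$, while $\widehat{\Phi}\subset\widehat{W}$, so $\widehat{\Phi}$ never enters $H$. The idea you are missing is that the \emph{first} conclusion feeds the second: because $\pi'|_{\widehat{U}}$ is injective and $\widetilde{W}$ has finite area, the infinite area of $L(\widehat{\Phi})\cap\widehat{U}$ (resp.\ $R(\widehat{\Phi})\cap\widehat{U}$) forces it to meet $\widehat{M}\setminus\widehat{W}$, hence to contain a boundary leaf $\widehat{\Phi}_1$ (resp.\ $\widehat{\Phi}_2$) of $\widehat{W}$. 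This gives boundary leaves on \emph{both} sides of $\widehat{\Phi}$, not just one. Concatenating the backward ray of $\widehat{\Phi}_1$, a path $\gamma'\subset\widehat{U}$ joining $\widehat{\Phi}_1$ to $\widehat{\Phi}_2$, and the forward ray of $\widehat{\Phi}_2$ produces a proper oriented arc $\Gamma$ with one side contained in $\widehat{U}$; since $\widehat{\Phi}$ must cross $\gamma'$, taking $t_0$ to be a lower bound of $\{t:\widehat{\Phi}(t)\in\gamma'\}$ traps $\widehat{\Phi}|_{(-\infty,t_0]}$ inside that side, hence inside $\widehat{U}$. This two-leaf sandwich is exactly what rules out the oscillation across $\widehat{K}$ that you correctly identify as the obstacle; the ``analysis of how $\widehat{K}$ meets infinity'' and the propagation along $\widehat{f}^{-n}(\widehat{\Phi}(a))$ that you contemplate are not needed and would be much harder to carry out.
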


  \begin{proof}
  We will first give a proof of the first statement by contradiction. We suppose that the area of $L(\widehat{\Phi})\cap\widehat{U}$ is finite, the other case can be treated similarly.  Then, $L(\widehat{\Phi})\cap R(\widehat{f}^{-1}(\widehat{\Phi}))\cap\widehat{U}$ is a wandering open set whose negative orbit  is contained in $L(\widehat{\Phi})\cap\widehat{U}$. It contradicts the fact that $\widehat{f}$ is area preserving.

 Let us prove the second statement.    We know that both the area of $L(\widehat{\Phi})\cap\widehat{U}$ and the area of $R(\widehat{\Phi})\cap\widehat{U}$ are infinite. Since $\pi'|_{\widehat{U}}$ is injective, both the area of $\pi'(L(\widehat{\Phi})\cap\widehat{U})$ and the area of $\pi'(R(\widehat{\Phi})\cap\widehat{U})$ are infinite. The area of $\widetilde{W}$ is finite, so both $\pi'(L(\widehat{\Phi})\cap\widehat{U})$ and $\pi'(R(\widehat{\Phi})\cap\widehat{U})$ intersect $\widetilde{M}\setminus\widetilde{W}$, and hence both $L(\widehat{\Phi})\cap\widehat{U}$ and $R(\widehat{\Phi})\cap\widehat{U}$ intersect $\widehat{M}\setminus \widehat{W}$.  Therefore, there exists a proper leaf $\widehat{\Phi}_1$ in $L(\widehat{\Phi})\cap\widehat{U}$ and a proper leaf $\widehat{\Phi}_2$ in $R(\widehat{\Phi})\cap\widehat{U}$. Fix  a parametrization $t\mapsto\widehat{\Phi}_1(t)$ of $\widehat{\Phi}_1$ and a parametrization $t\mapsto\widehat{\Phi}_2(t)$ of $\widehat{\Phi}_2$, and draw a path $\gamma$ in $\widehat{U}$ from a point of $\widehat{\Phi}_1$ to a point of $\widehat{\Phi}_2$.  Let $s_1=\inf\{t:\widehat{\Phi}_1(t)\in\gamma\}$, $s_2=\sup\{t:\widehat{\Phi}_2(t)\in\gamma\}$, and $\gamma'$ be the sub-path of $\gamma$ connecting $\widehat{\Phi}_1(s_1)$ to $\widehat{\Phi}_2(s_2)$. Then, as in the following picture,
\begin{figure}[h]
    \begin{minipage}[t]{0.2\linewidth}
      \centering
   \includegraphics[width=2.5cm]{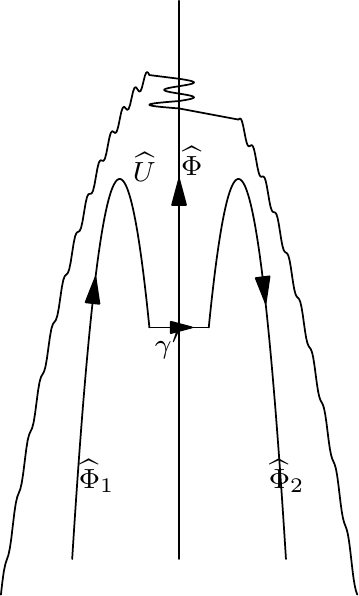}
    \end{minipage}
  \hspace{0.05\linewidth}
    \begin{minipage}[t]{0.2\linewidth}
      \centering
     \includegraphics[width=2.5cm]{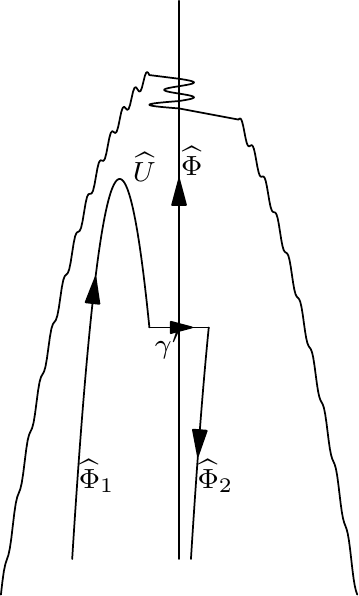}
    \end{minipage}
    \hspace{0.05\linewidth}
    \begin{minipage}[t]{0.2\linewidth}
      \centering
   \includegraphics[width=2.5cm]{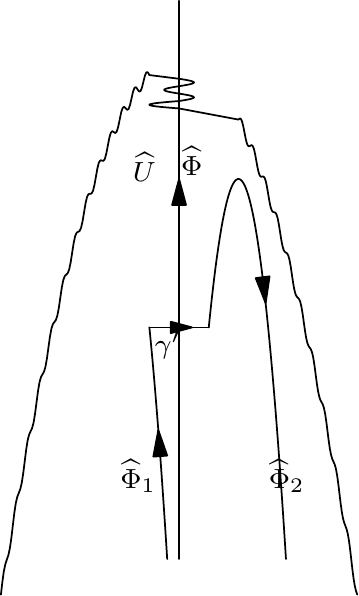}
    \end{minipage}
     \hspace{0.05\linewidth}
    \begin{minipage}[t]{0.2\linewidth}
      \centering
   \includegraphics[width=2.5cm]{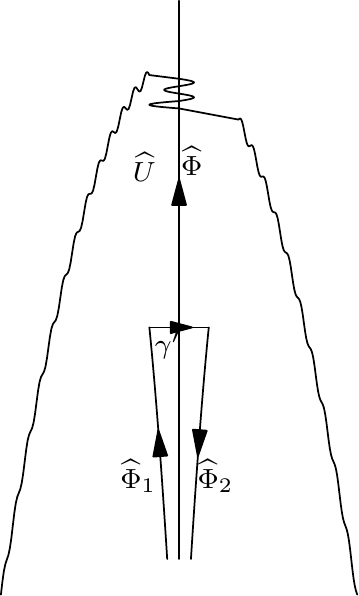}
    \end{minipage}
  \caption{Four possible cases in the proof of  Lemma \ref{L: chapter3-mainproof2-each leaf intersects only one component of M-K}}
\end{figure}
 $\Gamma=\widehat{\Phi}|_{(-\infty, s_1]}\gamma'\widehat{\Phi}_2|_{[s_2,\infty)}$ is an oriented proper arc and satisfies $R(\Gamma)\subset\widehat{U}$. We know that $\widehat{\Phi}$ intersects $\gamma'$. Let $t_0$ be a lower bound of the set $\{t:\widehat{\Phi}(t)\in\gamma'\}$.   We know that $\widehat{\Phi}|_{(-\infty,t_0]}\subset \widehat{U}$.
\end{proof}

Let $\delta:\mathbb{T}^1\rightarrow \widetilde{W}$ be an embedding that intersect $\widetilde{F}$ transversely, and $\widehat{\delta}:\mathbb{R}\rightarrow \widehat{W} $ be the lift of $\delta$. Then $\widehat{\delta}$ intersects every leaf in $\widehat{W}$, and intersects each leaf at only one point. Moreover, if $\widehat{\delta}$ intersects $\widehat{\Phi}$ and $\widehat{\Phi}'$ at $\widehat{\delta}(t)$ and $\widehat{\delta}(t')$ respectively, and if $t<t'$, then $\widehat{\Phi}$ is to the left of $\widehat{\Phi}'$, and $\widehat{\Phi}'$ is to the right of $\widehat{\Phi}$.
We define a map  $h:\mathbb{R}\rightarrow \widehat{\mathcal{F}}|_{\widehat{W}}$ by $h(t)=\widehat{\Phi}$ if $\widehat{\delta}(t)\in\widehat{\Phi}$.
\begin{lem}
The set of points $t\in\mathbb{R}$ such that $h(t)\cap\widehat{U}\ne\emptyset$ is open for each component $\widehat{U}$ of $\widehat{W}\setminus \widehat{K}$.
\end{lem}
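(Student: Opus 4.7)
The plan is to use the continuity of the foliation along transverse sections. Fix a component $\widehat{U}$ of $\widehat{W}\setminus\widehat{K}$ and let $t_0\in\mathbb{R}$ be such that $h(t_0)\cap\widehat{U}\ne\emptyset$. Pick a point $\widehat{z}_0\in h(t_0)\cap\widehat{U}$, and let $\widehat{\alpha}$ denote the compact sub-arc of the leaf $h(t_0)$ joining $\widehat{\delta}(t_0)$ to $\widehat{z}_0$ (it exists because $h(t_0)$ is an arc from $\infty$ to $\star$ by Lemma \ref{L: chapter3-mainsproof2-arc in tildeW dived tildeM} and both $\widehat{\delta}(t_0)$ and $\widehat{z}_0$ lie on it). Since $\widehat{U}$ is open, fix a trivialization neighborhood $V_{\widehat{z}_0}\subset\widehat{U}$ of $\widehat{z}_0$ for $\widehat{\mathcal{F}}$.

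Next I would cover the compact arc $\widehat{\alpha}$ by finitely many trivialization neighborhoods $V_0,V_1,\dots,V_N$ of $\widehat{\mathcal{F}}$, ordered along $\widehat{\alpha}$, with $V_N=V_{\widehat{z}_0}$ and $V_0$ chosen as a trivialization neighborhood of $\widehat{\delta}(t_0)$ in which $\widehat{\delta}$ meets each vertical (leaf) line in exactly one point (this is possible because $\widehat{\delta}$ is positively transverse to $\widehat{\mathcal{F}}$). Shrinking if necessary, one can arrange that consecutive trivialization charts overlap along open rectangles foliated by common plaques, so that a plaque of $V_i$ extends through $V_{i+1}$ to a plaque of $V_{i+1}$. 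This provides, for each leaf sufficiently close to $h(t_0)$ in $V_0$, a plaque-by-plaque continuation up to $V_N\subset\widehat{U}$.

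Concretely, I would choose $\varepsilon>0$ such that $\widehat{\delta}((t_0-\varepsilon,t_0+\varepsilon))\subset V_0$. For every $t'\in(t_0-\varepsilon,t_0+\varepsilon)$, the leaf $h(t')$ enters $V_0$ through the plaque containing $\widehat{\delta}(t')$, is then continued through $V_1,\dots,V_{N-1}$ into $V_N$, and thereby meets $V_{\widehat{z}_0}\subset\widehat{U}$. Hence $h(t')\cap\widehat{U}\ne\emptyset$ for all such $t'$, which proves that the set in question is open.

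The only genuinely non-routine point will be verifying the plaque-continuation argument across the finite chain of trivialization charts; this is the standard local product structure of a topological foliation, but it must be invoked carefully because the leaves of $\widehat{\mathcal{F}}$ are not assumed to be smooth or even locally uniformly parametrized.
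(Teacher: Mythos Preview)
Your argument is correct. The paper's own proof is much terser: it simply chooses a single trivialization neighborhood $V$ of a point $z\in h(t_0)\cap\widehat{U}$ with $V\subset\widehat{U}\cap\widehat{W}$, and then asserts that the set of $t$ with $h(t)\cap V\ne\emptyset$ is an open interval containing $t_0$. In other words, the paper works locally near $z$ rather than tracking the leaf all the way back to $\widehat{\delta}(t_0)$.

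The difference is mostly one of packaging. The paper's one-line claim rests on the standard fact that the saturation of an open set by leaves is open (equivalently, that the projection to the leaf space is an open map); your chain-of-charts holonomy argument is exactly what proves that fact. So you have unpacked what the paper leaves implicit. The advantage of the paper's phrasing is economy: once the local product structure is granted, a single chart at $z$ suffices and there is no need to cover the arc $\widehat{\alpha}$. The advantage of your approach is that it is self-contained and does not silently invoke leaf-space properties. Either way, the proof is short; your version would be fine with the plaque-continuation step stated as the routine holonomy lemma it is.
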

\begin{proof}
 We fix a component $\widehat{U}$ of $\widehat{M}\setminus\widehat{K}$, and will first prove that the set $\{t: h(t)\cap\widehat{U}\ne\emptyset\}$ is open. Given a real number $t$ such that $h(t)$ intersects $\widehat{U}$ and $z\in h(t)\cap\widehat{U}$,  there is a trivialization neighborhood $V$ of $z$ such that $V\subset (\widehat{U}\cap \widehat{W})$. Moreover, $h^{-1}(V)$ is  an open interval  containing $t$.  So, the set $\{t: h(t)\cap\widehat{U}\ne\emptyset\}$  is open.
 \end{proof}

By Lemma \ref{L: chapter3-mainproof2-K doesnot conatin a leaf in W}, each leaf of $\widehat{\mathcal{F}}$ in $\widehat{W}$  intersects at least a component of $\widehat{M}\setminus\widehat{K}$. By lemma \ref{L: chapter3-mainproof2-each leaf intersects only one component of M-K}, each leaf of $\widehat{\mathcal{F}}$ in $\widehat{W}$  intersects at most one component of $\widehat{M}\setminus\widehat{K}$. So, each leaf of $\widehat{\mathcal{F}}$ in $\widehat{W}$  intersects exactly one component of $\widehat{M}\setminus\widehat{K}$.
Since $\widehat{M}\setminus\widehat{K}$ has countable components,   $$\mathbb{R}=\cup_{\widehat{U}}\{t: h(t)\cap\widehat{U}\ne\emptyset\}$$ is a disjoint union of countable many open sets. This is impossible.
\end{proof}

 \subsubsection{The case where  $M_0$ is a plane and $\rho_s(I,z_0)$ is reduced to a non-zero integer $k$}

Recall that $f_0$ is an area preserving homeomorphism of $M_0$,  that $z_0$ is an isolated fixed point of $f_0$ satisfying $i(f_0,z_0)=1$,  and that $I_0$ is a maximal identity isotopy that fixes only one point $z_0$ and satisfies $\rho_s(I_0,z_0)=\{k\}$.
In this case,  one can easily deduce that the result of Proposition \ref{P: chapter3-mainproof2-property p hold if the area is finite} is just a corollary of the result in the previous two cases. We will give a brief explanation.  Let $J$ be the identity isotopy of the identity map on $M_0$ fixing $z_0$ and satisfying $\rho_s(J,z_0)=1$. Write $I'_0=J^{-k}I_0$. It is  an identity isotopy of $f_0$ that satisfies $\rho_s(I'_0, z_0)=\{0\}$. By the result of Proposition \ref{P: chapter3-mainproof2-property p hold if the area is finite} in the two cases we have already proved, the property \textbf{P)} holds for $(f_0,I'_0,z_0)$.  A periodic orbit in  $M_0$ of type $(p,q)$ associated to $I'_0$ at $z_0$ is  a periodic orbit of type $(kq+p,q)$ associated to $I_0$ at $z_0$. So, the property \textbf{P)} holds for $(f_0,I_0,z_0)$.

 \subsubsection{The case where  $M_0$ is neither a sphere nor a plane and $\rho_s(I,z_0)$ is reduced to a non-zero integer $k$}

Recall that $f_0$ is an area preserving homeomorphism of $M_0$,  that $z_0$ is an isolated fixed point of $f_0$ satisfying $i(f_0,z_0)=1$,  that $I_0$ is a maximal identity isotopy that fixes only one point $z_0$ and satisfies $\rho_s(I_0,z_0)=\{k\}$.

As in Section \ref{S: pre-annulus covering projection}, let  $\pi:\widetilde{M}\rightarrow M_0\setminus \{z_0\}$ be the annulus covering projection, $\widetilde{I}$ be the natural lift of $I_0$ to $\widetilde{M}\cup\{\star\}$, and $\widetilde{f}$ be the lift of $f_0$ associated to $I_0$ to $\widetilde{M}\cup\{\star\}$. Then $\widetilde{I}$ is a maximal identity isotopy and $\mathrm{Fix}(\widetilde{I})$ is reduced to $\star$.
As before,  if the property \textbf{P)} holds for $(\widetilde{f},\widetilde{I},\star)$, it holds for $(f_0,I_0,z_0)$.

Let $\mathcal{F}$ be a transverse foliation of $I_0$, and $\widetilde{\mathcal{F}}$ be the lift of $\mathcal{F}$. Since $\rho_s(I_0,z_0)$ is reduced to a non-zero integer, by the assertion iv) of Proposition \ref{P: pre-rotation set}, $z_0$ is a sink or a source of $\mathcal{F}$ and $\star$ is a sink or a source of $\widetilde{\mathcal{F}}$. Let $W$ be the attracting or repelling basin of $z_0$ for $\mathcal{F}$, and  $\widetilde{W}$ be the  attracting or repelling basin of $\star$ for $\widetilde{\mathcal{F}}$. Recall that $\pi|_{\widetilde{W}\setminus\{\star\}}$ is a homeomorphism be tween $\widetilde{W}\setminus \{\star\}$ and $W\setminus\{z_0\}$. So, $\widetilde{W}$ is a strict subset of $\widetilde{M}\cup\{\star\}$, and its boundary is  the union of some proper leaves. By Remark \ref{R: pre-blow-up}, one knows that $\widetilde{f}$ can be blown-up at $\infty$ and $\rho(\widetilde{I},\infty)$ is equal to $0$.

Let $J$ be the identity isotopy of the identity map of $\widetilde{M}\cup\{\star\}$ fixing $\star$ and satisfying $\rho_s(J,\star)=1$. Write $\widetilde{I}'=J^{-k}\widetilde{I}$. We know that  $\rho_s(\widetilde{I}',\star)$ is reduced to $0$, and that  the blow-up rotation number $\rho(\widetilde{I}',\infty)$ is equal to $k$. One deduces by the assertion ii) of Proposition \ref{P: pre-rotation set} that there exists a neighborhood of $\infty$ that does not contain any contractible fixed points of $\widetilde{f}|_{\widetilde{M}}$  associated to  $\widetilde{I}'|_{\widetilde{M}}$. Let $(Y, \widetilde{I}_Y)$ be a maximal extension of $(\{\star\},\widetilde{I}')$ (see Section \ref{S: pre-Jaulent's preorder}). One knows that $Y$ is  a closed subset of the union of $\{\star\}$ and the set of contractible fixed points of $\widetilde{f}|_{\widetilde{M}}$ associated to $\widetilde{I}'|_{\widetilde{M}}$.  So, there is a neighborhood of $\infty$ that  does not intersect $Y$, and hence $Y$ is a compact set in $\widetilde{M}\cup\{\star\}$. One knows also that  $\rho_s(\widetilde{I}_Y,\star)$ is reduced to $0$, and that  the blow-up rotation number $\rho(\widetilde{I}_Y,\infty)$ is equal to $k$.  As in the previous subsection, in order to prove the result of Proposition \ref{P: chapter3-mainproof2-property p hold if the area is finite}, we only need to prove that the property \textbf{P)} holds for $(\widetilde{f},\widetilde{I}_Y,\star)$, which is the aim of this subsection.

\begin{prop}\label{P: chapter3-mainproof3}
Under the previous assumptions, the property \textbf{P)} holds for $(\widetilde{f},\widetilde{I}_Y,\star)$.
\end{prop}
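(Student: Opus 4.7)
The plan is to reduce to Lemma \ref{L: chapter3-mainproof1-plane1} via a second annulus cover, paralleling the reduction in Section \ref{S: rotation 0 and not plane}. Writing $Y_0=Y\setminus\{\star\}$, which is a closed subset of the plane $\widetilde{M}\cup\{\star\}$ disjoint from $\infty$, let $M_0'$ denote the connected component of $(\widetilde{M}\cup\{\star\})\setminus Y_0$ containing $\star$. The construction of Section \ref{S: pre-annulus covering projection} applied with $X_0=Y_0$ and base point $\star$ furnishes an annulus cover $\pi':\widehat{M}\to M_0'\setminus\{\star\}$, a distinguished point $\star'$ compactifying the end of $\widehat{M}$ over $\star$ so that $\widehat{M}\cup\{\star'\}$ is a plane, a lift $\widehat{f}$ fixing $\star'$, and a lifted identity isotopy $\widehat{I}$. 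In the trivial case $Y=\{\star\}$ one has $M_0'=\widetilde{M}\cup\{\star\}$ and the cover is the identity, so Lemma \ref{L: chapter3-mainproof1-plane1} applies directly to $(\widetilde{f},\widetilde{I}_Y,\star)$; I may therefore assume $Y_0\neq\emptyset$.

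Because $\widetilde{I}_Y$ is a maximal isotopy and $Y\cap M_0'=\{\star\}$, the same arguments used at the end of Section \ref{S: pre-annulus covering projection} show that $\widehat{I}$ is itself maximal on $\widehat{M}\cup\{\star'\}$ with $\mathrm{Fix}(\widehat{I})=\{\star'\}$, and that $\widehat{f}|_{\widehat{M}}$ has no contractible fixed point associated to $\widehat{I}|_{\widehat{M}}$. Since $\pi'$ is a homeomorphism on a punctured neighbourhood of $\star'$, the definition of the local rotation set from Section \ref{S: pre-local rotation set} gives $\rho_s(\widehat{I},\star')=\rho_s(\widetilde{I}_Y,\star)=\{0\}$. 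Moreover, any periodic orbit of $\widehat{f}$ of type $(p,q)$ at $\star'$ associated to $\widehat{I}$ projects via $\pi'$ to a periodic orbit of $\widetilde{f}$ of type $(p,q)$ at $\star$ associated to $\widetilde{I}_Y$, and the weak-star convergence of the associated invariant measures to $\delta_\star$ follows by pushing forward the convergence to $\delta_{\star'}$ established for the lifted system (using Lemma \ref{L: compact to converge}). It therefore suffices to establish Property \textbf{P)} for $(\widehat{f},\widehat{I},\star')$.

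To invoke Lemma \ref{L: chapter3-mainproof1-plane1} on the lifted system I must show that $\widehat{f}$ can be blown up at the end of $\widehat{M}\cup\{\star'\}$ opposite to $\star'$ with blow-up rotation number different from $0$. For this I appeal to Section \ref{S: pre-extend lifts to the boundary}: endow $(\widetilde{M}\cup\{\star\})\setminus Y$ with a hyperbolic structure, so that its universal cover is identified with the hyperbolic plane $H$; the cover $\pi'$ factors through $H$ by a cyclic parabolic subgroup $\Gamma$ whose fixed point $P\in S_{\infty}$ corresponds to $\star'$, exactly as in the final diagram of Section \ref{S: pre-extend lifts to the boundary}. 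The chosen lift of $\widehat{f}|_{\widehat{M}\setminus\{\star'\}}$ to $H$ extends continuously to $S_{\infty}$, commutes with $\Gamma$ and fixes $P$, and hence descends to a continuous extension of $\widehat{f}$ to a closed annulus compactifying $\widehat{M}\cup\{\star'\}$ by an ideal circle at infinity. The rotation number on this circle will be computed by choosing an essential loop $\gamma$ in a small punctured neighbourhood of $\infty$ in $\widetilde{M}\cup\{\star\}$, disjoint from the compact set $Y_0$; since $\widetilde{I}'=J^{-k}\widetilde{I}$ is displaced by exactly $k$ per iterate on the lift of such a loop, and since $Y_0$ being compact allows $\gamma$ to be isotoped in $M_0'\setminus\{\star\}$ to a generator of the deck group of $\pi'$, the same displacement $k$ survives passage to $\widehat{M}\cup\{\star'\}$. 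Lemma \ref{L: chapter3-mainproof1-plane1} then yields Property \textbf{P)} for $(\widehat{f},\widehat{I},\star')$ and, by projection, for $(\widetilde{f},\widetilde{I}_Y,\star)$.

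The hard part will be the final rotation-number bookkeeping: propagating the value $k$ through the hyperbolic compactification and the second annulus cover without picking up spurious contributions from loops around the points of $Y_0$. This is mostly combinatorial, requiring a careful choice of the isotoping loop $\gamma$ so that its homotopy class in $M_0'\setminus\{\star\}$ differs from a generator of $\mathrm{Deck}(\pi')$ only by loops trivial at $\infty$; no technical ingredient beyond Sections \ref{S: pre-Poincare-Birkhoof}, \ref{S: pre-annulus covering projection} and \ref{S: pre-extend lifts to the boundary} is expected to be needed.
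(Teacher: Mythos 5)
Your overall plan — replace $(f,I,z_0)$ by an annulus cover over the maximal extension and then invoke a Poincar\'e--Birkhoff-type input — is in the spirit of the paper, and the case $Y=\{\star\}$ is handled the same way (directly by Lemma \ref{L: chapter3-mainproof1-plane1}). However, once $Y_0\neq\emptyset$ your argument has two concrete gaps, and the paper's proof actually goes a different route precisely to avoid them.

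First, you never treat the case where the component $\widetilde{M}_0=(\widetilde{M}\cup\{\star\})\setminus Y_0$ containing $\star$ is a bounded plane. This happens whenever $Y_0$ separates $\star$ from $\infty$ (for instance when $Y_0$ contains a Jordan curve encircling $\star$). In this case $\widetilde{M}_0\setminus\{\star\}$ is already an annulus, the second annulus cover $\pi'$ is trivial, and there is no ``hyperbolic end'' to compactify: the far end of $\widehat{M}\cup\{\star'\}$ is the boundary of $\widetilde{M}_0$, which lies in $Y_0\subset\mathrm{Fix}(\widetilde{f})$ and carries no reason to have a nonzero blow-up rotation number. The paper copes with this case not via Lemma \ref{L: chapter3-mainproof1-plane1} but by observing that $\widetilde{M}_0$ then has finite area and falling back on the finite-area argument of Section \ref{S: chapter3-plane finite area rotation 0} (Lemma \ref{L: chapter3-positive area implies periodic orbits} followed by Theorem \ref{T: chapter3-plane case}).

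Second, and more fundamentally, the rotation-number bookkeeping at the far end of $\widehat{M}$ does not work as you describe. A loop $\gamma$ near $\infty$ in $\widetilde{M}\cup\{\star\}$ encircles $\star$ together with all of $Y_0$; a generator of $\pi'_*\pi_1(\widehat{M})$ is a small loop around $\star$ alone. When $Y_0\neq\emptyset$ these classes differ in $\pi_1(\widetilde{M}_0\setminus\{\star\})$, so $\gamma$ does not lift to a closed loop in $\widehat{M}$, and it cannot be isotoped in $\widetilde{M}_0\setminus\{\star\}$ to represent a deck-group generator. The rotation number of the map induced on the Handel circle $S_\infty$ of $\widehat{M}$ is governed by the boundary dynamics of the hyperbolic lift, not by the displacement $k$ near $\infty$ of $\widetilde{M}\cup\{\star\}$, and there is no reason it should equal $k$ or even be nonzero in general. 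Indeed the paper makes no such claim: after the second annulus cover it instead proves the \emph{existence of a periodic orbit} for $\breve{f}$ (Lemma \ref{L: chapter3-mainproof2-not plane+rotation non 0}), via an area/connectedness argument in which Section \ref{S: pre-extend lifts to the boundary} is used only to control the invariant continuum $K'$, and then concludes by Theorem \ref{T: chapter3-plane case}. It is only inside the proof of Theorem \ref{T: chapter3-plane case} — and after having produced a concrete periodic orbit $O$ to puncture out — that a nonzero blow-up rotation number at $\infty$ is established and Lemma \ref{L: chapter3-mainproof1-plane1} is finally invoked.

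So, in short: the missing idea is the production of an auxiliary periodic orbit before attempting to compute any rotation number at infinity, and your shortcut of reading off $k$ at the far end of the second cover does not survive scrutiny.
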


\begin{proof}
To get this result, one has to consider two cases: $Y$ is reduced to a single point $\star$ or it contains at least two points. In the first case,  the proposition  is a corollary of Lemma \ref{L: chapter3-mainproof1-plane1}. Now, we will prove the proposition   in the second case.

Suppose that $Y$ contains at least two points and write $Y_0=Y\setminus\{\star\}$.  Let $\widetilde{M}_0$ be the connected component of $\widetilde{M}\cup\{\star\}\setminus Y_0$ containing $\star$. Recall that $Y$ is a compact subset of $\widetilde{M}\cup\{\star\}$. So, one has to consider the following two cases:
\begin{itemize}
\item[-]$\widetilde{M}_0$ is a bounded plane,
 \item[-]$\widetilde{M}_0$ is neither a sphere nor a plane.
 \end{itemize}
 In the first case, the area of $\widetilde{M}_0$ is finite, and the problem is reduced to the case of Section \ref{S: chapter3-plane finite area rotation 0}; while in the second case, we will prove the  result of Proposition \ref{P: chapter3-mainproof2-property p hold if the area is finite} like in Section \ref{S: rotation 0 and not plane}.

Now, we suppose that $\widetilde{M}_0$ is neither a sphere nor a plane.  Let $\pi'': \breve{M}\rightarrow \widetilde{M}_0$ be an annulus covering map,  $\breve{I}$ be the natural lift of $\widetilde{I}_Y|_{\widetilde{M}_0}$ to $\breve{M}\cup\{\breve{\star}\}$, and $\breve{f}$ be the lift of $\widetilde{f}|_{\widetilde{M}_0}$ to $\breve{M}\cup\{\breve{\star}\}$ associated to $\widetilde{I}_Y|_{\widetilde{M}_0}$.  As before, if the Property \textbf{P)} holds for $(\breve{f},\breve{I},\breve{\star})$, then it holds also for $(\widetilde{f},\widetilde{I}_Y,\star)$. So, the proposition is a corollary of Theorem \ref{T: chapter3-plane case} and the following Lemma \ref{L: chapter3-mainproof2-not plane+rotation non 0}.
\end{proof}

\begin{lem}\label{L: chapter3-mainproof2-not plane+rotation non 0}
 There exists a periodic orbit of $\breve{f}$ besides $\breve{\star}$.
 \end{lem}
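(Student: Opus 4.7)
The plan is to verify, for $(\breve{f},\breve{I},\breve{\star})$, all the hypotheses of Lemma~\ref{L: chapter3-mainproof1-plane1} and then invoke that lemma: property \textbf{P)} for $(\breve{f},\breve{I},\breve{\star})$ yields in particular a periodic orbit of $\breve{f}$ distinct from $\breve{\star}$, which is exactly what Lemma~\ref{L: chapter3-mainproof2-not plane+rotation non 0} asserts.

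First I would check the structural hypotheses. The surface $\breve{M}\cup\{\breve{\star}\}$ is homeomorphic to $\mathbb{R}^2$: it is an open annulus compactified at one of its two ends by a single point. By the construction recalled in Section~\ref{S: pre-annulus covering projection}, $\breve{I}$ is an identity isotopy of the area-preserving map $\breve{f}$, and $i(\breve{f},\breve{\star})=1$. Maximality of $(Y,\widetilde{I}_Y)$ transports to the annulus cover, so that $\breve{I}$ is a maximal identity isotopy with $\mathrm{Fix}(\breve{I})=\{\breve{\star}\}$; and since $\pi''$ restricts to a homeomorphism between neighborhoods of $\breve{\star}$ and of $\star$, one has $\rho_s(\breve{I},\breve{\star})=\rho_s(\widetilde{I}_Y,\star)=\{0\}$.

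The core step is to show that $\breve{f}$ can be blown up at the remaining end of $\breve{M}$, viewed as the point $\infty$ of the plane $\breve{M}\cup\{\breve{\star}\}$, with a non-zero blow-up rotation number. The hypothesis $\rho(\widetilde{I}_Y,\infty)=k\neq 0$ already furnishes a blow-up of $\widetilde{f}$ at $\infty$ in $\widetilde{M}\cup\{\star\}$. Because $Y_0$ is compact in $\widetilde{M}$, one may select a punctured disk neighborhood $V$ of $\infty$ in $\widetilde{M}$ with $V\subset\widetilde{M}_0\setminus\{\star\}$. An inspection of $\pi''$ shows that a well-chosen connected component of $\pi''^{-1}(V)$ forms a deleted neighborhood of the other end of $\breve{M}$ and is itself a (possibly finite cyclic) cover of $V$; on this component the lift $\breve{f}$ inherits the blow-up structure of $\widetilde{f}$ at $\infty$, and the blow-up rotation number for $\breve{f}$ is a non-zero rational multiple of $k$.

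Once all the hypotheses are verified, Lemma~\ref{L: chapter3-mainproof1-plane1} gives property \textbf{P)} for $(\breve{f},\breve{I},\breve{\star})$, and in particular the desired periodic orbit of $\breve{f}$ besides $\breve{\star}$. The main obstacle is the middle step: pinning down, through the annulus cover $\pi''$, a component of $\pi''^{-1}(V)$ that is indeed a deleted neighborhood of the other end of $\breve{M}$, and computing that the inherited blow-up rotation number is non-zero. The subtlety is that when $\widetilde{M}_0$ is not a plane the topology of $\widetilde{M}_0\setminus\{\star\}$ near $\infty$ may be non-trivial and the inclusion of a neighborhood of $\infty$ need not be $\pi_1$-surjective, so one must track the class of the loop around $\infty$ carefully in $\pi_1(\widetilde{M}_0\setminus\{\star\})$ modulo $\pi''_*\pi_1(\breve{M})$ and confirm that it is non-trivial there.
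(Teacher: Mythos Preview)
Your approach has a genuine gap at precisely the point you yourself flag as subtle: the claim that a component of $\pi''^{-1}(V)$ (for $V$ a deleted disk neighborhood of $\infty$ in $\widetilde{M}$) is a deleted neighborhood of the other end of $\breve{M}$ on which $\breve{f}$ inherits a blow-up with non-zero rotation number. This fails because $\pi''_*\pi_1(\breve{M})$ is, by construction of the annulus cover, the cyclic subgroup generated by a small loop around $\star$. Since we are in the case where $\widetilde{M}_0$ is \emph{not} a plane, there is at least one bounded complementary component of $\widetilde{M}_0$ in $\widetilde{M}\cup\{\star\}$ besides $\{\star\}$; hence in the free group $\pi_1(\widetilde{M}_0\setminus\{\star\})$ the loop around $\infty$ is a product involving generators other than the loop around $\star$, and no non-trivial power of it lies in $\pi''_*\pi_1(\breve{M})$. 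Consequently each component of $\pi''^{-1}(V)$ is the universal cover of the annulus $V$, i.e.\ a half-plane, not a finite cyclic cover, and no such component is a deleted neighborhood of an end of $\breve{M}$. The blow-up at $\infty$ of $\widetilde{M}$ simply does not transport to the other end of $\breve{M}$.

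Worse, the conclusion you aim for is false in general. The basin $\breve{W}$ is a proper subset of $\breve{M}\cup\{\breve{\star}\}$ (because $\pi''|_{\breve{W}\setminus\{\breve{\star}\}}$ is a homeomorphism while $\pi''$ on all of $\breve{M}$ is a non-trivial cover), and its boundary consists of leaves of $\breve{\mathcal{F}}$; so near the other end of $\breve{M}$ there are both a leaf issuing from that end and a leaf converging to it. By the criterion of Remark~\ref{R: pre-blow-up}, if $\breve{f}$ can be blown up at that end then $\rho(\breve{I},\infty)=0$, which rules out applying Lemma~\ref{L: chapter3-mainproof1-plane1}. The paper therefore does not proceed this way at all: it argues by contradiction, adapting the scheme of Proposition~\ref{P: chanpter3-mainproof2-exist periodic orbit in W} (exhaustion by invariant continua $K_n$, passage to the universal cover, analysis of components of the complement of $\widehat{K}$), and handles the case $k\le -1$---where the area of $\breve{W}$ may be infinite---by projecting back to $\widetilde{M}$ and using a prime-ends cross-cut argument near $S_\infty$ to show that the limit set $K$ still has finite area.
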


\begin{proof}
The proof is similar to the proof of Proposition \ref{P: chanpter3-mainproof2-exist periodic orbit in W}.

 Let $\widetilde{\mathcal{F}}_Y$ be a transverse foliation of $\widetilde{I}_Y$, and $\breve{\mathcal F}$ be the lift of $\mathcal{F}_Y|_{\widetilde{M}_0}$ to $\breve{M}\cup\{\breve{\star}\}$. Recall the assumption in Remark \ref{R: asume positive}, one knows that $\star$ is a sink of $\widetilde{\mathcal{F}}_Y$ and that $\breve{\star}$ is a sink of $\breve{\mathcal{F}}$. Let $\widetilde{W}^*$ be the attracting basin of $\star$ for $\widetilde{\mathcal{F}}_Y$ and $\breve{W}$ be the attracting basin of $\breve{\star}$ for $\breve{\mathcal{F}}$. Recall that $\pi''|_{\breve{W}\setminus \{\breve{\star}\}}$ is a homeomorphism between $\breve{W}\setminus \{\breve{\star}\}$ and $\widetilde{W}^*\setminus\{\star\}$.

When $k\ge 1$, one deduces  by Proposition \ref{P: pre-rotation set} that the end $\infty$ is sink  of $\widetilde{\mathcal{F}}_Y$.  In this case $\widetilde{W}^*$ is a bounded subset of $\widetilde{M}\cup\{\star\}$, and hence the area of  both $\widetilde{W}^*$ and $\breve{W}$ are finite. We can repeat the proof of Proposition \ref{P: chanpter3-mainproof2-exist periodic orbit in W}, and get the result.

Now, we suppose that $k\le -1$. In this case, the end $\infty$ is a source of of $\widetilde{\mathcal{F}}_Y$.

 \begin{sublem}\label{L: chapter3-mainproof2-each leaf in breveW is from infinite to brevestar}
 Each leaf in $\breve{W}$ is an arc from infinite to $\breve{\star}$.
 \end{sublem}

 \begin{proof}
 When the area of $\breve{W}$ is finite, we deduces the result as in  Lemma \ref{L: chapter3-mainsproof2-arc in tildeW dived tildeM}.
Now suppose that  the area of $\breve{W}$ is infinite. We consider the compactification of $\widetilde{M}\cup\{\star\}$ by adding a point $\infty$ at infinite,  the added point $\infty$ is a source of $\widetilde{F}_Y$ and is at the boundary of $\widetilde{W}^*$.  So, there exists a leaf in $\widetilde{W}^*$ from the singularity $\infty$ to $\star$ whose  lift  in $\breve{W}$ is a leaf from  infinite to $\breve{\star}$. Therefore, each leaf in $\breve{W}$ is an arc from infinite to $\breve{\star}$.
 \end{proof}

 The  difference between our case and the case of Proposition \ref{P: chanpter3-mainproof2-exist periodic orbit in W} is that the area of $\breve{W}$ may be infinite. But we did not use this condition except in the proof of Lemma \ref{L: chapter3-mainsproof2-arc in tildeW dived tildeM} and Lemma \ref{L: chapter3-mainproof2-K doesnot conatin a leaf in W}.
 We have proven  Sublemma \ref{L: chapter3-mainproof2-each leaf in breveW is from infinite to brevestar} corresponding to Lemma \ref{L: chapter3-mainsproof2-arc in tildeW dived tildeM}. We will prove that the area of $K$ is finite, so the result of Lemma \ref{L: chapter3-mainproof2-K doesnot conatin a leaf in W} is still valid.

  Formally,  suppose that there does not exist any periodic orbits besides $\breve{\star}$. Let $(D_n)_{n\in\mathbb{}N}$ be an increasing sequence of closed disks containing $\breve{\star}$ such that $D_{n}$ is contained in the interior of $D_{n+1}$  for all $n\in\mathbb{N} $ and $\cup_{n\in \mathbb{N}} D_n=\breve{W}$. Let $K_n$ be the connected component of $\cap_{k\in \mathbb{Z}} \breve{f}^{-k}(D_n)$ containing $\breve{\star}$ and  $K=\overline{\cup_{n\in\mathbb{N}} K_n}\subset \breve{M}\cup\{\star\}$. We will prove that the area of $K$ is finite.

 Let $K'_n=\pi''(K_n)$, and $K'=\overline{\cup_{n}K'_n}\subset\widetilde{M}\cup\{\star\}\cup S_{\infty}$, where $S_{\infty}$ is the circle we added when blowing-up $\widetilde{f}$ at $\infty$.  As before, we can deduce that $K\subset \breve{W}$. Recall that $\pi''|_{\breve{W}\setminus \{\breve{\star}\}}$ is a homeomorphism between $\breve{W}\setminus \{\breve{\star}\}$ and $\widetilde{W}^*\setminus\{\star\}$.  Therefore, we know that $\pi''(K)\subset K'$ , and that the area of  $K$ is not bigger than the area of $K'$. So, we only need to prove that the area of $K'$ is finite.

We will prove it by contradiction. Suppose that the area of $K'$ is infinite. One deduces that $K'\cap S_{\infty}\ne \emptyset$.  As was proven in Section \ref{S: rotation 0 and not plane}, one knows that $\rho(\breve{I},K_n)=0$, and so $\rho(\widetilde{I}_Y, K'_n)=0$ for all $n\in\mathbb{N}$.  Since $(Y,\widetilde{I}_Y)$ is a maximal extension of $(\{\star\},J^{-k}\widetilde{I})$, one deduces that  $\rho(J^{-k}\widetilde{I}, K'_n)$ is equal to $0$ and that $\rho(\widetilde{I},K'_n)$ is equal to $k$, for all $n\in\mathbb{N}$.

 Since $K'\cap S_{\infty}$ is invariant by $\widetilde{f}$ and the blow-up rotation number $\rho(\widetilde{I},\infty)=0$, there exists a fixed point $\widetilde{z}_1\in K'\cap S_{\infty}$, and the rotation number of $\widetilde{z}_1$ (associated to $\widetilde{I}$) in the annulus $\widetilde{M}\cup S_{\infty}$ is $0$.

 Let $\pi':\widehat{M}\rightarrow \widetilde{M}\cup S_{\infty}$ be the universal cover,  $T$ be a generator of the group of covering automorphism, and  $\widehat{f}$ the lift of $\widetilde{f}$ associated to $\widetilde{I}$. Fix one $\widehat{z}_1\in \pi'^{-1}(\widetilde{z}_1)$. It is a fixed point of $\widehat{f}$.
 Let $U$ be a small neighborhood of $\widehat{z}_1$ such that $T^n(U)\cap U=\emptyset$ for all $n\ne 0$. Let $V\subset U$ be a neighborhood of $\widehat{z}_1$ such that $\widehat{f}^2(V)\subset U$. Fix $n$ large enough such that $K'_n\cap V\ne \emptyset$, and choose an arc $\gamma$ in $V$ connecting $\widehat{z}_1$ and an accessible point of $K'_n$ such that $\gamma\cap K'_n$ has exactly one point. By choosing a sub-arc of $\gamma\cup \widehat{f}^{-2}(\gamma)$, we get a cross-cut $\gamma'$. On one hand, $T(\gamma')\cap \gamma'=\emptyset $ because $\gamma'\subset V$.  On the other hand, we consider the prime-ends compactification of $\widetilde{M}\cup S_{\infty}\setminus K'_n$ at the end $K'_n$, and denote by $\widetilde{f}_{K'_n}$ the extension of $\widetilde{f}|_{\widetilde{M}\setminus K'_n}$. As was in Section \ref{S: pre-prime-ends Compactification}, let  $\pi'_{K'_n}:\pi'^{-1}(\widetilde{M}\cup S_{\infty}\setminus K'_n)\cup \mathbb{R}\rightarrow (\widetilde{M}\cup S_{\infty}\setminus K'_n)\cup S^1$ be the universal cover, and $\widehat{f}_{K'_n}$ the lift of $\widetilde{f}_{K'_n}$ whose restriction to $\pi'^{-1}(\widetilde{M}\cup S_{\infty}\setminus K'_n)$ is equal to $\widehat{f}$.
  \begin{figure}[h]
 \centering
 \includegraphics{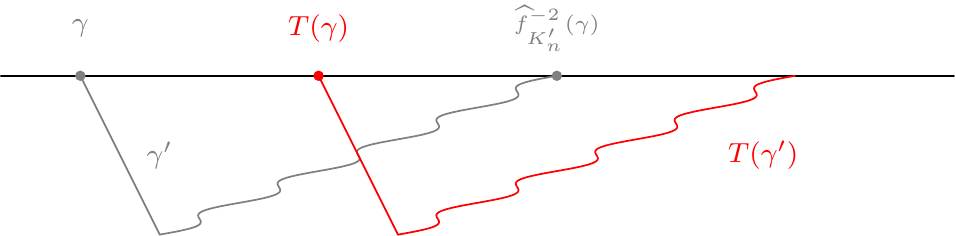}
 \end{figure}
 Recall that $\rho(\widetilde{I}, K'_n)$ is equal to $k\le -1$.  So, the end-cut $\widehat{f}_{K'_n}^{-2}(\gamma)>T^{-2k-1}(\gamma)\ge T(\gamma)$, which means $\gamma'\cap T(\gamma')\ne \emptyset$. We get a contradiction.
 \end{proof}

\section{The  case of diffeomorphisms}\label{S: chapter3-diffeomorphism case}

%

\subsection{The index at a degenerate  fixed point that is an extremum of a generating function }\label{S: index at a degenerate fixed point}

Let $f$ be a diffeomorphism of $\mathbb{R}^2$ and $g: \mathbb{R}^2\rightarrow \mathbb{R}$ be a $\mathcal{C}^2$ function, we call $g$ a \emph{generating function} of $f$ if  $\partial^2_{12}g<1$, and if
\begin{equation}\label{Eq: generating function}
 f(x,y)=(X,Y)\Leftrightarrow\left\{\begin{aligned} X-x & = & \partial_2 g(X,y),\\ Y-y & = & -\partial_1 g(X,y) . \end{aligned}\right.
\end{equation}

Every $\mathcal{C}^2$ function $g: \mathbb{R}^2\rightarrow \mathbb{R}$ satisfiying $\partial^2_{12} g\le c<1$ defines a diffeomorphism $f$ of $\mathbb{R}^2$ by the previous equations.  Moreover,  the Jacobian matrix $J_{f}$ of $f$ is  equal to
\begin{equation*}
\frac{1}{1-\partial^2_{12}g(X,y)}
\begin{pmatrix} 1 &    \partial^2_{22}g(X,y)\\
-\partial^2_{11}g(X,y)   & -\partial^2_{11}g(X,y) \partial^2_{22}g(X,y) +(1-\partial^2_{12} g(X,y))^2
\end{pmatrix}.
\end{equation*}
Since  $\det J_f=1$, the diffeomorphism  $f$  is orientation and area preserving.
On the other hand, every orientation and area preserving diffeomorphism $f$ of $\mathbb{R}^2$ satisfying $0<\varepsilon\le\partial_1(p_1\circ f)\le M<\infty $ can be generated by a generating function, where $p_1$ is the projection onto the first factor.

Moreover, we can naturally define an identity isotopy $I_0=(f_t)_{t\in[0,1]}$ of $f$ such that $f_t$ is generated by $tg$. Precisely, the diffeomorphisms $f_t$ are defined by the following equations:
 \begin{equation}\label{Eq: generate an isotopy}
 f_t(x,y)=(X^t,Y^t)\Leftrightarrow\left\{\begin{aligned} X^t-x & = & t \partial_2 g(X^t,y),\\ Y^t-y & = &  -t \partial_1 g(X^t,y) . \end{aligned}\right.
\end{equation}

  A point $(x,y)$ is a fixed point of $f$ if and only if it is a critical point of $g$. We say that a fixed point $(x,y)$ of $f$ is \emph{degenerate} if $1$ is an eigenvalue of $J_f(x,y)$. We will see later that a fixed point $(x,y)$ of $f$ is degenerate if and only if the Hessian matrix of $g$ at $(x,y)$ is degenerate.

 We can also define a local generating function. Precisely, if $(x,y)$ is a critical point of a  $\mathcal{C}^2$ function $g$ such that $\partial_{12} g(x,y)<1$, then one can define an orientation and area preserving local diffeomorphism $f$ at   $(x,y)$ by  the equations (\ref{Eq: generating function}). On the other side, if $(x,y)$ is a fixed point of  an orientation and area preserving diffeomorphism $f$ such that  $\partial_1(p_1\circ f)(x,y)>0$, where $p_1$ is the projection to the first factor, then one can find a $\mathcal{C}^2$ function $g$ defined in a neighborhood of $(x,y)$, that  defines the germ of $f$ at $(x,y)$ by the equations (\ref{Eq: generating function}). Moreover, in both cases, we can define a local isotopy of $f$ at $(x,y)$ by the equations (\ref{Eq: generate an isotopy}), and will call it  the \emph{local isotopy induced by $g$}.

\bigskip

In this section, suppose that $f: (W,0)\rightarrow (W',0)$ is a local diffeomorphism at  $0\in \mathbb{R}^2$, and that $g$ is a local generating function of $f$. We will prove the following Proposition \ref{P: chapter3-diffeo-the index of degenerate fixed point that is an extremum of genrating function}, and deduce Corollary \ref{C: main-generating function with a degenrate maximal point means accumlated by periodic points} as an immediate consequence of Theorem \ref{T: main-index 1 and rotation 0 implies  accumulated by periodic points} and Proposition \ref{P: chapter3-diffeo-the index of degenerate fixed point that is an extremum of genrating function}.

\begin{prop}\label{P: chapter3-diffeo-the index of degenerate fixed point that is an extremum of genrating function}
If $0$ is an isolated critical point of $g$ and  a local extremum  of $g$, and if the Hessian matrix of $g$ at $0$ is degenerate,  then  $i(f,0)$ is equal to $1$.
\end{prop}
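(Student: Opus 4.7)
The plan is to reduce the Lefschetz index of $f$ at $0$ to the Poincaré index of the gradient vector field $\nabla g$ at $0$, and then to show that the latter equals $+1$ at any isolated local extremum, regardless of whether the Hessian is degenerate.

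First, I would exploit the local isotopy $I_0 = (f_t)_{t\in[0,1]}$ induced by $g$ via the equations (\ref{Eq: generate an isotopy}), which give
\[ \frac{f_t(x,y) - (x,y)}{t} = \bigl( \partial_2 g(X^t, y),\, -\partial_1 g(X^t, y) \bigr) \qquad \text{for } t > 0. \]
Since $X^t \to x$ as $t \to 0^+$, this map extends continuously to $t = 0$ by $J\nabla g(x,y)$, where $J(a,b)=(b,-a)$ denotes the rotation by $-\pi/2$. To use this as a non-vanishing homotopy on a small circle around $0$, I need the extended map to avoid zero on some punctured neighborhood of $0$ for every $t \in [0,1]$. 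The implicit relation $X^t = x + t\,\partial_2 g(X^t,y)$ (solvable uniquely thanks to $\partial_{12}^2 g < 1$) shows that $(x,y)\mapsto(X^t,y)$ is a homeomorphism fixing $0$, so $(X^t,y)\ne 0$ whenever $(x,y)\ne 0$; combined with the hypothesis that $0$ is isolated as a critical point of $g$, this gives $\nabla g(X^t,y)\ne 0$ on a punctured neighborhood, as required. Therefore $(f-\mathrm{id})/\|f-\mathrm{id}\|$ is homotopic to $J\nabla g/\|J\nabla g\|$ on a small circle, yielding $i(f,0) = \deg(J\nabla g/\|J\nabla g\|)$. Since $J$ is an orientation-preserving rotation, composing with $J$ preserves the Brouwer degree, so $i(f,0)$ equals the Poincaré index of $\nabla g$ at $0$.

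Finally, I would show that this index is $+1$ at an isolated local extremum. Assume without loss of generality that $0$ is an isolated local minimum with $g(0)=0$ (the maximum case is analogous, or reduces by replacing $g$ with $-g$ and noting that the index is invariant under $V\mapsto -V$ in dimension two). Choose $r>0$ small enough that $0$ is the only critical point of $g$ in $\overline{B_r(0)}$; then $m := \min_{\partial B_r(0)} g > 0$. For $c \in (0,m)$, the connected component $U_c$ of $\{g < c\}$ containing $0$ has compact closure inside $B_r(0)$. Consider the gradient descent flow $\dot z = -\nabla g(z)$, which is well-defined because $g$ is $C^2$. The set $U_c$ is positively invariant (since $g$ is nonincreasing along orbits), orbits are precompact, and the only point of $\overline{U_c}$ where $\nabla g$ vanishes is $0$; a LaSalle invariance argument then shows every forward orbit in $U_c$ converges to $0$. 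Hence $0$ is an asymptotically stable attractor of $-\nabla g$, so the index of $-\nabla g$ at $0$ is $+1$, and in dimension two the index of $\nabla g$ coincides with that of $-\nabla g$. Combining the three steps gives $i(f,0) = 1$.

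The main obstacle is the third step: for a degenerate local extremum, the level sets $\{g = c\}$ near $0$ may fail to be smooth curves or even topological circles, so the naive argument ``$\nabla g$ points outward on a level curve'' is unavailable. The flow/attractor approach circumvents this difficulty by working with sublevel sets $U_c$ rather than level sets, and uses only the extremum property plus the isolation of the critical point. The first step is essentially a calculation once one verifies the non-vanishing condition, which reduces to the bijectivity of $(x,y)\mapsto(X^t,y)$ near $0$.
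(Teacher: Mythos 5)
Your proof is correct, and it takes a genuinely different route from the paper's. The paper stays inside the machinery of local isotopies and transverse foliations developed in Sections \ref{S: pre-local isotopies and index of isotopies}--\ref{S: pre-local rotation set}: it shows that \emph{every} local isotopy of $f$ at $0$ has index $0$, and then invokes Proposition \ref{P: pre-relation of indices between isotopy and homeo} (which forces $i(f,0)=1$ whenever no isotopy has nonzero index). The technical core is the construction of an explicit isotopy $I'$ equivalent to the one induced by $g$, and the verification that the foliation by integral curves of $\nabla g$ is locally transverse to $I'$, so that $0$ being a sink or source gives $i(I',0)=i(\mathcal{F}_0,0)-1=0$ by Proposition \ref{P: pre-relations of indices between foliation and the others}. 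You instead bypass the isotopy/foliation formalism entirely: the reparametrized displacement $(f_t-\mathrm{id})/t$ gives a direct non-vanishing homotopy from $J\nabla g$ to $f-\mathrm{id}$ on a small circle, reducing $i(f,0)$ to the Poincar\'e index of $\nabla g$, and then the attractor argument (Krasnoselskii: an asymptotically stable isolated equilibrium in $\mathbb{R}^n$ has index $(-1)^n$, so $+1$ in the plane, together with invariance of the planar index under $V\mapsto -V$) gives that index. Both proofs ultimately lean on the gradient field of $g$, but the paper uses it to produce a transverse foliation while you use it as the endpoint of a homotopy of vector fields. Your approach is more elementary and self-contained; it also never uses the degeneracy of $\mathrm{Hess}(g)(0)$, which the paper needs to control $\rho_s(I,0)$ for the non-equivalent isotopies, so you have in fact proved the slightly stronger statement that $i(f,0)=1$ for any isolated local extremum of a generating function. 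One small point worth making explicit in a final write-up: the non-vanishing of the homotopy for $t\in(0,1]$ requires that $(X^t,y)$ stay inside the neighborhood where $0$ is the unique critical point, which you get by shrinking the circle uniformly in $t$ (continuity of $(t,x,y)\mapsto X^t$ and $X^t|_{t=0}=x$), and that strictness of the minimum at $0$ (needed for the Lyapunov argument) follows automatically from isolation of the critical point, since any $p\ne 0$ with $g(p)=g(0)$ would itself be a local minimum and hence critical.
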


\begin{proof}
The idea is to  compute the indices of the local isotopies, so that we can deduce the Lefschetz index by Proposition \ref{P: pre-relation of indices between isotopy and homeo}.

 We denote by $I_0=(f_t)_{t\in[0,1]}$  the local isotopy induced by $g$. We have the following lemma
\begin{lem}\label{L: chapter3-generating fuction-rho(I,0)=0}
 The blow-up rotation number $\rho(I_0,0)$ is equal to $0$.
 \end{lem}

 \begin{proof}
 Since $\mathrm{Hess}(g)(0)$ is degenerate, one deduces that $0$ is an eigenvalue of $\mathrm{Hess}(g)(0)$. Let $v$ be an eigenvector of $\mathrm{Hess}(g)(0)$ corresponding to the eigenvalue $0$. We will prove that $v$ is a common eigenvector of $J_{f_t}(0)$ corresponding to the eigenvalue $1$ for $t\in[0,1]$, and hence the blow-up rotation number $\rho(I_0,0)$ is equal to $0$.

Write
 $$\mathrm{Hess}(g)(0)=\begin{pmatrix} \varrho & \sigma\\ \sigma & \tau\end{pmatrix} \quad \text{and} \quad v=\begin{pmatrix} a\\  b\end{pmatrix}.$$
  Then, one deduces that
$$\varrho\tau-\sigma^2=0, \quad \varrho a+\sigma b=0, \quad \text{and } \sigma a+\tau b=0.$$
By a direct computation, one knows that for every $t\in[0,1]$,
$$J_{f_t}(0)=\frac{1}{1-t\sigma}\begin{pmatrix}1 & t\tau\\ -t\varrho & -t^2\varrho\tau+(1-t\sigma)^2 \end{pmatrix}
=\frac{1}{1-t\sigma}\begin{pmatrix}1 & t\tau\\ -t\varrho & 1-2t\sigma \end{pmatrix},$$
and then
$$
J_{f_t}(0)v=\frac{1}{1-t\sigma}
\begin{pmatrix}  a+t\tau b \\
-t\varrho a+ b-2t\sigma b
\end{pmatrix}
=\frac{1}{1-t\sigma}\begin{pmatrix} a-t\sigma a \\ t\sigma b+ b-2t\sigma b\end{pmatrix}
= \begin{pmatrix} a\\  b\end{pmatrix}.$$
\end{proof}

Since $f$ is area preserving, the rotation set at $0$ is not empty. By the assertion iv) of Proposition \ref{P: pre-rotation set}, and the previous lemma, one can deduce that $\rho_s(I_0,0)$ is reduced to $0$, and that for all local isotopy $I$ of $f$ that is not equivalent to $I_0$, the rotation  set $\rho_s(I,0)$ is reduced to a non-zero integer.

\begin{lem}
If $I$ is a local isotopy of $f$ that is not equivalent to $I_0$, then  $i(I,0)$ is equal to $0$.
\end{lem}
\begin{proof}
Let $\mathcal{F}$ be  foliation  locally transverse to  $I$. Since $\rho_s(I,0)$ is reduced to a non-zero integer, one can deduce by the assertion iii) of Proposition \ref{P: pre-rotation set} that $0$ is either a sink or a source of $\mathcal{F}$. By Proposition \ref{P: pre-relations of indices between  foliation and the others}, one deduces that $i(I,0)=i(\mathcal{F},0)-1=0$.
\end{proof}

In order to compute the index of $I_0$, we will construct an isotopy $I'$ that is equivalent to $ I_0$, and prove that  $i(I',0)=0$.

We  define  $I'=(f'_t)_{t\in[0,1]}$ in a neighborhood of $0$ by
 $$f'_t(x,y)=\left\{\begin{aligned}(x, y)+2t(X-x,0) & & \textrm{for } 0\leq t\leq 1/2 ,\\ (X,y)+(2t-1)(0, Y-y) & & \textrm{for } 1/2\leq t\leq 1,\end{aligned}\right.$$
where $(X,Y)=f(x,y)$.

\begin{lem}
The family $I'=(f'_t)_{t\in[0,1]}$ is  a local isotopy of $f$.
 \end{lem}
 \begin{proof}
 For every fixed $t\in[0,1]$, We will prove that $f'_t$ is a local diffeomorphism by  computing the determinant of the Jacobian matrices, and then get the result.

 Indeed, one knows
 $$\partial_1 X=1/(1-\partial_{12}g)>0.$$
  Then for $t\in[0,1/2]$,
\begin{equation*}
\det J_{f'_t}=\det\begin{pmatrix} 1+2t(\partial_1 X-1)) & 2t\partial_2 X\\  0 & 1\end{pmatrix}=2t\partial_1 X+(1-2t)>0;
\end{equation*}
and for $t\in[1/2,1]$,
$$
\det J_{f'_t}=\det\begin{pmatrix} \partial_1 X & \partial_2 X\\  (2t-1)\partial_1 Y & (2-2t)+(2t-1)\partial_2 Y\end{pmatrix}
=(2t-1)\det J_{f}+(2-2t)\partial_1 X>0.
$$
\end{proof}

\begin{lem}
The blow-up rotation number $\rho(I',0)$ is equal to $0$, and hence $I'$ is equivalent to $I_0$.
\end{lem}
\begin{proof}
As in the proof of Lemma \ref{L: chapter3-generating fuction-rho(I,0)=0}, we will prove that an eigenvector  of $\mathrm{Hess}(g)(0)$ corresponding to the eigenvalue $0$   is a common eigenvector of $J_{f'_t}(0)$ corresponding to the eigenvalue $1$ for $t\in[0,1]$, and hence deduce the lemma.

We keep the notations in the proof of Lemma \ref{L: chapter3-generating fuction-rho(I,0)=0}, and recall that
$$\varrho\tau-\sigma^2=0, \quad \varrho a+\sigma b=0, \quad \text{and } \sigma a+\tau b=0.$$
For $t\in[0,1/2]$,
$$J_{f'_t}(0)=\mathrm{Id}+2t\begin{pmatrix}\partial_1 X(0,0)-1 & \partial_2 X(0,0)\\ 0 & 0\end{pmatrix}=\mathrm{Id}+\frac{2t}{1-\sigma}\begin{pmatrix} \sigma & \tau \\  0 & 0 \end{pmatrix},$$
and
$$J_{f'_t}(0)v= v+\frac{2t}{1-\sigma}\begin{pmatrix}\sigma a+\tau b\\ 0\end{pmatrix}=v.$$
For $t\in[1/2,1]$,
$$J_{f'_t}(0)=J_{f}(0)-(2-2t)\begin{pmatrix}0 & 0 \\ \partial_1 Y(0,0) &\partial_2 Y(0,0)-1 \end{pmatrix}=J_{f}(0)-\frac{2-2t}{1-\sigma}\begin{pmatrix} 0 & 0 \\ -\varrho & -\sigma \end{pmatrix},$$
and
$$J_{f'_t}(0)v= J_{f}(0)v+\frac{2-2t}{1-\sigma}\begin{pmatrix} 0\\ \varrho a+\sigma b\end{pmatrix}=v.$$
We have verified that $v$ is a common eigenvector of $J_{f'_t}(0)$ corresponding to the eigenvalue $1$ for $t\in[0,1]$.
\end{proof}

To conclude, we will define a locally transverse foliation $\mathcal{F}_0$ of $I'$ such that $0$ is a sink or a source of $\mathcal{F}_0$, and then deduce by Proposition \ref{P: pre-relations of indices between  foliation and the others} that $i(I', 0)=i(\mathcal{F}_0,0 )-1=0$. Indeed, let  $\mathcal{F}_0$  be the foliation in a neighborhood of $0$  whose leaves are the integral curves of the gradient vector field\footnote{It means the vector field: $(x,y)\mapsto (\partial_1 g(x,y), \partial_2 g(x,y))$.} of $g$. One knows that $0$ is a sink of $\mathcal{F}_0$ if $0$ is a local maximum of $g$, and is a source of $\mathcal{F}_0$ if $0$ is a minimum of $g$.  We can finish our proof by the following lemma.

\begin{lem}
The foliation $\mathcal{F}_0$ is locally transverse to $I'$.
\end{lem}

\begin{figure}[h]
\center
   \includegraphics[width=3.5cm]{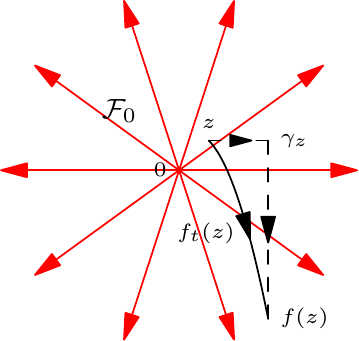}

  \caption{The dynamics and foliation generated by $g(x,y)=x^2+y^2$}
\end{figure}

\begin{proof}
Let $U$ be a sufficiently small Jordan domain containing $0$ such that $\mathcal{F}_0$ is well defined on $U$, and $V\subset U$ be a sufficiently small neighborhood of $0$ such  that  $f'_t$ is well defined on $V$ for $t\in[0,1]$, that $f$ does not have any other fixed point in $V$ except $0$, and that $\cup_{t\in[0,1]}f'_t(V)\subset U$. We will prove that  for every  $z=(x,y)\in V\setminus\{0\}$, the path $\gamma_z: t\mapsto f'_t(x,y)$ is   positively transverse to $\mathcal{F}_0$, and then deduce the lemma.

 Indeed, for $t\in[0,1/2]$,
\begin{eqnarray*}
&&\det\begin{pmatrix}
2(X-x) & \partial_1 g(f'_t(x,y))\\
0 &\partial_2 g(f'_t(x,y))
\end{pmatrix}\\
&=&2(X-x)\partial_2 g(f'_t(x,y))\\
&=&2(X-x)\partial_2 g(2tX+(1-2t)x,y)\\
&=&2(X-x)[\partial_2 g(X,y)+(2t-1)(X-x)\partial^2_{12}g(\xi, y)]  \\
&=&2(X-x)^2[1-(1-2t)\partial^2_{12}g(\xi, y)]\ge 0
\end{eqnarray*}
where $\xi$ is a real number between $x$ and $X$, and the inequality is strict if $X\ne x$.

 For $t\in[1/2,1]$,
\begin{eqnarray*}
&&\det\begin{pmatrix}
0 & \partial_1 g(f'_t(x,y))\\
2(Y-y) &\partial_2 g(f'_t(x,y))
\end{pmatrix}\\
&=&-2(Y-y)\partial_1 g(f'_t(x,y))\\
&=&-2(Y-y)\partial_1 g(X,(2-2t)y+(2t-1)Y)\\
&=&-2(Y-y)[\partial_1 g(X,y)+(2t-1)(Y-y)\partial^2_{12} g(X, \eta)]  \\
&=&2(Y-y)^2[1-(2t-1)\partial^2_{12} g (X ,\eta)]\ge 0
\end{eqnarray*}
where $\eta$ is a real number between $y$ and $Y$, and the inequality is strict if $Y\ne y$.

Since $z=(x,y)$ is not a fixed point, either $X\ne x$ or $Y\ne y$. If both of the inequalities are satisfied, $\gamma_z$ is positively transverse to $\mathcal{F}_0$; if $X\ne x$ and $Y= y$, $\gamma_z|_{t\in[0,\frac{1}{2}]}$  is positively transverse to $\mathcal{F}_0$, and $\gamma_z|_{t\in[\frac{1}{2},1]}$ is reduced to a point; if $X= x$ and $Y\ne y$, $\gamma_z|_{t\in[0,\frac{1}{2}]}$ is reduced to a point, and $\gamma_z|_{t\in[\frac{1}{2}, 1]}$ is positively transverse to $\mathcal{F}_0$.
\end{proof}
\end{proof}

\begin{rmq}
In the proof, we have indeed proven that $\mathcal{F}_0$ is locally transverse to any local isotopy  of $f$ that is equivalent to $I_0$.
\end{rmq}

\subsection{Discrete symplectic actions and symplectically degenerate extrema}\label{S: Discrete symplectic actions and symplectically degenerate extremum}

In this section, we will introduce  symplectically degenerate extrema.  More details can be found in \cite{mazzuchelli}.

We say that a diffeomorphism  $F:\mathbb{T}^2\rightarrow \mathbb{T}^2$ is  \emph{Hamiltonian} if  it is area preserving and  if there exists a lift $f$ satisfying
$$f(z+k)=f(z)+k \text{ for all } k\in\mathbb{Z}^2, \text{ and } \int_{\mathbb{T}^2}(f-\mathrm{Id})dxdy=0.$$
Refering to \cite{Mcduff}, this definition coincides with the usual definition of a Hamiltonian diffeomorphism of a symplectic manifold. More precisely, we call a time-dependent vector field $(X_t)_{t\in\mathbb{R}}$  a \emph{Hamiltonian vector field} if it is defined by the equation:
$$dH_t=\omega(X_t,\cdot),$$
where $(M, \omega)$ is a symplectic manifold and $H:\mathbb{R}\times M\rightarrow \mathbb{R}$ is a smooth function. The Hamiltonian vector field induces a \emph{Hamiltonian flow} $(\varphi_t)_{t\in\mathbb{R}}$ on $M$, which is the solution of  the following equation
$$\frac{\partial}{\partial t}\varphi_t(z)=X_t(\varphi_t(z)).$$
 We say that a diffeomorphism $F$ of $M$ is a \emph{Hamiltonian diffeomorphism} if it is the time-$1$ map of a Hamiltonian flow.
So, for a Hamiltonian diffeomorphism, there exists a natural identity isotopy $I$ which is defined by the Hamiltonian flow. We say that a fixed point  of a Hamiltonian diffeomorphism is \emph{contractible} if its trajectory along $I$ is a loop homotopic to zero in $M$, and that a $q$-periodic  point   of a Hamiltonian diffeomorphism is \emph{contractible} if it is a contractible fixed point of $F^q$.

\bigskip

Let $F: \mathbb{T}^2\rightarrow \mathbb{T}^2$ be a Hamiltonian diffeomorphism. Then $F$ is the time-$1$ map of a Hamiltonian flow, and we can factorize $F$ by
$$F=F_{k-1}\circ\cdots\circ F_0,$$
 where $F_j$ is $\mathcal{C}^1$-close to the identity, for $j=0,\cdots,k-1$. For every $j$,  let $f_j$ be the lift of $F_j$ that is $\mathcal{C}^1$-close to the identity, and $g_j$ be a generating function of $f_j$.  We define the \emph{discrete symplectic action}
 $$g:\mathbb{R}^{2k} \rightarrow \mathbb{R}$$ by
$$g(z):= \sum_{j\in \mathbb{Z}_k}(<y_j,x_j-x_{j+1}>+g_j(x_{j+1},y_j)),$$
where $z=(z_0,...,z_{k-1})$ and $z_j=(x_j,y_j)$.

By a direct computation, we  know that for every $j\in\mathbb{Z}_k$,
$$\frac{\partial}{\partial x_j}g(z)=y_j-y_{j-1}+\partial_1 g_{j-1}(x_j,y_{j-1}),\quad \text{and }\frac{\partial}{\partial y_j}g(z)=x_j-x_{j+1}+\partial_2 g_{j}(x_{j+1},y_j).$$
So, $z\in\mathbb{R}^{2k}$ is a critical point of $g$ if and only if $z_{j+1}=f_j(z_j)$ for every $j\in\mathbb{Z}_k$, and therefore if and only if $z_0\in \mathbb{R}^{2}$ is a  fixed point of $f=f_{k-1}\circ\cdots\circ f_0$.

 In particular, each $f_j$ commutes with the integer translation, and so $g$ is invariant by the diagonal action of $\mathbb{Z}^2$ on $\mathbb{R}^{2k}$ and descends to a function $$G:\mathbb{R}^{2 k}/\mathbb{Z}^2\rightarrow \mathbb{R}.$$
Moreover, $[z]\in\mathbb{R}^{2 k}/\mathbb{Z}^{2}$ is a critical point of $G$ if and only if $z_{j+1}=f_j(z_j)$ for every $j\in\mathbb{Z}_k$, and therefore if and only if $[z_0]\in \mathbb{T}^{2}$ is a contractible fixed point of $F$, where $F=F_{k-1}\circ\cdots\circ F_0$.  In particular, critical points of $G$   one-to-one correspond to contractible fixed points of  $F$. Moreover, for any period $q\in\mathbb{N}$, contractible $q$-periodic points of $F$ correspond to the equivalent classes in $\mathbb{R}^{2kp}/\mathbb{Z}^2$ of  critical points of the discrete symplectic action $g^{\times q}:\mathbb{R}^{2kp}/\mathbb{Z}^{2}\rightarrow\mathbb{R}$ defined by
$$g^{\times q}(z):=\sum_{j\in \mathbb{Z}_{kq}}(<y_j,x_j-x_{j+1}>+g_{ (j \mod k)}(x_{j+1},y_j)),$$
where $z=(z_0,...,z_{kp-1})$ and $z_j=(x_j,y_j)$.

 \medskip

Moreover, if $[z_0]\in\mathbb{T}^2$ is a contractible fixed point of $F$, then by a suitable shift one can suppose that $[z_0]$ is fixed along the Hamiltonian flow, and hence the factors $F_j$  fixes $[z_0]$ for $j=0,\cdots,k-1$. So, $z_0$ is a fixed point of each $f_j$ and a critical point of each $g_j$.
 We define the the graded group of local homology
$$C_{*}(z_0^{\times kn}):=H_{*}(\{g^{\times n}<g^{\times n}(z_0^{\times kn})\cup\{z_0^{\times kn}\},g^{\times n}<g^{\times n}(z_0^{\times kn})).$$
Then $C_j(z_0^{\times kn})$ are always trivial for $j<\textrm{mor}(z_0^{\times kn})$ and $j>\textrm{mor}(z_0^{\times kn})+\textrm{nul}(z_0^{\times kn})$, where $\textrm{mor}(z_0^{\times kn})$ is the dimension of   negative eigenvector space of Hessian matrix of $g^{\times n}$ at $z_0^{\times kn}$, and $\textrm{nul}(z_0^{\times kn})$ is the dimension of the kernel of Hessian matrix of $g^{\times n}$ at $z_0^{\times kn}$.

We say that $z_0$ is a \emph{symplectically degenerate maximum} (resp. \emph{symplectically degenerate minimum}) if $z_0$ is an isolated local maximum (resp. minumum) of the generating functions $g_0,\cdots, g_{k-1}$, and  the local homology $C_{kn+1}(z_0^{\times kn})$ is non-trivial for infinitely many $n\in\mathbb{N}$.

\begin{prop}[\cite{mazzuchelli}\cite{Ruelle}]
Let $z=z_0^{\times k}$ be a critical point of $g$ such that  $C_{kn+1}(z^{\times n})$ is non-trivial for infinitely $n\in\mathbb{N}$. Then $1$ is the only eigenvalue of $DF([z_0])$, and the blow-up rotation number $\rho(I,[z_0])$ is equal to $0$ for any identity isotopy of $F$ fixing $[z_0]$.
\end{prop}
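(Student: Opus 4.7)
The plan is to extract from the local-homology hypothesis a growth constraint on the Morse index of the iterates $g^{\times n}$, and then translate this constraint into a spectral restriction on $DF([z_0])$ via Ruelle's rotation-number formula.

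First, a direct computation from the generating-function identities identifies the kernel of the Hessian of $g^{\times n}$ at $z^{\times n}$ with the fixed-vector space of $(DF([z_0]))^n$, so
$$
\textrm{nul}(z^{\times n}) = \dim \ker\bigl((DF([z_0]))^n - \mathrm{Id}\bigr) \in \{0,1,2\}.
$$
Nontriviality of $C_{kn+1}(z^{\times n})$ is equivalent to $\textrm{mor}(z^{\times n}) \le kn+1 \le \textrm{mor}(z^{\times n}) + \textrm{nul}(z^{\times n})$. Combining, along the infinite sequence of integers $n$ furnished by the hypothesis, the integer $\textrm{mor}(z^{\times n}) - kn$ stays in $\{-1,0,1\}$.

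Next, the Morse index of the discrete symplectic action admits a description via a Conley--Zehnder index:
$$
\textrm{mor}(z^{\times n}) - kn = \mathrm{CZ}\bigl((DF([z_0]))^n\bigr) + O(1),
$$
where $\mathrm{CZ}$ denotes the Conley--Zehnder index of the iterated linearized symplectic path at $[z_0]$ (compare Robbin--Salamon and Long). By Ruelle's rotation-number theorem \cite{Ruelle}, the limit
$$
\hat\rho := \lim_{n\to\infty} \frac{\mathrm{CZ}((DF([z_0]))^n)}{n}
$$
exists and depends only on the conjugacy class of $DF([z_0])$ in $SL(2,\mathbb{R})$: it is nonzero when $DF([z_0])$ is elliptic with rotation angle in $(0,2\pi) \setminus \{0\}$, and vanishes exactly when the two eigenvalues are real and positive. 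Since $\textrm{mor}(z^{\times n}) - kn$ is bounded along an infinite subsequence, $\hat\rho = 0$, so $DF([z_0])$ is either hyperbolic with positive real eigenvalues $\lambda \ne 1$ or has $1$ as its only eigenvalue. In the hyperbolic case $\textrm{nul}(z^{\times n}) = 0$ for every $n$, so the homological inequality collapses to $\textrm{mor}(z^{\times n}) = kn+1$ for infinitely many $n$; but then the bounded constant $\textrm{mor}(z^{\times n}) - kn$ carries a parity/twist signature that forbids the odd value $1$, a contradiction. Hence the spectrum of $DF([z_0])$ is reduced to $\{1\}$.

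For the second conclusion, since $F$ is smooth its blow-up at $[z_0]$ is canonically induced by $DF([z_0])$; when $1$ is the only eigenvalue the matrix fixes a line in $T_{[z_0]}\mathbb{T}^2$, so the induced homeomorphism of the circle of unit tangent vectors has a fixed point and Poincaré rotation number zero, giving $\rho(I,[z_0]) = 0$ for any identity isotopy $I$ of $F$ fixing $[z_0]$. The main obstacle will be making the Morse-index--Conley--Zehnder correspondence precise in this discrete generating-function setting and extracting the correct asymptotic form of Ruelle's formula for $\mathrm{CZ}((DF)^n)$; these are the technical hearts of \cite{Ruelle} and \cite{mazzuchelli}, and once in hand the spectral deduction and the blow-up statement follow with only routine case-checking.
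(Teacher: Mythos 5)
The paper does not contain its own proof of this Proposition: it is quoted from \cite{mazzuchelli} and \cite{Ruelle} and used as a black box. So there is no paper argument to compare against; I can only assess your attempt on its own terms and against the strategy of the cited sources.

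Your overall strategy — identify the nullity of $\textrm{Hess}(g^{\times n})$ with $\ker((DF)^n - \mathrm{Id})$, extract from the local-homology hypothesis that $\textrm{mor}(z^{\times n}) - kn$ is bounded on an infinite set of $n$, and then feed this into the iteration asymptotics of the Conley–Zehnder index and Ruelle's mean rotation number to force $\hat\rho = 0$ — is indeed the mechanism in Mazzucchelli's paper, so the plan is sound. Two steps, however, are not actually carried out and do not follow as stated.

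First, the elimination of the positive-hyperbolic case. Once you know only that $\textrm{mor}(z^{\times n}) - kn = \mathrm{CZ}((DF)^n) + O(1)$ with an unspecified $O(1)$, boundedness tells you $\hat\rho = 0$, which leaves both the parabolic case (eigenvalue $1$) and the positive-hyperbolic case ($\lambda, \lambda^{-1} > 0$, $\lambda \ne 1$). To rule out the latter you need the \emph{exact} normalization constant in the Morse-index/CZ-index correspondence, not an $O(1)$: in the positive-hyperbolic case $\textrm{nul}(z^{\times n}) = 0$ for every $n$ and the CZ index of each iterate is $0$, so one must show $\textrm{mor}(z^{\times n}) = kn$ \emph{exactly} to conclude $C_{kn+1}(z^{\times n}) = 0$. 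Appealing to a ``parity/twist signature that forbids the odd value $1$'' is not an argument; with an unknown additive shift the odd value $1$ is not forbidden. This is precisely where the hard work of \cite{mazzuchelli} lives, and your write-up acknowledges the difficulty without resolving it.

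Second, the conclusion $\rho(I,[z_0]) = 0$. The blow-up rotation number is a \emph{real} number, not a class mod $\mathbb{Z}$, and Proposition \ref{P: pre-rotation set}\,i) shows that replacing $I$ by $J^p I$ shifts it by the integer $p$. Observing that $DF([z_0])$ has an eigenvector for the eigenvalue $1$ only shows that the induced circle map has a fixed point, hence that the rotation number is an integer. It does not identify which integer. To pin it down to $0$ you must show that the fixed direction on the circle stays fixed \emph{throughout} the isotopy $I$ (or at least wraps zero times), which requires tracking the linearization $J_{f_t}(0)$ for all $t \in [0,1]$ — compare Lemma \ref{L: chapter3-generating fuction-rho(I,0)=0}, where exactly this is done for the isotopy induced by a generating function — and then arguing that any identity isotopy of a Hamiltonian diffeomorphism of $\mathbb{T}^2$ fixing $[z_0]$ is homotopic to this one. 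Neither of these points is addressed.
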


\begin{rmq}
In particular, a symplectically degenerate extremum satisfies the condition of the proposition, and hence is a degenerate fixed point of $F$.
\end{rmq}

\subsection{The index at a symplectically degenerate extremum}\label{S: index at a symplectically degenerate extremum}

As in the previous subsection, let $F: \mathbb{T}^2\rightarrow \mathbb{T}^2$ be a Hamiltonian diffeomorphism, and $F=F_{k-1}\circ\cdots\circ F_0$ be a factorization by Hamiltonian diffeomorphisms $F_i$ which are $\mathcal{C}^1$-close to the identity.
Let $f_j$ be the lift of $F_j$ to  $\mathbb{R}^2$ that is $\mathcal{C}^1$-close to the identity, and $g_j$ be a generating function of $f_j$, for $j=0,\cdots, k-1$. As was recalled in the previous subsection, if $z_0$ is a symplectically degenerate extremum, then the blow-up rotation number $\rho(I,[z_0])$ is equal to $0$ for any identity isotopy of $F$ fixing $[z_0]$. We will prove the following Proposition \ref{P: chapter3-index of symplectically degenerate extremum}, and then can deduce Theorem \ref{T: main-symplectic degenerate maxima implies accumlated by periodic points} as an immediate corollary of Theorem \ref{T: main-index 1 and rotation 0 implies  accumulated by periodic points}.

\begin{prop}\label{P: chapter3-index of symplectically degenerate extremum}
If $z_0$ is a symplectically extremum, then $i(F, [z_0])$ is equal to $1$.
\end{prop}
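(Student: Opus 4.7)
The plan is to follow the template of the proof of Proposition \ref{P: chapter3-diffeo-the index of degenerate fixed point that is an extremum of genrating function}, using the natural Hamiltonian identity isotopy $I$ of $F$ in place of the isotopy $I_0$ of that statement. The proposition cited immediately before Proposition \ref{P: chapter3-index of symplectically degenerate extremum} supplies two key inputs: $DF([z_0])$ has $1$ as its unique eigenvalue, and $\rho(I,[z_0])=0$ for every identity isotopy $I$ of $F$ fixing $[z_0]$. Since $F$ is an area-preserving diffeomorphism, it can be blown up at $[z_0]$, the local rotation set is non-empty, and so Proposition \ref{P: pre-rotation set}(iv) gives $\rho_s(I,[z_0])=\{0\}$.

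Exactly as in the corresponding reduction in the proof of Proposition \ref{P: chapter3-diffeo-the index of degenerate fixed point that is an extremum of genrating function}, fixing a local isotopy $J$ of the identity at $[z_0]$ with $\rho(J,[z_0])=1$ and combining Proposition \ref{P: pre-rotation set}(i), (iii) with Proposition \ref{P: pre-relations of indices between  foliation and the others} gives $i(J^pI,[z_0])=0$ for every $p\in\mathbb{Z}\setminus\{0\}$. By Proposition \ref{P: pre-relation of indices between isotopy and homeo}, proving $i(F,[z_0])=1$ thus reduces to producing a locally transverse foliation $\mathcal{F}$ of $I$ at $[z_0]$ with $[z_0]$ as a sink (in the maximum case) or as a source (in the minimum case).

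My candidate for $\mathcal{F}$ is the gradient foliation $\mathcal{F}_G$ of the summed potential $G:=\sum_{j} g_j$. Each $g_j$ has an isolated local maximum at $z_0$, so $G$ does too and $[z_0]$ is a sink of $\mathcal{F}_G$. For sufficiently fine factorizations of the Hamiltonian flow defining $F$ (which can always be arranged while preserving the property that each new generating function still has $z_0$ as a local maximum), each $F_j$ becomes $\mathcal{C}^1$-arbitrarily close to the identity; the displacement $F_j(z)-z$ is then arbitrarily close to $-J\nabla g_j(z)$, the trajectory of $z$ under the natural isotopy $I$ is $\mathcal{C}^0$-close to the piecewise linear path joining $z,\,F_0(z),\,F_1F_0(z),\ldots,F(z)\approx z-J\nabla G(z)$, and the transversality of this broken path to the leaf of $\mathcal{F}_G$ through $z$ follows from the same kind of $2\times 2$ determinant computation as in the proof of Proposition \ref{P: chapter3-diffeo-the index of degenerate fixed point that is an extremum of genrating function}, now with the gradient of a single generating function replaced by the gradient of $G$. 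This gives $i(\mathcal{F}_G,[z_0])=1$, and Proposition \ref{P: pre-relations of indices between  foliation and the others} then yields $i(I,[z_0])=0$, completing the proof.

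The principal obstacle is making the transversality argument of the previous paragraph rigorous at the degenerate fixed point $[z_0]$: the linear parts of both $F-\mathrm{Id}$ and $-J\nabla G$ at $[z_0]$ are nilpotent (the first by the hypothesis on $DF([z_0])$, the second because $[z_0]$ is a critical point of $G$ with degenerate negative semi-definite Hessian), so the transversality comparison has to be carried out at higher order, where the symplectically degenerate extremum condition on the local homology $C_{kn+1}(z_0^{\times kn})$ must enter to control the relevant higher-order Taylor coefficients of $F-\mathrm{Id}$ relative to those of $\nabla G$. An alternative route avoiding this delicate higher-order comparison is to show, via Lagrangian reduction of the discrete symplectic action $g$, that $F$ itself admits a local generating function $g_F$ near $[z_0]$ at which $[z_0]$ is a local extremum with degenerate Hessian, and then to apply Proposition \ref{P: chapter3-diffeo-the index of degenerate fixed point that is an extremum of genrating function} directly.
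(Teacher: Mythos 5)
Your first paragraph is sound and matches the paper's reduction: the cited proposition gives $1$ as the unique eigenvalue of $DF([z_0])$ and $\rho(I,[z_0])=0$, Proposition \ref{P: pre-rotation set}(iv) pins down $\rho_s(I,[z_0])=\{0\}$, and the combination of Propositions \ref{P: pre-rotation set}(iii), \ref{P: pre-relations of indices between  foliation and the others} and \ref{P: pre-relation of indices between isotopy and homeo} reduces the problem to producing a locally transverse foliation of $I$ for which $[z_0]$ is a sink (resp.\ source).

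Your main candidate, the gradient foliation of $G:=\sum_j g_j$, does not work, and the obstacle is not merely the delicacy you flag at the end. The function $G$ is simply not the generating function of the composition $f=f_{k-1}\cdots f_0$; generating functions do not add under composition. What is true (and this is the content of Lemma \ref{L: chapter3-symplectic-construct new generating function}) is that, after a normalization, the \emph{Hessians} at the fixed point add, so $G$ and the genuine composite generating function agree only to second order at $[z_0]$. But the local picture of a gradient foliation at a degenerate critical point (and hence the sink/source property, and whether the foliation is transverse to the isotopy) is governed precisely by the higher-order terms, which are not controlled by $G$. In addition, the ``$\mathcal{C}^0$-closeness of the broken path to the gradient flow'' argument requires uniformity that fails in a punctured neighborhood of $[z_0]$, where $\nabla G$ vanishes to high order, so the $2\times2$ determinant computation of Proposition \ref{P: chapter3-diffeo-the index of degenerate fixed point that is an extremum of genrating function} does not transfer. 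Refining the factorization cannot repair this: the gap is structural, not a matter of how close each $F_j$ is to the identity.

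Your one-sentence alternative (``produce a local generating function $g_F$ for $F$ at which $[z_0]$ is a degenerate local extremum and apply Proposition \ref{P: chapter3-diffeo-the index of degenerate fixed point that is an extremum of genrating function} directly'') is indeed what the paper does, but the real work is hidden in that sentence. Two nontrivial steps are needed. First, one cannot compose the $g_j$ directly: one must first conjugate by a single normal matrix $\Omega$ so that \emph{every} $J_{f_j}(0)$ simultaneously becomes upper unitriangular with nonpositive off-diagonal entry (Lemma \ref{L: chapter3-symplectic-exist common conjugate}); this step crucially uses $\rho(I,[z_0])=0$ together with a monotonicity argument for the angle function $\theta(t)$, forcing each $\rho(I_j,z_0)=0$ and producing a common eigenvector. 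Second, one must verify by hand (Lemma \ref{L: chapter3-symplectic-construct new generating function}, via the implicit function theorem and a Taylor estimate) that in these normalized coordinates the composite generating function exists near $0$, has degenerate Hessian, and---the delicate point---still has a local maximum at $0$; the maximality of each factor does not transfer to the composition for free, and Lemma \ref{L: chapter3-symplectic-conjugated does not change the maximality} is also needed to ensure the normalization does not destroy the extremum property. Without these lemmas, your alternative route remains a statement of intent rather than a proof.
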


We will only deal with the case where $0$ is a symplectically degenerated maximum, the other case can be treated similarly. Let us begin by some lemmas.

\begin{lem}
Suppose that $g$  is a (local or global) generating function of a diffeomorphism $f$, and that $0$ is a local maximum  of $g$ such that the Hessian matrix of $g$ at $0$ is degenerate. Let $I=(f_t)$ be the identity isotopy of $f$ induced by $g$ as in Section \ref{S: index at a degenerate fixed point},  and $\theta(t)$ be a continuous function such that
$$\frac{J_{f_t}(0)\begin{pmatrix}\cos \theta(0)\\ \sin\theta(0)\end{pmatrix}}{\|J_{f_t}(0)\begin{pmatrix}\cos \theta(0)\\ \sin\theta(0)\end{pmatrix}\|}=\begin{pmatrix}\cos \theta(t)\\\sin\theta(t)\end{pmatrix}.$$
Then,  one can deduce that $\theta(1)\ge \theta(0)$.
\end{lem}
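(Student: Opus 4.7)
The strategy is to prove the pointwise statement $\theta'(t) \geq 0$ for every $t \in [0,1]$, from which $\theta(1) \geq \theta(0)$ is immediate by integration.

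I would start from the explicit formula for $J_{f_t}(0)$ derived in the proof of Lemma~3.8, denoting the Hessian entries by $\varrho, \sigma, \tau$ and recording that the degeneracy hypothesis forces $\varrho\tau = \sigma^2$, so that $\det J_{f_t}(0) = 1$ for all $t$ and in particular $J_{f_t}(0)^{-1}$ has a clean closed form. The heart of the computation is the infinitesimal generator
\[
B(t) := \tfrac{d}{dt} J_{f_t}(0) \cdot J_{f_t}(0)^{-1}.
\]
A direct calculation, in which the identity $\varrho\tau = \sigma^2$ collapses a factor of $(1-t\sigma)$, yields the surprisingly simple rank-one expression
\[
B(t) = \frac{1}{(1-t\sigma)^2}\begin{pmatrix}\sigma & \tau \\ -\varrho & -\sigma\end{pmatrix}.
\]
The fact that $B(t)$ is rank one is consistent with the common eigenvector of the $J_{f_t}(0)$ exhibited in the proof of Lemma~3.8.

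With $v(t) := J_{f_t}(0) v_0$ and $v'(t) = B(t) v(t)$, the angular velocity is $\theta'(t) = \det(v, v')/|v|^2$. A short expansion of the $2\times 2$ determinant, using the entries of $B(t)$ above, gives
\[
\det(v(t), v'(t)) = -\frac{1}{(1-t\sigma)^2}\, v(t)^{T}\,\mathrm{Hess}(g)(0)\, v(t).
\]
Since $0$ is a local maximum of $g$, the Hessian is negative semidefinite, the right-hand side is non-negative, and the conclusion $\theta'(t) \geq 0$ follows.

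The main obstacle is purely computational: tracking the prefactors carefully in the product $\tfrac{d}{dt}J_{f_t}(0) \cdot J_{f_t}(0)^{-1}$. Naively the denominator is $(1-t\sigma)^3$, and one must observe that every entry of the numerator factors through $(1-t\sigma)$ once $\varrho\tau = \sigma^2$ is invoked. After that simplification, the emergence of the Hessian quadratic form inside $\det(v, B(t)v)$, and hence the sign of $\theta'(t)$, is automatic.
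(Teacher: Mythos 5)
Your proof is correct, and it takes a genuinely different route from the paper's. The paper exploits the common eigenvector $(a,b)$ of the matrices $J_{f_t}(0)$ established earlier in the section: choosing the rotation $\Omega = \begin{pmatrix} a & -b \\ b & a \end{pmatrix}$, it conjugates the whole isotopy to the family of shears $\begin{pmatrix}1 & t(\varrho+\tau)/(1-t\sigma) \\ 0 & 1\end{pmatrix}$, and reads off the monotonicity of the angle from the non-positivity of the off-diagonal entry. You instead compute the infinitesimal generator $B(t) = \tfrac{d}{dt}J_{f_t}(0)\cdot J_{f_t}(0)^{-1}$ directly (and the simplification to $\tfrac{1}{(1-t\sigma)^2}\begin{pmatrix}\sigma & \tau \\ -\varrho & -\sigma\end{pmatrix}$ checks out, using $\varrho\tau=\sigma^2$), then observe the clean identity $\det(v,Bv) = -\tfrac{1}{(1-t\sigma)^2}\, v^{T}\mathrm{Hess}(g)(0)\,v$, whose sign follows at once from negative semi-definiteness. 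Your argument is more self-contained (it never explicitly invokes the common eigenvector or a change of basis, this information being encoded in $B(t)$ being rank-one and nilpotent), and it yields the slightly sharper pointwise statement $\theta'(t)\ge 0$ rather than just the endpoint inequality; the paper's version is a little shorter by reusing the eigenvector lemma already in hand and is perhaps more geometrically transparent. Both are valid.
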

\begin{proof}
As in Section \ref{S: index at a degenerate fixed point}, we  denote the Hessian of $g$ at $0$ by
$$\textrm{Hess}(g)(0)=\begin{pmatrix} \varrho &  \sigma\\ \sigma &  \tau\end{pmatrix}.$$
Since $0$ is a local maximal point of $g$, $\textrm{Hess}g(0)$ is  negative semi-definite. So, we knows that
 $$\varrho\le 0, \quad \tau\le 0, \quad\text{and } \varrho\tau-\sigma^2=0.$$
As  was proved in Section \ref{S: index at a degenerate fixed point}, if $( a, b)$ is a unit eigenvector of $\mathrm{Hess}(g)(0)$ corresponding to the eigenvalue $0$ , then it is a common  eigenvector of $J_{f_{t}}(0)$ corresponding to the eigenvalue $1$. Recall that
$$\varrho a+\sigma b=0,\quad \text{and } \sigma a+\tau b=0.$$
So,
$$J_{f_t}(0)\begin{pmatrix}- b\\  a\end{pmatrix}=
\frac{1}{1-t\sigma}\begin{pmatrix}1 & t\tau\\ -t\varrho & 1-2t\sigma\end{pmatrix}\begin{pmatrix}- b\\  a\end{pmatrix}\\
= \begin{pmatrix}- b\\  a\end{pmatrix}+\frac{t(\varrho+\tau)}{1-t\sigma}\begin{pmatrix} a\\  b\end{pmatrix}.
$$
Therefore
$$J_{f_t}(0)\Omega=\Omega\begin{pmatrix}1 & \frac{t(\varrho+\tau)}{1-t\sigma}\\ 0 & 1\end{pmatrix},$$
where $\Omega=\begin{pmatrix}  a& - b\\  b& a\end{pmatrix}$ is a normal matrix. Since $\frac{t(\varrho+\tau)}{1-t\sigma}\le 0$,  one can deduce that $\theta(1)\ge \theta(0)$.
\end{proof}
\begin{figure}[h]
\centering
 \includegraphics[width=8cm]{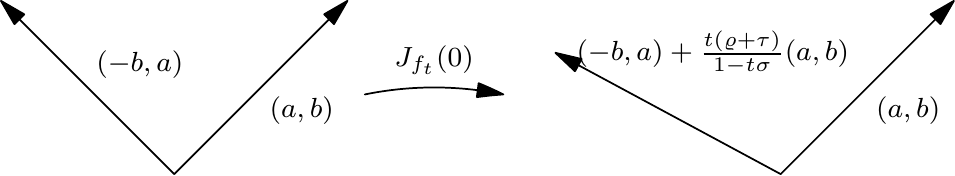}
\end{figure}

\begin{lem}\label{L: chapter3-symplectic-exist common conjugate}
If $0$ is a symplectically degenerate maximum,  then there exists a normal matrix $\Omega$ such that
$$\Omega^{-1}J_{f_j}(0)\Omega=\begin{pmatrix}1 & c_j\\ 0 & 1\end{pmatrix}$$
for $j=0,\cdots,k-1$, where $c_j$ are non-positive real numbers.
\end{lem}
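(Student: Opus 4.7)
The plan is to construct the common $\Omega$ out of a single unit vector $v$ that is simultaneously a $1$-eigenvector of every $J_{f_j}(0)$. Once such a $v$ is produced, the orthogonal matrix $\Omega$ of the form $\begin{pmatrix}a & -b\\b & a\end{pmatrix}$ with first column $v=(a,b)$ puts each $J_{f_j}(0)$ into the asserted upper triangular form, and the sign condition $c_j\le 0$ is the content of the preceding lemma applied to $g_j$ individually, which gives $c_j=(\varrho_j+\tau_j)/(1-\sigma_j)$ with $\varrho_j,\sigma_j,\tau_j$ the entries of $\mathrm{Hess}(g_j)(0)$ and both numerator and denominator of the correct sign.

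The key step is producing $v$, and I plan to combine the two conclusions recalled at the end of Section \ref{S: Discrete symplectic actions and symplectically degenerate extremum}. First, $1$ is the only eigenvalue of $DF(0)=J_{f_{k-1}}(0)\cdots J_{f_0}(0)$, so $\ker(DF(0)-I)$ contains a non-zero vector $v$. Second, the blow-up rotation number of any identity isotopy of $F$ at $0$ vanishes, which I apply to the isotopy $I'$ obtained by concatenating the local isotopies $I_j$ induced by $g_j$ through formula (\ref{Eq: generate an isotopy}). Along $I'$, let $\theta(t)$ be the continuous lift of the angle of $J_{f_{t,I'}}(0)\,v$. Since $DF(0)v=v$, we have $\theta(1)-\theta(0)\in 2\pi\mathbb{Z}$; since each $F_j$ is $\mathcal{C}^1$-close to the identity, the total angular increment stays strictly below $2\pi$, so $\theta(1)=\theta(0)$. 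The preceding lemma, extended slightly to allow strictly negative definite $\mathrm{Hess}(g_j)(0)$ by a direct computation from the explicit form of $J_{f_{t,j}}(0)$ in Section \ref{S: index at a degenerate fixed point}, shows that $\theta$ is non-decreasing on each interval $[j/k,(j+1)/k]$. Hence the $k$ non-negative increments summing to zero are each zero, so $J_{f_j}(0)$ sends the intermediate vector $w_j=J_{f_{j-1}}(0)\cdots J_{f_0}(0)\,v$ to a positive real multiple of itself. Because $\mathrm{tr}\,J_{f_j}(0)\le 2$ while $\lambda+1/\lambda\ge 2$ for every positive real $\lambda$ with equality only at $\lambda=1$, that multiple must equal $1$. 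An induction from $w_0=v$ then yields $w_j=v$ for every $j$, so $v$ is the desired common $1$-eigenvector.

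The main obstacle I anticipate is the angle-bookkeeping: one must ensure that $\theta$ is non-decreasing on each sub-interval even without a priori degeneracy of $\mathrm{Hess}(g_j)(0)$, and that the total angular increment is strictly less than $2\pi$. Both points rely on the $\mathcal{C}^1$-closeness of each factor $F_j$ to the identity, which is a standing hypothesis of Section \ref{S: index at a symplectically degenerate extremum}, but they deserve to be spelled out explicitly since blow-up rotation numbers elsewhere in the paper are handled only modulo $\mathbb{Z}$.
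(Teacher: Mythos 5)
Your overall strategy --- track the angular lift along the concatenated generating-function isotopy, use monotonicity of the angle on each sub-interval $[j/k,(j+1)/k]$, and conclude that each increment vanishes --- is the same as the paper's. The two cosmetic differences (you start from a prechosen $1$-eigenvector $v$ of $DF(0)$ rather than from a fixed point of the lifted circle map, and you close the argument with the trace identity $\mathrm{tr}\,J_{f_j}(0)\le 2$ instead of the paper's equivalence ``$\rho(I_j,0)=0$ iff $0$ is a degenerate fixed point of $f_j$'') are both legitimate and would work.

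However, there is a genuine gap at the key step: your justification that $\theta(1)=\theta(0)$ rather than $\theta(1)=\theta(0)+2\pi n$ for some $n\ge 1$. You argue that $\mathcal{C}^1$-closeness of each factor $F_j$ to the identity keeps the total angular increment strictly below $2\pi$. This is not correct: the closeness bounds each increment $\theta((j+1)/k)-\theta(j/k)$ individually, but the number of factors $k$ is not bounded, so the sum of $k$ small nonnegative quantities has no a priori bound. This is exactly why the paper leans on the \emph{global} fact $\rho(I_{k-1}\cdots I_0,0)=\rho(I,[z_0])=0$, not on any per-factor estimate. The repair is short and uses an ingredient you already invoke but then set aside: by the extended monotonicity lemma the lifted circle map $\widetilde{h}$ associated to $I'$ satisfies $\widetilde{h}(\theta)\ge\theta$ for all $\theta$; if $\widetilde{h}(\theta(0))-\theta(0)=2\pi n$ with $n\ge 1$, then monotonicity gives $\widetilde{h}^m(\theta(0))\ge\theta(0)+2\pi mn$, forcing the blow-up rotation number $\rho(I',0)\ge n\ge 1$, contradicting the vanishing of the blow-up rotation number recalled from Section \ref{S: Discrete symplectic actions and symplectically degenerate extremum}. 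Once that is put in place your proof is complete, and your trace argument for $\mu=1$ is a nice, slightly more elementary alternative to the paper's use of the degeneracy criterion; note that $\mathrm{tr}\,J_{f_j}(0)\le 2$ must itself be checked from the explicit formula for $J_{f_j}(0)$ using $\varrho_j\tau_j\ge\sigma_j^2$, which holds because $\mathrm{Hess}(g_j)(0)$ is negative semidefinite at a local maximum.
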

\begin{proof}
Let $I_j=(f_{j,t})_{t\in[0,1]}$  be the local isotopy  of $f_j$ induced by $g_j$ as in section \ref{S: index at a degenerate fixed point}. Let $\mathcal{F}_j$ be the foliation whose leaves are the integral curves of the gradient vector field of $g_j$. As in Section \ref{S: index at a degenerate fixed point}, one can deduce that $0$ is a sink of $\mathcal{F}_j$ and that $\mathcal{F}_j$ is locally transverse to $I_j$. Therefore, one knows  that $\rho(I_j,0)\ge 0$, and that $\rho(I_j,0)=0$ if and only if $0$ is a degenerate fixed point of $f_j$.

  Let $\theta:[0,k]\rightarrow \mathbb{R}$ be a continuous function such that
$$\frac{J_{f'_{jt}}(0)\begin{pmatrix}\cos \theta(j)\\ \sin\theta(j)\end{pmatrix}}{\|J_{f'_{jt}}(j)\begin{pmatrix}\cos \theta(j)\\ \sin\theta(j)\end{pmatrix}\|}=\begin{pmatrix}\cos \theta(j+t)\\\sin\theta(j+t)\end{pmatrix}.$$
One knows that $\theta(j+1)> \theta(j)$ if $\rho(I_j,0)>0$, and $\theta(j+1)\ge \theta(j)$ if $\rho(I_j,0)=0$.
But we know that  $\rho(I_{k-1}\cdots I_0, z_0 )=\rho(I,[z_0])=0$, so there exists $\theta(0)\in\mathbb{R}$ and a continuous function $\theta$ as above such that $\theta(k)=\theta(0)$. Therefore, $\rho(I_j,z_0)=0$ for  $ j=0,\cdots, k-1$ and $\begin{pmatrix}\cos \theta(0)\\\sin \theta(0)\end{pmatrix}$ is a common eigenvector of $J_{f_j}(0)$ corresponding to the eigenvalue $1$. As in the proof of the previous lemma, we can prove this lemma by choosing
$$\Omega=\begin{pmatrix}\cos\theta(0) & -\sin\theta(0)\\ \sin\theta(0) & \cos\theta(0)\end{pmatrix}.$$
\end{proof}

\begin{lem}\label{L: chapter3-symplectic-conjugated does not change the maximality}
Suppose that $g$ is a (local or global) generating function of a diffeomorphism $f$,  that $0$ is a local maximum  of $g$, and that the Hessian matrix of $g$ at $0$ is degenerate. If $\Omega$ is a normal matrix, and if $f'=\Omega^{-1} f\Omega$ is  generated by $g'$ in a neighborhood of $0$, then $0$ is a local maximum of $g'$ and $\mathrm{Hess}(g')(0)$ is degenerate.
\end{lem}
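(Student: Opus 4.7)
The strategy is to prove the two assertions separately: the Hessian degeneracy reduces to a spectral remark, while the local maximum property requires comparing the rotation types of the local isotopies induced by $g$ and $g'$.

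For the Hessian statement, I would use the explicit expression derived in the proof of Lemma~\ref{L: chapter3-generating fuction-rho(I,0)=0}, namely $J_f(0)=\frac{1}{1-\sigma}\begin{pmatrix}1 & \tau\\ -\varrho & 1-2\sigma\end{pmatrix}$, where $\varrho,\sigma,\tau$ denote the entries of $\mathrm{Hess}(g)(0)$. A direct computation gives $\det(J_f(0)-\mathrm{Id})=(\varrho\tau-\sigma^{2})/(1-\sigma)^{2}=\det(\mathrm{Hess}(g)(0))/(1-\sigma)^{2}$, so $\mathrm{Hess}(g)(0)$ is degenerate if and only if $1$ is an eigenvalue of $J_f(0)$. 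Since $J_{f'}(0)=\Omega^{-1}J_f(0)\Omega$ has the same spectrum as $J_f(0)$, applying the same equivalence to $g'$ yields the degeneracy of $\mathrm{Hess}(g')(0)$.

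For the local maximum, let $I_0$ and $I'_0$ denote the local isotopies at $0$ induced by $g$ and $g'$, respectively. Because $f$ and $f'$ are diffeomorphisms, both can be blown up at $0$; Lemma~\ref{L: chapter3-generating fuction-rho(I,0)=0} (whose proof only uses degeneracy of the Hessian) applied to $g$ and $g'$ gives $\rho(I_0,0)=\rho(I'_0,0)=0$, and combined with assertion iv) of Proposition~\ref{P: pre-rotation set} (together with non-emptiness of the rotation set, which holds because $f$ and $f'$ are area preserving), one obtains $\rho_s(I_0,0)=\rho_s(I'_0,0)=\{0\}$. The conjugate $\Omega^{-1}I_0\Omega$ is also a local isotopy of $f'$ at $0$ with rotation set $\{0\}$. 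By assertion i) of Proposition~\ref{P: pre-rotation set}, any two local isotopies of $f'$ differ by some integer power of a generator $J$ of blow-up rotation number $1$, so sharing the rotation set $\{0\}$ forces this integer to vanish: the local isotopies $I'_0$ and $\Omega^{-1}I_0\Omega$ are equivalent.

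To conclude, I would exploit that $0$ is a local maximum of $g$: the gradient foliation $\mathcal{F}_0$ of $g$ is then locally transverse to $I_0$ with $0$ as a sink (as in the proof of Proposition~\ref{P: chapter3-diffeo-the index of degenerate fixed point that is an extremum of genrating function}), so $I_0$ has positive rotation type. Rotation type is preserved by conjugation (conjugate the witnessing foliation by $\Omega$) and by equivalence of local isotopies, hence $I'_0$ has positive rotation type as well. The transversality computation in that same proof uses only the inequality $\partial^{2}_{12}g'<1$ satisfied by any generating function, so it applies verbatim to $g'$: the gradient foliation $\mathcal{F}'_0$ of $g'$ is locally transverse to $I'_0$ on a punctured neighbourhood of $0$. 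By the remark following Proposition~\ref{P: pre-rotation type is unique}, the positive rotation type of $I'_0$ then forces $0$ to be a sink of $\mathcal{F}'_0$, so $\nabla g'$ points towards $0$ nearby, meaning $0$ is a local maximum of $g'$. The main subtlety I expect will be justifying that $\mathcal{F}'_0$ is a well-behaved (possibly singular) foliation to which the rotation-type machinery applies without assuming in advance that $0$ is an extremum of $g'$; this will be handled by the index identity $i(\mathcal{F}'_0,0)=i(I'_0,0)+1=1$, coming from $i(f',0)=i(f,0)=1$ (invariance of the Lefschetz index under conjugation) and Proposition~\ref{P: pre-relations of indices between  foliation and the others}, which rules out saddle-type behaviour of $\nabla g'$ at $0$ and forces $0$ to be a topological extremum, with the positive rotation type then selecting the maximum over the minimum.
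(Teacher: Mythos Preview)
Your proposal is correct and follows essentially the same route as the paper: both argue Hessian degeneracy via the eigenvalue $1$ of $J_f(0)$, establish that the gradient foliations of $g$ and $g'$ are locally transverse to the respective rotation-$0$ isotopies, and then use conjugation together with the uniqueness of rotation type (the remark after Proposition~\ref{P: pre-rotation type is unique}) to conclude that $0$ is a sink of the gradient foliation of $g'$. The only cosmetic difference is that you conjugate the isotopy (showing $\Omega^{-1}I_0\Omega$ is equivalent to $I'_0$), whereas the paper conjugates the foliation (showing $\Omega\circ\mathcal{F}'$ is locally transverse to $I_0$); these are two sides of the same coin. Your closing worry about $\mathcal{F}'_0$ being ``well-behaved'' is unnecessary: the transversality computation requires only that $0$ be an isolated critical point of $g'$ (which follows from $0$ being an isolated critical point of $g$ via the conjugacy), and once $\mathcal{F}'_0$ is locally transverse, the area-preserving hypothesis already forces $0$ to be a sink, source, or saddle, so the uniqueness remark applies directly without the extra index detour.
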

\begin{proof}
Since $\mathrm{Hess}(g)(0)$ is degenerate, $1$ is an eigenvalue of $J_f(0)$ and hence an eigenvalue of $J_{f'}(0)$. So, $\mathrm{Hess}(g')(0)$ is degenerate.

Let $\mathcal{F}$ be the foliation whose leaves are integral curves of the gradient vector field of $g$, and $\mathcal{F}'$ be the foliation whose leaves are integral curves of the gradient  vector field of $g'$. Let $I_0$ be a local isotopy of $f$ satisfies $\rho(I_0,0)=0$,  and $I'_0$ be a local isotopy of $f$ satisfies $\rho(I'_0,0)=0$. As was proved in Section \ref{S: index at a degenerate fixed point}, $\mathcal{F}$ is locally transverse to $I_0$ and $\mathcal{F}'$ is locally transverse to $I'_0$.  Therefore, $\Omega\circ\mathcal{F}'$ is locally transverse to $I_0$. Since $0$ is a maximal point of $g$, it is a sink of $\mathcal{F}$. By the remark that follows Proposition \ref{P: pre-rotation type is unique},  one deduces that $0$ is a sink of $\Omega\circ \mathcal{F}'$, and hence a sink of $\mathcal{F}'$. Therefore, $0$ is a local maximum of $g'$.
\end{proof}

\begin{lem}\label{L: chapter3-symplectic-construct new generating function}
Let $g_0$ and $g_1$ be  local generating functions of  $f_0$ and $f_1$ respectively such that $0$ is a local maximal point of both $g_0$ and $g_1$, and that the Hessian matrices satisfy
$$\text{Hess}(g_i)(0)=\begin{pmatrix} 0 & 0\\ 0 & c_i\end{pmatrix},$$
where $c_i\le 0$ for $i=0,1.$
Then there exists  a function $g$  which is a generating function of $f=f_1\circ f_0$ in a neighborhood of  $0$. Moreover, $0$ is a local maximal point of $g$, and the Hessian matrix satisfies
$$\text{Hess}(g)(0)=\begin{pmatrix} 0 & 0\\ 0 & c_0+c_1\end{pmatrix}.$$
\end{lem}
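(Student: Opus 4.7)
The plan is to build $g$ explicitly as a critical value of a discrete symplectic action on the intermediate point of the composition, then read off the three required properties using the envelope theorem and a Schur reduction.

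First I set
\[
\Psi(x_2, y_0;\, x_1, y_1) := g_0(x_1, y_0) + g_1(x_2, y_1) + (y_0 - y_1)(x_2 - x_1),
\]
a $\mathcal{C}^2$ function near $0\in\mathbb{R}^4$. The two equations $\partial_{x_1}\Psi = 0$ and $\partial_{y_1}\Psi = 0$ read $y_1 - y_0 = -\partial_1 g_0(x_1, y_0)$ and $x_2 - x_1 = \partial_2 g_1(x_2, y_1)$, which express exactly that $(x_1, y_1) = f_0(x_0, y_0)$ and $(x_2, y_2) = f_1(x_1, y_1)$; the critical point of $\Psi$ in $(x_1, y_1)$ therefore encodes the intermediate point of $f_1 \circ f_0$. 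At the origin the Hessian of $\Psi$ in $(x_1, y_1)$ equals $\bigl(\begin{smallmatrix} 0 & 1 \\ 1 & c_1 \end{smallmatrix}\bigr)$, with determinant $-1$, so the implicit function theorem produces smooth functions $x_1^\ast(x_2, y_0)$ and $y_1^\ast(x_2, y_0)$, and I define $g(x_2, y_0) := \Psi(x_2, y_0; x_1^\ast, y_1^\ast)$. The envelope theorem then gives $\partial_1 g = \partial_1 g_1(x_2, y_1^\ast) + (y_0 - y_1^\ast) = y_0 - y_2$ and $\partial_2 g = \partial_2 g_0(x_1^\ast, y_0) + (x_2 - x_1^\ast) = x_2 - x_0$, which are exactly the generating-function relations of $f = f_1 \circ f_0$; since $\partial^2_{12} g(0) = 0$, the condition $\partial^2_{12} g < 1$ holds on a neighborhood.

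For the Hessian at $0$, the hypothesis combined with the formula of Section~4.1 yields $J_{f_i}(0) = \bigl(\begin{smallmatrix} 1 & c_i \\ 0 & 1 \end{smallmatrix}\bigr)$, hence $J_f(0) = J_{f_1}(0)\,J_{f_0}(0) = \bigl(\begin{smallmatrix} 1 & c_0 + c_1 \\ 0 & 1 \end{smallmatrix}\bigr)$, and inverting the same formula back to recover the Hessian of the generating function gives $\mathrm{Hess}(g)(0) = \bigl(\begin{smallmatrix} 0 & 0 \\ 0 & c_0 + c_1 \end{smallmatrix}\bigr)$; equivalently, this matrix is the Schur complement of the full $4\times 4$ Hessian of $\Psi$ at $0$ against its $(x_1, y_1)$-block, with the implicit-function derivatives $(\partial_{x_2} x_1^\ast, \partial_{x_2} y_1^\ast) = (1, 0)$ and $(\partial_{y_0} x_1^\ast, \partial_{y_0} y_1^\ast) = (-c_1, 1)$ used to differentiate the envelope formula.

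The main obstacle I expect is verifying that $0$ is a local maximum of $g$, since $\mathrm{Hess}(g)(0)$ is only negative semidefinite with degenerate direction $\partial_{x_2}$, so the sign must come from higher-order data. Substituting the critical-point equations into the bilinear term of $\Psi$ rewrites
\[
g(x_2, y_0) = g_0(x_1^\ast, y_0) + g_1(x_2, y_1^\ast) + \partial_1 g_0(x_1^\ast, y_0)\,\partial_2 g_1(x_2, y_1^\ast),
\]
in which the two $g_i$-terms are non-positive by hypothesis; controlling the cross-product is the delicate step. Along the non-degenerate direction the strictly negative quadratic $\tfrac12(c_0 + c_1)y_0^2$ absorbs the cross-product whenever $c_0 + c_1 < 0$, since each factor vanishes at the origin with zero gradient in the $x_1$-variable (because $\partial^2_{11} g_0 = \partial^2_{12} g_0 = 0$). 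Along the degenerate direction $y_0 = 0$, the expansions $y_1^\ast = O(x_2^2)$ and $x_2 - x_1^\ast = O(x_2^2)$ make the cross-product $O(x_2^4)$, and one argues that the negative contribution of $g_0(\cdot, 0)$ and $g_1(\cdot, 0)$—whose leading non-zero Taylor jets are automatically non-positive of order at least four by the genuine maximum hypothesis together with the vanishing of $\partial^2_{11}g_i(0)$—dominates the quartic remainder. The borderline case $c_0 + c_1 = 0$, combined with possibly flat maxima of $g_i(\cdot, 0)$, is the technical heart of the argument and requires a careful combined Taylor estimate exploiting the full "$0$ is a local maximum of each $g_i$" hypothesis rather than just its Hessian.
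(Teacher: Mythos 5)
The construction in your proposal (discrete action $\Psi$, implicit function theorem at the intermediate point, envelope theorem for the first derivatives, Schur complement for the Hessian) matches the paper's exactly, and those parts are correct. The gap is in the local-maximality argument, which you yourself flag as ``the technical heart'' and do not actually carry out.

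Concretely, there are three problems with the sketch you give for that step. First, you assert that the non-zero Taylor jets of $g_i(\cdot,0)$ are ``automatically non-positive of order at least four''; a local maximum need not have any non-zero Taylor jet at all (e.g.\ $-e^{-1/x^2}$), so any argument that expands the $g_i$ in Taylor series to finite order and compares leading coefficients cannot close. Second, checking the degenerate axis $y_0=0$ and the transverse direction separately is not enough for a two-variable function whose quadratic part is only semidefinite: the dangerous behaviour for the cross term $\partial_1 g_0\cdot\partial_2 g_1$ may occur along curves tangent to the kernel, not on the axis. Third, the claim that the cross-product is $O(x_2^4)$ along $y_0=0$ is a formal jet computation, not a bound; with possibly flat $g_i$, the comparison between the cross term and $g_0+g_1$ has to be done with remainders, not leading orders. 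The paper closes this step without any case split: writing $\partial_1 g_0(x_1,y_0)=y_0-y_1$ and $\partial_2 g_1(x_2,y_1)=x_2-x_1$, it applies Taylor with Lagrange remainder in a single variable to rewrite
\[
g(x_2,y_0)=g_0\bigl(x_1+\tfrac{1}{\varepsilon}(y_0-y_1),y_0\bigr)+g_1\bigl(x_2,y_1+\varepsilon(x_2-x_1)\bigr)-\tfrac{1}{2\varepsilon}(1+\tfrac{1}{\varepsilon}\partial^2_{11}g_0(\xi,y_0))(y_0-y_1)^2-\tfrac{\varepsilon}{2}(1+\varepsilon\partial^2_{22}g_1(x_2,\eta))(x_2-x_1)^2-\tfrac12\bigl(\tfrac{1}{\sqrt\varepsilon}(y_0-y_1)-\sqrt\varepsilon(x_2-x_1)\bigr)^2,
\]
and then uses only that $\partial^2_{11}g_0$ and $\partial^2_{22}g_1$ are small near the origin (continuity, not jets) to see that every bracket is non-negative, so $g\le g_0(\cdot)+g_1(\cdot)\le 0$. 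This completing-the-square argument is uniform in $c_0+c_1$, requires no case distinction, and uses only the ``local maximum'' hypothesis through the sign of $g_0,g_1$ rather than through their Taylor expansion. You should replace your multi-case jet argument with this remainder estimate.
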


\begin{proof}
Suppose that $g_0(0)=g_1(0)=0$. Since $0$ is a local maximal point of both $g_0$ and $g_1$, it is a critical point of both $g_0$ and $g_1$. So,
$$\partial_1 g_0(0,0)=\partial_2 g_0(0,0)=\partial_1 g_1(0,0)=\partial_2 g_1(0,0)=0.$$
Write $(x_1,y_1)=f_0(x_0,y_0)$ and  $(x_2,y_2)=f_1(x_1,y_1)$. By definition of generating functions, one knows that
 \begin{equation}\label{Eq: construct new generating fuction-1}
 y_1-y_0+\partial_1 g_0(x_1,y_0)=0, \quad \text{and } x_1-x_2+ \partial_2 g_1(x_2,y_1)=0.
 \end{equation}
Note that
$$\det\begin{pmatrix}\partial^2_{11} g_0 (0,0) & 1\\ 1& \partial^2_{22} g_1(0, 0)\end{pmatrix}=-1.$$
So, by implicit function theorem, there exists a $\mathcal{C}^1$ diffeomorphism $\varphi: W\rightarrow W'$ such that $(x_1,y_1)=\varphi(x_2,y_0)$, where $W$ and $W'$ are sufficiently small neighborhoods of $0$ in $\mathbb{R}^2$.
Moreover,
$$J_{\varphi}(0,0)=-\begin{pmatrix}\partial^2_{11}g_0(0,0) & 1\\ 1& \partial^2_{22}g_1(0,0)\end{pmatrix}^{-1}\begin{pmatrix}0 & \partial^2_{12}g_0(0,0)-1\\ \partial^2_{12}g_1(0,0)-1 & 0\end{pmatrix}=\begin{pmatrix}1 & -c_1\\ 0 & 1\end{pmatrix}.$$

Let
$$g(x_2, y_0)=g_0(x_1,y_0)+g_1(x_2,y_1)+(x_2-x_1)(y_0-y_1),$$
where $(x_1,y_1)=\varphi(x_2,y_0)$. We know that $g(0,0)=0$. In a neighborhood of $0$, by a direct computation and equations (\ref{Eq: construct new generating fuction-1}), one knows that
\begin{align*}
\partial_1 g(x_2,y_0)=& \partial_1 g_0(x_1,y_0)\partial_1 x_1(x_2,y_0)+\partial_1 g_1(x_2,y_1)+\partial_2 g_1(x_2,y_1)\partial_1 y_1(x_2,y_0)\\
&+(1-\partial_1 x_1(x_2,y_0))(y_0-y_1)-\partial_1 y_1(x_2,y_0)(x_2-x_1)\\
=&\partial_1 g_0(x_1,y_0)+\partial_1 g_1(x_2,y_1).
\end{align*}
Similarly, one gets
$$\partial_2 g(x_2,y_0)=\partial_2 g_0(x_1,y_0)+\partial_2 g_1(x_2,y_1).$$
 So, $g$ is a $\mathcal{C}^2$ function near $0$. Moreover,
$$\partial^2_{12} g(0,0)=\partial^2_{11}g_0(0,0)\partial_2 y_1(0,0)+\partial^2_{12}g_0(0,0)+\partial^2_{12}g_1(0,0)\partial_2  y_1(0,0)=0.$$
Because $g_0$ and $g_1$ locally generate $f_0$ and $f_1$ respectively, one deduces
$$\partial_1 g(x_2,y_0)=-(y_2-y_0) \quad \text{and} \quad
\partial_2 g(x_2,y_0)=x_2-x_0.$$
 Therefore, $g$ is a generating function of $f$ in a neighborhood of $0$.

By a direct computation, one gets
$$\partial^2_{11}g(0,0)=\partial^2_{11}g_0(0,0)\partial_1 x_1(0,0)+\partial^2_{11}g_1(0,0)+\partial^2_{12}g_1(0,0)\partial_1 y_1(0,0)=0,$$
and
$$\partial^2_{22}g(0,0)=\partial^2_{12}g_0(0,0)\partial_2 x_1(0,0)+\partial^2_{22}g_0(0,0)+\partial^2_{22}g_1(0,0)=c_0+c_1.$$
So,
 $$\mathrm{Hess}(g)(0)=\begin{pmatrix} 0 & 0\\ 0 & c_0+c_1\end{pmatrix}.$$

\bigskip

We will conclude by proving that $0$ is a locally maximum of $g$.  Let $\varepsilon>0$ be a small real number such that $|\varepsilon c_1|<1$.
We will prove that in a sufficiently small neighborhood of $0$,
$$ g(x_2,y_0)\le g_0(x_1+\frac{1}{\varepsilon}(y_0-y_1),y_0)+g_1(x_2, y_1+\varepsilon(x_2-x_1))\le 0,$$
and hence $0$ is a locally maximum of $g$ because the second inequality is strict for $(x_2,y_0)\ne 0$.
Indeed, by Taylor's theorem and equations (\ref{Eq: construct new generating fuction-1}),  one knows that in a sufficiently small neighborhood of $0$,
\begin{align*}
g_0(x_1+\frac{1}{\varepsilon}(y_0-y_1),y_0)&=g_0(x_1,y_0)+\frac{1}{\varepsilon}\partial_{1} g_0 (x_1,y_0)(y_0-y_1)+\frac{1}{2\varepsilon^2}\partial^2_{11}g_0(\xi,y_0)(y_0-y_1)^2\\
&=g_0(x_1,y_0)+\frac{1}{\varepsilon}(y_0-y_1)^2+\frac{1}{2\varepsilon^2}\partial^2_{11}g_0(\xi,y_0)(y_0-y_1)^2,
\end{align*}
where $\xi$ is a real number between $x_1$ and $x_1+\frac{1}{\varepsilon}(y_0-y_1)$. Similarly, one deduces that in sufficiently small neighborhood of $0$,
$$g_1(x_2,y_1+\varepsilon(x_2-x_1))=g_1(x_2,y_1)+\varepsilon(x_2-x_1)^2+\frac{\varepsilon^2}{2}\partial^2_{22}g_1(x_2,\eta )(x_2-x_1)^2,
$$
where $\eta$ is a real number between $y_1$ and $y_1+\varepsilon(x_2-x_1)$.
So,
\begin{align*}
 g(x_2,y_0)=&g_0(x_1+\frac{1}{\varepsilon}(y_0-y_1),y_0)+g_1(x_2, y_1+\varepsilon(x_2-x_1))\\
 &-\frac{1}{2\varepsilon}(y_0-y_1)^2-\frac{\varepsilon}{2}(x_2-x_1)^2+(x_2-x_1)(y_0-y_1)\\
&-\frac{1}{2\varepsilon}(1+\frac{1}{\varepsilon}\partial_{11}g_0(\xi,y_0))(y_0-y_1)^2-\frac{\varepsilon}{2}(1+\varepsilon\partial_{22}g_1(x_2,\eta))(x_2-x_1)^2.
\end{align*}
For $(x_2,y_0)\ne 0$ that is in a sufficiently small neighborhood of $0$, one can suppose that
$$|\frac{1}{\varepsilon^2}\partial_{11} g_0(\xi,y_0)|<1, \quad \text{and } |\varepsilon \partial_{22} g_1(x_2,\eta)|<1.$$
So,
$$ g(x_2,y_0)\le g_0(x_1+\frac{1}{\varepsilon}(y_0-y_1),y_0)+g_1(x_2, y_1+\varepsilon(x_2-x_1)).$$
\end{proof}

Now, we begin the proof of Proposition \ref{P: chapter3-index of symplectically degenerate extremum}.

\begin{proof}[Proof of Propostion \ref{P: chapter3-index of symplectically degenerate extremum}]
Suppose that $z_0$ is a symplectically degenerated maximum. By Lemma \ref{L: chapter3-symplectic-exist common conjugate}, there exists a coordinate transformation such that in the new coordinate system the Jacobian of each $f_j$ at $z_0$ has the form
 $$\begin{pmatrix} 1 & c_j \\ 0 & 1\end{pmatrix}$$
   where $c_j$ is a non-positive real number. We consider everything in the new coordinate system.   Each $f_j$ can be locally generated by a generating function $g'_j$, and the Hessian of $g'_j$ at $z_0$ has the form
    $$\begin{pmatrix} 0 & 0 \\ 0 & c_j\end{pmatrix}.$$
    By Lemma \ref{L: chapter3-symplectic-conjugated does not change the maximality}, $z_0$ is a local maximum of each $g'_j$. So, by Lemma \ref{L: chapter3-symplectic-construct new generating function}, we can construct  a generating function $g'$ such that
 \begin{itemize}
 \item[-] $z_0$ is a local maximum  of $g'$,
 \item[-]  $\mathrm{Hess}(g')(z_0)$ is degenerate,
\item[-] $g'$ generates $f=f_{k-1}\cdots f_0 $ in a neighborhood of $z_0$.
\end{itemize}
So, by Proposition \ref{P: chapter3-diffeo-the index of degenerate fixed point that is an extremum of genrating function}, we know  $i(f,z_0)$ is equal to $1$, and hence $i(F,[z_0])$ is equal to $1$.
\end{proof}

\section*{Acknowledgements}

I wish to thank Patrice Le Calvez for  proposing me the subject.

\bibliographystyle{alpha}

\bibliography{reference}

\end{document}